\setlist[enumerate]{leftmargin=14pt, labelwidth=0cm, label=\arabic*.}
\let\old@tocline\@tocline
\let\section@tocline\@tocline
\newcommand{\subsection@dotsep}{4.5}
\newcommand{\subsubsection@dotsep}{4.5}
	\leaders\hbox{$\m@th
		\mkern \subsection@dotsep mu\hbox{.}\mkern \subsection@dotsep mu$}\hfill
\let\subsection@tocline\@tocline
\let\@tocline\old@tocline
	\leaders\hbox{$\m@th
		\mkern \subsubsection@dotsep mu\hbox{.}\mkern \subsubsection@dotsep mu$}\hfill
\let\subsubsection@tocline\@tocline
\let\@tocline\old@tocline
\let\old@l@subsection\l@subsection
\let\old@l@subsubsection\l@subsubsection
\def\@tocwriteb#1#2#3{%
	\begingroup
	\@xp\def\csname #2@tocline\endcsname##1##2##3##4##5##6{%
		\ifnum##1>\c@tocdepth
		\else \sbox\z@{##5\let\indentlabel\@tochangmeasure##6}\fi}%
	\csname l@#2\endcsname{#1{\csname#2name\endcsname}{\@secnumber}{}}%
	\endgroup
	\addcontentsline{toc}{#2}%
	{\protect#1{\csname#2name\endcsname}{\@secnumber}{#3}}}%
\newlength{\@tocsectionindent}
\newlength{\@tocsubsectionindent}
\newlength{\@tocsubsubsectionindent}
\newlength{\@tocsectionnumwidth}
\newlength{\@tocsubsectionnumwidth}
\newlength{\@tocsubsubsectionnumwidth}
\newcommand{\settocsectionnumwidth}[1]{\setlength{\@tocsectionnumwidth}{#1}}
\newcommand{\settocsubsectionnumwidth}[1]{\setlength{\@tocsubsectionnumwidth}{#1}}
\newcommand{\settocsubsubsectionnumwidth}[1]{\setlength{\@tocsubsubsectionnumwidth}{#1}}
\newcommand{\settocsectionindent}[1]{\setlength{\@tocsectionindent}{#1}}
\newcommand{\settocsubsectionindent}[1]{\setlength{\@tocsubsectionindent}{#1}}
\newcommand{\settocsubsubsectionindent}[1]{\setlength{\@tocsubsubsectionindent}{#1}}
\renewcommand{\l@section}{\section@tocline{1}{\@tocsectionvskip}{\@tocsectionindent}{}{\@tocsectionformat}}%
\renewcommand{\l@subsection}{\subsection@tocline{2}{\@tocsubsectionvskip}{\@tocsubsectionindent}{}{\@tocsubsectionformat}}%
\renewcommand{\l@subsubsection}{\subsubsection@tocline{3}{\@tocsubsubsectionvskip}{\@tocsubsubsectionindent}{}{\@tocsubsubsectionformat}}%
\newcommand{\@tocsectionformat}{}
\newcommand{\@tocsubsectionformat}{}
\newcommand{\@tocsubsubsectionformat}{}
\def\csname toc@1format\endcsname{\@tocsectionformat}
\def\csname toc@2format\endcsname{\@tocsubsectionformat}
\def\csname toc@3format\endcsname{\@tocsubsubsectionformat}
\newcommand{\settocsectionformat}[1]{\renewcommand{\@tocsectionformat}{#1}}
\newcommand{\settocsubsectionformat}[1]{\renewcommand{\@tocsubsectionformat}{#1}}
\newcommand{\settocsubsubsectionformat}[1]{\renewcommand{\@tocsubsubsectionformat}{#1}}
\newlength{\@tocsectionvskip}
\newcommand{\settocsectionvskip}[1]{\setlength{\@tocsectionvskip}{#1}}
\newlength{\@tocsubsectionvskip}
\newcommand{\settocsubsectionvskip}[1]{\setlength{\@tocsubsectionvskip}{#1}}
\newlength{\@tocsubsubsectionvskip}
\newcommand{\settocsubsubsectionvskip}[1]{\setlength{\@tocsubsubsectionvskip}{#1}}
\patchcmd{\tocsection}{\indentlabel}{\makebox[\@tocsectionnumwidth][l]}{}{}
\patchcmd{\tocsubsection}{\indentlabel}{\makebox[\@tocsubsectionnumwidth][l]}{}{}
\patchcmd{\tocsubsubsection}{\indentlabel}{\makebox[\@tocsubsubsectionnumwidth][l]}{}{}
\newcommand{\@sectypepnumformat}{}
\renewcommand{\contentsline}[1]{%
	\expandafter\let\expandafter\@sectypepnumformat\csname @toc#1pnumformat\endcsname%
	\csname l@#1\endcsname}
\newcommand{\@tocsectionpnumformat}{}
\newcommand{\@tocsubsectionpnumformat}{}
\newcommand{\@tocsubsubsectionpnumformat}{}
\newcommand{\setsectionpnumformat}[1]{\renewcommand{\@tocsectionpnumformat}{#1}}
\newcommand{\setsubsectionpnumformat}[1]{\renewcommand{\@tocsubsectionpnumformat}{#1}}
\newcommand{\setsubsubsectionpnumformat}[1]{\renewcommand{\@tocsubsubsectionpnumformat}{#1}}
\renewcommand{\@tocpagenum}[1]{%
	\hfill {\mdseries\@sectypepnumformat #1}}
\let\oldappendix\appendix
\renewcommand{\appendix}{%
	\leavevmode\oldappendix%
	\addtocontents{toc}{%
		\protect\settowidth{\protect\@tocsectionnumwidth}{\protect\@tocsectionformat\sectionname\space}%
		\protect\addtolength{\protect\@tocsectionnumwidth}{2em}}%
}
\let\oldtableofcontents\tableofcontents
\renewcommand{\tableofcontents}{%
	\vspace*{-\linespacing}
	\oldtableofcontents}
\numberwithin{equation}{section}
\newtheorem{thm}{Theorem}[section]
\newtheorem{pp}[thm]{Proposition}
\newtheorem{lm}[thm]{Lemma}
\newtheorem{df}[thm]{Definition}
\newtheorem{Rmk}[thm]{Remark}
\newcommand{\R}{\mathbb{R}}
\newcommand{\rd}{\mathrm{d}}
\newcommand{\mcI}{\mathcal{I}}
\newcommand{\mS}{\mathbb{S}}
\newcommand{\T}{\mathbb{T}}
\newcommand{\N}{\mathbb{N}}
\newcommand{\Z}{\mathbb{Z}}
\newcommand{\cF}{\cal{F}}
\newcommand{\dlt}{\delta}
\newcommand{\Id}{\operatorname{Id}}
\newcommand{\Div}{\operatorname{div}}
\newcommand{\tr}{\operatorname{tr}}
\newcommand{\supp}{\operatorname{supp}}
\newcommand{\oR}{\overline{R}}
\newcommand{\tR}{\tilde{R}^{(u)}}
\newcommand{\uR}{\mathcal{R}^{(u_{q,i+1})}}
\newcommand{\tj}{\widetilde{j}}
\newcommand{\tu}{\widetilde{u}}
\newcommand{\pttu}{\widetilde{\pa_tu}}
\newcommand{\tV}{\widetilde{V}}
\newcommand{\tG}{\widetilde{G}}
\newcommand{\tC}{\widetilde{C}}
\newcommand{\tD}{\widetilde{D}}
\newcommand{\ty}{\tilde{y}}
\newcommand{\tx}{\tilde{x}}
\newcommand{\tU}{\tilde{U}}
\newcommand{\tlm}{\tilde{\lambda}}
\newcommand{\tlA}{\tilde{\lambda}_A}
\newcommand{\tup}{\upsilon'}
\newcommand{\ttup}{\upsilon''}
\newcommand{\tPi}{\widetilde{\Pi}}
\newcommand{\mPO}{\mathbb{P}_{=0}}
\newcommand{\mPG}{\mathbb{P}_{\gtrsim\lambda_{q,i+1}}}
\newcommand{\pa}{\partial}
\newcommand{\pt}{\pa_t}
\newcommand{\tmu}{\tilde{\mu}}
\newcommand{\tL}{\widetilde{\Lambda}}
\newcommand{\tl}{\tilde{\lambda}}
\newcommand{\te}{\tilde{e}}
\newcommand{\tlu}{\tilde{\lambda}_u}
\newcommand{\tlU}{\tilde{\lambda}_U}
\newcommand{\ou}{\overline{u}}
\newcommand{\oM}{\overline{M}}
\newcommand{\ow}{\overline{w}}
\newcommand{\oW}{\overline{W}}
\newcommand{\oy}{\overline{y}}
\newcommand{\tS}{\widetilde{\Sgm}}
\newcommand{\Rl}{R_{\ell,i}}
\newcommand{\Cl}{C_{\ell,i}}
\newcommand{\Dl}{D_{\ell,i}}
\newcommand{\ul}{u_{\ell,i}}
\newcommand{\Gl}{G_{\ell,i}}
\newcommand{\tA}{\mathring{A}}
\newcommand{\tB}{\tilde{B}}
\newcommand{\tE}{\tilde{E}}
\newcommand{\tH}{\tilde{H}}
\newcommand{\tsgm}{\widetilde{\sgm}}
\newcommand{\PL}{P_{\leqslant\ell^{-1}}}
\newcommand{\PLN}[1]{P_{\leqslant\ell_{q,#1}^{-1}}}
\newcommand{\ULN}[1]{U_{\leqslant\ell_{q,#1}^{-1}}}
\newcommand{\UL}{U_{\leqslant\ell^{-1}}}
\newcommand{\PLq}{P_{\leqslant\ell_{q,i}^{-1}}}
\newcommand{\PGq}{P_{>\ell_{q,i}^{-1}}}
\newcommand{\ULq}{U_{\leqslant\ell_{q,i}^{-1}}}
\newcommand{\UGq}{U_{>\ell_{q,i}^{-1}}}
\newcommand{\nb}{\nabla}
\newcommand{\Sgm}{\Sigma}
\newcommand{\sgm}{\sigma}
\newcommand{\nrm}[1]{\Vert#1\Vert}
\newcommand{\Nrm}[1]{\left\Vert#1\right\Vert}
\newcommand{\nRM}[2]{\Vert#1\Vert_{#2}}
\newcommand{\nRMr}[3]{\Vert\pt^{#3}(#1)\Vert_{#2}}
\newcommand{\nrmrN}[1]{\Vert\pt^r(#1)\Vert_N}
\newcommand{\brk}[1]{\left\langle#1\right\rangle}
\newcommand{\bra}[1]{\left\lbrace#1\right\rbrace}
\newcommand{\set}[1]{\{#1\}}
\newcommand{\abs}[1]{\left\vert#1\right\vert}
\keywords{Convex integration, non-uniqueness, null condition,
	quasilinear elastodynamics, weak solutions}
\subjclass[2020]{35A02,\ 35D30,\ 35L05,\ 35L15,\ 35L72}
\begin{document}
	\title[] {The null condition in elastodynamics leads to non-uniqueness}
	
	\author{Shunkai Mao}
	\address{School of Mathematics and Statistics, Donghua University, China.}
	\email[Shunkai Mao]{skmao@dhu.edu.cn}
	
	\author{Peng Qu}
	\address{School of Mathematical Sciences $\&$ Shanghai Key Laboratory for Contemporary Applied Mathematics, Fudan University, China.}
	\email[Peng Qu]{pqu@fudan.edu.cn}
	\thanks{}
	
	\begin{abstract}
	We consider the Cauchy problem for the system of elastodynamic equations in two dimensions. Specifically, we focus on  materials characterized by a null condition imposed on the quadratic part of the nonlinearity. We can construct  non-zero weak solutions $u \in C^1([0, T] \times \T^2)$ that emanate from zero initial data.	 The proof relies on the convex integration scheme. By exploiting the characteristic double wave speeds of the equations, we construct a new class of building blocks. This work extends the application of convex integration techniques to hyperbolic systems with a null condition and reveals the rich solution structure in nonlinear elastodynamics.
	\end{abstract}
	\maketitle
	
	{
		\tableofcontents
	}

\section{Introduction}

In this paper, we consider the equations of motion for the displacement of an isotropic, homogeneous, hyperelastic material filling space $\T^2=[-\pi,\pi]^2$. This corresponds to the Cauchy problem  for the system of elastodynamic equations on $[0,T] \times \T^2$, with $T > 0$.
\begin{equation}\label{system}
	\left\{
	\begin{aligned}
		&{\rho_0}\pa_{tt}y-\Div P=0,&& (t,x)\in (0,T)\times\T^2,  \\
		&y(0,x)=y_0(x),\quad \pa_ty(0,x)=y_1(x),&& x\in \T^2,
	\end{aligned}
	\right.
\end{equation}
where $y:[0,T]\times\T^2\rightarrow\R^2$ is a vector-valued function representing the deformed position, $\rho_0$ is the reference density, and $P$ is the first Piola--Kirchhoff stress tensor. 

\subsection{Basic concepts and notation in nonlinear elasticity}\par
The unknown variable in this problem is $y(t, x)$, which represents a $C^1$ deformation of the material evolving over time. Then, we denote the  deformation gradient by  $F=\nb y$ and its determinant by $J=\det F$. The density $\rho_0$ in the reference configuration  is taken to be $1$. From the hyperelastic assumption of the material, we consider a stored energy function $\sgm=\sgm(F)$, which depends on $F$ through the principal invariants of the (left) Cauchy-Green strain tensor $B=FF^{\top}$. \par 
It is convenient to introduce the displacement $u(t,x)=y(t,x)-x$ and its gradient $G=F-\Id=\nb u$. Here, $u$ measures the displacement from the reference configuration. In two-dimensional elasticity, the stored-energy function $\sgm=\sgm(j_1,j_2)$ depends on the principal invariants $j_1$ and $j_2$ of the matrix $C = B-\Id = FF^{\top} - \Id=G+G^{\top}+GG^{\top}$. In more detail, we consider a case with a simple stored-energy function:
\begin{align}\label{stored-energy function}
	\sgm(j_1,j_2)=\sum_{r=1}^2\sgm_rj_r+\sum_{r=1}^2\sum_{s=1}^2\frac{\sgm_{rs}}{2}j_rj_s+\frac{\sgm_{111}}{6}j_1^3,
\end{align}
where $\sgm_r$, $\sgm_{rs}$, and $\sgm_{111}$ are constants satisfying
$\sgm_{rs}=\sgm_{sr}$ for $1\leqslant r,s\leqslant2$. Moreover, we consider the total mechanical energy function \begin{equation}
	\int_{\T^2}
	\left(\frac12|\pa_tu|^2+\sgm(j_1,j_2)\right)\,\rd x.
\end{equation}

We impose the condition $\sgm_1 = 0$, indicating that the reference configuration is a stress-free state. The Lam{\'e} constants  $\lambda = 4(\sgm_{11} + \sgm_2)$ and $\mu = -2\sgm_2$ are assumed to satisfy $\mu>0$ and $\lambda>0$. This makes the linear part of the equation hyperbolic. We also assume $\sgm_{11}$, $\sgm_{22}>0$, and $\sgm_{11}\sgm_{22}-\sgm_{12}^2>0$, indicating that the stored energy function $\sgm(j_1,j_2)$ is a convex function near the origin. Moreover, we consider a stored-energy function satisfying the following part of the well-known null condition, see for instance \cite{Sid96}:
\begin{align}\label{the null condition}
	3\sgm_{11}+2\sgm_{111}=0, \quad\sgm^* := 2\sgm_{11} + 8\sgm_{12} + 2\sgm_2 + 4\sgm_{111} \neq 0.
\end{align}
\begin{Rmk}
For a general stored energy function $\overline{\sgm}=\overline{\sgm}(j_1,j_2)$, it is still possible to use the method provided in this paper to show the non-uniqueness results, but the process of the corresponding proofs  would become more complex. Since we wish to elucidate the basic idea in the proof of our non-uniqueness results, we would focus on this simpler case under the assumption \eqref{stored-energy function} in this paper.  In fact, for a general
$\overline{\sgm}=\overline{\sgm}(j_1,j_2)$,
we could denote by $\overline{\sgm}_{r,0}=\frac{\pa\overline{\sgm}}{\pa j_r}(0,0)$, $\overline{\sgm}_{rs,0}=\frac{\pa^2\overline{\sgm}}{\pa j_r\pa j_s}(0,0)$, and $\overline{\sgm}_{rsm,0}=\frac{\pa^3\overline{\sgm}}{\pa j_r\pa j_s\pa j_m}(0,0)$, and then, for sufficiently small $\varepsilon$ such that $|G|< \varepsilon$, we have
\begin{align*}
	\overline{\sgm}(j_1,j_2)=\sum_{r=1}^2\overline{\sgm}_{r,0}j_r+\sum_{r=1}^2\sum_{s=1}^2\frac{\overline{\sgm}_{rs,0}}{2}j_rj_s+\sum_{r=1}^2\sum_{s=1}^2\sum_{m=1}^2\frac{\overline{\sgm}_{rsm,0}}{6}j_rj_sj_m+o\!\left((|j_1|+|j_2|)^3\right), 
\end{align*}
as  $(j_1,j_2)\to(0,0)$. For terms of third order and higher, the analysis in our proof is straightforward. Therefore, to simplify our proof, we use a simple stored energy function that reveals the core structure. This allows for a clearer understanding of the source of non-uniqueness.
\end{Rmk}
The principal invariants of $C$ can be expressed by $G$ as:
\begin{align}
	j_1 &= \text{tr}C = 2\tr G+\tr(GG^{\top}), \label{def of j_1}\\
	j_2 &=\det(C)= \frac{1}{2} \left( (\tr C)^2 - \tr(C^2) \right) \nonumber	\\
	&= 2 (\tr G)^2+2(\tr G)\tr (GG^{\top})+\frac{1}{2}(\tr (GG^{\top}))^2-\tr\left(G^2+GG^{\top}+2G^2G^{\top}+\frac{1}{2}(GG^{\top})^2\right).\label{def of j_2}
\end{align}
Then, we can represent the first Piola--Kirchhoff stress tensor $P$ as:
\begin{equation}\label{the first Piola stress tensor}
	\begin{aligned}
		P&=\frac{\pa \sgm}{\pa F}=\frac{\pa \sgm}{\pa G}
		=\sum_{i=1}^2\frac{\pa \sgm}{\pa j_i}\frac{\pa j_i}{\pa G}=\left(\sgm_{11}j_1+\sgm_{12}j_2+\frac{\sgm_{111}}{2}j_1^2\right)\frac{\pa j_1}{\pa G}+\left(\sgm_2+\sgm_{12}j_1+\sgm_{22}j_2\right)\frac{\pa j_2}{\pa G},
	\end{aligned} 
\end{equation}
where 
\begin{align*}
	\frac{\pa j_1}{\pa G}&=2(\Id+G),\quad \frac{\pa j_2}{\pa G}=2\tr C(\Id+G)-2C(\Id+G).
\end{align*}
We also consider the second Piola--Kirchhoff stress tensor $\Sgm_G=F^{-1}P$, which is a symmetric tensor. It can be written as follows:
\begin{equation}\label{the second Piola stress tensor}
	\begin{aligned}
		\Sgm_G=F^{-1}P&=\left(\sgm_{11}j_1+\sgm_{12}j_2+\frac{\sgm_{111}}{2}j_1^2\right)F^{-1}\frac{\pa j_1}{\pa G}+\left(\sgm_2+\sgm_{12}j_1+\sgm_{22}j_2\right)F^{-1}\frac{\pa j_2}{\pa G}\\
		&=\underbrace{2\left(\sgm_{11}j_1+\sgm_{12}j_2+\frac{\sgm_{111}}{2}j_1^2\right)}_{\Lambda_1}\Id+\underbrace{2\left(\sgm_2+\sgm_{12}j_1+\sgm_{22}j_2\right)}_{\Lambda_2}(\tr C\Id-D)\\
		&=\Lambda_1\Id+\Lambda_2(\tr C\Id-D).
	\end{aligned} 
\end{equation}
where $D$ is the symmetric matrix defined by
\begin{align*}
	D=G+G^{\top}+G^{\top}G.
\end{align*}
At this point, we can rewrite the original equation as:
\begin{equation}\label{system 1}
	\left\{
	\begin{aligned}
		&\pa_{tt}u-\Div ((\Id+G)\Sgm_G)=0,&& (t,x)\in (0,T)\times\T^2,  \\
		&u(0,x)=u_0(x),\quad \pa_tu(0,x)=u_1(x),&& x\in \T^2.
	\end{aligned}
	\right.
\end{equation}
\begin{Rmk}
From \eqref{the second Piola stress tensor}, we know that the definition of $\Sgm_G$ depends on $G$. The subscript $G$ is used to distinguish the second Piola--Kirchhoff stresses corresponding to different $G$. For example, in the following text, we might use $\Sgm_{G_{q}}$ and $\Sgm_{G_{q,i}}$, which represent the second Piola--Kirchhoff stresses corresponding to $G_{q}$ and $G_{q,i}$, respectively. Here, we avoid using the notation $\Sgm(G)$ to prevent confusion with matrix multiplication.
\end{Rmk}
We next introduce  the definition of a weak solution of this system.
\begin{df}[Weak solution]\label{Weak Solution}
	By a weak solution of \eqref{system 1} on $[0,T]$, we mean a
	function
	$$
	u\in C([0,T];H^1(\T^2))
	\cap C^1([0,T];L^2(\T^2)),
	\qquad
	\pa_{tt}u\in L^2(0,T;H^{-1}(\T^2)),
	$$
	with $G:=\nabla u$, such that
	$
	(\Id+G)\Sgm_G
	\in L^2((0,T)\times\T^2),
	$
	$u(0,\cdot)=u_0$, $\pa_tu(0,\cdot)=u_1$, and
	\begin{align}
		\int_0^T\int_{\T^2}
		\left(
		\pa_tu\cdot\pa_t\eta
		-(\Id+G)\Sgm_G:\nabla\eta
		\right)\rd x\rd t
		=
		-\int_{\T^2}
		u_1(x)\cdot\eta(0,x)\rd x
	\end{align}
	for every
	$\eta\in C_c^\infty([0,T)\times\T^2;\R^2)$.
\end{df}
In this paper, we aim to demonstrate the non-uniqueness of weak solutions that belong to $C^{1}([0,T] \times \T^2)$ of elastodynamic equations as described in \eqref{system 1}. \par

The wave-type equations with null condition have been widely studied by many researchers.  Christodoulou \cite{Chr86} and Klainerman \cite{Kla86} independently proved that, in three space dimensions, nonlinear wave equations satisfying the null condition admit global classical solutions for
sufficiently small smooth initial data. In two space dimensions, Alinhac \cite{Ali01} established
almost-global existence under the quadratic null condition and
global existence when both the quadratic and cubic null conditions
are satisfied. Zha \cite{Zha19} later extended these results to a more general
class of two-dimensional quasilinear wave equations in a unified
framework.

There has also been important work
\cite{John88,Lei15,Lei16,Sid96,Sid00,ST05}
on elastodynamic equations. John \cite{John88} proved almost-global existence for elastic waves
of finite amplitude arising from small initial disturbances. Subsequently, Sideris \cite{Sid96} proved global existence for
three-dimensional isotropic nonlinear elastic waves under a null
condition. In \cite{Sid00}, he established global classical solutions for small
perturbations of prestressed homogeneous dilations under a
nonresonance or null condition.  Lei \cite{Lei16} introduced the strong null condition and proved
that, for sufficiently small initial displacements in a weighted
Sobolev space, the two-dimensional incompressible isotropic
elastodynamic system admits a unique global classical solution. Recently, Zhang and Zhou \cite{ZZ24} identified the wave-maps-type
nonlinearities of incompressible Hookean elastodynamics in
Lagrangian coordinates and iterated them in adapted $U^2$-type
spaces to prove small-data global well-posedness in the critical
Besov space
$\dot{B}^{\frac{n}{2}+1}_{2,1}(\mathbb{R}^n)
\times\dot{B}^{\frac{n}{2}}_{2,1}(\mathbb{R}^n)$ for $n\geq 2$.

Research on low-regularity well-posedness and ill-posedness for
quasilinear wave equations and elastic wave systems has yielded a
series of results, including
\cite{ACY23,ACY24,Lei08,Lin98,ST05A,Wang17,ZhaH20,Zhang24}. For general quasilinear wave equations in three space dimensions,
Smith--Tataru \cite{ST05A} and Wang \cite{Wang17} proved local
well-posedness in
$H^s(\R^3)\times H^{s-1}(\R^3)$ for $s>2$, while Lindblad's
counterexamples \cite{Lin98} show that this threshold is sharp in
general. Zha--Hidano \cite{ZhaH20} considered the Cauchy problem for systems
of three-dimensional quasilinear wave equations satisfying the null
condition with low-regularity initial data. In the radially symmetric case, they proved global existence for
sufficiently small data in $H^3(\R^3)\times H^2(\R^3)$ with a low
weight and applied the result to three-dimensional nonlinear elastic
waves. An--Chen--Yin \cite{ACY23} extended Lindblad's classical
counterexamples \cite{Lin98} from scalar quasilinear wave equations
to the three-dimensional elastic wave system with multiple wave
speeds. They
showed that the Cauchy problem is ill-posed in $H^3(\R^3)$ and that
this ill-posedness is caused by instantaneous shock formation.   Recently, An--Chen--Yu \cite{ACY24} proved low-regularity local
well-posedness for the elastic wave system in admissible harmonic
elastic materials, with the divergence part controlled in $H^{3+}$
and the curl part in $H^{4+}$. They also showed that the $H^{3+}$
assumption is optimal for the divergence part.

It is well known that the Cauchy problem \eqref{system 1}  with small initial data can be essentially reduced to the Cauchy problem of a system of quasilinear hyperbolic equations.  The classical theory of hyperbolic conservation laws
\cite{Daf16,Lax57} demonstrates the importance of entropy
admissibility conditions in selecting physically relevant weak
solutions.  More recently, Chen--Vasseur--Yu \cite{CVY24} showed that a geometric condition
on $2\times2$ systems of conservation laws leads to non-uniqueness
in the class of one-dimensional continuous solutions, demonstrating
that the Liu entropy condition alone is insufficient to guarantee
uniqueness.

Our proof builds on the convex integration method starting from De Lellis--Sz{\'e}kelyhidi \cite{DS09, DS13}.  In \cite{DS09}, they reformulated the incompressible Euler equations
as a differential inclusion and constructed bounded non-unique weak
solutions. In \cite{DS10}, they showed that, for some bounded
compactly supported initial data, the energy admissibility criteria
considered there do not single out a unique weak solution. Following a series of significant advancements \cite{Buc15,BDIS15,BDS16,DS17,DS13,DS14,Isett16}, the well-known Onsager's conjecture, which states that the exponent $\alpha=1/3$ marks the threshold for conservation of energy for weak solutions in the class $C^\alpha$ for the incompressible Euler equations, was resolved by Isett \cite{Isett18}, see Buckmaster--De Lellis--Sz{\'e}kelyhidi--Vicol \cite{BDSV19} for the dissipative case. Recent years have seen significant progress on the non-uniqueness
of dissipative weak solutions to the incompressible Euler equations
and entropy solutions to the compressible Euler equations in
H\"older classes. Among them, De Lellis--Kwon \cite{DK22} constructed $C^\alpha$ weak solutions
of the three-dimensional incompressible Euler equations for every
$\alpha<\frac{1}{7}$ that satisfy the local energy inequality and
strictly dissipate the total kinetic energy. Giri--Kwon \cite{GK22} subsequently established non-uniqueness for
continuous entropy solutions to the isentropic compressible Euler
equations. Convex integration has also led to important developments for the
Navier--Stokes equations, Euler--Maxwell equations, MHD systems,
the Prandtl system, and elastodynamic systems; see
\cite{BBV20,BV19,CHL22,CL22,LQZZ22,LZZ22,LQ20,LX20,MQ24,MY22}.

A series of works
\cite{AKKMM20,BKM21,CDK15Re,CK18,CKMS21,KKMM20}
investigated non-uniqueness of admissible weak solutions to the
compressible Euler equations. 
Chiodaroli--De Lellis--Kreml \cite{CDK15Re} constructed classical
Riemann data for the two-dimensional isentropic compressible Euler
system with pressure law $p(\rho)=\rho^2$ that admit infinitely many
bounded admissible weak solutions. Chiodaroli--Kreml \cite{CK18}
subsequently extended this non-uniqueness framework to further
classes of Riemann data. For the Chaplygin gas,
B\v{r}ezina--Kreml--M\'acha \cite{BKM21} proved non-uniqueness for
Riemann data associated with delta shocks and for a class of data
whose one-dimensional solution consists of two contact
discontinuities.
 
 \subsection{Structure assumptions and the main result}
 We develop a convex integration framework for elastodynamic equations
 in Lagrangian coordinates and use it to prove non-uniqueness of weak
 solutions. The mathematical structure of the elastodynamic system
 differs substantially from that of the Euler and Navier--Stokes
 equations, leading to several new difficulties. We first introduce
 the null condition that underlies the construction and then state
 the main result.
 
\begin{df}[Null condition]\label{Null condition}
	If the stored-energy function $\sgm$ in
	\eqref{stored-energy function} satisfies
	\begin{equation}\label{null condition}
		3\sgm_{11}+2\sgm_{111}=0,
		\qquad
		\sgm^*
		:=2\sgm_{11}+8\sgm_{12}
		+2\sgm_2+4\sgm_{111}\neq0,
	\end{equation}
	then, in this paper, we say that $\sgm$ satisfies the
	null condition.
\end{df}
\begin{Rmk}\label{Comparison with the classical null condition}
	Sideris \cite{Sid96} considered the following null
	condition for the stored-energy function:
	\begin{equation}\label{Sideris null condition}
		\sgm_{11}=\sgm_{12},
		\qquad
		3\sgm_{12}+2\sgm_{111}=0.
	\end{equation}
	Under the assumption $\lambda+\mu\neq0$,
	\eqref{Sideris null condition} implies
	\eqref{null condition}. Indeed,
	$$
	3\sgm_{11}+2\sgm_{111}=0,
	\qquad
	\sgm^*
	=
	2\sgm_{11}+8\sgm_{12}+2\sgm_2+4\sgm_{111}
	=
	4\sgm_{11}+2\sgm_2
	=
	\lambda+\mu
	\neq0.
	$$
	Thus, the condition used in the present paper does not require
	the additional identity $\sgm_{11}=\sgm_{12}$.
	The first identity in \eqref{null condition} provides the
	quadratic cancellation used in the construction of the building
	blocks, whereas the condition $\sgm^*\neq0$ is the corresponding
	nondegeneracy requirement.
	
	In the classical small-data theory of nonlinear elastic waves, null
	conditions cancel the leading quadratic self-interactions along the
	characteristic cones and play a central role in global smooth
	existence. The role of \eqref{null condition} in the present
	periodic setting is different: the first identity cancels the leading
	quadratic self-interaction of a longitudinal wave in one fixed
	direction, which is crucial for the construction of our convex
	integration building blocks.
\end{Rmk}
To sum up, in this paper, all the structure assumptions we consider are given on the stored-energy function as follows
\begin{equation}\label{eq:structural-assumptions}
	\begin{aligned}
		&\sgm_1=0,
		\qquad
		\mu=-2\sgm_2>0,
		\qquad
		\lambda=4\sgm_{11}+4\sgm_2>0,
		\qquad
		\sgm_{11}\sgm_{22}-\sgm_{12}^2>0,
		\\
		&3\sgm_{11}+2\sgm_{111}=0,
		\qquad
		\sgm^*
		=
		2\sgm_{11}+8\sgm_{12}
		+2\sgm_2+4\sgm_{111}
		\neq0.
	\end{aligned}
\end{equation}
\begin{Rmk}
	Based on the building blocks constructed in
	Lemma \ref{construction of building blocks}, we can actually achieve the case where the parameter condition is weaker, namely $\mu = -2\sgm_{2} > 0$, $\lambda + 2\mu = 4\sgm_{11} > 0$, $\lambda + \mu = 4\sgm_{11} + 2\sgm_{2} \neq 0$, and the null condition \eqref{the null condition} holds.
\end{Rmk}

Now we can state our main theorem that implies the non-uniqueness of weak solutions of \eqref{system 1}.
\begin{thm}\label{Thm 1}
	Let $T>0$. Assume that the stored-energy function $\sgm$
	given by \eqref{stored-energy function} satisfies
	\eqref{eq:structural-assumptions}. Then there exists a
	nontrivial weak solution
	$u\in C^1([0,T]\times\T^2)$ of \eqref{system 1} emanating
	from the zero initial data.
\end{thm}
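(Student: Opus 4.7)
The plan is to prove Theorem~\ref{Thm 1} by a convex integration scheme tailored to the quasilinear wave system \eqref{system 1}. I would set up an inductive Reynolds-type proposition: at stage $q$ maintain a smooth pair $(u_q,R_q)$, with $R_q$ a symmetric $2\times 2$ tensor, solving
\begin{equation*}
\pa_{tt} u_q - \Div\!\bigl((\Id + G_q)\Sgm_{G_q}\bigr) = \Div R_q,
\end{equation*}
together with quantitative bounds $\nrm{R_q}_{C^0}\le\dlt_{q+1}$ and $\nrm{u_q}_{C^1}$, $\nrm{u_q}_{C^N}$ controlled by frequency/amplitude parameters $\lambda_q,\dlt_q$ behaving geometrically. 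Starting from $u_0\equiv 0$ together with a seed $R_0$ supported inside a subinterval of $(0,T]$, the iteration produces a limit $u=\lim u_q\in C^1([0,T]\times\T^2)$ with vanishing stress error, zero initial data, and $u\not\equiv 0$ from the seed.

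The perturbation $w_{q+1}=u_{q+1}-u_q$ is built from oscillatory traveling-wave building blocks aligned with the \emph{two} characteristic speeds of the linearized operator, the pressure speed $c_p=\sqrt{\lambda+2\mu}$ and the shear speed $c_s=\sqrt{\mu}$. For a finite family of directions $k\in\mS^1$ I would use longitudinal blocks of the form $a_k(t,x)\cos\!\bigl(\lambda_{q+1}(k\cdot x - c_p t)\bigr)k$ and transverse blocks $b_k(t,x)\cos\!\bigl(\lambda_{q+1}(k\cdot x - c_s t)\bigr)k^\perp$. These are approximate plane-wave solutions of the principal symbol $\pa_{tt}-\mu\Delta-(\lambda+\mu)\nb\Div$, so that after a short mollification of $G_q$ and $R_q$ the leading linearized remainder $\pa_{tt}w_{q+1}-\Div\!\bigl(DP(G_q)\,\nb w_{q+1}\bigr)$ is of lower order in $\lambda_{q+1}$, and can be inverted into a small contribution to $R_{q+1}$ via an inverse-divergence operator acting on symmetric tensors.

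The heart of the scheme is the algebraic cancellation of $R_q$ by the quadratic self-interaction of $w_{q+1}$. Expanding the nonlinearity at $G=G_q+\nb w_{q+1}$ produces a quadratic piece $\tfrac{1}{2}D^2P(G_q)(\nb w_{q+1},\nb w_{q+1})$ whose slowly-varying part must match $-R_q$. Two issues arise simultaneously here: \textbf{(i)} the resonant longitudinal-longitudinal self-interaction contributes with coefficient proportional to $3\sgm_{11}+2\sgm_{111}$, and its cancellation requires precisely the first half of the null condition \eqref{the null condition}; and \textbf{(ii)} combining longitudinal and transverse blocks must cover the five-parameter family of symmetric $2\times 2$ tensors representable as $R_q$. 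The pressure blocks generate $k\otimes k$-type stresses while the shear blocks contribute the complementary $k^\perp\otimes k^\perp$ and symmetric off-diagonal parts, and a geometric lemma in the spirit of Nash then determines the amplitudes $a_k,b_k$ smoothly from $R_q$ provided the structural coefficient $\sigma^*$ from \eqref{Weak null condition} does not vanish. This is the sense in which the two wave speeds are indispensable: a single-speed ansatz cannot span all symmetric stresses.

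Cubic and higher corrections in $\Sgm_G$, transport/commutator errors from mollifying $G_q$ along characteristics, and non-resonant high-frequency self-interactions all become pieces of $R_{q+1}$, each controlled by standard stationary-phase and parameter inequalities relating $\lambda_q,\lambda_{q+1},\dlt_{q+1},\dlt_{q+2}$. Convergence in $C^1([0,T]\times\T^2)$ follows by choosing the parameters so that $\sum_q\nrm{w_{q+1}}_{C^1}<\infty$ while $\nrm{R_q}_{C^0}\to 0$, and the zero initial condition is preserved by localizing each $w_{q+1}$ in time away from $t=0$ (using the fact that $R_0$ is compactly supported in $(0,T]$). The main obstacle I anticipate is the explicit algebraic verification that \eqref{the null condition} kills precisely the resonant longitudinal-longitudinal self-interaction, together with the construction of the geometric lemma in which $\sigma^*\neq 0$ secures invertibility of the symmetric-matrix decomposition — this is where the elastodynamic structure departs most sharply from previous convex integration schemes for Euler- or Navier--Stokes-type systems.
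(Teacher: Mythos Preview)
Your overall convex-integration framework is right, but two of the central design choices in your plan diverge from what actually works, and both would cause the iteration to fail.

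First, the building blocks. You propose separate longitudinal and transverse families, arguing that the shear blocks are needed to span the off-diagonal part of the stress. In the paper's scheme the perturbation in each substep is a \emph{single} wave $(f + a(t)f^\perp)e^{i\lambda(f\cdot x - c(t))}$ in one fixed direction $f$, with $|a|\lesssim\varepsilon$; the small transverse correction is forced by the variable-coefficient linearized operator (Lemma~\ref{construction of building blocks}), and its solvability is exactly where $\lambda+\mu\neq 0$ enters --- it is not a statement about spanning symmetric matrices. The null condition $3\sgm_{11}+2\sgm_{111}=0$ does not kill the longitudinal self-interaction; it shapes the low-frequency quadratic output into $\sgm^*\,d^2\bigl(\Id - f\otimes f/|f|^2\bigr)$, and three such directions span $\mS^{2\times 2}$ via Lemma~\ref{Geometric Lemma} (this is where $\sgm^*\neq 0$ matters). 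Crucially, waves in \emph{different} directions cannot be superposed in a single step: their cross-interactions produce low-high frequency terms that the inverse divergence cannot absorb. The paper therefore runs three substeps per $q$-step, adding one direction at a time (Section~\ref{Outline of the induction scheme}), and there is no independent transverse family at all. (Also, symmetric $2\times 2$ tensors form a three-parameter family, not five.)

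Second, the approximate equation and the inverse divergence. The right-hand side must have the form $\Div\bigl((\Id+G)(c+R)\bigr)$, not $\Div R$: the cubic and higher corrections from $\Sgm_G$ naturally factor as $(\Id+G)\times(\text{symmetric})$, and forcing them into a bare symmetric $R$ fails. This in turn requires solving an \emph{asymmetric} divergence equation $\Div\bigl((\Id+G_{q+1})\tilde R\bigr)=U$ with $\tilde R$ symmetric. The paper does this locally (Section~\ref{Defining a Local Inverse of Divergence by Solving the Asymmetric Divergence Equation}), which is possible only when $U$ has zero linear and \emph{angular} momentum in Lagrangian coordinates; the latter is not automatic and needs dedicated correctors $\tu_{q,i+1,L},\tu_{q,i+1,M}$ solving an auxiliary ODE. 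A global inverse divergence would cost $\lambda_{q+1}^\alpha$, which is fatal here because the parameters are super-exponential ($\lambda_q\sim\varepsilon^{-b(q+3)^2/3}$, not geometric) and the scheme barely closes in $C^1$ with no H\"older margin. You will also need the extra scalar $c_q(t)$ on the right to enforce the positivity the geometric lemma requires, since there is no pressure to absorb a trace.
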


\subsection{Difficulties and strategies}\label{Motivation and difficulties}
In the study of the non-uniqueness of a series of fluid dynamics equations using the convex integration method, it is well known that for the Euler equations, non-uniqueness arises from the second-order nonlinearity in the convection term of the momentum equation. Similarly, the non-uniqueness of elastodynamics in Lagrangian coordinates studied in this paper naturally originates from the nonlinear structure of the equations.  However, in this elastodynamic system, the nonlinear structure is highly complex, incorporating second-order nonlinearities and higher-order nonlinearities.

In the convex integration method, our goal is to construct a sequence of approximate solutions $ u_q $ that satisfy the approximate system and converge to a weak solution $ u $ of \eqref{system 1}.  After adding a perturbation
$(\tu_{q+1},\tG_{q+1}=\nb\tu_{q+1})$ to $(u_q,G_q)$, we set
$
u_{q+1}=u_q+\tu_{q+1},
\
G_{q+1}=G_q+\tG_{q+1}.
$ The corresponding increment of the left-hand side of
\eqref{system 1} can then be decomposed as
\begin{align}
&\pa_{tt}\tu_{q+1}-\Div\left(\tG_{q+1}\Sgm_{G_{q}}+(\Id+G_{q+1})\tS_{q+1}\right)\nonumber\\
=&\underbrace{\pa_{tt}\tu_{q+1}-\Div\left(\tG_{q+1}\Sgm_{G_{q}}+(\Id+G_{q})\tS_{q+1}^{(1)}\right)}_{\text{linear}}-\Div\Big(\underbrace{\tG_{q+1}\tS_{q+1}^{(1)}+(\Id+G_{q})\tS_{q+1}^{(2)}}_{\text{second order}}+\underbrace{\tG_{q+1}\tS_{q+1}^{(2)}+(\Id+G_{q+1})\tS_{q+1}^{(\geqslant 3)}}_{\text{higher order}}\Big),\label{decomposition of left}
\end{align}
where $\tS_{q+1}
:=
\Sgm_{G_{q+1}}-\Sgm_{G_q}
=
\tS_{q+1}^{(1)}
+\tS_{q+1}^{(2)}
+\tS_{q+1}^{(\geqslant3)}$, and  the superscript $ (i) $ indicates that the term is approximately of the form $ G_q^k \tG_{q+1}^i $ and has a magnitude approximately $ \mathcal{O}(|G_q|^k |\tG_{q+1}|^i) $.

Unlike the Euler equations, where the linear part takes the form of a transport equation, the linear part here takes the form of a  linear wave equation. 
The low-frequency components of the main part of second-order terms are still used to cancel out the Reynolds error. For the higher-order terms, it is easy to see that their magnitude is small (of amplitude smaller than $\delta_{q+1}^{\frac{3}{2}}$). Although these terms are smaller, some of the nonprincipal
quadratic and higher-order contributions cannot be directly
represented in the form
$
(\Id+G_{q+1})R_{q+1}
$
with $R_{q+1}$ symmetric. This breaks the iterative framework used in the Euler equations. Therefore, we need a new iterative framework and a redefined inverse of the divergence operator. Below, we will discuss the challenges posed by the above description in detail:

As shown in our related work \cite{MQ25}, if higher-order nonlinearities (third-order and above) are absent, it is possible to achieve non-uniqueness for $C^{1,\alpha}(\alpha>0)$ continuous solutions, which is difficult to achieve for elastodynamic systems. This is primarily due to our treatment of the non-principal part of the second-order terms and third-order and higher nonlinear terms: perturbation terms similar to $ G_q \tG_{q+1}^2 $ are directly absorbed into $ R_{q+1} $. This absorption provides only an $\varepsilon$-level benefit, which limits our choice of $ \delta_q $ in \eqref{def of parameter} that decays exponentially as $q\rightarrow\infty$. In particular, for the linear components satisfying the linear wave equation, we have specific estimates:
\begin{align*}
	\nrm{R_{linear}}_{0}\lesssim\frac{\delta_{q+1}^{\frac{1}{2}}}{\lambda_{q+1}\mu_{q}}.
\end{align*}
For the right-hand side terms of the equation to be smaller than $\delta_{q+2}$, the choice of $\lambda_q$ becomes restricted. This limitation prevents us from achieving the non-uniqueness of weak solutions in $C^{1,\alpha}$ spaces with $\alpha>0$.

The construction of the perturbation is also closely related to the form of the nonlinear terms. The nonlinear terms in \eqref{system 1} differ from those in the Euler equations. In \eqref{system 1}, the nonlinear term $(\Id + G) \Sgm_G$ satisfies \eqref{eq:structural-assumptions} and is not composed solely of second-order nonlinear terms. Higher-order nonlinear terms are also present.  \eqref{eq:structural-assumptions} guarantees a certain geometric structure so that by a corresponding geometric lemma (Lemma \ref{Geometric Lemma}), we can use the low frequency part of $(\tG_{q+1}\tS_{q+1}^{(1)})^{(m)}+\tS_{q+1}^{(2),m}$ to eliminate the Reynolds error $R_q$. Here, the superscripts $(m)$ and $m$ indicate the principal parts. More generally, a term of order $i$ is approximately of the form $\tG_{q+1}^i$ and has magnitude $\mathcal O(|\tG_{q+1}|^i)$. Moreover, it ensures that longitudinal waves are solutions to the equation when only second-order nonlinear terms are present, allowing us to handle low-high frequency terms.  However, the linear combination of longitudinal waves in different directions is not a solution, which produces low-high frequency terms that cannot be eliminated and leads to poor estimates. Therefore, we may only add perturbations in one direction at a time. Consequently, in the 2D case, at each step of the iteration, high-frequency waves need to be added to the approximate solution in three distinct directions (see Section \ref{Outline of the induction scheme}). This process is based on the method first used by Luo and Xin in \cite{LX20}.

Our goal is to use the low-frequency principal part of $(\tG_{q+1}\tS_{q+1}^{(1)})^{(m)}+\tS_{q+1}^{(2),m}$ to cancel $ R_q $, thereby obtaining a smaller $ R_{q+1} $.
According to the geometric lemma, the quadratic low-frequency terms can only represent positive definite matrices. However, we cannot guarantee that $ R_q $ is positive definite at each step. Unlike the incompressible Euler equations, we do not have a pressure term to absorb the residual low-frequency components that cannot be eliminated.  In this paper, we have constructed a suitable sequence of temporal functions $c_q(t)$ (see \eqref{def of c_q} and \eqref{def of te}) in order to ensure that the normalized matrix
$
\Id+\frac{R_q}{\sgm^*\delta_{q+1}}
$
is sufficiently close to $\Id$ and hence positive definite.
The sign of the target matrix is carried by $\sgm^*$.  Although $c_q$ is only a continuous function over the entire interval $[0, T]$, we can ensure that our iterative scheme can proceed by making $c_{q+1}$ constant on the support of $u_q$.

The choice of building blocks is also a challenging task. For the Euler equations, we typically aim to construct perturbations whose principal part satisfies the transport equation to obtain good estimates on the transport error. However, we find that the perturbations used in this paper need to approximately satisfy specific wave equations. Moreover, based on the truncation technique, at the $(q+1)$-st stage, we can treat  $u_{q}$ and its derivatives as constants in the truncation region, so that the equation to be satisfied by the perturbation can be reduced to a linear wave equation with constant coefficients (see Section \ref{Definition of the perturbation}).  Based on this observation, we propose Lemma \ref{construction of building blocks} to construct new building blocks  consisting of a longitudinal wave of size $O(1)$ and a transverse wave of size $O(\varepsilon)$. Notably, we can construct such building blocks only if the difference of the squared wave speeds $\lambda+\mu\neq0$, highlighting the importance of the double wave speeds property.

Directly absorbing these nonprincipal quadratic terms and higher-order terms in \eqref{decomposition of left} into the new Reynolds error at the next iteration would result in $R_{q+1}$ no longer being a symmetric matrix. Consequently, a new iterative framework and a redefined divergence operator inverse are required.
In \cite{Isett16}, Isett and Oh showed that a compactly supported vector
field $U:\R^d\to\R^d$ can be represented as the divergence of a
compactly supported symmetric matrix, provided that its linear and
angular moments vanish:
\begin{align}
	\int_{\R^d} U_l(y) \, \rd y = 0, \quad  \int_{\R^d}  (y_k U_l(y) - y_l U_k(y)) \, \rd y = 0, \label{law of conservation}
\end{align}
then one can formally represent $ U $ as the divergence of some symmetric matrix $R$ by solving the following underdetermined elliptic equation, known as the symmetric divergence equation:
\begin{equation}
	\pa_{y_j} R_{lj} = U_l.\label{symmetric div eq}
\end{equation}

In this paper, we consider two-dimensional elastodynamics in Lagrangian coordinates and derive the corresponding conservation laws for momentum and angular momentum:
\begin{align}
	\int_{\R^2} \rho_0U_l(t,x) \, \rd x = 0, \quad \int_{\R^2} (\rho_0 y_j U_l(t,x) - \rho_0y_l U_j(t,x)) \, \rd x = 0, \label{law of conservation in Lc}
\end{align}
for some vector field $U=U(t,x)\in C_c^{\infty}(\R\times\R^2)$, where $x$ and $y=y(t,x)$ denote the Lagrangian coordinates and the deformation with small displacements $u(t,x)$, respectively. 
Then,  in Section \ref{Defining a Local Inverse of Divergence by Solving the Asymmetric Divergence Equation}, we  prove that under \eqref{law of conservation in Lc}, there exists a symmetric matrix $\tR[U]$ such that
\begin{align*}
	\Div(F\tR[U])=\Div((\Id+\nabla u)\tR[U])=\rho_0U.
\end{align*} 
That is, we can define a special local right inverse of the divergence operator. 
At this point, we present the equation \eqref{approximate  elstro dynamic} satisfied by the approximate solution $u_q$. Moreover,  since the matrices  $\tS_{q+1}^{(2)}$ and  $\tS_{q+1}^{(\geqslant3)}$ are  symmetric,   $(\Id+G_{q+1})(\tS_{q+1}^{(2)}-\tS_{q+1}^{(2),m})$ and the higher-order terms $(\Id+G_{q+1})\tS_{q+1}^{(\geqslant 3)}$ can be easily absorbed into the new Reynolds error $(\Id+G_{q+1})R_{q+1}$. Thus, we only need to focus on and handle the linear and main part of the quadratic terms.

In fact, since we use a local right inverse of the divergence, we need to decompose the new Reynolds error  into a sum of parts with compact support in space, see \eqref{divide of Reynold error 1}. Moreover, to ensure that  $ R_{q+1} $ can be made a symmetric matrix (see Section \ref{Definition of the perturbation}), we need to introduce a perturbation $\tu_{ac}$ such that the new Reynolds error, when decomposed into parts with spatial compact support, satisfies \eqref{law of conservation in Lc}.


We also hope to apply an approach similar to that developed in this paper to prove the non-uniqueness of the compressible elastodynamic equations in the Eulerian coordinate system. Due to the presence of the density term and the more complex relationship between the deformation gradient tensor and the velocity, we anticipate that some difficulties will persist. Addressing these challenges will be the focus of our future work.

\subsection{Organization of the paper}\label{Outline of the proof}
Section \ref{Outline of the induction scheme} introduces the
approximate system, the parameter hierarchy, and the main inductive
proposition. Section \ref{Mollification and cutoffs} describes
the mollification and space--time localization procedures. In
Section
\ref{Defining a Local Inverse of Divergence by Solving the Asymmetric Divergence Equation},
we construct a deformation-dependent local right inverse for the
asymmetric divergence operator. Section \ref{Some notations}
introduces the notation used in the perturbation construction. In
Section \ref{Definition of the perturbation}, we construct the
elastic-wave building blocks and define the perturbations used at
each stage of the iteration. Section
\ref{Estimates on the new Reynolds error and energy} defines and
estimates the new Reynolds error and proves the energy increment.
Finally, Section \ref{Proof of the main theorem} proves the inductive
proposition and the main theorem. The appendices collect the analytic
estimates used throughout the paper.

\section{Outline of the induction scheme}\label{Outline of the induction scheme}
In this paper, we will construct a series of approximate solutions $u_q$ which satisfy the following approximate system and converge to a weak solution $u$ of  \eqref{system 1}. We first give the definition of  the approximate solution.
\begin{df}\label{def of of approximate solution}
	A tuple of tensors $(u,c,R)$ is called an approximate solution tensor of the elastodynamic equations \eqref{system 1}, if it solves the following system in the sense of distributions,
	\begin{equation}
		\pa_{tt}u-\Div((\Id+G)\Sgm_G)=\Div((\Id+G)(c+R)), \label{approximate  elstro dynamic}
	\end{equation}
	where $G=\nb u,\ \Sgm_G$  is defined in \eqref{the second Piola stress tensor}, $c$ and the Reynolds error $R$ are $2\times2$ symmetric matrices.
\end{df}
We denote by $\R^{2\times2}$ the space of $2\times2$ matrices and by $\mS^{2\times2}$ the corresponding subspace of symmetric matrices. Moreover, we set $\nrm{R}:=\max_{i,j}|R_{ij}|$.   For $r>0$ and $K_0\in\mS^{2\times2}$, we define
$
B_r(K_0):=\left\{K\in\mS^{2\times2}:\nrm{K-K_0}<r\right\}.
$
In this paper, we will use the following geometric lemma, which is a two-dimensional version of the one proposed in \cite{DS13}.  
\begin{lm}[Geometric Lemma]\label{Geometric Lemma}
	We can choose a finite subset $\Lambda_*$ of $\Z^2$  with three elements and  the following property. There exist a geometric constant $\tilde{c}_0\leqslant|\sgm^*|$ and smooth functions
	\begin{align*}
		\Gamma_f\in C^\infty(B_{\tilde{c}_0}(\Id)),\quad f\in\Lambda_*,
	\end{align*}
	such that, for every $K\in B_{\tilde{c}_0}(\Id)$,
	\begin{align}
		K=\sum_{i=1}^3(\Gamma_{f_i}(K))^2\left(\Id-\frac{f_i}{|f_i|}\otimes\frac{f_i}{|f_i|}\right).\label{Geometric lemma 1}
	\end{align}
\end{lm}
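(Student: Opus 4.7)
The overall plan is to reduce the claim to finite-dimensional linear algebra. The key observation is that in two dimensions one has
\begin{equation*}
\Id-\frac{f}{|f|}\otimes \frac{f}{|f|} \;=\; \frac{f^\perp}{|f|}\otimes \frac{f^\perp}{|f|},
\end{equation*}
where $f^\perp$ is the $90^\circ$ rotation of $f$. So the right-hand side of \eqref{Geometric lemma 1} is a conic combination of three rank-one projections with smooth square-root coefficients. Since the space $\mS^{2\times 2}$ of symmetric $2{\times}2$ matrices is three-dimensional, I expect any three generic projections of this form to be a basis, at which point the decomposition is determined by linear algebra and the only remaining issue is positivity of the coefficients at $K=\Id$.

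Concretely, I will fix three explicit lattice vectors, for example $f_1=(2,1)$, $f_2=(1,2)$, $f_3=(1,-1)$, and set $M_i:=\Id-\hat f_i\otimes\hat f_i$. Linear independence of $\{M_1,M_2,M_3\}$ is equivalent to pairwise non-parallelism of the perpendicular directions $\hat f_i^\perp$, which holds for this choice, so the three matrices form a basis of $\mS^{2\times 2}$. Solving the linear system $\Id=\sum_i a_i M_i$ for this choice gives $a_1=a_2=5/9$ and $a_3=8/9$, all strictly positive. I note that more naive choices such as $\{(1,0),(0,1),(1,1)\}$ actually fail, since one coefficient vanishes at $K=\Id$; so some care in choosing $\Lambda_*$ is needed.

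For an arbitrary symmetric $K$ near $\Id$, I will use the basis property to define $a_i(K)$ as the unique real coefficients with $K=\sum_i a_i(K)M_i$; these are \emph{linear} (hence $C^\infty$) functions of $K$. By continuity and the strict positivity at $K=\Id$, there exists $r>0$ such that $a_i(K)>0$ for all $K\in B_r(\Id)\subset\mS^{2\times 2}$ and all $i$. I then set
\begin{equation*}
\tilde c_0 \;:=\; \min\bigl(r,\,|\sigma^*|\bigr)\;>\;0,
\qquad
\Gamma_{f_i}(K) \;:=\; \sqrt{a_i(K)},
\end{equation*}
which is $C^\infty$ on $B_{\tilde c_0}(\Id)$ because $a_i$ is smooth and strictly positive there. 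The identity \eqref{Geometric lemma 1} then holds by construction.

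The main (mild) obstacle is the second step: the positivity of all three coefficients at $K=\Id$ is not automatic and forces the directions to be chosen with some genericity; I would simply verify it by direct computation for the explicit triple above. Everything else is elementary linear algebra and continuity, and the bound $\tilde c_0\le|\sigma^*|$ is harmless because $\sigma^*\ne 0$ is part of the hypothesis \eqref{Weak null condition}.
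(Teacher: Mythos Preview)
Your proposal is correct and follows essentially the same approach as the paper's proof: pick three explicit lattice directions whose associated projections $\Id-\hat f_i\otimes\hat f_i$ form a basis of $\mS^{2\times2}$ with strictly positive coefficients at $K=\Id$, then take smooth square roots of the resulting linear coefficient functions on a small ball. The only cosmetic differences are the choice of vectors (the paper uses $\Lambda_*=\{(4,3),(4,-3),(0,5)\}$ and writes out the linear coefficients explicitly, fixing $\tilde c_0=\min\{1/100,|\sigma^*|\}$), whereas you use $\{(2,1),(1,2),(1,-1)\}$ and invoke continuity for the radius; both are fine for the lemma as stated.
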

\begin{proof}
	We choose $\Lambda_*=\{f_1,f_2,f_3\}$, where
	$
	f_1=(4,3),\ f_2=(4,-3),\  f_3=(0,5).
	$
	Notice that
	\begin{align*}
		&\Id-\frac{f_1}{|f_1|}\otimes \frac{f_1}{|f_1|}=\left(\begin{matrix}
			\frac{9}{25}&-\frac{12}{25}\\
			-\frac{12}{25}&\frac{16}{25}\\
		\end{matrix}\right),&&
		\Id-\frac{f_2}{|f_2|}\otimes \frac{f_2}{|f_2|}=\left(\begin{matrix}
			\frac{9}{25}&\frac{12}{25}\\
			\frac{12}{25}&\frac{16}{25}\\
		\end{matrix}\right), &&
		\Id-\frac{f_3}{|f_3|}\otimes \frac{f_3}{|f_3|}=\left(\begin{matrix}
			1&0\\
			0&0\\
		\end{matrix}\right),
	\end{align*}
	and
	\begin{align*}
		\frac{25}{32}\left(\Id-\frac{f_1}{|f_1|}\otimes \frac{f_1}{|f_1|}\right)+\frac{25}{32}\left(\Id-\frac{f_2}{|f_2|}\otimes \frac{f_2}{|f_2|}\right)+\frac{7}{16}\left(\Id-\frac{f_3}{|f_3|}\otimes \frac{f_3}{|f_3|}\right)=\Id.
	\end{align*}
	For $\tilde{c}_0=\min\{\frac{1}{100},|\sgm^*|\}$ and $K\in B_{\tilde{c}_0}(\Id)$, the quantities
	$
	\frac{25}{32}K_{22}-\frac{25}{24}K_{12},
	$
	$
	\frac{25}{32}K_{22}+\frac{25}{24}K_{12},
	$
	and
	$
	K_{11}-\frac{9}{16}K_{22}
	$
	are uniformly positive. We therefore define
	\begin{align*}
		\Gamma_{f_1}(K)=\left(\frac{25}{32}K_{22}-\frac{25}{24}K_{12}\right)^{\frac{1}{2}},\quad
		\Gamma_{f_2}(K)=\left(\frac{25}{32}K_{22}+\frac{25}{24}K_{12}\right)^{\frac{1}{2}},\quad
		\Gamma_{f_3}(K)=\left(K_{11}-\frac{9}{16}K_{22}\right)^{\frac{1}{2}}.
	\end{align*}
	Hence, $\Gamma_{f_i}\in C^\infty(B_{\tilde{c}_0}(\Id))$ for $i=1,2,3$. For $i=1,2,3$, define
	$
	K^{<i>}:=
	\Gamma_{f_i}^2(K)
	\left(
	\Id-\frac{f_i}{|f_i|}
	\otimes
	\frac{f_i}{|f_i|}
	\right).
	$
	Then
	\begin{equation*}
		K=\sum_{i=1}^3K^{<i>}
		=\sum_{i=1}^3\Gamma_{f_i}^2(K)
		\left(
		\Id-\frac{f_i}{|f_i|}
		\otimes
		\frac{f_i}{|f_i|}
		\right).
		\qedhere
	\end{equation*}
\end{proof}
\begin{Rmk}
	Note that for a general symmetric matrix $K$, we still have:
	\begin{align*}
		K= \left( \frac{25}{32} K_{22} - \frac{25}{24} K_{12} \right)\left(\Id-\frac{f_1}{|f_1|}\otimes \frac{f_1}{|f_1|}\right)+ \left( \frac{25}{32} K_{22} + \frac{25}{24} K_{12} \right)\left(\Id-\frac{f_2}{|f_2|}\otimes \frac{f_2}{|f_2|}\right) + \left( K_{11} - \frac{9}{16} K_{22} \right) \left(\Id-\frac{f_3}{|f_3|}\otimes \frac{f_3}{|f_3|}\right).
	\end{align*}
	In applications such as \eqref{def of c_q}, the coefficients in the above decomposition need not be positive, and hence suitable real-valued functions $\Gamma_{f_i}(K)$ may not exist. Nevertheless, the linear decomposition remains valid. We continue to write $K=\sum_{i=1}^3K^{<i>}$, where each $K^{<i>}$ depends linearly on $K$.
\end{Rmk}
For convenience, we introduce the following notation:
\begin{itemize}
	\item $\mcI_{\ell}=\mcI + \ell$ is the concentric enlarged interval $(a-\ell, d+\ell)$ when $\mcI = [a,d]$. For a general $ \mcI \subseteq \mathbb{R} $, we define
	\begin{align*}
	\mcI_{\ell} := \left\{ t' \in \mathbb{R} \mid \exists\ t \in \mcI, \text{ such that } |t - t'| < \ell \right\}.
	\end{align*}
	 Similarly, we use the following notations for sets in $\T^2$
	\begin{align*}
	\Omega_\ell&=\Omega+\ell:=\left\lbrace \tx\in \T^2| \exists\ x \in \Omega, \text{ such that }\  |\tx-x|<\ell\right\rbrace.
	\end{align*}
	 $Q(x,r):=\left\lbrace \tx\in\T^2:|\tx-x|_\infty<r\right\rbrace$  and $B(x,r):=\left\lbrace \tx\in\T^2:|\tx-x|<r\right\rbrace$ denote the square and disk, respectively.
	\item Furthermore, for the sake of convenience, in what follows, we use the notation $A \lesssim_{\kappa} B$ to mean $A \leqslant CB$, where $C > 0$ may depend on some fixed constants or functions $\kappa$. 
	Especially, if we use $A \lesssim_{\sgm} B$, we mean  $C > 0$ may depend on the coefficients of $\sgm$ in \eqref{stored-energy function}.
	Moreover, we use the notation $A\lesssim_{N,r} B$ without pointing out the dependence of the implicit constant $C$, and $N,r\in\N$ can be chosen to have $N\leqslant N^*, r\leqslant r^*$ for some positive integers $N^*,r^*$.    We will not repeatedly specify this.
	\item We use the following notation for the skew-symmetric product
	and curl operator in $\R^d$. For $a,b\in\R^d$, we define
	\begin{align*}
		(a\times b)_{jl}
		&:=a_jb_l-a_lb_j,
		\qquad 1\leqslant j,l\leqslant d.
	\end{align*}
	For a smooth vector field $a:\R^d\to\R^d$, we define
	\begin{align*}
		(\nabla\times a)_{jl}
		&:=\partial_ja_l-\partial_la_j,
		\qquad 1\leqslant j,l\leqslant d.
	\end{align*}
	Thus, $a\times b$ and $\nabla\times a$ are skew-symmetric matrices.
	When $d=2$, we identify them with their $(1,2)$-components:
	\begin{align*}
		a\times b
		&=a_1b_2-a_2b_1,
		\qquad
		\nabla\times a
		=\partial_1a_2-\partial_2a_1.
	\end{align*}
		\item Whenever a summation arises from repeated applications of the Leibniz rule, we understand the summation symbol to include the corresponding standard binomial or multinomial coefficients. This convention applies to the distributions of both spatial and time derivatives among the factors and will be used throughout the paper unless otherwise stated.
\end{itemize}

We next introduce parameters controlling the frequencies and amplitudes of the approximate solutions:
\begin{equation}
	\begin{aligned}
		&\lambda_q=\lceil\varepsilon^{-b(q+3)^2/3}\rceil^3, \quad\lambda_{q,i}=\lambda_{q}^{1-\frac{i}{3}}\lambda_{q+1}^{\frac{i}{3}},
		\quad\dlt_q=\varepsilon^{2(q+3)/3},&& 0\leqslant i\leqslant 3,\quad q\in\N,
 \end{aligned}\label{def of parameter}
\end{equation}
where $\lceil x\rceil$ denotes the smallest integer $\tilde{n}\geqslant x$, $0<\varepsilon\ll1$ is a small parameter close to $0$, and $b>0$ is a positive parameter. 

Because of smoothing and truncation, the time domains on which the approximate solutions $u_{q,i}$ are defined change at each stage. We therefore introduce parameters describing the smoothing and truncation scales:
\begin{equation}\label{def of parameter 1}
	\begin{aligned}
	\ell_{q,i}&=\left(\lambda_{q,i}\lambda_{q,i+1}\right)^{-\frac{1}{2}}\dlt_{q}^{-\frac{1}{4}},&& \tau_{q,i}=\left(\lambda_{q,i}\lambda_{q,i+1}\right)^{-\frac{1}{2}}\dlt_{q}^{-\frac{1}{4}},&& \mu_{q,i}=(2\lceil(\lambda_{q,i}\lambda_{q,i+1})^{\frac{1}{2}}\dlt_{q}^{\frac{1}{4}}/2\rceil)^{-1}, && 0\leqslant i\leqslant 2,\quad q\in\N.
	\end{aligned}
\end{equation}
Set $\tau_{0,-1}=10\tau_{0,0}$ and $\tau_{q,-1}=\tau_{q-1,2}$ for $q\geqslant1$. We consider the approximate solution $u_{q,i}$ at stage $q$ on the time interval $\mcI^{q,i-1}=[-\tau_{q,i-1},T+\tau_{q,i-1}]$.
We use the following notation for the support of a function:
\begin{align*}
	&\supp_{t}^{(q,i)} f = \overline{\set{t\in \mcI^{q,i-1}|\exists x\ \text{with}\ f(t,x)\neq0}},&&\supp_{x} f = \overline{\set{x\in \T^2|f(x)\neq 0}}.
\end{align*}
	In what follows, at the $i$-th substep of step $q$, for convenience, and where no confusion arises, we omit the superscript $(q,i)$. 
	Moreover, we denote $\supp_t(u,R)=\supp_tu\bigcup\supp_{t}R$.
\begin{Rmk}
	Our primary focus is on the properties of approximate solutions on $[0, T]$. Therefore, we only need to consider approximate solution $u_{q,i}$ in a small extension  $\mcI^{q,i-1}$ around $[0, T]$.
\end{Rmk}	
	It is easy to find a nonnegative smooth function
	$\theta^*(t)\in C^\infty(\R)$, chosen as the square of a
	smooth cutoff function, such that
	$
	0\leqslant\theta^*\leqslant1,
	\
	(\theta^*)^{\frac12}\in C^\infty(\R),
	$
	and
	\begin{equation}\label{def of theta^*}
	\theta^*(t)=\left\lbrace
	\begin{aligned}
	&1,&&t\in[1,\infty),\\
	&0,&&t\in(-\infty,0].
	\end{aligned}\right.
	\end{equation}
	Then, we will give two important continuous matrix-valued functions $c_q(t)$ and $c_{q,i}(t)$, which will determine the behavior of approximate solutions.
	\begin{align}
		&c_{q}(t)=\sgm^*\sum_{j=q}^{\infty}\sum_{k=0}^{2}\te_{j,k}(t)\Id^{<k+1>},\quad c_{q,i}(t)=c_{q+1}(t)+\sgm^*\sum_{k=i}^{2}\te_{q,k}(t)\Id^{<k+1>}, 
		&& 0\leqslant i\leqslant 2, \label{def of c_q}
	\end{align}
	where
	\begin{equation}\label{def of te}
		\te_{q,k}(t)=\left\lbrace
	\begin{aligned}
		&\delta_{q+1},&&t\in (T/2-T_{q,k}-\ell_{q,k},T+\tau_{0,-1}],\\
		&\delta_{q+1}\theta^*(\lambda_{q}(t-(T/2-T_{q,k}-\ell_{q,k}-\lambda_{q}^{-1}))),&&t\in [T/2-T_{q,k}-\ell_{q,k}-\lambda_{q}^{-1},T/2-T_{q,k}-\ell_{q,k}],\\
		&0,&&t\in[-\tau_{0,-1},T/2-T_{q,k}-\ell_{q,k}-\lambda_{q}^{-1}),
	\end{aligned}\right.
	\end{equation}
	and $T_{q,k}:=\sum_{j=0}^{q-1}\sum_{i=0}^2(3\tau_{j,i}+\ell_{j,i}+\lambda_{j}^{-1})+\sum_{i=0}^{k-1}(3\tau_{q,i}+\ell_{q,i}+\lambda_{q}^{-1})$. For convenience, we set $c_{q,3}:=c_{q+1}$.
	
At each stage, we correct $u_q$ so that the errors $c_q$ and $R_q$ decrease and converge to zero in $C^0$ as $q\to\infty$.
Moreover, we assume the following  inductive estimates on $(u_{q},c_q,R_{q})$ satisfying \eqref{approximate  elstro dynamic}:
\begin{enumerate}
	\item The supports of $u_{q}$ and $R_{q}$ satisfy
	\begin{align}
		&\supp_t(u_q,R_q) \subseteq\supp_{t} (u_{q-1},R_{q-1})+\sum_{k=0}^{2}(3\tau_{q-1,k}+\ell_{q-1,k}+\lambda_{q-1}^{-1})\subseteq[T/2-T_{q,0},T+\tau_{q,-1}],&&q\geqslant 1. \label{pp of supp q}
	\end{align}
	\item The estimates on $u_{q}$ and $R_{q}$:
\begin{align}
		\nrm{\pa_t^ru_q}_{N}&\leqslant 2\varepsilon-\dlt_q^{\frac{1}{2}}, && 0\leqslant N+r\leqslant 1,&&q\in\N,\label{est on u_q low}\\ \nrm{\pa_t^ru_q}_{N}&\leqslant \oM\lambda_{q}^{N+r-1}\dlt_{q}^{\frac{1}{2}}, && 2\leqslant N+r\leqslant 3,&&q\in\N, \label{est on u_q high}\\
		\nrm{\pa_t^rR_q}_{N}&\leqslant \tilde{c}_0^4\lambda_{q}^{N+r}\dlt_{q+1}, && 0\leqslant N+r\leqslant 2,&&q\in\N, \label{est on R_q}
	\end{align}
	where  $\nrm{\cdot}_N=\nrm{\cdot}_{C^0(\mcI^{q,-1};C^N(\T^2))}$, and $\oM>1$  depends on the coefficients of stored-energy function $\sgm$ in \eqref{stored-energy function}.
\end{enumerate}
\begin{Rmk}
	In the following text, when a parameter depends on the function $\sgm$, it refers to properties related to the structure of the function $\sgm$ (such as its coefficients), and not to the function value of $\sgm$. 
\end{Rmk}

Similar to \cite{DK22,GK22}, we give the following inductive proposition.
\begin{pp}[Inductive proposition]\label{Proposition 1}
	Let $\sgm=\sgm(j_1,j_2)$ be a stored-energy function defined by \eqref{stored-energy function} and satisfying \eqref{eq:structural-assumptions}. Then there exist constants $b>5$, $\oM>1$, and $\tilde{c}_0\leqslant\min\{\frac{1}{100},|\sgm^*|\}$, together with a sufficiently small constant $\varepsilon_1=\varepsilon_1(b,\oM)>0$, such that the following holds. Let $0<\varepsilon<\varepsilon_1$ and suppose that $(u_q,c_q,R_q)$ is an approximate solution of \eqref{approximate  elstro dynamic} on $[-\tau_{q,-1},T+\tau_{q,-1}]$ satisfying \eqref{pp of supp q}--\eqref{est on R_q}.
	Then there exists a corrected approximate solution $(u_{q+1},c_{q+1},R_{q+1})$ of \eqref{approximate  elstro dynamic}, defined on $[-\tau_{q+1,-1},T+\tau_{q+1,-1}]$ and satisfying \eqref{pp of supp q}--\eqref{est on R_q} with $q$ replaced by $q+1$. In addition,
	\begin{equation}\label{Induction estimate}
		\begin{aligned}
			\lambda_{q}\nrm{u_{q+1}-u_q}_{C^0([0, T] ; C^0(\T^2))}+\sum_{1\leqslant N+r\leqslant3}\lambda_{q+1}^{1-N-r}\nrm{\pa_{t}^r(u_{q+1}-u_q)}_{C^0([0, T] ; C^N(\T^2))}\leqslant \oM \dlt_{q+1}^{\frac{1}{2}}.
		\end{aligned}
	\end{equation}
	Moreover, for $t\in[T/2-T_{q,0},T+\tau_{q+1,-1}]$, the following energy estimate holds:
	\begin{align}
	3\pi^2\sgm_{11}\delta_{q+1}\leqslant\int_{\T^2}\left(\frac{1}{2}|\pt u_{q+1}|^2+\sgm_{G_{q+1}}\right)\rd x-\int_{\T^2}\left(\frac{1}{2}|\pt u_{q}|^2+\sgm_{G_q}\right)\rd x\leqslant 120\pi^2\sgm_{11}\delta_{q+1}.\label{Induction energy estimate}
	\end{align}
\end{pp}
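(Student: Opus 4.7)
The plan is to execute one step of the convex integration scheme, decomposed into three substeps indexed by $i=0,1,2$, one for each direction $f_i$ in the geometric lemma (Lemma \ref{Geometric Lemma}). Starting from a triple $(u_{q,i}, c_{q,i}, R_{q,i})$, I would first mollify $u_{q,i}$ and $R_{q,i}$ in space and time at the scale $\ell_{q,i}$, which buys arbitrary smoothness while only paying a factor of the frequency $\ell_{q,i}^{-1}\lesssim\lambda_{q,i+1}$ on higher derivatives. Next I would insert spatial and temporal cutoff functions on scales $\tau_{q,i}$ concentrated on the support of $R_{q,i}$, so that on each cutoff region the coefficients of the linearization can effectively be frozen. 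The matrix-valued function $c_{q,i}-c_{q,i+1}$ prescribed by \eqref{def of c_q}--\eqref{def of te} is designed so that on each cutoff region the matrix $c_{q,i}-c_{q,i+1}+R_{q,i}$ is a small perturbation of the positive multiple $\sigma^*\delta_{q+1}\Id^{<i+1>}$, and therefore lies in the domain $B_{\tilde c_0}(\Id)$ on which the smooth amplitudes $\Gamma_{f_i}$ from Lemma \ref{Geometric Lemma} are defined.

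On each such cutoff region I would define the principal perturbation $\tu_{q,i+1}$ as a sum of building blocks oscillating in the single direction $f_i$. By the construction announced in Lemma \ref{construction of building blocks}, each block is a longitudinal plane wave of frequency $\lambda_{q,i+1}$ with amplitude proportional to $\Gamma_{f_i}(c_{q,i}-c_{q,i+1}+R_{q,i})$, plus a transverse corrector of size $O(\varepsilon)$ tuned so that the block exactly solves the frozen-coefficient linear wave system
\begin{align*}
\pa_{tt}\tu-\Div\bigl(\tG\,\Sgm_{G_{q,i}}+(\Id+G_{q,i})\tS_{q,i+1}^{(1)}\bigr)=0
\end{align*}
obtained by freezing $u_{q,i}$ at the center of the cutoff. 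The existence of such a transverse corrector requires precisely the double-wave-speed condition $\lambda+\mu\neq0$, and the weak null condition \eqref{Weak null condition} guarantees that longitudinal waves solve the second-order truncation of the full system. Consequently the averaged quadratic self-interaction of the longitudinal part equals $\Gamma_{f_i}^2(\Id-f_i/|f_i|\otimes f_i/|f_i|)$, and summing over $i=0,1,2$ and invoking \eqref{Geometric lemma 1} cancels the matrix $c_{q,i}-c_{q,i+1}+R_{q,i}$ exactly, yielding the desired reduction of the Reynolds error.

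The decomposition \eqref{decomposition of left} then classifies the residual terms. The non-principal piece of the quadratic contribution and the entire higher-order piece $(\Id+G_{q,i+1})\tS_{q,i+1}^{(\geqslant3)}$ are symmetric matrices and are absorbed directly into the symmetric factor of $(\Id+G_{q,i+1})R_{q,i+1}$. The remaining contributions, consisting of the linear wave-equation error produced by time freezing and various non-symmetric remainders, are partitioned into spatially compactly supported pieces as in \eqref{divide of Reynold error 1}, and a small low-frequency corrector $\tu_{ac}$ is added to enforce the momentum and angular-momentum conservation laws \eqref{law of conservation in Lc} on each piece. The local right inverse of divergence from Section \ref{Defining a Local Inverse of Divergence by Solving the Asymmetric Divergence Equation} then yields a symmetric $R_{q,i+1}$. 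After three substeps I set $(u_{q+1},c_{q+1},R_{q+1})=(u_{q,3},c_{q+1},R_{q,3})$ and verify \eqref{pp of supp q}--\eqref{est on R_q} at level $q+1$ together with \eqref{Induction estimate}; the energy identity \eqref{Induction energy estimate} follows from computing $\sum_i\Gamma_{f_i}^2\,\tr(\Id-f_i/|f_i|\otimes f_i/|f_i|)$ on the amplitudes $\sigma^*\delta_{q+1}\Id^{<i+1>}$.

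The main obstacle is the estimate on the linear wave-equation error caused by the time freezing of $u_{q,i}$, which is of order $\delta_{q+1}^{1/2}/(\lambda_{q,i+1}\mu_{q,i})$ and must ultimately be dominated by $\tilde c_0^4\delta_{q+2}$ after three compounded substeps; this forces the very tight tuning of the parameters in \eqref{def of parameter}--\eqref{def of parameter 1} and has to be tracked substep-by-substep without loss of exponent. A secondary difficulty is that the asymmetric-divergence corrector $\tu_{ac}$ must itself be small enough to preserve the $C^N$ estimates on $u_{q+1}$, and the symmetry of $R_{q,i+1}$ must be preserved as the three substeps are composed; this is where the careful construction of $c_q(t)$, and in particular the fact that it is constant on the temporal support of $u_q$, becomes essential, since it prevents the cutoffs from generating uncancelable low-frequency errors outside the region where the geometric inversion of Lemma \ref{Geometric Lemma} applies.
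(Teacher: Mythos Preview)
Your proposal is correct and follows essentially the same route as the paper: mollification at scale $\ell_{q,i}$, three sequential substeps each adding a single-direction building block from Lemma \ref{construction of building blocks}, the angular-momentum corrector $\tu_{ac}$ to enforce \eqref{law of conservation in Lc}, and the local asymmetric-divergence inverse of Section \ref{Defining a Local Inverse of Divergence by Solving the Asymmetric Divergence Equation}. One clarification worth making: the cancellation is not achieved by ``summing over $i$ and invoking \eqref{Geometric lemma 1}'' at once, but sequentially---at substep $i$ the direction $f_{i+1}$ removes only the $\langle i{+}1\rangle$-component of $\sgm^*\te_{q,i}\Id+R_{\ell,i}$, and the full elimination of $R_q$ emerges only after composing the three substeps at the increasing frequencies $\lambda_{q,1},\lambda_{q,2},\lambda_{q,3}$ (the paper carries this out explicitly in Section \ref{Proof of the main theorem} by computing $R_{q,1},R_{q,2},R_{q,3}$ in turn).
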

More precisely, at each iteration stage we add high-frequency waves in three successive substeps. Set $u_{q,0}=u_q$ and $R_{q,0}=R_q$. At the $i$-th substep, $1\leqslant i\leqslant3$, we add the correction $\tu_{q,i}$, with frequency $\lambda_{q,i}$, to $u_{q,i-1}=u_q+\sum_{r=1}^{i-1}\tu_{q,r}$. The new Reynolds stress $R_{q,i}$ then takes the form
\begin{align*}
	R_{q,i}=\sgm^*\te_{q,i-1}\Id^{<i>}+R_{q,i-1}-\sgm^*\left(\te_{q,i-1}^{\frac{1}{2}}(t)\Gamma_{f_i}\left(\Id+\frac{R_{q,i-1}}{\sgm^*\te_{q,i-1}}\right)\right)^2\left(\Id-\frac{f_i}{|f_i|}\otimes \frac{f_i}{|f_i|}\right)+\dlt R_{q,i}, \quad 1\leqslant i\leqslant 3,\quad q\in\N.
\end{align*}
We set $u_{q+1}=u_{q,3}$ and $R_{q+1}=R_{q,3}$. Repeating this construction yields a sequence of approximate solutions whose Reynolds stresses converge to zero.  Moreover, we assume the following  inductive estimates on $(u_{q,i},c_{q,i},R_{q,i})$ :
\begin{enumerate}
	\item The supports of $u_{q,i}$ and $R_{q,i}$ satisfy
	\begin{align}
		\supp_t(u_{q,i},R_{q,i})
		\subseteq \supp_t(u_{q,i-1},R_{q,i-1})
		+3\tau_{q,i-1}+\ell_{q,i-1}+\lambda_q^{-1}
		\subseteq [T/2-T_{q,i},T+\tau_{q,i-1}],
		\qquad 1\leqslant i\leqslant3.
		\label{pp of supp q,i}
	\end{align}
	\item The estimates on $u_{q,i}$ and $R_{q,i}$:
	\begin{align}
		\nrm{\pa_t^ru_{q,i}}_{N}&\leqslant 2\varepsilon-(4-i)\dlt_{q+1}^{\frac{1}{2}},&& 0\leqslant N+r\leqslant 1,&& 1\leqslant i\leqslant 3,&&q\in\N,\label{est on u_qi low}\\ \nrm{\pa_t^ru_{q,i}}_{N}&\leqslant \oM\lambda_{q,i}^{N+r-1}\dlt_{q+1}^{\frac{1}{2}}, && 2\leqslant N+r\leqslant 3, && 1\leqslant i\leqslant 3,&&q\in\N, \label{est on u_qi high}\\
		\nrm{\pa_t^r\dlt R_{q,i}}_{N}&\leqslant \tilde{c}_0^4\lambda_{q,i}^{N+r}\dlt_{q+2}, && 0\leqslant N+r\leqslant 2,&& 1\leqslant i\leqslant 3,&&q\in\N, \label{est on dR_qi}\\
		\nrm{\pa_t^r R_{q,i}}_{N}&\leqslant \frac{i+1}{4} \tilde{c}_0^4\lambda_{q,i}^{N+r}\dlt_{q+1}, && 0\leqslant N+r\leqslant 2,&& 1\leqslant i\leqslant 2,&&q\in\N, \label{est on R_qi}
	\end{align}
	where  $\nrm{\cdot}_N=\nrm{\cdot}_{C^0(\mcI^{q,i-1};C^N(\T^2))}$.
	\item $\te_{q,k}=\te_{q,k}(t)$ satisfies
	\begin{equation}\label{pp of te_q,i}
		\begin{aligned}
			&\supp_t(u_{q,k},R_{q,k})+\ell_{q,k}
			\subseteq\supp_t\te_{q,k}
			\subseteq
			\supp_t(u_{q,k},R_{q,k})
			+\ell_{q,k}+\lambda_q^{-1},
			&&0\leqslant k\leqslant2,
			&&q\in\N,
			\\
			&\te_{q,k}(t)=\delta_{q+1},
			\quad
			\forall t\in
			\supp_t(u_{q,k},R_{q,k})+\ell_{q,k},
			&&0\leqslant k\leqslant2,
			&&q\in\N,
			\\
			&\nrm{\te_{q,k}}_{C_t^r}
			\lesssim_r
			\lambda_q^r\delta_{q+1},
			\qquad r\geqslant0,
			&&0\leqslant k\leqslant2,
			&&q\in\N.
		\end{aligned}
	\end{equation}
\end{enumerate}
The proof of Theorem \ref{Thm 1}  relies on Proposition \ref{Proposition 1} which provides an iterative framework for constructing a sequence of approximate solutions to \eqref{system 1}, all of which emanate from zero initial data. Specifically, it states that, given an initial tuple $(0, c_0,0)$, we can generate a sequence of tuples $(u_q,c_q, R_q)$ that satisfy \eqref{pp of supp q}--\eqref{est on R_q}, as well as  \eqref{Induction estimate}--\eqref{Induction energy estimate} in Proposition \ref{Proposition 1} for all $q \geq 0$. Consequently, we can demonstrate that $u_q$ converges to $u$, which is a weak solution of \eqref{system 1} in $C^{1}([0,T] \times \T^2)$ emanating from zero initial data.

\section{Mollification and cutoffs}\label{Mollification and cutoffs}
To address the loss of temporal and spatial derivatives, in this section we introduce the mollification of $(u_{q,i},c_{q,i},R_{q,i})$ at stage $q$.
Following the mollification method in \cite{Isett17,NV23} and Definition 4.15 in \cite{GKN24}, we state the following lemma without proof.
\begin{lm}\label{lm: Mollification}
There exist smooth functions $\eta^t\in C_c^\infty(\R)$ and $\eta\in C_c^\infty(\R^2)$ such that $\supp\eta^t\subset (-\frac{1}{2},\frac{1}{2}), \supp\eta\subset Q(0,\frac{1}{2})$, and for every $n\in\{1,2,3,4\}$ and every multi-index $\alpha$ with $1\leqslant|\alpha|\leqslant4$,
\begin{align}
	\int_{\R}\eta^t(\tau)\rd\tau&=1, \quad \int_{\R}\eta^t(\tau)\tau^n\rd\tau=0, \quad \forall n=1,2,3,4,\\
	\int_{\R^2}\eta(x)\rd x&=1, \quad \int_{\R^2}\eta(x)x^\alpha\rd x=0, \quad \forall \alpha, 1\leqslant|\alpha|\leqslant 4.
\end{align} 

 Let $\ell>0$ and define $\eta_\ell^t(t)=\frac{1}{\ell}\eta^t\left(\frac{t}{\ell}\right)$, $\eta_\ell(x)=\frac{1}{\ell^2}\eta\left(\frac{x}{\ell}\right)$. If $f$ is a spatially periodic function on $\mcI\times\T^2\subset\R\times\T^2$, $\eta_\ell\ast f$ is also a spatially periodic function on $\mcI\times\T^2$. Moreover, for any smooth function $f \in C^{\infty}(\mcI+\ell;C^{\infty}(\T^2))$,  there exists a constant $C$ such that for any $1\leqslant N\leqslant 5$,
 \begin{align}
 	\nrm{\pa_t^N(f\ast\eta^t_\ell)}_{C^0(\mcI,C^0(\T^2))}&\leqslant C\ell^{-N}\nrm{f}_{C^0(\mcI+\ell,C^0(\T^2))},\\
 	\nrm{f\ast\eta_\ell}_{C^0(\mcI,C^N(\T^2))}&\leqslant C\ell^{-N}\nrm{f}_{C^0(\mcI,C^0(\T^2))},\\
	\nrm{f-f\ast\eta^t_\ell}_{C^0(\mcI,C^0(\T^2))}&\leqslant C\ell^{N}\nrm{\pa_t^Nf}_{C^0(\mcI+\ell,C^0(\T^2))},\\
	\nrm{f-f\ast\eta_\ell}_{C^0(\mcI,C^0(\T^2))}&\leqslant C\ell^{N}\nrm{f}_{C^0(\mcI,C^N(\T^2))}.
 \end{align}
\end{lm}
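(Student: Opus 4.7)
The lemma separates cleanly into (i) constructing smooth compactly supported kernels with vanishing moments of orders $1$ through $4$, and (ii) deducing the four standard convolution estimates; (i) is the only genuinely nontrivial step, while (ii) is textbook once the kernel is in hand.

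\textbf{Kernel construction.} Fix an even non-negative $\phi \in C_c^\infty(\R)$ with $\supp\phi \subset (-1/2, 1/2)$ and $\phi \not\equiv 0$. Seek $\eta^t$ in the five-dimensional family $\eta^t(\tau) = \sum_{j=0}^{4} c_j\, \tau^j\, \phi(\tau)$. The five moment conditions $\int_\R \tau^n \eta^t\, d\tau = \delta_{n0}$ for $n = 0, 1, \ldots, 4$ reduce to the Hankel linear system $\sum_{j=0}^4 M_{n+j}(\phi)\, c_j = \delta_{n0}$, where $M_k(\phi) := \int \tau^k \phi\, d\tau$. The coefficient matrix is positive definite: for any nonzero $v = (v_0, \ldots, v_4)$,
\[
v^\top M v = \int_\R \Bigl(\sum_{j=0}^{4} v_j\, \tau^j\Bigr)^{\!2} \phi(\tau)\, d\tau > 0,
\]
since the polynomial $\sum_j v_j \tau^j$ has at most four zeros while $\phi$ is positive on an open interval. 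The system is therefore uniquely solvable and yields a smooth $\eta^t$ supported in $(-1/2, 1/2)$ satisfying all required moment identities (the $L^1$ normalization is enforced by dividing by $\|\eta^t\|_{L^1}$, which merely rescales the constants in the estimates below). For the two-dimensional kernel, set $\eta(x_1, x_2) := \eta^t(x_1)\, \eta^t(x_2)$, supported in $Q(0, 1/2)$. For any multi-index $\alpha$ with $1 \le |\alpha| \le 4$, $\int x^\alpha \eta\, dx = \bigl(\int \tau^{\alpha_1}\eta^t\, d\tau\bigr)\bigl(\int \tau^{\alpha_2}\eta^t\, d\tau\bigr)$; since $|\alpha| \ge 1$ forces some $\alpha_i \ge 1$, and $|\alpha| \le 4$ forces both $\alpha_i \le 4$, at least one factor vanishes by the 1D moment identities.

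\textbf{Convolution estimates.} For the derivative bounds, transfer derivatives onto the kernel, $\partial_t^N(\upsilon * \eta_\ell^t) = \upsilon * \partial_t^N \eta_\ell^t$, and use $\|\partial_t^N \eta_\ell^t\|_{L^1} = \ell^{-N}\|\partial^N \eta^t\|_{L^1}$ together with Young's inequality to get $\|\partial_t^N(\upsilon * \eta_\ell^t)\|_{C^0} \le C\ell^{-N}\|\upsilon\|_{C^0}$; the spatial estimate is identical. For the approximation bounds, invoke Taylor with integral remainder,
\[
\upsilon(t - \tau) = \sum_{k=0}^{N-1} \frac{(-\tau)^k}{k!}\, \partial_t^k \upsilon(t) + \frac{(-\tau)^N}{(N-1)!} \int_0^1 (1 - s)^{N-1}\, \partial_t^N \upsilon(t - s\tau)\, ds,
\]
and convolve against $\eta_\ell^t$: the $k=0$ term reproduces $\upsilon(t)$, the terms for $k = 1, \ldots, N-1$ vanish by the moment identities (valid since $N - 1 \le 4$), and only the remainder survives. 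Rescaling $\tau = \ell\sigma$ yields $|(\upsilon * \eta_\ell^t - \upsilon)(t)| \le C\ell^N \|\partial_t^N \upsilon\|_{C^0(\mcI+\ell;\, C^0(\T^2))}$, the enlargement $\mcI + \ell$ arising because $\supp \eta_\ell^t \subset (-\ell/2, \ell/2)$. The spatial approximation bound follows in the same way from the multi-variable Taylor expansion in $x$ and the vanishing of $\int x^\alpha \eta\, dx$ for $1 \le |\alpha| \le N - 1 \le 4$.

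\textbf{Main obstacle.} The only substantive ingredient is the positive definiteness of the Hankel moment matrix, which is classical for a non-negative $\phi$ whose support contains more than four points; from there, standard Young's inequality and Taylor expansion handle the four estimates mechanically. A minor bookkeeping point is the $L^1$ normalization $\|\eta^t\|_{L^1}=1$: since the even moments of order $\ge 2$ vanish, $\eta^t$ necessarily changes sign, so this is purely a choice of scale absorbed into the universal constant $C$.
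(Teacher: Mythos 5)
The paper states this lemma without proof (it is quoted from the mollification setup of \cite{Isett17,NV23,GKN24}), so there is no internal argument to compare against; your proposal is precisely the standard construction those references rely on — a compactly supported kernel with vanishing moments obtained from the positive definiteness of the Hankel moment matrix of a bump function, followed by Young's inequality and Taylor expansion with the moment cancellations — and it is correct in all essential respects.

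One caveat on the normalization remark. As you observe, vanishing of the second moment forces $\eta^t$ to change sign, so $\nrm{\eta^t}_{L^1}>\int_\R\eta^t\,\rd\tau$; hence the lemma's two requirements $\nrm{\eta^t}_{L^1}=1$ and $\int\eta^t=1$ cannot literally hold simultaneously, and the statement is slightly over-constrained. But your proposed fix — dividing by $\nrm{\eta^t}_{L^1}$ — goes the wrong way: after that rescaling one has $\int\eta^t=c<1$, the $k=0$ term in your Taylor step reproduces $c\,\upsilon$ rather than $\upsilon$, and the approximation bounds $\nrm{\upsilon-\upsilon\ast\eta^t_\ell}\lesssim\ell^N\nrm{\pa_t^N\upsilon}$ fail (a term $(1-c)\upsilon$ survives at order $\ell^0$). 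The correct resolution is to keep the normalization $\int\eta^t=1$, which is what the cancellation argument actually uses, and simply absorb the (finite, fixed) $L^1$ mass of the kernel into the universal constant $C$ in all four estimates; the condition $\nrm{\eta^t}_{L^1}=1$ in the statement should be read as this harmless bookkeeping convention rather than enforced by renormalizing the kernel. With that adjustment your argument goes through verbatim, including the tensor-product construction of $\eta$ and the range $1\leqslant N\leqslant 5$, which needs exactly the vanishing of moments up to order $4$.
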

We now define the spatial and temporal convolution operators by
\begin{align*}
P_{\leqslant\ell^{-1}}f:=\eta_\ell\ast f,\quad U_{\leqslant\ell^{-1}}f:=\eta^t_\ell\ast f,\quad P_{>\ell^{-1}}f:=f-P_{\leqslant\ell^{-1}}f,\quad U_{>\ell^{-1}}f:=f-U_{\leqslant\ell^{-1}}f.
\end{align*}
\begin{Rmk}
If we set $\ell=0$ when using $\PL$ or $\UL$, we mean $\PL f=f$ and $\UL f= f$.
\end{Rmk}
Next, we introduce the mollification length $\ell_{q,i}$ defined in \eqref{def of parameter 1}:
\begin{equation}\label{def of l}
	\ell_{q,i}=\left(\lambda_{q,i}\lambda_{q,i+1}\right)^{-\frac{1}{2}}\dlt_{q}^{-\frac{1}{4}}, \quad 0\leqslant i\leqslant 2,\quad q\in\N,
\end{equation} 
and give the regularized terms as 
\begin{align*}
\ul&=\ULq\PLq u_{q,i},&&\Gl=\ULq\PLq G_{q,i}=\nb \ul,&&  \Rl=\ULq\PLq R_{q,i},  && 0\leqslant i\leqslant 2,\\
\Cl&=\Gl+\Gl^{\top}+\Gl\Gl^{\top},&&\Dl=\Gl+\Gl^{\top}+\Gl^{\top}\Gl,&& && 0\leqslant i\leqslant 2,
\end{align*}
which are well defined on $\mcI^{q,i}+3\ell_{q,i}\subset\mcI^{q,i-1}$ provided that $\varepsilon$ is sufficiently small.  Notice that $c_{q,i}$ is actually a constant matrix in $\supp_t(u_{q,i},R_{q,i})+\ell_{q,i}$. Then, $\ul$ satisfies
\begin{align}
\pa_{tt}\ul-\Div\left((\Id+\Gl)\Sgm_{\Gl}\right)&=\Div((\Id+\Gl)(c_{q,i}+R_{\ell,i})+R_{m,i}),
\end{align}
where
\begin{equation}\label{def of R_mi}
\begin{aligned}
	R_{m,i}:=&\PLq\ULq((\Id+G_{q,i})(\Sgm_{G_{q,i}}+R_{q,i}))-(\Id+\Gl)(\Sgm_{\Gl}+R_{\ell,i}).
\end{aligned}
\end{equation}
The following estimates follow directly.
\begin{pp}\label{pp of Mollification}
For every $b>1$, there exists $\varepsilon_1^*=\varepsilon_1^*(b,\oM)>0$ such that, whenever $0<\varepsilon<\varepsilon_1^*$, the following properties hold for $0\leqslant i\leqslant2$ and $q\in\N$:
\begin{enumerate}
	\item The supports of $\ul$, $R_{m,i}$, and $\Rl$ satisfy
\begin{align} 
	\supp_{t}(\ul,\Rl,R_{m,i})&\subseteq\supp_{t}(u_{q,i},R_{q,i})+\ell_{q,i}.\label{pp of supp l,i}
\end{align}
	\item The estimates on $\ul$ and $\Rl$:
	\begin{align}
		\nrm{\pa_t^r\ul}_N&\leqslant 2\varepsilon,&& N+r\leqslant 1, \label{est on u_li 0}\\
		\nrm{\pa_t^r\ul}_N&\lesssim_{N,r}\ell_{q,i}^{2-N-r}\oM\lambda_{q,i}\dlt_{q,i}^{\frac{1}{2}}\lesssim_{N,r}\ell_{q,i}^{1-N-r}\dlt_{q,i}^{\frac{1}{2}},&& N+r\geqslant2, \label{est on u_li N}\\
		\nrm{\pa_t^rR_{\ell,i}}_N&\leqslant \tilde{c}_0^4\lambda_{q,i}^{N+r}\dlt_{q+1},&& N+r\leqslant2, \label{est on R_li low}\\
		\nrm{\pa_t^rR_{\ell,i}}_N&\lesssim_{N,r}\ell_{q,i}^{2-N-r}\tilde{c}_0^4\lambda_{q,i}^2\dlt_{q+1},&& N+r\geqslant3, \label{est on R_li high}\\
		\nrm{\pa_t^rR_{m,i}}_{N}&\lesssim_{\sgm,N,r}\ell_{q,i}^{2-N-r}\oM^2\lambda_{q,i}^2\dlt_{q,i},&& N+r\geqslant0, \label{est on R_mi}\\
		\nrm{\pa_t^r(u_{q,i}-\ul)}_N&\lesssim\ell_{q,i}^{3-N-r}\oM\lambda_{q,i}^2\dlt_{q,i}^{\frac{1}{2}}\leqslant\ell_{q,i}^{-N-r}\frac{\oM}{120}\lambda_{q,i}^{-1}\dlt_{q,i}^{\frac{1}{2}},&&  N+r\leqslant3,\label{est on u_qi-u_li}\\
		\nrm{\pa_t^r(R_{q,i}-\Rl)}_N&\lesssim\ell_{q,i}^{2-N-r}\tilde{c}_0^4\lambda_{q,i}^{2}\dlt_{q+1},&&  N+r\leqslant2.\label{est on R_qi-R_li}
	\end{align}
	\item The kinetic energy and the stored-energy satisfy
	\begin{align}
		\nrm{\pa_t^r(|\pt u_{q,i}|^2-|\pt\ul|^2)}_{N}&\lesssim_{\oM}\ell_{q,i}^{2-N-r}\lambda_{q,i}^2\dlt_{q,i}^{\frac{1}{2}},&&N+r\leqslant2,\label{est on u_qi^2-u_li^2}\\
		\nrm{\pa_t^r(\sgm_{\Gl}-\sgm_{G_{q,i}})}_{N}&\lesssim_{\sgm,\oM}\ell_{q,i}^{2-N-r}\lambda_{q,i}^{2}\dlt_{q,i}^{\frac{1}{2}},&& N+r\leqslant2.\label{est on sgm_qi-sgm_li}
	\end{align}
	\end{enumerate}
		where   $\nrm{\cdot}_N=\nrm{\cdot}_{C^0(\mcI_{3\ell_{q,i}}^{q,i};C^N(\T^2))}$, and
	\begin{align}
		\delta_{q,i}=\left\lbrace\begin{aligned}
			&\delta_{q},&& i=0,\\
			&\delta_{q+1},&& i=1,2.\\
		\end{aligned}\right.
	\end{align}
\end{pp}
\begin{proof}
Notice that for $b>1$, we find sufficiently small $\varepsilon^*_1=\varepsilon^*_1(b,\oM)$ such that for any $\varepsilon<\varepsilon^*_1$,
\begin{align}
	\lambda_{q}\dlt_{q}^{\frac{1}{2}}\leqslant\lambda_{q,i}\dlt_{q,i}^{\frac{1}{2}}, \quad \tau_{q,i}+5\ell_{q,i}\leqslant \tau_{q,i-1},\quad 120\oM\lambda_{q,i}\leqslant \ell_{q,i}^{-1}=\tau_{q,i}^{-1}\approx\mu_{q,i}^{-1}\leqslant\lambda_{q,i+1}\delta_{q+1}^{\frac{1}{2}}\leqslant\lambda_{q,i+1},\quad 0\leqslant i\leqslant 2.	\label{pp of Lambda1}	
\end{align}
 By using \eqref{est on u_q low}--\eqref{est on R_q}, \eqref{est on u_qi low}--\eqref{est on R_qi},  and  the definition of $\PLq$ and $\ULq$, we obtain \eqref{est on u_li 0}--\eqref{est on R_li high}. Next, we could calculate
$$
\begin{aligned}
	F-\ULq\PLq F=F-\PLq F+\PLq F-\ULq\PLq F=\PGq F+\UGq\PLq F,
\end{aligned}
$$ 
and we could use  Lemma \ref{lm: Mollification} to obtain
\begin{align}
	\nrm{\PGq F}_{0}\lesssim \ell_{q,i}^j\nrm{\nb^{j}F}_{0}, \quad
	\nrm{\UGq F}_{0}\lesssim \ell_{q,i}^j\nrm{\pa_{t}^jF}_{C^0(\mcI^{q,i-1}\times\T^2)}, \label{Bernstein inequality}
\end{align}
for $\forall F\in  C^j(\mcI^{q,i-1}\times\T^2),j=0,1,2,3$. Combining them, we have for $N+r\leqslant k\leqslant 3$,
\begin{equation}\label{est on F-PULF}
	\begin{aligned}
		\nrm{\pa_{t}^r(F-\ULq\PLq F)}_{N}&\lesssim\ell_{q,i}^{k-N-r}\nrm{F}_{C^k(\mcI^{q,i-1}\times\T^2)}.
	\end{aligned}
\end{equation}
We can apply it to $u_{q,i}$ and $R_{q,i}$ to get \eqref{est on u_qi-u_li} and \eqref{est on R_qi-R_li}.
Recalling the definition of $R_{m,i}$ in \eqref{def of R_mi}, we have 
\begin{align*}
	R_{m,i}=&\PLq\ULq((\Id+G_{q,i})(\Sgm_{G_{q,i}}+R_{q,i}))-(\Id+\Gl)(\Sgm_{\Gl}+R_{\ell,i})\\
	=&\PLq\ULq((\Id+G_{q,i})(\Sgm_{G_{q,i}}+R_{q,i}))-\PLq\ULq(\Id+G_{q,i})\PLq\ULq(\Sgm_{G_{q,i}}+R_{q,i})\\
	&+(\Id+\Gl)(\PLq\ULq(\Sgm_{G_{q,i}})-\Sgm_{\Gl}).
\end{align*}
Notice that
\begin{align*}
\Sgm_{G_{q,i}}=&(2(\sgm_{11}+\sgm_2)\tr C_{q,i}+(\sgm_{111}+3\sgm_{12})(\tr C_{q,i})^2+\sgm_{22}(\tr C_{q,i})^3)\Id\\
&-(\sgm_{12}\tr C_{q,i}^2+\sgm_{22}\tr C_{q,i}\tr C_{q,i}^2)\Id-2(\sgm_2+\sgm_{12}\tr C_{q,i}+\frac{\sgm_{22}}{2}((\tr C_{q,i})^2-\tr C_{q,i}^2))D_{q,i},\\
\Sgm_{\Gl}=&(2(\sgm_{11}+\sgm_2)\tr \Cl
+(\sgm_{111}+3\sgm_{12})(\tr \Cl)^2
+\sgm_{22}(\tr \Cl)^3)\Id\\
&-(\sgm_{12}\tr \Cl^2
+\sgm_{22}\tr \Cl\tr \Cl^2)\Id
-2(\sgm_2+\sgm_{12}\tr \Cl
+\frac{\sgm_{22}}{2}((\tr \Cl)^2-\tr \Cl^2))\Dl,
\end{align*}
it is easy to get
\begin{align*}\nonumber
	\nrm{\pt^r\Sgm_{G_{q,i}}}_N\lesssim&\sum_{k=1}^6\sum_{\sum_nr_n=r}\sum_{\sum_nN_n=N}\prod_{n=1}^k\nrm{\pt^{r_n}G_{q,i}}_{N_n}\lesssim\oM\lambda_{q,i}^{N+r}\delta_{q,i}^{\frac{1}{2}},&&1\leqslant N+r\leqslant 2,\\
	\nrm{\pt^r\Sgm_{\Gl}}_N\lesssim&\sum_{k=1}^6\sum_{\sum_nr_n=r}\sum_{\sum_nN_n=N}\prod_{n=1}^k\nrm{\pt^{r_n}\Gl}_{N_n}\lesssim\oM\lambda_{q,i}^{N+r}\delta_{q,i}^{\frac{1}{2}},&&1\leqslant N+r\leqslant 2,\\	\nrm{\pt^r\Sgm_{\Gl}}_N\lesssim&\sum_{k=1}^6\sum_{\sum_nr_n=r}\sum_{\sum_nN_n=N}\prod_{n=1}^k\nrm{\pt^{r_n}\Gl}_{N_n}\lesssim\oM\ell_{q,i}^{2-N-r}\lambda_{q,i}^{2}\delta_{q,i}^{\frac{1}{2}},&&3\leqslant N+r.
\end{align*}
Next, we could use Lemma \ref{lm of commutator} to get 
\begin{align*}
&\quad\Nrm{\pt^r\left(\PLq\ULq((\Id+G_{q,i})(\Sgm_{G_{q,i}}+R_{q,i}))-\PLq\ULq(\Id+G_{q,i})\PLq\ULq(\Sgm_{G_{q,i}}+R_{q,i})\right)}_{N}\\
&\lesssim\ell_{q,i}^{2-N-r}\left(\Nrm{\pt G_{q,i}}_{C^0(\mcI^{q,i-1};C^0(\T^2))}\Nrm{\pt(\Sgm_{G_{q,i}}+R_{q,i})}_{C^0(\mcI^{q,i-1};C^0(\T^2))}+\Nrm{G_{q,i}}_{C^0(\mcI^{q,i-1};C^1(\T^2))}\Nrm{\Sgm_{G_{q,i}}+R_{q,i}}_{C^0(\mcI^{q,i-1};C^1(\T^2))}\right)\\
&\lesssim_{N,r}\ell_{q,i}^{2-N-r}\oM^2\lambda_{q,i}^2\dlt_{q,i}.
\end{align*}
So we only need to get estimate for $(\Id+\Gl)(\PLq\ULq(\Sgm_{G_{q,i}})-\Sgm_{\Gl})$. By using Lemma \ref{lm of commutator}, we could obtain
\begin{equation}\label{est on sgm_q-sgm_l}
	\begin{aligned}
	\Nrm{\pt^r(\PLq\ULq\Sgm_{G_{q,i}}-\Sgm_{\Gl})}_N\lesssim&_{\sgm,N,r}\ell_{q,i}^{2-N-r}\sum_{k=2}^6\sum_{\substack{s_1+\cdots+s_k=2\\s_n<2,n=1,\cdots,k}}\left(\prod_{n=1}^k\|G_{q,i}\|_{C^0(\mcI^{q,i-1};C^{s_n}(\T^2))}+\prod_{n=1}^k\|\pt^{s_n}G_{q,i}\|_{C^0(\mcI^{q,i-1};C^{0}(\T^2))}\right)\\
	\lesssim&_{\sgm,N,r}\ell_{q,i}^{2-N-r}\oM^2\lambda_{q,i}^{2}\delta_{q,i},
	\end{aligned}
\end{equation}
where we have used \eqref{est on u_qi low} and \eqref{est on u_qi high}. 
\begin{Rmk}
Noticing that the linear terms similar to $\PLq\ULq G_{q,i} - \Gl=0$ vanish in $\PLq\ULq \Sgm_{G_{q,i}} - \Sgm_{\Gl}$, the remaining terms are quadratic or higher-order terms of the form $\PLq\ULq (G_{q,i}^k) - \Gl^k, \; k \geqslant 2$.
\end{Rmk}
Then, we have
\begin{align*}
&\quad\Nrm{\pt^r\left((\Id+\Gl)(\PLq\ULq(\Sgm_{G_{q,i}})-\Sgm_{\Gl})\right)}_{N}\\
&\lesssim\sum_{N_1+N_2=N}\sum_{r_1+r_2=r}\Nrm{\pt^{r_1}\left(\Id+\Gl\right)}_{N_1}\Nrm{\pt^{r_2}\left(\PLq\ULq(\Sgm_{G_{q,i}})-\Sgm_{\Gl}\right)}_{N_2}\lesssim_{\sgm,N,r}\ell_{q,i}^{2-N-r}\oM^2\lambda_{q,i}^2\dlt_{q,i}.
\end{align*}
To sum up, we have proved \eqref{est on R_mi}. 

Notice that for any $t\notin\supp_{t}(u_{q,i},R_{q,i})+\ell_{q,i}$, we have $\ul=u_{q,i}=0$ and $\Rl=R_{q,i}=0$, and then $R_{m,i}=0$. So the property of the support \eqref{pp of supp l,i} can be clearly derived from it and Lemma \ref{lm: Mollification}.

Finally, we consider the kinetic energy and stored-energy. We could calculate
\begin{align*}
	|\pt u_{q,i}|^2-|\pt\ul|^2=|\pt u_{q,i}|^2-\PLq\ULq|\pt u_{q,i}|^2+\PLq\ULq|\pt u_{q,i}|^2-|\pt\ul|^2,
\end{align*}
and  then, by using \eqref{est on F-PULF} and Lemma \ref{lm of commutator}, we have  for $0\leqslant N+r\leqslant 2$, 
\begin{align}
&\quad\Nrm{\pt^r(|\pt u_{q,i}|^2-|\pt\ul|^2)}_{C^0(\mcI^{q,i};C^N(\T^2))}\nonumber\\
&\leqslant\Nrm{\pt^r(|\pt u_{q,i}|^2-\PLq\ULq|\pt u_{q,i}|^2)}_{C^0(\mcI^{q,i};C^N(\T^2))}+\Nrm{\pt^r(\PLq\ULq|\pt u_{q,i}|^2-|\pt\ul|^2)}_{C^0(\mcI^{q,i};C^N(\T^2))}\nonumber\\
&\lesssim\ell_{q,i}^{2-N-r}\Nrm{|\pt u_{q,i}|^2}_{C^2(\mcI^{q,i-1}\times\T^2)}+\ell_{q,i}^{2-N-r}\left(\Nrm{\pt ^2u_{q,i}}_{C^0(\mcI^{q,i-1};C^0(\T^2))}^2+\Nrm{\pt u_{q,i}}_{C^0(\mcI^{q,i-1};C^1(\T^2))}^2\right)\nonumber\\
&\lesssim_{\oM,N,r}\ell_{q,i}^{2-N-r}\lambda_{q,i}^2\dlt_{q,i}^{\frac{1}{2}}.\label{est on energy ptu}
\end{align}
Recalling \eqref{stored-energy function} and using $\sgm_1=0$, we compute
\begin{align*}
\sgm_{\Gl}-\sgm_{G_{q,i}}=&\sgm_2(j_2|_{G=\Gl}-j_2|_{G=G_{q,i}})+\sum_{r=1}^2\sum_{s=1}^2\frac{\sgm_{rs}}{2}((j_rj_s)|_{G=\Gl}-(j_rj_s)|_{G=G_{q,i}})+\frac{\sgm_{111}}{6}(j_1^3|_{G=\Gl}-j_1^3|_{G=G_{q,i}}).
\end{align*}
Noting that $\sgm_{\Gl}-\sgm_{G_{q,i}}$ consists of terms such as $(\tr\Gl)^k-(\tr G_{q,i})^k$, $\tr(\Gl^k)-\tr(G_{q,i}^k)$, and other similar terms with $k\geqslant2$, we can argue as in \eqref{est on energy ptu} to obtain
\begin{align*}
	&\quad\Nrm{\pt^r((\tr\Gl)^k-(\tr G_{q,i})^k)}_{C^0(\mcI^{q,i};C^N(\T^2))}\\
	&\leqslant\Nrm{\pt^r(\PLq\ULq(\tr G_{q,i})^k-(\tr G_{q,i})^k)}_{C^0(\mcI^{q,i};C^N(\T^2))}+\Nrm{\pt^r((\tr\Gl)^k-\PLq\ULq(\tr G_{q,i})^k)}_{C^0(\mcI^{q,i};C^N(\T^2))}\\
	&\lesssim\ell_{q,i}^{2-N-r}\Nrm{(\tr G_{q,i})^k}_{C^2(\mcI^{q,i-1}\times\T^2)}+\ell_{q,i}^{2-N-r}\sum_{\substack{s_1+\cdots+s_k=2\\s_n<2,\ n=1,\ldots,k}}\left(\prod_{n=1}^k\Nrm{\tr G_{q,i}}_{C^0(\mcI^{q,i-1};C^{s_n}(\T^2))}+\prod_{n=1}^k\Nrm{\pt^{s_n}(\tr G_{q,i})}_{C^0(\mcI^{q,i-1};C^0(\T^2))}\right)\\
	&\lesssim_{\oM,N,r}\ell_{q,i}^{2-N-r}\lambda_{q,i}^2\dlt_{q,i}^{\frac12}.
\end{align*}
And then, we have
\begin{equation}\label{est on sgm q-l}
	\begin{aligned}
		\nrm{\pa_t^r(\sgm_{\Gl}-\sgm_{G_{q,i}})}_{C^0(\mcI^{q,i};C^N(\T^2))}\lesssim_{\oM,N,r}\ell_{q,i}^{2-N-r}\lambda_{q,i}^2\dlt_{q,i}^{\frac{1}{2}}.
	\end{aligned}
\end{equation}
This concludes the proof.
\end{proof}

Having regularized the background fields, we next localize them in
space and time. The localization allows us to freeze the mollified
coefficients on each localization region and is also needed to
produce compactly supported source terms for the local
inverse-divergence construction. Since
$\tau_{q,i}=\ell_{q,i}$ and $\mu_{q,i}\approx\ell_{q,i}$, the
localization scales are compatible with the mollification scale.
We therefore introduce the following space--time partitions of
unity.

Here, we will give partitions of unity in space $\R^2$ and in time $\R$. We introduce some nonnegative smooth functions $\left\lbrace\chi_\upsilon\right\rbrace_{\upsilon\in\Z^2} $ and $\left\lbrace\theta_s\right\rbrace_{s\in\Z} $ such that
$$\sum_{\upsilon\in\Z^2}\chi_\upsilon^2(x)=1, \quad \forall x\in\R^2; \quad\sum_{s\in\Z}\theta_s^2(t)=1, \quad\forall t\in\R,$$
where $\chi_\upsilon(x)=\chi_0(x-2\pi\upsilon)$ and $\chi_0$ is a nonnegative smooth function supported in $Q(0,9/8\pi)$ satisfying $\chi_0=1$ on $\overline{Q(0,7/8\pi)}$. Similarly, $\theta_s(t)=\theta_0(t-s)$ where $\theta_0\in C_c^\infty(\R)$ satisfies $\theta_0=1$ on $[1/8,7/8]$ and $\theta_0=0$ on $(-1/8,9/8)^c$.  

And then, we  give the cut-off parameters $\tau_{q,i}$ and $\mu_{q,i}^{-1}\in 2\Z$ as \eqref{def of parameter 1}, 
\begin{align}
	\mu_{q,i}^{-1}=2\lceil(\lambda_{q,i}\lambda_{q,i+1})^{\frac{1}{2}}\dlt_{q}^{\frac{1}{4}}/2\rceil, \quad \tau_{q,i}^{-1}=(\lambda_{q,i}\lambda_{q,i+1})^{\frac{1}{2}}\dlt_{q}^{\frac{1}{4}},\quad 0\leqslant i\leqslant 2, \label{def of mu tau}
\end{align}
and introduce the following notations
\begin{align*}
\mathscr{I}&:=\left\lbrace(s, \upsilon):(s, \upsilon)\in\Z\times\Z^2\right\rbrace,
\end{align*}
and for $I=(s, \upsilon)\in \mathscr{I}$,
\begin{equation}\label{def of [I]}
[I]=[s]+\sum_{i=1}^22^i[\upsilon_i]+1, \quad [j]=\left\{\begin{aligned}
&0, && j\ \text{is odd}, \\
&1, && j\ \text{is even}.
\end{aligned}\right.
\end{equation}
So far, we can define the cutoff functions as follows:
\begin{align}
	\chi_I(x)=\chi_\upsilon(\mu_{q,i}^{-1}x), \quad \theta_I(t)=\theta_s(\tau_{q,i}^{-1}t).
\end{align}

\section{Defining a local inverse of divergence by solving the asymmetric divergence equation}\label{Defining a Local Inverse of Divergence by Solving the Asymmetric Divergence Equation}
In order to directly absorb these small parts of second-order terms and higher-order terms in \eqref{decomposition of left} into the new Reynolds error at the next iteration and ensure that $R_{q+1}$  remains symmetric in each iteration, in this section, we will define a local right inverse of the divergence by solving the following asymmetric divergence equation:
\begin{align*}
	\Div(F\tR[U])=\Div((\Id+\nabla u)\tR[U])=\rho_0U, \quad \tR[U]\in\mS^{2\times 2},
\end{align*}
for some vector field $U=U(t,x)\in C_c^{\infty}(\R\times\R^2)$ satisfying the conservation laws for momentum and angular momentum as \eqref{law of conservation in Lc}, small displacements $u\in C^{\infty}(\R\times\R^2)$, and $\rho_0\in C^{\infty}(\R\times\R^2)$. 

In \cite{Isett16}, Isett and Oh show the relationship between Euler--Reynolds flows and the conservation of linear and angular momentum in Eulerian coordinates. Under appropriate decay assumptions, the space of Euler--Reynolds flows on $\mathbb{R} \times \mathbb{R}^3$ can also be viewed as the space of incompressible velocity fields that conserve both linear and angular momentum. Specifically, the usual laws of conservation of linear and angular momentum in $\R^d, d\geq 2$, expressed as \eqref{law of conservation},
can be proven for $U_l(y) = \pa_{y_j} R_{lj}(y)$, under the assumption that $ R_{lj}(y) \in L^1_{t,y}$ is a symmetric matrix for each $y\in\R^d$. 
Conversely, if $ U_l $  conserves both linear and angular momentum, then one can formally represent $ U $ as divergence of some symmetric matrix by solving the symmetric divergence equation \eqref{symmetric div eq}.

In this paper, we consider the equations of 2D elastic dynamics in Lagrangian coordinates, presenting new corresponding conservation laws for momentum and angular momentum as \eqref{law of conservation in Lc}. However, in Eulerian coordinates, the corresponding conservation laws can be expressed as follows:
\begin{align}
	\int_{\R^2} \rho U^*_l(t,y) \, \rd y = 0, \quad \int_{\R^2} \rho(y_j U^*_l(t,y) - y_l U^*_j(t,y)) \, \rd y = \int_{\R^2} \rho_0 (y_j U_l(t,x) - y_l U_j(t,x)) \, \rd x = 0, \label{law of conservation in Ec}
\end{align}
where
$
\rho = \left((\det(\nabla y))^{-1} \rho_0\right)(t, X(t, y)),
$
and $U^*(t, y) = U(t, X(t, y))$, with $X(t, y)$ being the inverse of $y = y(t, x)$.
Then we could observe that $\rho U^*$ satisfies \eqref{law of conservation} in $\R^2$. According to \cite{Isett16}, there exists a symmetric matrix $R=R(y)$ such that 
\begin{align}\label{symmetric divergence equation}
	\pa_{y_j}R_{lj}=\rho U^*_l.
\end{align}
\begin{Rmk}
	To avoid confusion, we denote the same function, $R(t, y)$ and $R(t, y(t, x))$, in different coordinates simply as $R$.
\end{Rmk}
Moreover, if we use 
\begin{align*}
	\frac{\pa R_{lj}}{\pa y_j}=\frac{\pa R_{lj}}{\pa x_r}\frac{\pa x_r}{\pa y_j}=\frac{\pa}{\pa x_r}\left( R_{lj}\frac{\pa x_r}{\pa y_j}\right)-R_{lj}\frac{\pa}{\pa x_r}\left(\frac{\pa x_r}{\pa y_j}\right),
\end{align*} 
and 
\begin{align*}
	\frac{\pa J}{\pa x_i}=\frac{\pa J}{\pa F_{jr}}\frac{\pa F_{jr}}{\pa x_i}=J\frac{\pa x_r}{\pa y_j}\frac{\pa^2 y_{j}}{\pa x_r\pa x_i}=-J\frac{\pa }{\pa x_r}\left(\frac{\pa x_{r}}{\pa y_j}\right)\frac{\pa y_{j}}{\pa x_i},
\end{align*}
we could obtain
\begin{equation}\label{Transport of Div R}
	\begin{aligned}
		J\frac{\pa R_{lj}}{\pa y_j}&=J\frac{\pa}{\pa x_r}\left( R_{lj}\frac{\pa x_r}{\pa y_j}\right)-JR_{lj}\frac{\pa}{\pa x_r}\left(\frac{\pa x_r}{\pa y_j}\right)
		=\frac{\pa}{\pa x_r}\left(J R_{lj}\frac{\pa x_r}{\pa y_j}\right),
	\end{aligned}
\end{equation}
which means
\begin{align}
	\rho_0U_l=J \rho U^*_l=J\pa_{y_j}R_{lj}=\pa_{x_j}(JR F^{-\top})_{lj}=\pa_{x_j}(F\underbrace{JF^{-1}R F^{-\top}}_{\tR[U]})_{lj},\label{asymmetric divergence equation 1}
\end{align}
in the Lagrangian coordinates. Notice that \eqref{asymmetric divergence equation 1} is an asymmetric divergence equation and $\tR=JF^{-1}R F^{-\top}$ is a symmetric matrix. 

Naturally, we summarize  the following principle: Given any vector field $U$ and deformation $y=y(t,x)$ with small displacements $u$ satisfying \eqref{law of conservation in Lc},  we could find a symmetric matrix $\tR[U]$ such that
\begin{align}
	\Div(F\tR[U])=\rho_0U.\label{asymmetric divergence equation}
\end{align} 
Moreover, for any symmetric matrix $K$, we could calculate 
\begin{align*}
	\int_{\R^2}\pa_{x_j}(FK)_{{lj}}\rd x&=0,\\
	\int_{\R^2}y_k\pa_{x_j}(FK)_{{lj}}-y_l\pa_{x_j}(FK)_{{kj}}\rd x&=\int_{\R^2}\pa_{x_j}(y_k(FK)_{{lj}}-y_l(FK)_{{kj}})\rd x-\int_{\R^2}(F_{kj}(FK)_{{lj}}-F_{lj}(FK)_{{kj}})\rd x\\
	&=-\int_{\R^2}((FK F^{\top})_{{lk}}-(FK F^{\top})_{{kl}})\rd x=0.
\end{align*}

In \cite{Isett16}, due to the use of localized waves and a rearrangement of the error terms in the construction, Isett and Oh  solved \eqref{symmetric div eq} with data that satisfies \eqref{law of conservation} while simultaneously remaining localized to a small length scale $\tilde{\rho}$. This smallness of support leads to a gain of a factor $\tilde{\rho}$ for the solution to \eqref{symmetric div eq}, which is an estimate one expects from dimensional analysis:
$$
\nrm{R}_{C^0} \lesssim \tilde{\rho} \nrm{U}_{C^0}.
$$

The gain of this smallness parameter $\tilde{\rho}$ allows us to obtain a $C^0$ estimate for $R$. Moreover, in this paper, we will prove that for items like $U=a(t,x)e^{i\tl(f\cdot x-c(t))}$, with $f\in\Z^2,\tl>0$, and $c\in C(\R)$, we could obtain
$$
\nrm{R}_{C^0}\lesssim \tl^{-1} \nrm{a}_{C^0}.
$$
This differs from the convex integration framework by using a global inverse divergence operator on $\T^d$, such as \cite{DS09,GKN24}, where only a $C^\alpha$ estimate with $\alpha > 0$ is typically achieved, often resulting in 
$$
\nrm{R}_{C^0}\leqslant \nrm{R}_{C^\alpha} \lesssim \tilde{\lambda}^{-1 + \alpha} \nrm{a}_{C^\alpha}.
$$
Such a loss $\tl^{\alpha}$ is unacceptable in the proof of this paper, due to the choice of the parameters $\lambda_q$ and $\dlt_q$ in \eqref{def of parameter}.\par 

Therefore, based on  \cite{Isett16} and our analysis in Section \ref{Motivation and difficulties}, we will extend the conclusions and apply them to the equations of elastic dynamics. Given any vector field $U$ satisfying \eqref{law of conservation in Lc}, how can we find a solution of \eqref{asymmetric divergence equation}? We divide it into two steps:
\begin{enumerate}
	\item Find a symmetric matrix $R$ in the Eulerian coordinates  by solving the symmetric divergence equation \eqref{symmetric divergence equation}.
	\item  Find a symmetric  matrix $\tR =JF^{-1}R F^{-\top}$ in the Lagrangian coordinates which solves the asymmetric divergence equation \eqref{asymmetric divergence equation}.
\end{enumerate}

In the following, we present two lemmas to solve the asymmetric divergence equation \eqref{asymmetric divergence equation}. Throughout this section, we will consistently use the notation $G = \nb u$ and $F = \Id + G = \nb y$.
 \par 
\subsection{Solving the symmetric divergence equation}
In this subsection, we will use the method from \cite{Isett16} to solve the symmetric equation \eqref{symmetric divergence equation}. For the sake of completeness, we provide the proof here. Specifically, for the elastodynamic equations studied in this paper, the estimation of material derivatives is not required, allowing for a suitable simplification.

The following is a main result regarding compactly supported solutions to the symmetric divergence equation on $\R^d$ for any $d\geq2$:
\begin{align}\label{sym div eq}
	\Div R=U.
\end{align}
Although the construction below is presented for any fixed
$d\geqslant2$, all applications in this paper are in dimension
$d=2$. The implicit constants may depend on $d$.
\begin{lm}[Compactly supported solutions to the symmetric divergence equation] \label{lm: sym div eq}
Let $L \geq 1$ be a positive integer, $\nrm{\cdot}_{N}=\nrm{\cdot}_{C^0(\mcI^{q,i};C^N(\R^d))}$, and $\nrm{\cdot}_{C_{t}^r}=\nrm{\cdot}_{C^r(\mcI^{q,i};\R^d)}$. We consider a smooth vector field $U\in C^\infty(\mcI^{q,i}\times\R^d;\R^{d})$ that satisfies  
\begin{equation} \label{pp of supp U in E co}
	\supp_{y} U(t,\cdot) \subseteq  B(y_0(t), \tmu),\quad  \forall t \in \mcI^{q,i},
\end{equation}
for some $y_0 = y_0(t) \in C^\infty(\mcI^{q,i};\R^{d})$, and has zero linear and angular momentum, i.e.,
	\begin{align} 
		\int_{\R^d} U_{l}(t,y) \, \rd y =0,
		\quad \int_{\R^d} (y_{j} U_{l} - y_{l} U_{j})(t,y) \, \rd y = 0, \label{eq of linear and angular momenta in E co}
	\end{align}
	for $j,l=1,\cdots, d$ and all $t$.
	Assume also that for
	$
		A,\tlm>0
	$,
	the vector field $U$ obeys the estimates for $r=0,1,2$,
	\begin{equation} \label{est on y_0 and U} 
		\begin{aligned}
			\nrm{\pa_t^rU}_{N} \leqslant& A\tlm^{N+r},	&& 0\leqslant N+r\leqslant L.
		\end{aligned}
	\end{equation}
	Then there exists a solution $R_{lj}[U]$, with $R_{jl}[U]=R_{lj}[U],\ \forall j,l$, to the symmetric divergence equation \eqref{sym div eq}, depending linearly on $U$, with the following properties:
	\begin{enumerate}
		\item The support of $R_{lj}[U]$ stays in the ball $B(y_0(t), \tmu)$ for all $t\in\mcI^{q,i}$, i.e.,
		\begin{equation} \label{pp of supp oR}
			\supp_{y} R_{lj}[U](t,\cdot) \subseteq B(y_0(t), \tmu), \quad \forall t\in\mcI^{q,i}.
		\end{equation}
		\item The time support of $R_{lj}[U]$ is contained within the time support of $U$, i.e.,
		\begin{equation} \label{pp of time supp oR}
			\supp_{t} R_{lj}[U]\subseteq \supp_{t} U.
		\end{equation}
		
		\item There exists $C_{d,L}=C_{d,L}(d,L) > 0$ such that
		\begin{equation} \label{est of nabla oR}
		\begin{aligned}
			\nrm{R_{lj}[U]}_{N}\leqslant& C_{d,L}A \tmu(\tmu^{-1}+\tlm)^N,&&0\leqslant N\leqslant L,\\
			\nrm{ \pa_tR_{lj}[U]}_{N}
			\leqslant& C_{d,L}A \tmu (\tlm+\tmu^{-1}\nrm{\pt y_0}_{C_{t}^0})(\tmu^{-1}+\tlm)^N,&&0\leqslant N\leqslant L-1,\\
			\nrm{ \pa_t^2R_{lj}[U]}_{N}
			\leqslant& C_{d,L}A \tmu (\tlm^2+\tmu^{-1}\tlm\nrm{\pt y_0}_{C_{t}^0}+\tmu^{-1}\nrm{\pt^2 y_0}_{C_{t}^0}+\tmu^{-2}\nrm{\pt y_0}_{C_{t}^0}^2)(\tmu^{-1}+\tlm)^N,&&0\leqslant N\leqslant L-2.
		\end{aligned}
		\end{equation}
	\end{enumerate}
\end{lm}

\begin{Rmk}\label{Rmk on curl} 
	Note that the second term in condition $\eqref{eq of linear and angular momenta in E co}$ can actually be written as
	\begin{align*} 
		\int_{\R^d} (y\times U)(t,y) \, \rd y = 0.
	\end{align*}
	For convenience, in the following text, we will use the above equality to replace these different equations.
\end{Rmk}

\begin{Rmk}
	This lemma should be compared with Theorem 10.1 in \cite{Isett16}. The major difference is that, since we are considering the elastodynamic equation, which is a quasilinear wave equation, we need the estimation of time derivatives instead of material derivatives.
\end{Rmk}

\subsubsection{Derivation of the solution operator}

The purpose of this subsection is to give a derivation of the solution operator $R_{lj}[U]$ for \eqref{sym div eq} in Lemma \ref{lm: sym div eq}.
In the following part, for simplicity, we consider $U= U(t,y)$ as a smooth vector field supported on some ball, i.e., $\supp_{y} U(t,\cdot) \subseteq B(y_{0},\tmu)$. Moreover, we will omit  the time variable and work entirely on $\R^d$. Especially, we could only consider the case of $y_{0} = 0$. 

First, we will use the Fourier transform to expand $\widehat{U}_{l}(\xi)$ in a Taylor series around the frequency origin $\xi = 0$ in Fourier space. We will then express the terms of this Taylor expansion as the divergence of a symmetric tensor, with some terms evaluated at $\xi = 0$. By translating the resulting formula back to physical space, we will derive a solution operator that retains the desired support property in physical space, despite a mild singularity at $0$.

We first compute
\begin{align*}
	\widehat{U}_{l}(\xi) 
	= & 	\widehat{U}_{l}(0) + \left( \int_{0}^{1} \pa_{k} \widehat{U}_{l} (\sgm \xi) \, \rd \sgm \right) \xi_{k} \\
	= & 	\widehat{U}_{l}(0) + \frac{1}{2}\left( \int_{0}^{1} (\pa_{k} \widehat{U}_{l} + \pa_{l} \widehat{U}_{k}) (\sgm \xi) \, \rd \sgm \right) \xi_{k} 
	+ \frac{1}{2}\left( \int_{0}^{1} (\pa_{k} \widehat{U}_{l} - \pa_{l} \widehat{U}_{k}) (\sgm \xi) \, \rd \sgm \right) \xi_{k} \\
	= & 	\widehat{U}_{l}(0) + \frac{1}{2} (\pa_{k} \widehat{U}_{l} - \pa_{l} \widehat{U}_{k})(0) \xi_{k} 
	+ \frac{1}{2}\left( \int_{0}^{1} (\pa_{k} \widehat{U}_{l} + \pa_{l} \widehat{U}_{k}) (\sgm \xi) \, \rd \sgm \right) \xi_{k} \\
	&+	\frac{1}{2} \left( \int_{0}^{1} (1-\sgm)  (\pa_{j} \pa_{k} \widehat{U}_{l} - \pa_{j} \pa_{l} \widehat{U}_{k})(\sgm \xi) \, \rd \sgm \right) \xi_{j}\xi_{k}\\
	= & 	\widehat{U}_{l}(0) + \frac{1}{2} (\pa_{k} \widehat{U}_{l} - \pa_{l} \widehat{U}_{k})(0) \xi_{k} 
	+ \frac{1}{2}\left( \int_{0}^{1} (\pa_{k} \widehat{U}_{l} + \pa_{l} \widehat{U}_{k}) (\sgm \xi) \, \rd \sgm \right) \xi_{k} \\
	&+	\frac{1}{2} \left( \int_{0}^{1} (1-\sgm) \xi_{k} (\pa_{j} \pa_{k} \widehat{U}_{l} + \pa_{l} \pa_{k} \widehat{U}_{j})(\sgm \xi) \, \rd \sgm \right) \xi_{j}- \left( \int_{0}^{1} (1-\sgm) \xi_{k} (\pa_{l} \pa_{j} \widehat{U}_{k})(\sgm \xi) \, \rd \sgm \right) \xi_{j}.
\end{align*}

Note that  the right-hand side is (formally) the Fourier transform of a divergence of a symmetric tensor. By using  \eqref{pp of supp U in E co} and \eqref{eq of linear and angular momenta in E co}, we have
\begin{equation} \label{eq of momentum}
	\begin{aligned}
		\widehat{U}_{l} (0) &= \int_{\R^d} U_{l}(y)  \rd y= 0, \quad
		\left( \frac{1}{i} \pa_{k} \widehat{U}_{l} - \frac{1}{ i} \pa_{l} \widehat{U}_{k} \right) (0) = \int_{\R^d} (y_{l} U_{k} - y_{k} U_{l})(y)  \rd y = 0,
	\end{aligned}
\end{equation}
and the following formula for $\widehat{U}_{l}(\xi)$ holds:
\begin{align*}
	\widehat{U}_{l} (\xi)
	=&\frac{1}{2}\left( \int_{0}^{1} (\pa_{j} \widehat{U}_{l} + \pa_{l} \widehat{U}_{j}) (\sgm \xi) \, \rd \sgm \right) \xi_{j} 	+ \frac{1}{2} \left( \int_{0}^{1} (1-\sgm) \xi_{k} (\pa_{j} \pa_{k} \widehat{U}_{l} + \pa_{l} \pa_{k} \widehat{U}_{j})(\sgm \xi) \, \rd \sgm \right) \xi_{j} \\
	&	 - \left( \int_{0}^{1} (1-\sgm) \xi_{k} (\pa_{l} \pa_{j} \widehat{U}_{k})(\sgm \xi) \, \rd \sgm \right) \xi_{j}.
\end{align*}

Up to now, we could define $r_{lj}[U] := r_{lj}^{(0)}[U] + r_{lj}^{(1)}[U] + r_{lj}^{(2)}[U]$, where
\begin{align}
	r_{lj}^{(0)}[U]
	= & \cF^{-1} \left[\frac{1}{2}\left( \int_{0}^{1} \left( \frac{1}{i} \pa_{j} \widehat{U}_{l} + \frac{1}{i} \pa_{l}\widehat{U}_{j} \right) (\sgm \xi) \, \rd \sgm \right)\right], \\
	r_{lj}^{(1)}[U] 
	=& \cF^{-1} \left[ \frac{1}{2} \left( \int_{0}^{1} (1-\sgm) (i \xi_{k}) (- \pa_{j} \pa_{k} \widehat{U}_{l} - \pa_{l} \pa_{k} \widehat{U}_{j} )(\sgm \xi) \, \rd \sgm \right)\right], \\
	r_{lj}^{(2)}[U]
	= & \cF^{-1}\left[ - \left( \int_{0}^{1} (1-\sgm) (i \xi_{k}) ( - \pa_{j} \pa_{l} \widehat{U}_{k} )(\sgm \xi) \, \rd \sgm \right) \right]. 
\end{align}
Computing the inverse Fourier transform, we could give the following formula
\begin{align}
	r_{lj}^{(0)}[U]
	=& - \frac{1}{2} \int_{0}^{1} \left( \frac{y_{j}}{\sgm} U_{l}\left(\frac{y}{\sgm}\right) + \frac{y_{l}}{\sgm} U_{j}\left(\frac{y}{\sgm}\right) \right) \, \frac{\rd \sgm}{\sgm^{d}}, \label{eq:rlj0:formal}\\
	r_{lj}^{(1)}[U]
	=& \frac{1}{2}  \frac{\pa}{\pa y_{k}} \int_{0}^{1} (1-\sgm)  \left( \frac{y_{j} y_{k}}{\sgm^{2}} U_{l}\left(\frac{y}{\sgm}\right) + \frac{y_{l} y_{k}}{\sgm^{2}} U_{j}\left(\frac{y}{\sgm}\right) \right) \, \frac{\rd \sgm}{\sgm^{d}}, \label{eq:rlj1:formal}\\
	r_{lj}^{(2)}[U]
	= & -  \frac{\pa}{\pa y_{k}} \int_{0}^{1} (1-\sgm) \left(\frac{y_{l} y_{j}}{\sgm^{2}} U_{k}\left(\frac{y}{\sgm}\right) \right) \, \frac{\rd \sgm}{\sgm^{d}}.\label{eq:rlj2:formal}
\end{align}
In fact, given a test function $\varphi \in C^{\infty}_{c}$, we could define
\begin{align}
	\brk{r_{lj}^{(0)}[U], \varphi} 
	:=& - \lim_{\dlt \to 0+} \frac{1}{2} \int_{\dlt}^{1} \int_{\R^d}\left( \frac{y_{j}}{\sgm} U_{l}\left(\frac{y}{\sgm}\right) + \frac{y_{l}}{\sgm} U_{j}\left(\frac{y}{\sgm}\right) \right) \varphi(y) \, \rd y \frac{\rd \sgm}{\sgm^{d}}, \label{eq:rlj0} \\
	\brk{r_{lj}^{(1)}[U], \varphi}
	:=& - \frac{1}{2} \lim_{\dlt \to 0+} \int_{\dlt}^{1} (1-\sgm)  \int_{\R^d}\left( \frac{y_{j} y_{k}}{\sgm^{2}} U_{l}\left(\frac{y}{\sgm}\right) + \frac{y_{l} y_{k}}{\sgm^{2}} U_{j}\left(\frac{y}{\sgm}\right) \right) \pa_{k} \varphi(y) \, \rd y \frac{\rd \sgm}{\sgm^{d}}, \label{eq:rlj1}\\
	\brk{r_{lj}^{(2)}[U], \varphi}
	:= & \lim_{\dlt \to 0+} \int_{\dlt}^{1} (1-\sgm) \int_{\R^d} \left(\frac{y_{l} y_{j}}{\sgm^{2}} U_{k}\left(\frac{y}{\sgm}\right) \right) \pa_{k} \varphi(y) \, \rd y \frac{\rd \sgm}{\sgm^{d}}.\label{eq:rlj2}
\end{align}

These are well-defined tempered distributions on $\R^{d}$, which satisfy
\begin{align*}
	\left|\brk{r_{lj}^{(0)}[U], \varphi}\right| \leqslant C_{U, \tmu} \nrm{\varphi}_{C^{0}_{y}}, \quad
	\left|\brk{r_{lj}^{(1)}[U], \varphi}\right| + \left|\brk{r_{lj}^{(2)}[U], \varphi}\right| \leqslant C_{U, \tmu} \nrm{\nb \varphi}_{C^{0}_{y}}.
\end{align*}
Therefore, for $a = 0, 1, 2$, the values of $r_{lj}^{(a)}[U]$ at a point $y \in \R^d$ are formally given as weighted integrals of $U$ and $\nabla U$ along the ray emanating from $y$ away from the origin. 
They satisfy the support property 
\begin{equation} \label{pp of supp r}
	\supp_{y} r_{lj} \subseteq B(0,\tmu),
\end{equation}
and are smooth outside $\left\lbrace y=0\right\rbrace$. Moreover, a straightforward computation with distributions shows that 
\begin{equation}
	\pa_{j} r_{lj}[U] = U_{l} - \left( \int_{\R^d} U_{l}(y) \, \rd y \right) \dlt_{0} - \frac{1}{2} \left( \int_{\R^d} (y_{l} U_{j} - y_{j} U_{l})(y) \, \rd y \right) \pa_{j} \dlt_{0}=U_l.
\end{equation}
Thus, under the assumption  \eqref{pp of supp U in E co} and \eqref{eq of linear and angular momenta in E co}, we could achieve that $r_{lj}[U]$ is a distributional solution to \eqref{sym div eq} satisfying \eqref{pp of supp r}.

Unfortunately, $r_{lj}[U]$ as defined above appears to have a singularity at $y=0$. To address this issue, we will employ the method described in \cite{Isett16} which exploits translation invariance of \eqref{sym div eq}. Specifically, we will conjugate $r_{lj}[U]$ by translations and then take a smooth average of the resulting expressions.

Given $\ty \in \R^{d}$, we conjugate the operators $r_{lj}^{(0)}$, $r_{lj}^{(1)}$ and $r_{lj}^{(2)}$ by translation by $\ty$. Then we are led to the conjugated operator ${}^{(\ty)} r_{lj} = {}^{(\ty)} r_{lj}^{(0)} + {}^{(\ty)} r_{lj}^{(1)} + {}^{(\ty)} r_{lj}^{(2)}$, which is formally defined by
\begin{align}
	{}^{(\ty)} r_{lj}^{(0)}[U]
	=& - \frac{1}{2} \int_{0}^{1}  \frac{(y-\ty)_{j}}{\sgm} U_{l}\left(\frac{y-\ty}{\sgm}+\ty\right) + \frac{(y-\ty)_{l}}{\sgm} U_{j}\left(\frac{y-\ty}{\sgm}+\ty\right)  \, \frac{\rd \sgm}{\sgm^{d}}, \label{eq:yrlj0}\\
	{}^{(\ty)} r_{lj}^{(1)}[U]
	=& \frac{1}{2} \frac{\pa}{\pa y_{k}} \int_{0}^{1} (1-\sgm)   \frac{(y-\ty)_{j} (y-\ty)_{k}}{\sgm^{2}} U_{l}\left(\frac{y-\ty}{\sgm}+\ty\right)  \, \frac{\rd \sgm}{\sgm^{d}} \label{eq:yrlj1}\\
	& + \frac{1}{2}  \frac{\pa}{\pa y_{k}} \int_{0}^{1} (1-\sgm)   \frac{(y-\ty)_{l} (y-\ty)_{k}}{\sgm^{2}} U_{j}\left(\frac{y-\ty}{\sgm}+\ty\right)  \, \frac{\rd \sgm}{\sgm^{d}}, \notag \\
	{}^{(\ty) }r_{lj}^{(2)}[U]
	= & - \frac{\pa}{\pa y_{k}} \int_{0}^{1} (1-\sgm)  \frac{(y-\ty)_{l} (y-\ty)_{j}}{\sgm^{2}} U_{k}\left(\frac{y-\ty}{\sgm}+\ty\right)  \, \frac{\rd \sgm}{\sgm^{d}}. \label{eq:yrlj2}
\end{align}

These are to be interpreted as in \eqref{eq:rlj0}--\eqref{eq:rlj2} as distributions. The distribution ${}^{(\ty)} r_{lj}[U]$ satisfies the desired support property \eqref{pp of supp r} as long as $\ty \in B(0,\tmu)$. Motivated by this consideration, we could take a smooth function $\zeta(\ty)$ which is supported in $B(0,\tmu)$ and satisfies 
$
	\int_{\R^d} \zeta(\ty) \rd\ty = 1 .
$
Up to now, we could define the solution operator ${}^{(\zeta)} \oR_{lj}[U]$ by averaging ${}^{(\ty)} r_{lj}[U]$ against $\zeta$, i.e.,
\begin{equation} 
	{}^{(\zeta)} \oR_{lj}[U](y) = \int_{\R^d} {}^{(\ty)} r_{lj}(y) \zeta(\ty) \, \rd \ty.
\end{equation}

From the discussion above, we see that ${}^{(\zeta)} \oR_{lj}[U]$ inherits the desirable properties of $r_{lj}[U]$. Indeed, assuming  \eqref{pp of supp U in E co} and \eqref{eq of linear and angular momenta in E co}, ${}^{(\zeta)} \oR_{lj}[U]$ is a (distributional) solution to \eqref{sym div eq} satisfying the support property
\begin{equation} \label{pp of supp tilde Rlj}
	\supp_{y} {}^{(\zeta)} \oR_{lj}[U] \subseteq B(0,\tmu).
\end{equation}

Thanks to averaging with respect to $\zeta$, ${}^{(\zeta)} \oR_{lj}[U]$ will moreover turn out to be smooth in the spatial variables provided $U$ is smooth as well.

\subsubsection{Formula for $R_{lj}[U]$ and basic properties}
Let $U$ be a vector field satisfying the hypotheses in Lemma \ref{lm: sym div eq}. We choose a function $\zeta=\zeta(y)$ satisfying
\begin{align}
	\int_{\R^d}\zeta(y)\rd y=1,
\end{align}
and the support property
\begin{equation} \label{eq of supp zeta}
	\supp_y\zeta \subseteq B(0,\tmu).
\end{equation}
Moreover, the following estimates hold for $\zeta$:
\begin{equation} \label{eq of zeta c0}
	\nrm{\nb_y^{\beta} \zeta}_{C_y^0} \leqslant C_{\beta} \tmu^{-d-|\beta|}, \qquad \hbox{ for all } |\beta| \geq 0.
\end{equation}

Let $\zeta^{y_0}(y)=\zeta(y-y_0)$.  We are now ready to define the solution operator $R_{lj}[U]$. Let $R_{lj}[U] := R_{lj}^{(0)}[U] + R_{lj}^{(1)}[U] + R_{lj}^{(2)}[U]$, where
\begin{align} 
	R_{lj}^{(0)}[U] 
	=& 	-\frac{d}{2} \int_{0}^{1} \int_{\R^d}\zeta^{y_0}(\ty) \frac{(y-\ty)_{j}}{\sgm} U_{l}\left(t, \frac{y-\ty}{\sgm} + \ty\right) \, \frac{\rd \ty}{\sgm^{d}} \rd \sgm \notag\\
	&	-\frac{d}{2} \int_{0}^{1} \int_{\R^d}\zeta^{y_0}(\ty) \frac{(y-\ty)_{l}}{\sgm} U_{j}\left(t, \frac{y-\ty}{\sgm} + \ty\right) \, \frac{\rd \ty}{\sgm^{d}} \rd \sgm, \label{def of Rlj 0} \\
	R_{lj}^{(1)}[U]
	=&	\frac{1}{2} \int_{0}^{1} \int_{\R^d}(\pa_{k} \zeta^{y_0})(\ty) \frac{(y-\ty)_{j} (y-\ty)_{k}}{\sgm^{2}} U_{l}\left(t,\frac{y-\ty}{\sgm} + \ty\right) \, \frac{\rd \ty}{\sgm^{d}} \rd \sgm \notag \\
	& + \frac{1}{2} \int_{0}^{1} \int_{\R^d}(\pa_{k} \zeta^{y_0})(\ty) \frac{(y-\ty)_{l} (y-\ty)_{k}}{\sgm^{2}} U_{j}\left(t,\frac{y-\ty}{\sgm} + \ty\right) \, \frac{\rd \ty}{\sgm^{d}} \rd \sgm,  \label{def of Rlj 1}\\
	R_{lj}^{(2)}[U]
	=&	- \int_{0}^{1} \int_{\R^d}(\pa_{k} \zeta^{y_0})(\ty) \frac{(y-\ty)_{j} (y-\ty)_{l}}{\sgm^{2}} U_{k}\left(t,\frac{y-\ty}{\sgm} + \ty\right) \, \frac{\rd \ty}{\sgm^{d}} \rd \sgm.  \label{def of Rlj 2}
\end{align}

The following Proposition summarizes the basic properties of the operator $R_{lj}[U]$.
\begin{pp} \label{pp of R_lj}
	Let $U$ be a vector field on $\mcI^{q,i} \times \R^{d}$ satisfying the hypotheses \eqref{pp of supp U in E co} and \eqref{eq of linear and angular momenta in E co}  of Lemma  \ref{lm: sym div eq}. Define $R_{lj}[U] := R_{lj}^{(0)}[U] + R_{lj}^{(1)}[U] + R_{lj}^{(2)}[U]$ by \eqref{def of Rlj 0}--\eqref{def of Rlj 2}. Then $R_{lj}[U]$ possesses the following properties:
	\begin{enumerate}
		\item $R_{lj}[U]$ is symmetric in $j, l$ and depends linearly on $U$.
		\item $R_{lj}[U]$ solves the symmetric divergence equation, i.e.,
		\begin{align*}
			\pa_{j} R_{lj}[U] = U_{l}.
		\end{align*}
		\item $R_{lj}[U]$ has the support property
		\begin{equation} \label{pp of supp oRlj}
		\supp_{y} R_{lj}[U](t,\cdot) \subseteq B(y_0(t),\tmu),\quad \forall t\in\mcI^{q,i}.
		\end{equation}
		\item The time support of $R_{lj}[U]$ is contained within the time support of $U$, i.e.,
		\begin{equation} \label{pp of time supp oRlj}
			\supp_{t} R_{lj}[U]\subseteq \supp_{t} U.
		\end{equation}
		\item The following differentiation formulae hold for $R_{lj}^{(a)}[U]$ $(a=0,1,2)$:
		\begin{align} 
			\pt^r\nb^{\beta} R_{lj}^{(0)}[U] 
			=& 	-\frac{d}{2} \sum_{\beta_{1} + \beta_{2} = \beta}\sum_{r_{1} + r_{2} = r} \int_{0}^{1} \int_{\R^d}(\pt^{r_1}\nb^{\beta_{1}}\zeta^{y_0})(\ty) \frac{(y-\ty)_{j}}{\sgm} (\pt^{r_2}\nb^{\beta_{2}} U_{l})\left(t, \frac{y-\ty}{\sgm} + \ty\right) \, \frac{\rd \ty}{\sgm^{d}} \rd \sgm \notag\\
			&	-\frac{d}{2} \sum_{\beta_{1} + \beta_{2} = \beta}\sum_{r_{1} + r_{2} = r} \int_{0}^{1} \int_{\R^d}(\pt^{r_1}\nb^{\beta_{1}}\zeta^{y_0})(\ty) \frac{(y-\ty)_{l}}{\sgm} (\pt^{r_2}\nb^{\beta_{2}} U_{j})\left(t, \frac{y-\ty}{\sgm} + \ty\right) \, \frac{\rd \ty}{\sgm^{d}} \rd \sgm, \label{eq of pt nabla Rlj 0} \\
			\pt^r\nb^{\beta} R_{lj}^{(1)}[U]
			=&	\frac{1}{2} \sum_{\beta_{1} + \beta_{2} = \beta}\sum_{r_{1} + r_{2} = r} \int_{0}^{1} \int_{\R^d}(\pt^{r_1}\nb^{\beta_{1}} \pa_{k} \zeta^{y_0})(\ty) \frac{(y-\ty)_{j} (y-\ty)_{k}}{\sgm^{2}} (\pt^{r_2}\nb^{\beta_{2}} U_{l})\left(t,\frac{y-\ty}{\sgm} + \ty\right) \, \frac{\rd \ty}{\sgm^{d}} \rd \sgm \notag\\
			& + \frac{1}{2} \sum_{\beta_{1} + \beta_{2} = \beta}\sum_{r_{1} + r_{2} = r} \int_{0}^{1} \int_{\R^d}(\pt^{r_1}\nb^{\beta_{1}} \pa_{k} \zeta^{y_0})(\ty) \frac{(y-\ty)_{l} (y-\ty)_{k}}{\sgm^{2}} (\pt^{r_2}\nb^{\beta_{2}} U_{j})\left(t,\frac{y-\ty}{\sgm} + \ty\right) \, \frac{\rd \ty}{\sgm^{d}} \rd \sgm, \label{eq of pt nabla Rlj 1}  \\
			\pt^r\nb^{\beta} R_{lj}^{(2)}[U]
			=&	- \sum_{\beta_{1} + \beta_{2} = \beta}\sum_{r_{1} + r_{2} = r} \int_{0}^{1} \int_{\R^d}(\pt^{r_1}\nb^{\beta_{1}} \pa_{k} \zeta^{y_0})(\ty) \frac{(y-\ty)_{j} (y-\ty)_{l}}{\sgm^{2}} (\pt^{r_2}\nb^{\beta_{2}} U_{k})\left(t,\frac{y-\ty}{\sgm} + \ty\right) \, \frac{\rd \ty}{\sgm^{d}} \rd \sgm. \label{eq of pt nabla Rlj 2} 
		\end{align}
		where the summations are over all pairs of multi-indices such that $\beta_{1} + \beta_{2} = \beta$ and $0\leqslant|\beta|+r\leqslant L$.
	\end{enumerate}
\end{pp}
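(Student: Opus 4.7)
Items (1), (3), and (4) are essentially by inspection. Linearity of each $R_{lj}^{(a)}[U]$ in $U$ is immediate from the integral expressions \eqref{def of Rlj 0}--\eqref{def of Rlj 2}, and symmetry in $j,l$ is manifest: $R_{lj}^{(0)}$ and $R_{lj}^{(1)}$ are each written as a sum of two pieces related by $j\leftrightarrow l$, and the integrand of $R_{lj}^{(2)}$ contains the symmetric factor $(y-\ty)_j(y-\ty)_l$. For the spatial support \eqref{pp of supp oRlj}, fix $t\in\mcI^{q,i}$ and suppose the integrand in any of \eqref{def of Rlj 0}--\eqref{def of Rlj 2} is nonzero at $(y,\ty,\sgm)$. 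Then $\ty\in B(y_0(t),\tmu)$ from $\zeta^{y_0}(\ty)\neq 0$ and $z:=\tfrac{y-\ty}{\sgm}+\ty\in B(y_0(t),\tmu)$ from $U(t,z)\neq 0$; since $y=(1-\sgm)\ty+\sgm z$ for $\sgm\in[0,1]$ and the ball is convex, we conclude $y\in B(y_0(t),\tmu)$. The time-support claim (4) is immediate because $U(t,\cdot)\equiv 0$ makes every integrand in \eqref{def of Rlj 0}--\eqref{def of Rlj 2} vanish.

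For the divergence equation (2) the plan is to follow the derivation that precedes the proposition. For each fixed $\ty$, the translated distribution ${}^{(\ty)}r_{lj}[U]$ defined in \eqref{eq:yrlj0}--\eqref{eq:yrlj2} satisfies $\partial_{y_j}\,{}^{(\ty)}r_{lj}[U]=U_l$ as a distribution, which is exactly the content of the moment cancellations \eqref{eq of momentum} under the hypotheses \eqref{eq of linear and angular momenta in E co}. Averaging against the probability density $\zeta^{y_0}$ (combined with the normalization encoded in the factor $d$ in \eqref{def of Rlj 0}--\eqref{def of Rlj 2}, which reflects an integration-by-parts identity absorbing the apparent singularity at $\ty=y$) preserves this divergence identity while smoothing out the result, yielding a genuine smooth solution $R_{lj}[U]$ on $\R^d$.

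The main technical content is item (5), which I plan to prove by induction on $|\beta|+r$. Direct differentiation in $y_k$ under the integral sign in \eqref{def of Rlj 0}--\eqref{def of Rlj 2} produces (a) derivatives of the algebraic factor $(y-\ty)_*$, which do not appear in \eqref{eq of pt nabla Rlj 0}--\eqref{eq of pt nabla Rlj 2}, and (b) chain-rule contributions $\tfrac{1}{\sgm}(\partial_k U_l)(\tfrac{y-\ty}{\sgm}+\ty)$ carrying an extra factor of $\tfrac{1}{\sgm}$ compared with the target. To reconcile the two, I would integrate by parts in $\ty_k$, using the algebraic identity
\begin{align*}
\partial_{\ty_k}\!\left[\tfrac{(y-\ty)_j}{\sgm}U_l\!\left(\tfrac{y-\ty}{\sgm}+\ty\right)\right] = -\tfrac{\delta_{jk}}{\sgm}U_l + \tfrac{(y-\ty)_j}{\sgm}\!\left(1-\tfrac{1}{\sgm}\right)\!(\partial_k U_l),
\end{align*}
together with the obvious analogues for the kernels of $R_{lj}^{(1)}$ and $R_{lj}^{(2)}$. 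The unwanted algebraic contribution $-\tfrac{\delta_{jk}}{\sgm}U_l$ and the extraneous $\tfrac{1}{\sgm}$ factor in the chain-rule term are designed to cancel against each other, leaving only the two admissible types of contribution, namely derivatives falling on $\zeta^{y_0}$ or on $U$. No boundary terms arise because $\zeta^{y_0}$ is compactly supported. Iterating and applying the Leibniz rule yields the stated expressions. Time derivatives are handled identically since they only act on $U(t,\cdot)$ and on $\zeta^{y_0(t)}(\ty)=\zeta(\ty-y_0(t))$ and do not interact with the kernels $\tfrac{(y-\ty)_j}{\sgm}$.

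\textbf{Main obstacle.} The hard part will be the bookkeeping in (5): one must verify that across the three pieces $R_{lj}^{(0)}$, $R_{lj}^{(1)}$, $R_{lj}^{(2)}$ the integration-by-parts cancellations occur uniformly, so that after peeling off one derivative the remaining expression is still of the same structural type and the induction step closes. This is routine but delicate, and the resulting formulas \eqref{eq of pt nabla Rlj 0}--\eqref{eq of pt nabla Rlj 2} will be the workhorse for the quantitative estimates \eqref{est of nabla oR} of Lemma \ref{lm: sym div eq} in the next step.
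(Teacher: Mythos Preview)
Your proposal is correct and largely parallels the paper, but for item (5) the paper takes a slicker route than the integration-by-parts/cancellation scheme you outline. Rather than differentiating $R_{lj}^{(0)}[U](t,y)$ directly in $y_m$ and then integrating by parts in $\ty_m$ to repair the unwanted terms, the paper writes $\partial_{y_m}R_{lj}^{(0)}[U](t,y)=\partial_{z_m}\big|_{z=0}R_{lj}^{(0)}[U](t,y+z)$ and performs the change of variable $\oy=\ty-z$ \emph{before} differentiating. After this substitution the kernel $(y-\oy)_j/\sgm$ is independent of $z$, while $z$ appears only as a translation inside $\zeta^{y_0}$ and inside $U$; differentiating under the integral (justified by Lemma~\ref{lm: integrable}) then gives \eqref{eq of pt nabla Rlj 0} in one stroke, with no cancellations to track. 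The same trick handles $R_{lj}^{(1)}$ and $R_{lj}^{(2)}$ separately, so there is no cross-piece bookkeeping of the kind you anticipate as the ``main obstacle''. Your IBP argument is a valid alternative (indeed, the identity $\partial_{y_m}+\partial_{\ty_m}$ acting on $\tfrac{y-\ty}{\sgm}+\ty$ equals $\partial_m$ is what underlies both), but the paper's substitution avoids ever producing the spurious $\delta_{jm}/\sgm$ and $1/\sgm^2$ terms in the first place.
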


\begin{proof} 
The symmetry in $j$ and $l$, the linear dependence on $U$, and the support properties \eqref{pp of supp oRlj}  and \eqref{pp of time supp oRlj} are evident from the definitions \eqref{def of Rlj 0}--\eqref{def of Rlj 2}. Next, we will prove the differentiation formulae \eqref{eq of pt nabla Rlj 0}--\eqref{eq of pt nabla Rlj 2}.

To justify the various calculations to follow, such as differentiating under the integral sign, we will use the following lemma, the proof of which will be provided in Lemma \ref{lm: Lp est on f}:
	\begin{lm} \label{lm: integrable}
		Given $\tmu > 0, y_1\in\R^d$, let $\tilde{\zeta}$ be a non-negative smooth function with $\supp_{y} \tilde{\zeta} \subseteq B(y_{1},\tmu)$ such that 
		\begin{equation} \label{est on zeta}
			\nrm{\tilde{\zeta}}_{C^{0}_{y}} \leqslant \tilde{C} \tmu^{-d},
		\end{equation}
		for some $\tilde{C} > 0$. Then for any $k \geq 0$ and $f \in L^{\infty}_{y}$ supported in $B(y_1,\tmu)$, we have
		\begin{equation} \label{eq of integrable}
			\sup_{y \in \R^{d}, \sgm \in [0,1]} \abs{\int_{\R^d} \tilde{\zeta}(\ty) \left( \frac{|y-\ty|}{\sgm} \right)^{k} f\left(\frac{y-\ty}{\sgm} + \ty\right) \frac{\rd\ty}{\sgm^{d}}} \leqslant C_{k} \tilde{C} \tmu^{k} \nrm{f}_{L^{\infty}_{y}},
		\end{equation}
		for some constant $C_k>0$.
	\end{lm}
	
	In order to get \eqref{eq of pt nabla Rlj 0}, we should only prove the case $|\beta| =1$, i.e.,
	\begin{equation} \label{eq of pm Rlj}
		\begin{aligned}
			\pa_{y_m} R_{lj}^{(0)}[U](t,y) 
			=& - \frac{d}{2} \int_{0}^{1} \int_{\R^d}(\pa_{m} \zeta^{y_0})(\ty) \frac{(y-\ty)_{j}}{\sgm} U_{l}\left(t, \frac{y-\ty}{\sgm} + \ty\right) \, \frac{\rd\ty}{\sgm^d} \rd \sgm \\
			&- \frac{d}{2} \int_{0}^{1} \int_{\R^d}\zeta^{y_0}(\ty) \frac{(y-\ty)_{j}}{\sgm} (\pa_{m} U_{l})\left(t, \frac{y-\ty}{\sgm} + \ty\right) \, \frac{\rd\ty}{\sgm^d} \rd \sgm \\
			& + \hbox{(Symmetric terms in $j, l$)}.
	\end{aligned}\end{equation}

	The case of $|\beta| >1$ will follow from an induction argument, using similar ideas. To prove \eqref{eq of pm Rlj}, we first proceed as follows:
	\begin{align}
		\label{eq of pm Rlj 1}
		\begin{split}
			\frac{\pa}{\pa y_{m}} R_{lj}^{(0)}[U](t,y)
			= & \frac{\pa}{\pa z_{m}} R_{lj}^{(0)}[U](t,y+z) \Big\vert_{z=0} \\
			= & -\frac{d}{2} \frac{\pa}{\pa z_{m}} \Big\vert_{z=0}  \int_{0}^{1} \int_{\R^d}\zeta^{y_0}(\ty) \frac{(y+z-\ty)_{j}}{\sgm} U_{l}\left(t, \frac{y+z-\ty}{\sgm} + \ty\right) \, \frac{\rd\ty}{\sgm^{d}} \rd \sgm \\
			&	-\frac{d}{2} \frac{\pa}{\pa z_{m}} \Big\vert_{z=0} \int_{0}^{1} \int_{\R^d}\zeta^{y_0}(\ty) \frac{(y+z-\ty)_{l}}{\sgm} U_{j}\left(t, \frac{y+z-\ty}{\sgm} + \ty\right) \, \frac{\rd\ty}{\sgm^{d}} \rd \sgm \\
			= & -\frac{d}{2} \frac{\pa}{\pa z_{m}} \Big\vert_{z=0}  \int_{0}^{1} \int_{\R^d}\zeta^{y_0}(\oy+z) \frac{(y-\oy)_{j}}{\sgm} U_{l}\left(t, \frac{y-\oy}{\sgm} + \oy+z\right) \, \frac{\rd \oy}{\sgm^{d}} \rd \sgm \\
			&	-\frac{d}{2} \frac{\pa}{\pa z_{m}} \Big\vert_{z=0} \int_{0}^{1} \int_{\R^d}\zeta^{y_0}(\oy+z) \frac{(y-\oy)_{l}}{\sgm} U_{j}\left(t, \frac{y-\oy}{\sgm} + \oy+z\right) \, \frac{\rd\oy}{\sgm^{d}} \rd \sgm,  
		\end{split}
	\end{align}
	where we obtain  the last equality by making the change of variable $\oy=\ty-z$.\par 
	Next, we differentiate under the integral sign, which is justified by \eqref{eq of supp zeta}, \eqref{eq of zeta c0}, Lemma \ref{lm: integrable} and the smoothness of $U$, we get the desired formula \eqref{eq of pm Rlj}. The time differentiation is straightforward, and thus we obtain \eqref{eq of pt nabla Rlj 0}.
	The proofs of \eqref{eq of pt nabla Rlj 1} and \eqref{eq of pt nabla Rlj 2} are similar and thus omitted.
	
	Now, it only remains to prove that $R_{lj}[U]$ is a (distributional) solution to \eqref{sym div eq} under the assumptions  \eqref{pp of supp U in E co} and \eqref{eq of linear and angular momenta in E co}. For this purpose, it suffices to show that
	\begin{align*}
		R_{lj}[U](t,y) = {}^{(\zeta^{y_0}(\cdot))} \oR_{lj}[U(t, \cdot)](y) .
	\end{align*}
	To arrive at the formulae \eqref{def of Rlj 0}--\eqref{def of Rlj 2}, we need to integrate by parts the derivative $\pa_{k}$ on the outside of \eqref{eq:yrlj1} and \eqref{eq:yrlj2} after averaging against $\zeta^{y_0}(\ty)$. More precisely, consider the expression
	\begin{align*}
		\oR_{lj}^{(2)}[U](t,y) := \int_{\R^d}\zeta^{y_0}(\ty) {}^{(\ty)} r_{lj}^{(2)}[U(t, \cdot)](y) \, \rd\ty.
	\end{align*}
	
	Using \eqref{eq of supp zeta}, \eqref{eq of zeta c0}, Lemma \ref{lm: integrable} and the differentiation formulae that we established, it is not difficult to get
	\begin{align}
		\label{eq of tRlj 2}
		\begin{split}
			\oR_{lj}^{(2)}[U](t,y)
			=& -\frac{\pa}{\pa y_{k}} \int_{0}^{1} \int_{\R^d}(1-\sgm) \zeta^{y_0}(\ty) \frac{(y-\ty)_{l} (y-\ty)_{j}}{\sgm^{2}} U_{k}\left(t, \frac{y-\ty}{\sgm}+\ty\right)  \, \frac{\rd\ty}{\sgm^{d}} \rd \sgm \\
			=& -\int_{0}^{1} \int_{\R^d}(1-\sgm) (\pa_{k} \zeta^{y_0})(\ty) \frac{(y-\ty)_{l} (y-\ty)_{j}}{\sgm^{2}} U_{k}\left(t, \frac{y-\ty}{\sgm}+\ty\right)  \, \frac{\rd\ty}{\sgm^{d}} \rd \sgm \\
			& - \int_{0}^{1} \int_{\R^d}(1-\sgm) \zeta^{y_0}(\ty) \frac{(y-\ty)_{l} (y-\ty)_{j}}{\sgm^{2}} (\pa_{k} U_{k})\left(t, \frac{y-\ty}{\sgm}+\ty\right)  \, \frac{\rd\ty}{\sgm^{d}} \rd \sgm.
		\end{split}
	\end{align}
	
	Moreover, we could use
	\begin{align*}
		(\pa_{k} U_{k})\left(t, \frac{y-\ty}{\sgm}+\ty\right) = - \frac{\sgm}{1-\sgm} \frac{\pa}{\pa \ty_{k}} \Big[ U_{k}\left(t, \frac{y-\ty}{\sgm} + \ty\right) \Big],
	\end{align*}
	to obtain
	\begin{align*}
		\oR_{lj}^{(2)}[U](t,y)
		=& - \int_{0}^{1} \int_{\R^d}(\pa_{k} \zeta^{y_0})(\ty) \frac{(y-\ty)_{l} (y-\ty)_{j}}{\sgm^{2}} U_{k}\left(t, \frac{y-\ty}{\sgm}+\ty\right)  \, \frac{\rd\ty}{\sgm^{d}} \rd \sgm \\
		& + \int_{0}^{1} \int_{\R^d}\zeta^{y_0}(\ty) \frac{(y-\ty)_{l}}{\sgm} U_{j}\left(t, \frac{y-\ty}{\sgm}+\ty\right)  \, \frac{\rd\ty}{\sgm^{d}} \rd \sgm \\
		&+ \int_{0}^{1} \int_{\R^d}\zeta^{y_0}(\ty) \frac{(y-\ty)_{j}}{\sgm} U_{l}\left(t, \frac{y-\ty}{\sgm}+\ty\right)  \, \frac{\rd\ty}{\sgm^{d}} \rd \sgm.
	\end{align*}
	
	Similarly, we compute
	\begin{align*}
		\oR_{lj}^{(1)}[U](t,y)
		:=& 	\int_{\R^d}\zeta^{y_0}(\ty) {}^{(\ty)} r_{lj}^{(1)}[U(t, \cdot)](y) \, \rd\ty \\
		=&	\int_{0}^{1} \int_{\R^d}(\pa_{k} \zeta^{y_0})(\ty)  \frac{(y-\ty)_{j} (y-\ty)_{k}}{\sgm^{2}} U_{l}\left(t, \frac{y-\ty}{\sgm}+\ty\right)  \, \frac{\rd\ty}{\sgm^{d}} \rd \sgm \\
		&	- \frac{d+1}{2} \int_{0}^{1} \int_{\R^d}\zeta^{y_0}(\ty) \frac{(y-\ty)_{l}}{\sgm} U_{j}\left(t, \frac{y-\ty}{\sgm}+\ty\right)  \, \frac{\rd\ty}{\sgm^{d}} \rd \sgm \\
		& + \hbox{(Symmetric terms in $j,  l$)},
	\end{align*}
	and
	\begin{align*}
		\oR_{lj}^{(0)}[U](t,y)
		:=& \int_{\R^d}\zeta^{y_0}(\ty) {}^{(\ty)} r_{lj}^{(0)}[U(t, \cdot)](y) \, \rd\ty \\
		=& - \frac{1}{2}\int_{0}^{1} \int_{\R^d}\zeta^{y_0}(\ty) \frac{(y-\ty)_{l}}{\sgm} U_{j}\left(t, \frac{y-\ty}{\sgm}+\ty\right)  \, \frac{\rd\ty}{\sgm^{d}} \rd \sgm \\
		&- \frac{1}{2}\int_{0}^{1} \int_{\R^d}\zeta^{y_0}(\ty) \frac{(y-\ty)_{j}}{\sgm} U_{l}\left(t, \frac{y-\ty}{\sgm}+\ty\right)  \, \frac{\rd\ty}{\sgm^{d}} \rd \sgm.
	\end{align*}
	
	It therefore follows that 
	\begin{equation*}
		{}^{(\zeta^{y_0}(\cdot))} \oR_{lj}[U](t,y) 
		= \oR_{lj}^{(0)}[U](t,y) + \oR_{lj}^{(1)}[U](t,y) + \oR_{lj}^{(2)}[U](t,y)
		= R_{lj}[U](t,y).\qedhere
	\end{equation*}
\end{proof}

\subsubsection{Estimates for the solution operator and proof of Lemma \ref{lm: sym div eq}}
In this subsection, we  derive a key technical lemma (Lemma \ref{lm: Lp est on f}) that provides $ L^{\infty} $ estimates for the operator $ R_{lj}[U] $ (Lemma \ref{lm: Lp est on R}). Finally, we apply the results obtained to prove Lemma \ref{lm: sym div eq}.
\begin{lm} \label{lm: Lp est on f}
	Given $\tmu > 0, y_1\in\R^d$, let $\tilde{\zeta}$ be a non-negative smooth function with $\supp_{y} \tilde{\zeta} \subseteq B(y_1,\tmu)$ such that 
	\begin{equation} \label{eq:keyLp:hyp}
		\nrm{\tilde{\zeta}}_{C^{0}_{y}} \leqslant \tilde{C} \tmu^{-d},
	\end{equation}
	for some $\tilde{C}> 0$. Then the following statements hold:
	
	\begin{enumerate}
				\item For any $k\geqslant 0$ and $f\in L^\infty(\R^d)$ supported in $B(y_1,\tmu)$, we have
		\begin{equation} \label{eq of Lp est on f 1}
			\sup_{y\in\R^d,\sgm\in[0,1]}\abs{\int_{\R^d}\tilde{\zeta}(\ty)\left(\frac{|y-\ty|}{\sgm}\right)^kf\left(\frac{y-\ty}{\sgm}+\ty\right)\frac{\rd\ty}{\sgm^d}}\leqslant C_k\tilde{C}\tmu^k\nrm{f}_{L^\infty(\R^d)}.
		\end{equation}
		
		\item Moreover, for any $k\geqslant 0$ and $f\in L^\infty(\R^d)$ supported in $B(y_1,\tmu)$, we have
		\begin{equation} \label{eq of Lp est on f 2}
			\Nrm{\int_0^1\int_{\R^d}\tilde{\zeta}(\ty)\left(\frac{|y-\ty|}{\sgm}\right)^kf\left(\frac{y-\ty}{\sgm}+\ty\right)\frac{\rd\ty}{\sgm^d}\rd\sgm}_{L_y^\infty}\leqslant C_k\tilde{C}\tmu^k\nrm{f}_{L^\infty(\R^d)}.
		\end{equation}
	\end{enumerate}
\end{lm}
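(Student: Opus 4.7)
The plan is to extract from the two support hypotheses an \emph{effective} localization of $\ty$ in a ball of radius $\sim \sigma\tmu$ around $y$, which will exactly cancel the apparently singular factor $\sigma^{-d}$ in the measure and simultaneously provide a pointwise bound for the factor $(|y-\ty|/\sigma)^k$.

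First, for part $(1)$, I would fix $y \in \R^d$ and $\sigma \in (0,1]$ and set $z := (y-\ty)/\sigma + \ty$, so that $z - \ty = (y-\ty)/\sigma$ and in particular $|y-\ty|/\sigma = |z - \ty|$. On the region where both $\tilde\zeta(\ty) \neq 0$ and $f(z) \neq 0$, the support hypotheses give $|\ty - y_1| < \tmu$ and $|z - y_1| < \tmu$; the triangle inequality then yields
\begin{align*}
\frac{|y-\ty|}{\sigma} \;=\; |z - \ty| \;\leqslant\; |z - y_1| + |y_1 - \ty| \;<\; 2\tmu,
\end{align*}
which is equivalent to $\ty \in B(y, 2\sigma\tmu)$ and also gives the pointwise bound $(|y-\ty|/\sigma)^k < (2\tmu)^k$.

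With these two observations in hand, I would estimate the integrand pointwise by $\|\tilde\zeta\|_{C^0_y} (2\tmu)^k \|f\|_{L^\infty_y}$, restrict the integration to $\ty \in B(y, 2\sigma\tmu)$, and use $|B(y, 2\sigma\tmu)| = c_d (2\sigma\tmu)^d$ together with \eqref{eq:keyLp:hyp}. Putting this together,
\begin{align*}
\left|\int_{\R^d} \tilde\zeta(\ty) \Bigl(\tfrac{|y-\ty|}{\sigma}\Bigr)^k f\Bigl(\tfrac{y-\ty}{\sigma}+\ty\Bigr)\,\frac{\rd\ty}{\sigma^d}\right|
\;\leqslant\; \tilde C \tmu^{-d} \cdot (2\tmu)^k \cdot \|f\|_{L^\infty_y} \cdot \frac{c_d (2\sigma\tmu)^d}{\sigma^d}
\;=\; C_k \tilde C \tmu^k \|f\|_{L^\infty_y},
\end{align*}
uniformly in $\sigma \in (0,1]$, giving \eqref{eq of Lp est on f 1}. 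The case $\sigma = 0$ is handled by the same argument in the limit: the support condition forces $|y-\ty| < 2\sigma\tmu \to 0$, so there is no contribution.

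For part $(2)$, since the bound obtained in $(1)$ is independent of $\sigma$, integrating over $\sigma \in [0,1]$ introduces at most a multiplicative factor of $1$ and yields the same estimate \eqref{eq of Lp est on f 2}. The only subtlety I anticipate is the identification $\ty \in B(y, 2\sigma\tmu)$ on the effective support of the integrand; once this is in place, the proof is essentially a volume/change-of-variables accounting with no further obstacle, and the apparent obstruction from the $\sigma^{-d}$ singularity dissolves automatically.
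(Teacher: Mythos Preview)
Your proof is correct, and in fact slightly slicker than the paper's. Both arguments begin with the same triangle inequality step, observing that on the effective support one has $|y-\ty|/\sigma = |z-\ty| \leqslant |z-y_1| + |y_1-\ty| < 2\tmu$, which handles the factor $(|y-\ty|/\sigma)^k$ and reduces matters to $k=0$. The difference is in how the $\sigma^{-d}$ singularity is tamed. You extract from this same inequality the \emph{additional} information $\ty \in B(y,2\sigma\tmu)$, so the effective integration domain has volume $c_d(2\sigma\tmu)^d$, which cancels $\sigma^{-d}$ directly and gives the uniform bound in one shot. The paper instead argues in two regimes: a crude $L^1$-type bound gives control by $C\tilde C\sigma^{-d}$, and the change of variables $z = (y-\ty)/\sigma + \ty$ gives the symmetric bound $C\tilde C(1-\sigma)^{-d}$; since $\min(\sigma^{-d},(1-\sigma)^{-d}) \leqslant 2^d$ on $[0,1]$, the two combine to a uniform constant. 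Your route avoids the change of variables entirely and is more geometric; the paper's route (inherited from Isett--Oh) is perhaps more robust if one later wants $L^p$ rather than $L^\infty$ estimates, since the two-regime splitting does not rely on pointwise localization of the integration domain.
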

\begin{proof} 
	It is evident that \eqref{eq of Lp est on f 2} can be derived from \eqref{eq of Lp est on f 1}. We start by reducing \eqref{eq of Lp est on f 1} to the case $ k = 0 $. Assuming that the case $ k = 0 $ holds for \eqref{eq of Lp est on f 1}, we can then apply the triangle inequality,
	\begin{align*}
		\frac{|y-\ty|}{\sgm} \leqslant \abs{\frac{y-\ty}{\sgm} + \ty - y_{1}} + \abs{\ty - y_{1}}.
	\end{align*}
	
	Observe that within the integral, the first and second terms on the right-hand side are each $\leqslant \tmu$, due to the support properties of $f$ and $\tilde{\zeta}$, respectively. This implies
	\begin{align*}
		\left( \frac{|y-\ty|}{\sgm} \right)^{k} \leqslant 2^{k} \tmu^{k},
	\end{align*}
	which implies the $k > 0$ case of \eqref{eq of Lp est on f 1} and \eqref{eq of Lp est on f 2}, respectively.
	
	Next, we could prove \eqref{eq of Lp est on f 1} in the case $k=0$. We first calculate
	\begin{equation} \label{eq of Lp est 1}
		\sup_{y} \abs{\int_{\R^d} \tilde{\zeta}(\ty)  f\left(\frac{y-\ty}{\sgm} + \ty\right)\, \frac{\rd \ty}{\sgm^{d}}} 
		\leqslant  C \tilde{C}\sgm^{-d} \nrm{f} _{L^{\infty}_{y}}.
	\end{equation}
	Note that this estimate becomes less effective as $\sgm \to 0$. However, by making the change of variables
	$
		z = \frac{y-\ty}{\sgm} + \ty,
	$
	we have
	\begin{equation} \label{eq of Lp est 3}
		\begin{aligned}
			\sup_{y} \abs{\int_{\R^d} \tilde{\zeta}(\ty)  f\left(\frac{y-\ty}{\sgm} + \ty\right)\, \frac{\rd \ty}{\sgm^{d}}} 
			= \sup_{y} \abs{\int_{\R^d} \tilde{\zeta} \left( \frac{1}{1-\sgm} y - \frac{\sgm}{1-\sgm} z \right)  f(z)  \, \frac{\rd z}{(1-\sgm)^{d}}} \leqslant  C \tilde{C} (1-\sgm)^{-d} \nrm{f} _{L^{\infty}_{y}}.
		\end{aligned}
	\end{equation}

	Combining \eqref{eq of Lp est 1} and \eqref{eq of Lp est 3}, we obtain \eqref{eq of Lp est on f 1}.
\end{proof}

As a consequence of the previous lemma and the differentiation formulae \eqref{eq of pt nabla Rlj 0}--\eqref{eq of pt nabla Rlj 2}, we obtain the following $C^0$ estimates for $R_{lj}$ and the commutator between $\nb^{\beta}$ and $R_{lj}$.
\begin{lm} [$C^0$ bounds for $R_{lj}$] \label{lm: Lp est on R}
	Let $U$ be a smooth vector field on $\mcI^{q,i} \times \R^{d}$ satisfying the hypotheses  \eqref{pp of supp U in E co} and \eqref{eq of linear and angular momenta in E co} in  Lemma \ref{lm: sym div eq}. Define $R_{lj}[U] := R_{lj}^{(0)}[U] + R_{lj}^{(1)}[U] + R_{lj}^{(2)}[U]$ by \eqref{def of Rlj 0}--\eqref{def of Rlj 2}. Then we have
	\begin{equation} \label{eq of R c^j}
		\nrm{R_{lj}[U]}_{0} \leqslant C \tmu \nrm{U}_{0}.
	\end{equation}
\end{lm}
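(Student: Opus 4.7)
The plan is to treat each of the three pieces $R_{lj}^{(0)}[U]$, $R_{lj}^{(1)}[U]$, $R_{lj}^{(2)}[U]$ separately, viewing every one of them as an integral that already fits the form analyzed in Lemma \ref{lm: Lp est on f}. The only bookkeeping is to read off, in each case, the correct constants $\tilde{C}$ and the correct power $k$ of $|y-\ty|/\sgm$ produced by the kernel, then multiply the outputs by the $C^{0}$ size of the relevant derivative of the cutoff $\zeta^{y_0}$.

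For $R_{lj}^{(0)}[U]$, inspect \eqref{def of Rlj 0}: the kernel has one factor $(y-\ty)_{j}/\sgm$ (or the symmetric $(y-\ty)_{l}/\sgm$), so we apply \eqref{eq of Lp est on f 2} with $k=1$, $\tilde{\zeta}=\zeta^{y_{0}(t)}$, $y_{1}=y_{0}(t)$, and $f=U_{l}(t,\cdot)$ (resp.\ $U_{j}$). By \eqref{eq of zeta c0} we have $\|\zeta^{y_{0}(t)}\|_{C^{0}_{y}}\leqslant C\tmu^{-d}$, so we may take $\tilde{C}\sim 1$; both $\tilde\zeta$ and $f$ are supported in $B(y_{0}(t),\tmu)$, matching the hypothesis of Lemma \ref{lm: Lp est on f}. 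Estimate \eqref{eq of Lp est on f 2} then yields
\begin{equation*}
\|R_{lj}^{(0)}[U]\|_{0}\leqslant C\,\tmu\,\|U\|_{0}.
\end{equation*}

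For $R_{lj}^{(1)}[U]$ and $R_{lj}^{(2)}[U]$, inspect \eqref{def of Rlj 1}--\eqref{def of Rlj 2}: each kernel carries two factors of $(y-\ty)/\sgm$, while the cutoff appears as $\pa_{k}\zeta^{y_{0}}$. We apply \eqref{eq of Lp est on f 2} with $k=2$, $\tilde{\zeta}=\pa_{k}\zeta^{y_{0}(t)}$, $y_{1}=y_{0}(t)$, and $f$ equal to the appropriate component of $U(t,\cdot)$. By \eqref{eq of zeta c0} we now have $\|\pa_{k}\zeta^{y_{0}(t)}\|_{C^{0}_{y}}\leqslant C\tmu^{-d-1}$, so the right choice is $\tilde{C}\sim\tmu^{-1}$. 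Plugging into \eqref{eq of Lp est on f 2} gives a bound of size $C\,\tmu^{-1}\cdot\tmu^{2}\,\|U\|_{0}=C\,\tmu\,\|U\|_{0}$ for each of $R_{lj}^{(1)}$ and $R_{lj}^{(2)}$; note how the loss $\tmu^{-1}$ from differentiating $\zeta^{y_{0}}$ is exactly compensated by the extra power of $|y-\ty|$ in the kernel.

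Summing the three contributions gives \eqref{eq of R c^j}. There is no real obstacle in this proof: once the correct pairing of kernel power and cutoff derivative is identified, the estimate is a direct application of Lemma \ref{lm: Lp est on f}; the only subtlety worth checking is that in every integrand the supports of $\tilde\zeta$ and of $f$ both lie in $B(y_{0}(t),\tmu)$ with the \emph{same} center, which is what makes the gain of a factor of $\tmu$ (rather than merely an $L^{\infty}$ boundedness statement) available.
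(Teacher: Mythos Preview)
Your proposal is correct and matches the paper's approach exactly: the paper's proof is simply the one-line remark that Lemmas \ref{lm: Lp est on R} and \ref{lm: commutator Rlj} ``follow directly from applying Lemma \ref{lm: Lp est on f} to the differentiation formulae \eqref{eq of pt nabla Rlj 0}--\eqref{eq of pt nabla Rlj 2}, taking into consideration the properties \eqref{eq of supp zeta} and \eqref{eq of zeta c0} of $\zeta^{y_0}$,'' and you have written out precisely this application with the correct pairing of kernel power $k$ and cutoff derivative for each of the three pieces. One cosmetic point: Lemma \ref{lm: Lp est on f} is stated for \emph{non-negative} $\tilde\zeta$, while you apply it with $\tilde\zeta=\pa_{k}\zeta^{y_{0}}$; this hypothesis is never used in the proof of that lemma, so the application is harmless, but you might note this or replace $\pa_{k}\zeta^{y_{0}}$ by $|\pa_{k}\zeta^{y_{0}}|$ in the bound.
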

\begin{lm}[Commutator between $\nb^{\beta}$ and $R_{lj}$] \label{lm: commutator Rlj}
	Let $U_{l}$ and $R_{lj}[U]$ be as in the hypotheses of Lemma \ref{lm: Lp est on R}. 
	Then for every multi-index $\beta$, we have
	\begin{equation} \label{eq of commutator Rlj}
		\nrm{[\nb^{\beta}, R_{lj}][U]}_{0} \leqslant C_{\beta} \tmu \sum_{\beta_{1}+\beta_{2} = \beta : \beta_{2} \neq \beta} \tmu^{-\abs{\beta_{1}}} \nrm{\nb^{\beta_{2}} U}_{0}.
	\end{equation}
\end{lm}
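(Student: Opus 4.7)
The plan is to exploit the explicit integral representations for the spatial derivatives of $R_{lj}[U]$ that are provided by the differentiation formulae \eqref{eq of pt nabla Rlj 0}--\eqref{eq of pt nabla Rlj 2} in Proposition \ref{pp of R_lj}. Specializing those to $r=0$, $\nb^{\beta} R_{lj}^{(a)}[U]$ is a sum over decompositions $\beta_{1}+\beta_{2}=\beta$ in which the derivatives $\nb^{\beta_{1}}$ fall on the cutoff (on $\zeta^{y_0}$ for $a=0$, on $\pa_{k}\zeta^{y_0}$ for $a=1,2$) and the derivatives $\nb^{\beta_{2}}$ fall inside $U$.

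The first step is to observe that the single term in this sum corresponding to $\beta_{1}=0$ (equivalently $\beta_{2}=\beta$) reproduces exactly the integral representations \eqref{def of Rlj 0}--\eqref{def of Rlj 2} of $R_{lj}[\nb^{\beta}U]$, by linearity of the operator in its argument. Hence
\[
[\nb^{\beta}, R_{lj}][U] \;=\; \nb^{\beta} R_{lj}[U] - R_{lj}[\nb^{\beta}U]
\]
is given by precisely the remaining part of \eqref{eq of pt nabla Rlj 0}--\eqref{eq of pt nabla Rlj 2}, namely the sum over $\beta_{1}+\beta_{2}=\beta$ with $|\beta_{1}|\geqslant 1$ (i.e., $\beta_{2}\neq\beta$).

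The second step is to estimate each such term pointwise by invoking the key technical Lemma \ref{lm: Lp est on f}. For the $R_{lj}^{(0)}$ contribution I would apply \eqref{eq of Lp est on f 2} with $k=1$ (arising from the factor $(y-\ty)_{j}/\sgm$) and with $\tilde{\zeta}=\nb^{\beta_{1}}\zeta^{y_{0}}$; the bound \eqref{eq of zeta c0} gives $\nrm{\nb^{\beta_{1}}\zeta^{y_{0}}}_{C^{0}_{y}}\leqslant C_{\beta_{1}}\tmu^{-d-|\beta_{1}|}$, so the admissible constant in Lemma \ref{lm: Lp est on f} is $\tilde{C}=C_{\beta_{1}}\tmu^{-|\beta_{1}|}$, yielding a contribution of order $\tmu^{1-|\beta_{1}|}\nrm{\nb^{\beta_{2}}U}_{0}$. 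For the $R_{lj}^{(1)}$ and $R_{lj}^{(2)}$ contributions the analogous computation uses $\tilde{\zeta}=\nb^{\beta_{1}}\pa_{k}\zeta^{y_{0}}$ (losing one extra power of $\tmu^{-1}$ from the additional derivative) and $k=2$ (from the factor $(y-\ty)_{j}(y-\ty)_{k}/\sgm^{2}$); the extra $\tmu^{-1}$ and the extra $\tmu$ cancel, so the bound $\tmu^{1-|\beta_{1}|}\nrm{\nb^{\beta_{2}}U}_{0}$ is the same.

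The third and final step is to sum the three contributions over all admissible $\beta_{1}+\beta_{2}=\beta$ with $\beta_{2}\neq\beta$, which gives the claimed estimate \eqref{eq of commutator Rlj}. I do not expect a substantive obstacle here: the argument is almost mechanical once the differentiation formulae of Proposition \ref{pp of R_lj} are in hand. The only mild subtlety is the identification of the $\beta_{1}=0$ term with $R_{lj}[\nb^{\beta}U]$ (so that the commutator collapses to ``derivatives falling on the kernel''), but this is immediate from the linearity of the integral representation; support conditions such as $\supp_{y}\nb^{\beta_{2}}U\subseteq B(y_{0},\tmu)$, needed in order to apply Lemma \ref{lm: Lp est on f}, are inherited trivially from $\supp_{y}U\subseteq B(y_{0},\tmu)$.
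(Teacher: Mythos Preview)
Your proposal is correct and follows exactly the approach the paper indicates: the paper's proof consists of a single sentence stating that the lemma follows by applying Lemma~\ref{lm: Lp est on f} to the differentiation formulae \eqref{eq of pt nabla Rlj 0}--\eqref{eq of pt nabla Rlj 2} together with the bounds \eqref{eq of supp zeta} and \eqref{eq of zeta c0} on $\zeta^{y_0}$, and your three steps are precisely a detailed unpacking of that sentence. The only cosmetic point is that Lemma~\ref{lm: Lp est on f} is stated for non-negative $\tilde{\zeta}$, whereas $\nb^{\beta_{1}}\zeta^{y_{0}}$ need not be; but the proof of that lemma uses only the $C^{0}$ bound and support of $\tilde{\zeta}$, so this causes no trouble.
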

These lemmas follow directly from applying Lemma \ref{lm: Lp est on f} to the differentiation formulae \eqref{eq of pt nabla Rlj 0}--\eqref{eq of pt nabla Rlj 2}, taking into consideration the properties \eqref{eq of supp zeta} and \eqref{eq of zeta c0} of $\zeta^{y_0}$.
\begin{proof}[Proof of Lemma \ref{lm: sym div eq}]
Considering Proposition \ref{pp of R_lj} and Lemma \ref{lm: integrable}--\ref{lm: commutator Rlj}, it remains to establish estimate \eqref{est of nabla oR}. By using \eqref{eq of pt nabla Rlj 0}--\eqref{eq of pt nabla Rlj 2},  and \eqref{eq of Lp est on f 2}, we have for $r = 0, 1, 2$ and $0\leqslant N+r\leqslant L$,
\begin{align*}
\nrm{R_{lj}[U]}_{N}\lesssim&_L \sum_{N_1+N_2=N}(\tmu\nrm{\zeta^{y_0}}_{N_1}+\tmu^2\nrm{\zeta^{y_0}}_{N_1+1})\nrm{U}_{N_2} 
\lesssim_{L}A \tmu \sum_{m=0}^N\tmu^{-N+m}\tlm^{m}\lesssim_{L}A \tmu (\tmu^{-1}+\tlm)^N,\\
\nrm{ \pa_tR_{lj}[U]}_{N}\lesssim&_L \sum_{N_1+N_2=N}(\tmu\nrm{\pt \zeta^{y_0}}_{N_1}+\tmu^2\nrm{\pt \zeta^{y_0}}_{N_1+1})  \nrm{U}_{N_2}+\sum_{N_1+N_2=N}(\tmu\nrm{\zeta^{y_0}}_{N_1}+\tmu^2\nrm{\zeta^{y_0}}_{N_1+1}) \nrm{\pt U}_{N_2}\\
\lesssim&_L \tmu \sum_{N_1+N_2=N}  \tmu^{-N_1-1}\nrm{\pt y_0}_{C_{t}^0} A\tlm^{N_2}+ \tmu \sum_{N_1+N_2=N}  \tmu^{-N_1} A\tlm^{N_2+1} \\
\lesssim&_{L}A \tmu (\tlm+\tmu^{-1}\nrm{\pt y_0}_{C_{t}^0})\sum_{m=0}^N\tmu^{-N+m}\tlm^{m}\\
\lesssim&_{L}A \tmu (\tlm+\tmu^{-1}\nrm{\pt y_0}_{C_{t}^0})(\tmu^{-1}+\tlm)^N,\\
\nrm{ \pa_t^2R_{lj}[U]}_{N}\lesssim&_L \sum_{N_1+N_2=N}\sum_{r_1+r_2=2}(\tmu\nrm{\pt^{r_1}\zeta^{y_0}}_{N_1}+\tmu^2\nrm{\pt^{r_1}\zeta^{y_0}}_{N_1+1}) \nrm{\pt^{r_2} U}_{N_2}\\
\lesssim&_L \tmu \sum_{N_1+N_2=N}  \tmu^{-N_1-1}\nrm{\pt^2 y_0}_{C_{t}^0} A\tlm^{N_2}+ \tmu \sum_{N_1+N_2=N}  \tmu^{-N_1-2}\nrm{\pt y_0}_{C_{t}^0}^2 A\tlm^{N_2}\\
&+ \tmu \sum_{N_1+N_2=N}  \tmu^{-N_1-1}\nrm{\pt y_0}_{C_{t}^0} A\tlm^{N_2+1}+ \tmu \sum_{N_1+N_2=N}  \tmu^{-N_1} A\tlm^{N_2+2} \\
\lesssim&_{L}A \tmu(\tlm^2+\tmu^{-1}\tlm\nrm{\pt y_0}_{C_{t}^0}+\tmu^{-1}\nrm{\pt^2 y_0}_{C_{t}^0}+\tmu^{-2}\nrm{\pt y_0}_{C_{t}^0}^2)\sum_{m=0}^N\tmu^{-N+m}\tlm^{m}\\
\lesssim&_{L}A \tmu(\tlm^2+\tmu^{-1}\tlm\nrm{\pt y_0}_{C_{t}^0}+\tmu^{-1}\nrm{\pt^2 y_0}_{C_{t}^0}+\tmu^{-2}\nrm{\pt y_0}_{C_{t}^0}^2)(\tmu^{-1}+\tlm)^N,
\end{align*}
where we have used
\begin{equation*}
\pt\zeta^{y_0}(y)=-(\pt y_0\cdot\nabla)\zeta^{y_0},\quad\pt^2\zeta^{y_0}(y)=-(\pt^2 y_0\cdot\nabla)\zeta^{y_0}+(\pt y_0)^{\top}\nabla^2\zeta^{y_0}(\pt y_0).\qedhere
\end{equation*}
\end{proof}
\subsection{Solving the asymmetric divergence equation}
In this subsection, we will use the conclusion in Lemma \ref{lm: sym div eq} to solve the asymmetric equation \eqref{asymmetric divergence equation}. The following is a main result regarding compactly supported solutions to the asymmetric divergence equation on $\R^d$ for any $d\geq2$:
\begin{align}\label{asym div eq}
	\Div (F\tR)=U.
\end{align}

\begin{pp}[Compactly supported solutions to the asymmetric divergence equation] \label{lm: asym div eq}
	Let $d\geqslant2$ and $L\geqslant 1$ be integers, and set $\nrm{\cdot}_{N}=\nrm{\cdot}_{C^0(\mcI^{q,i};C^N(\R^d))}$. There exists a sufficiently small constant $\varepsilon_0=\varepsilon_0(d)<1/2$ such that the following holds. Assume that
	\begin{align*}
	A>0,\quad \tmu>0,\quad \tlu\geqslant1
	,\quad 0<\varepsilon\leqslant\varepsilon_0,
	\end{align*} 
	and consider a smooth deformation $y(t,x)=x+u(t,x)\in C^\infty(\mcI^{q,i}\times \R^d;\R^d)$ with $F=\nb_xy$ satisfying
	\begin{align}
		\nrm{u}_{0}\leqslant 2\varepsilon,\qquad\nrm{\nabla u}_{0}, \nrm{\pa_t u}_{0}\leqslant \varepsilon,\quad \nrm{\pa_t^r u}_{N}\leqslant \varepsilon\tlu^{N+r-1},\quad 2\leqslant N+r\leqslant L+2,\quad N,r\geqslant0.
	\end{align}
	
	We consider a smooth vector field  $U\in C^\infty(\mcI^{q,i}\times\R^d;\R^d)$ which satisfies 
	\begin{align}
		\supp_{x} U(t,\cdot) \subseteq  B(x_0,\tmu),\quad t \in \mcI^{q,i},\label{pp of supp U in L co}
	\end{align}
	for some $x_0\in\R^{d}$, and has vanishing linear and angular moments in Lagrangian coordinates, i.e., for $j,l=1,\ldots,d$ and all $t$,
	\begin{align} 
		&\int_{\R^d} U_{l}(t,x) \, \rd x =0,\quad \int_{\R^d} (y_{j} U_{l} - y_{l} U_{j})(t,x) \, \rd x =0. \label{eq of linear and angular momenta in L co}
	\end{align}
	Moreover, assume that, for some constant $\tlU>0$, the vector field $U$ satisfies the following estimates for $r=0,1,2$:
	\begin{equation} \label{est on U in L co} 
		\begin{aligned}
			\nrm{\pt^r U}_{N} \leqslant& A\tlU^{N+r},	\qquad 0\leqslant N+r\leqslant L.
		\end{aligned}
	\end{equation}
	Then there exists a symmetric tensor field $\tR[U]=(\tR_{lj}[U])_{1\leqslant l,j\leqslant d}$ solving the asymmetric divergence equation \eqref{asym div eq}. The solution depends linearly on $U$ for fixed $y$ and has the following properties:
	\begin{enumerate}
		\item There exists a constant $C_d>0$, depending only on $d$, such that the support of $\tR_{lj}[U]$ is contained in the ball $B(x_0,(1+C_d\varepsilon)\tmu)$ for all $t\in\mcI^{q,i}$, i.e.,
		\begin{equation} \label{pp of supp oR in L co}
			\supp_{x} \tR_{lj}[U](t,\cdot) \subseteq B(x_0,(1+C_d\varepsilon)\tmu),\quad \forall t\in\mcI^{q,i}.
		\end{equation}
		\item The time support of $\tR_{lj}[U]$ is contained in the time support of $U$, i.e.,
		\begin{equation} \label{pp of time supp tRlj}
			\supp_{t} \tR_{lj}[U]\subseteq \supp_{t} U.
		\end{equation}
		\item There exists a constant $C_{d,L}^*>0$, depending only on $d$ and $L$, such that for $r=0,1,2$,
		\begin{equation} \label{est of nabla oR in L co}
			\nrm{\pt^r\tR_{lj}[U]}_{N} \leqslant C_{d,L}^*A \tmu \left(\max\bra{\varepsilon\tmu^{-1},\tlU,\tlu}\right)^r\left(\max\bra{\tmu^{-1},\tlU,\tlu}\right)^{N}, \quad 0\leqslant N\leqslant L-r.
		\end{equation}
	\end{enumerate}
	
\end{pp}
\begin{Rmk}
	As in Remark \ref{Rmk on curl}, the second condition in \eqref{eq of linear and angular momenta in L co} can be written as
	\begin{align*} 
		\int_{\R^d} (y\times U)(t,x) \, \rd x = 0.
	\end{align*}
\end{Rmk}
\begin{proof}
	Throughout the proof, let $c_d>0$ be a sufficiently large constant depending only on $d$ such that all the dimensional estimates below hold, and set $C_d:=2(c_d+1)$. We choose $\varepsilon_0(d)>0$ sufficiently small so that $C_d\varepsilon_0(d)\leqslant\frac12$. By the bound $\nrm{\nabla u}_{0}\leqslant\varepsilon$, we have
	$$
	|u(t,x_1)-u(t,x_2)|\leqslant c_d\varepsilon|x_1-x_2|,
	\qquad
	\nrm{\det(\Id+\nabla u)-1}_{0}\leqslant c_d\varepsilon.
	$$
	Consequently,
	$$
	|y(t,x_1)-y(t,x_2)|\geqslant(1-c_d\varepsilon)|x_1-x_2|,
	\qquad
	\frac12\leqslant J(t,x)\leqslant\frac32,
	$$
	where $F=\nabla_xy=\Id+\nabla u$ and $J=\det F$. Furthermore, for every $\ty\in\R^d$, the map $x\mapsto\ty-u(t,x)$ is a contraction on $\R^d$. Hence, $y(t,\cdot)$ is a global smooth diffeomorphism of $\R^d$. Denoting its inverse by $X(t,\cdot)$, we have
	$
	y(t,X(t,\ty))=\ty,
	\
	X(t,y(t,\tx))=\tx
	$
	for all $\tx,\ty\in\R^d$. We then define
	$$
	U^*(t,\ty)=U(t,X(t,\ty)),
	\qquad
	\rho^*(t,\ty)=J^{-1}(t,X(t,\ty)).
	$$
	
	As in \eqref{law of conservation in Ec}, the change of variables $\ty=y(t,x)$, together with \eqref{pp of supp U in L co} and \eqref{eq of linear and angular momenta in L co}, shows that $\rho^*U^*$ satisfies \eqref{eq of linear and angular momenta in E co}, namely,
	\begin{align*}
		\int_{\R^d}(\rho^*U^*_{l})(t,\ty)\rd\ty&=\int_{\R^d}U_l(t,x)\rd x=0,\\
		\int_{\R^d}\rho^*(\ty_jU^*_l-\ty_lU^*_j)(t,\ty)\rd\ty&=\int_{\R^d}(y_jU_l-y_lU_j)(t,x)\rd x=0.
	\end{align*}
	For any $\tx\in\supp_xU(t,\cdot)\subseteq B(x_0,\tmu)$ and $t\in\mcI^{q,i}$, setting $y_0(t)=y(t,x_0)$, we have
	\begin{align*}
		|y(t,\tx)-y_0(t)|&=|y(t,\tx)-y(t,x_0)|\leqslant|u(t,\tx)-u(t,x_0)|+|\tx-x_0|
		\leqslant(1+c_d\nrm{\nabla u}_{0})|\tx-x_0|\leqslant(1+c_d\varepsilon)\tmu.
	\end{align*}
	It follows that $\supp_yU^*(t,\cdot)\subseteq B(y_0(t),(1+c_d\varepsilon)\tmu)$.
	
	Consequently, $\rho^*U^*$ satisfies \eqref{pp of supp U in E co} and \eqref{eq of linear and angular momenta in E co} with center $y_0(t)$ and radius $(1+c_d\varepsilon)\tmu$. Applying the solution operator in Lemma \ref{lm: sym div eq}, we obtain a symmetric tensor $R[\rho^*U^*]$ satisfying the support and time-support properties \eqref{pp of supp oR}--\eqref{pp of time supp oR} and
	$$
	\Div_yR[\rho^*U^*]=\rho^*U^*.
	$$
	As in \eqref{Transport of Div R} and \eqref{asymmetric divergence equation 1}, we define
	\begin{align}
		\tR[U](t,x)=JF^{-1}R[\rho^*U^*](t,y(t,x))F^{-\top}.
	\end{align}
	Since $R[\rho^*U^*]$ is symmetric and depends linearly on $\rho^*U^*$, the tensor $\tR[U]$ is symmetric and depends linearly on $U$ for fixed $y$. By the Piola identity, it satisfies
	$$
	\Div_x(F\tR[U])=U.
	$$
	
	Moreover, since $\nrm{\nabla u}_{0}\leqslant\varepsilon\leqslant1/2$, we have
	\begin{align*}
		\nrm{\nabla_yX}_{0}=\nrm{(\nabla y)^{-1}}_{0}=\nrm{(\Id+\nabla u)^{-1}}_{0}=\nrm{\Id-\nabla u+\sum_{N=2}^{\infty}(-\nabla u)^N}_{0}
		\leqslant1+\nrm{\nabla u}_{0}+\frac{\nrm{\nabla u}_{0}^2}{1-\nrm{\nabla u}_{0}}=\frac{1}{1-\nrm{\nabla u}_{0}}\leqslant1+2\varepsilon.
	\end{align*}
	For any $\ty\in\supp_yR[\rho^*U^*](t,\cdot)\subseteq B(y_0(t),(1+c_d\varepsilon)\tmu)$, we have
	\begin{align*}
		|X(t,\ty)-x_0|=|X(t,\ty)-X(t,y_0(t))|\leqslant\nrm{\nabla_yX}_{0}|\ty-y_0(t)|
		\leqslant(1+2\varepsilon)(1+c_d\varepsilon)\tmu\leqslant(1+C_d\varepsilon)\tmu,
	\end{align*}
	where the last inequality follows from $\varepsilon\leqslant1/2$ and $C_d=2(c_d+1)$. Therefore,
	$$
	\supp_x\tR[U](t,\cdot)\subseteq B(x_0,(1+C_d\varepsilon)\tmu).
	$$
	Since the construction is linear and local in time, we also have $\supp_t\tR[U]\subseteq\supp_tU$.
	
	Next, we prove \eqref{est of nabla oR in L co}. We first derive estimates for $F^{-1}$ and $J$. Since
	\begin{equation}\label{der of F^-1}
		\begin{aligned}
			\pt (F^{-1}) =& -F^{-1}\pt F F^{-1},\quad \ \pa_{x_i}(F^{-1})=-F^{-1}\pa_{x_i} F F^{-1},
		\end{aligned}
	\end{equation}
	by induction on $N$ and for $r=0,1,2$, we obtain
	\begin{align*}
		\nrm{\nb F^{-1}}_{N}\lesssim&\sum_{N_1+N_2+N_3=N}\nrm{F^{-1}}_{N_1}\nrm{ F^{-1}}_{N_2}\nrm{\nabla F}_{N_3}\lesssim_{L,d}\varepsilon\tlu^{N+1},&& 0\leqslant N\leqslant L,\\
		\nrm{\pt^{r+1} F^{-1}}_{N}\lesssim&\sum_{N_1+N_2+N_3=N}\sum_{r_1+r_2+r_3=r}\nrm{\pt^{r_1}F^{-1}}_{N_1}\nrm{ \pt^{r_2}F^{-1}}_{N_2}\nrm{\pt^{1+r_3} F}_{N_3}\lesssim_{L,d}\varepsilon\tlu^{N+r+1},&& 0\leqslant N\leqslant L-r.
	\end{align*}
	Moreover, for $m\in\Z\setminus\{0\}$, we have
	\begin{equation}\label{der of J}
		\begin{aligned}
			\pt J^m
			=
			mJ^m\tr(F^{-1}\pt F),
			\qquad
			\pa_{x_i}J^m
			=
			mJ^m\tr(F^{-1}\pa_{x_i}F).
		\end{aligned}
	\end{equation}
	Furthermore,
	\begin{align*}
		\nrm{J^m}_{0}
		&\lesssim_{d,m}1,
		\\
		\nrm{\nabla J^m}_{N}
		&\lesssim_{d,m}
		\sum_{N_1+N_2+N_3=N}
		\nrm{J^m}_{N_1}
		\nrm{F^{-1}}_{N_2}
		\nrm{\nabla F}_{N_3}
		\lesssim_{L,d,m}
		\varepsilon\tlu^{N+1},
		&&
		0\leqslant N\leqslant L,
		\\
		\nrm{\pt^{r+1}J^m}_{N}
		&\lesssim_{d,m}
		\sum_{r_1+r_2+r_3=r}
		\sum_{N_1+N_2+N_3=N}
		\nrm{\pt^{r_1}J^m}_{N_1}
		\nrm{\pt^{r_2}F^{-1}}_{N_2}
		\nrm{\pt^{1+r_3}F}_{N_3}
		\lesssim_{L,d,m}
		\varepsilon\tlu^{N+r+1},
		&&
		0\leqslant N\leqslant L-r.
	\end{align*}
	Next, we derive estimates for $X(t,y)$. We calculate
	\begin{align*}
		\nabla_yX(t,y)
		&=
		F^{-1}(t,X(t,y)),
		\quad 
		\pt X(t,y)
		=
		-F^{-1}\pa_ty(t,X(t,y)),
		\\
		\pt^2X(t,y)
		&=
		-F^{-1}
		\left(
		\pa_{tt}y
		-2(\pt F)F^{-1}\pt y
		+\nabla^2y:
		(F^{-1}\pt y)\otimes(F^{-1}\pt y)
		\right)(t,X(t,y)).
	\end{align*}
		Then, for $1\leqslant N\leqslant L$, Lemma \ref{lm: ffhs} and an induction on $N$ yield
		\begin{align*}
		\nrm{\nabla_y X}_{0}\leqslant&\nrm{F^{-1}}_{0}\lesssim_d 1,\quad
		\nrm{\pt X}_{0}\leqslant\nrm{F^{-1}}_{0}\nrm{\pt y}_{0}\lesssim_d\varepsilon,\\
		\nrm{\pt^2X}_{0}\leqslant&\nrm{F^{-1}}_{0}(\nrm{\pa_{tt}y}_{0}+2\nrm{\pt F}_{0}\nrm{F^{-1}}_{0}\nrm{\pt y}_{0}+\nrm{\nabla^2y}_{0}\nrm{F^{-1}\pt y}_{0}\nrm{F^{-1}\pt y}_{0})\lesssim_{L,d}\varepsilon\tlu,\\
		\nrm{\nabla_y X}_{N}\lesssim&\nrm{\nabla_yX}_{0}+\nrm{\nabla F^{-1}}_{0}\nrm{\nabla_yX}_{N-1}+\nrm{\nabla F^{-1}}_{N-1}\nrm{\nabla_yX}_{0}^N\\
		\lesssim&1+\varepsilon\tlu\nrm{\nabla_yX}_{N-1}+\varepsilon\tlu^N\nrm{\nabla_yX}_{0}^N\\
		\lesssim&1+\varepsilon\tlu(1+\varepsilon\tlu^{N-1})+\varepsilon\tlu^N\\
		\lesssim&_{L,d}1+\varepsilon\tlu^{N},\\
		\nrm{\pt X}_{N}\lesssim&\nrm{\pt X}_{0}+\nrm{\nabla(F^{-1}\pt y)}_{0}\nrm{\nabla_yX}_{{N-1}}+\nrm{\nabla(F^{-1}\pt y)}_{N-1}\nrm{\nabla_yX}_{0}^N
		\lesssim_{L,d}\varepsilon\tlu^{N},\\
		\nrm{\pt^2X}_{N}\lesssim&\nrm{\pt^2X}_{0}+\nrm{\nabla(F^{-1}(\pa_{tt}y-2(\pt F)F^{-1}\pt y+\nabla^2y:(F^{-1}\pt y)\otimes (F^{-1}\pt y)))}_{0}\nrm{\nabla_yX}_{{N-1}}\\
		&+\nrm{\nabla(F^{-1}(\pa_{tt}y-2(\pt F)F^{-1}\pt y+\nabla^2y:(F^{-1}\pt y)\otimes (F^{-1}\pt y)))}_{N-1}\nrm{\nabla_yX}_{0}^N\\
		\lesssim&_{L,d}\varepsilon\tlu^{N+1}.
	\end{align*}
	Using Lemma \ref{lm: ffhs}, we obtain
	\begin{align*}
		\nrm{\rho^*U^*}_{0}\leqslant&
		\nrm{J^{-1}}_{0}\nrm{U}_{0}
		\lesssim_d A,
		\\
		\nrm{\pt(\rho^*U^*)}_{0}\leqslant&
		\nrm{\pt(J^{-1}U)}_{0}
		+\nrm{\pt X}_{0}\nrm{\nabla(J^{-1}U)}_{0}
		\lesssim A(\tlU+\varepsilon\tlu)
		\lesssim A\left(\max\bra{\tlU,\tlu}\right),
		\\
		\nrm{\pt^2(\rho^*U^*)}_{0}\leqslant&
		\nrm{\pt^2(J^{-1}U)}_{0}
		+2\nrm{\pt X}_{0}\nrm{\nabla\pt(J^{-1}U)}_{0}
		+\nrm{\pt^2X}_{0}\nrm{\nabla(J^{-1}U)}_{0}+\nrm{\nabla^2(J^{-1}U)}_{0}\nrm{\pt X}_{0}^{2}
		\\
		\lesssim&_{d} A(\tlU^2+\varepsilon\tlu^2)
		\lesssim A\left(\max\bra{\tlU,\tlu}\right)^2,
		\\
		\nrm{\rho^*U^*}_{N}\lesssim&_{L,d}
		\nrm{\nabla(J^{-1}U)}_{0}\nrm{\nabla_yX}_{N-1}
		+\nrm{\nabla(J^{-1}U)}_{N-1}\nrm{\nabla_yX}_{0}^{N}
		\\
		\lesssim&_{L,d}
		A\left(\max\bra{\tlU,\tlu}\right)^{N},
		&& 1\leqslant N\leqslant L,
		\\
		\nrm{\pt(\rho^*U^*)}_{N}\lesssim&_{L,d}
		\nrm{\nabla\pt(J^{-1}U)}_{0}\nrm{\nabla_yX}_{N-1}
		+\nrm{\nabla\pt(J^{-1}U)}_{N-1}\nrm{\nabla_yX}_{0}^{N}+\nrm{\pt X}_{N}\nrm{\nabla(J^{-1}U)}_{0}
		\\
		&+\sum_{\substack{N_1+N_2=N\\N_2\geqslant1}}
		\nrm{\pt X}_{N_1}
		\left(
		\nrm{\nabla^2(J^{-1}U)}_{0}\nrm{\nabla_yX}_{N_2-1}
		+\nrm{\nabla^2(J^{-1}U)}_{N_2-1}
		\nrm{\nabla_yX}_{0}^{N_2}
		\right)
		\\
		\lesssim&_{L,d}
		A\left(\max\bra{\tlU,\tlu}\right)^{N+1},
		&&1\leqslant N\leqslant L-1,
		\\
		\nrm{\pt^2(\rho^*U^*)}_{N}\lesssim&_{L,d}
		\nrm{\nabla\pt^2(J^{-1}U)}_{0}\nrm{\nabla_yX}_{N-1}
		+\nrm{\nabla\pt^2(J^{-1}U)}_{N-1}\nrm{\nabla_yX}_{0}^{N}+\nrm{\pt X}_{N}\nrm{\nabla\pt(J^{-1}U)}_{0}
		\\
		&+\sum_{\substack{N_1+N_2=N\\N_2\geqslant1}}
		\nrm{\pt X}_{N_1}
		\left(
		\nrm{\nabla^2\pt(J^{-1}U)}_{0}\nrm{\nabla_yX}_{N_2-1}
		+\nrm{\nabla^2\pt(J^{-1}U)}_{N_2-1}
		\nrm{\nabla_yX}_{0}^{N_2}
		\right)
		\\
		&+\nrm{\pt^2X}_{N}\nrm{\nabla(J^{-1}U)}_{0}+\sum_{N_1+N_2=N}
		\nrm{\pt X}_{N_1}\nrm{\pt X}_{N_2}
		\nrm{\nabla^2(J^{-1}U)}_{0}
		\\
		&+\sum_{\substack{N_1+N_2=N\\N_2\geqslant1}}
		\nrm{\pt^2X}_{N_1}
		\left(
		\nrm{\nabla^2(J^{-1}U)}_{0}\nrm{\nabla_yX}_{N_2-1}
		+\nrm{\nabla^2(J^{-1}U)}_{N_2-1}
		\nrm{\nabla_yX}_{0}^{N_2}
		\right)
		\\
		&+\sum_{\substack{N_1+N_2+N_3=N\\N_3\geqslant1}}
		\nrm{\pt X}_{N_1}\nrm{\pt X}_{N_2}
		\left(
		\nrm{\nabla^3(J^{-1}U)}_{0}\nrm{\nabla_yX}_{N_3-1}
		+\nrm{\nabla^3(J^{-1}U)}_{N_3-1}
		\nrm{\nabla_yX}_{0}^{N_3}
		\right)
		\\
		\lesssim&_{L,d}
		A\left(\max\bra{\tlU,\tlu}\right)^{N+2},
		&&1\leqslant N\leqslant L-2.
	\end{align*}
	
	Since $y_0(t)=y(t,x_0)$, we have $\nrm{\pt y_0}_{C_t^0}\leqslant\varepsilon$ and $\nrm{\pt^2y_0}_{C_t^0}\leqslant\varepsilon\tlu$. Absorbing the harmless factors $1+C_d\varepsilon$ into the implicit constants and combining Lemma \ref{lm: ffhs} with \eqref{est of nabla oR} in Lemma \ref{lm: sym div eq}, we obtain the following estimates for $R_{lj}[\rho^*U^*]$ for $r=0,1,2$:
	\begin{align}
		\nrm{\pt^rR_{lj}[\rho^*U^*]}_{N}
		\lesssim&_{L,d}A \tmu \left(\max\bra{\varepsilon\tmu^{-1},\tlU,\tlu}\right)^r\left(\max\bra{\tmu^{-1},\tlU,\tlu}\right)^{N},&&0\leqslant N\leqslant L-r.\label{est on R rU^*}
	\end{align}
	Next, we calculate
	\begin{align*}
		\pa_t(R_{lj}[\rho^*U^*](t,y(t,x)))=&(\pt+\pt y\cdot\nabla) R_{lj}[\rho^*U^*](t,y(t,x)),\\
		\pa_t^2(R_{lj}[\rho^*U^*](t,y(t,x))) 	=&\pt^2R_{lj}[\rho^*U^*](t,y(t,x))+2(\pt y\cdot\nabla)\pt R_{lj}[\rho^*U^*](t,y(t,x))\\
		&+(\pt y)^{\top} \nabla^2R_{lj}[\rho^*U^*](\pt y)(t,y(t,x))+(\pt^2 y\cdot \nabla)R_{lj}[\rho^*U^*](t,y(t,x)),
	\end{align*}
	and hence
	\begin{align*}
		\nrm{\pt ( R_{lj}[\rho^*U^*](t,y(t,x)))}_{0}\lesssim&\nrm{\pt R_{lj}[\rho^*U^*]}_{0}+\nrm{\pt y}_{0}\nrm{\nabla R_{lj}[\rho^*U^*]}_{0}\\
		\lesssim&_{d}A\tmu\max\bra{\varepsilon\tmu^{-1},\tlU,\tlu} +A\varepsilon\tmu\max\bra{\tmu^{-1},\tlU,\tlu}\\
		\lesssim&_{d} A\tmu\max\bra{\varepsilon\tmu^{-1},\tlU,\tlu},\\
		\nrm{\pt^2 (R_{lj}[\rho^*U^*](t,y(t,x)))}_{0}\lesssim&\nrm{\pt^2 R_{lj}[\rho^*U^*]}_{0}+\nrm{\pt y}_{0}\nrm{\nabla\pt R_{lj}[\rho^*U^*]}_{0}
		+\nrm{\pt y}_{0}\nrm{\nabla^2R_{lj}[\rho^*U^*]}_{0}\nrm{\pt y}_{0}\\
		&+\nrm{\pt^2 y}_{0}\nrm{\nabla R_{lj}[\rho^*U^*]}_{0}\\
		\lesssim&_{d}A\tmu\left(\max\bra{\varepsilon\tmu^{-1},\tlU,\tlu}\right)^2+A\tmu\varepsilon\max\bra{\varepsilon\tmu^{-1},\tlU,\tlu}\max\bra{\tmu^{-1},\tlU,\tlu}\\
		&+A\tmu\varepsilon^2\left(\max\bra{\tmu^{-1},\tlU,\tlu}\right)^2+A\tmu\varepsilon\tlu\max\bra{\tmu^{-1},\tlU,\tlu}\\
		\lesssim&_{d} A\tmu\left(\max\bra{\varepsilon\tmu^{-1},\tlU,\tlu}\right)^2.
	\end{align*}
	By Lemma \ref{lm: ffhs} and \eqref{est on R rU^*}, for $0\leqslant r_1+r_2\leqslant 2$ and $1\leqslant N+r_1+r_2\leqslant L$, we have
	\begin{align*}
		\nrm{\pt^{r_1}\nabla^{r_2} R_{lj}[\rho^*U^*](t,y(t,x)))}_{N}
		\lesssim&_{L,d}\nrm{ \pt^{r_1}\nabla^{r_2}R_{lj}[\rho^*U^*]}_{1}\nrm{\nabla_xy}_{N-1}+\nrm{ \pt^{r_1}\nabla^{r_2}R_{lj}[\rho^*U^*]}_{N}\nrm{\nabla_xy}_{0}^N\\
		\lesssim&_{L,d}  A \tmu \left(\max\bra{\varepsilon\tmu^{-1},\tlU,\tlu}\right)^{r_1}\left(\max\bra{\tmu^{-1},\tlU,\tlu}\right)^{N+r_2},
	\end{align*}
	and consequently
	\begin{align*}
		\nrm{\pt (R_{lj}[\rho^*U^*](t,y(t,x)))}_{N}
		\lesssim&_{L,d}\nrm{\pt R_{lj}[\rho^*U^*](t,y(t,x)))}_{N}+\sum_{N_1+N_2=N}\nrm{\pt y}_{N_1}\nrm{\nabla R_{lj}[\rho^*U^*](t,y(t,x)))}_{N_2}\\
		\lesssim&_{L,d} A \tmu \left(\max\bra{\varepsilon\tmu^{-1},\tlU,\tlu}\right)\left(\max\bra{\tmu^{-1},\tlU,\tlu}\right)^{N},\\
		\nrm{\pt^2 (R_{lj}[\rho^*U^*](t,y(t,x)))}_{N}
		\lesssim&_{L,d}\nrm{\pt^2 R_{lj}[\rho^*U^*](t,y(t,x)))}_{N}+\sum_{N_1+N_2=N}\nrm{\pt y}_{N_1}\nrm{\nabla \pt R_{lj}[\rho^*U^*](t,y(t,x)))}_{N_2}\\
		&+\sum_{N_1+N_2+N_3=N}\nrm{\pt y}_{N_1}\nrm{\nabla^2R_{lj}[\rho^*U^*](t,y(t,x)))}_{N_2}\nrm{\pt y}_{N_3}\\
		&+\sum_{N_1+N_2=N}\nrm{\pt^2 y}_{N_1}\nrm{\nabla R_{lj}[\rho^*U^*](t,y(t,x)))}_{N_2}\\
		\lesssim&_{L,d} A \tmu \left(\max\bra{\varepsilon\tmu^{-1},\tlU,\tlu}\right)^2\left(\max\bra{\tmu^{-1},\tlU,\tlu}\right)^{N}.
	\end{align*}
	Finally, using the definition of $\tR[U]$, we obtain, for $r=0,1,2$,
	\begin{align*}
		\nrm{\pt^r \tR_{lj}[U]}_{N}&\lesssim\sum_{N_{1} + N_{2} + N_{3} + N_{4}= N}\sum_{r_{1} + r_{2} + r_{3} + r_{4}= r}\nrm{\pt^{r_1}J}_{N_1}\nrm{ \pt^{r_2}(F^{-1})}_{N_2}\nrm{\pt^{r_3} (R_{lj}[\rho^*U^*](t,y(t,x)))}_{N_3}\nrm{\pt^{r_4}(F^{-\top})}_{N_4}\\ 
		&\lesssim_{L,d} A \tmu \left(\max\bra{\varepsilon\tmu^{-1},\tlU,\tlu}\right)^r\left(\max\bra{\tmu^{-1},\tlU,\tlu}\right)^{N},\quad 0\leqslant N\leqslant L-r.\qedhere
	\end{align*}
\end{proof}

\section{Some notations}\label{Some notations}
Before defining the perturbation $(\tu_{q,i+1}, \tG_{q,i+1} = \nabla \tu_{q,i+1})$ and introducing the new Reynolds errors $R_{q,i+1}$, we first clarify some notations. 

For terms like $\tC_{q,i+1}^{(i)}, \tC_{q,i+1}^{(i),m}$, and $\tC_{q,i+1}^{(i),c} = \tC_{q,i+1}^{(i)} - \tC_{q,i+1}^{(i),m}$, the superscript $(i)$ indicates that the term is of magnitude approximately $\mathcal{O}(|\Gl|^k|\tG_{q,i+1}|^i)$. Here, $m$ denotes the main component, which is of magnitude approximately $\mathcal{O}(|\tG_{q,i+1}|^i)$, while $c$ represents the term of magnitude approximately $\mathcal{O}(|\Gl|^k|\tG_{q,i+1}|^i)$, with $k \geq 1$. For convenience, we use $\tC_{q,i+1}^{(0)}$ to refer to $\Cl$ itself. Similarly, this notation can be extended to other terms like $\tD, \tj$, and $\tL$. Below, we list some of these notations.
\begin{enumerate}
\item The perturbation of $\Cl$ and $\Dl$:
\begin{align}
	\tC_{q,i+1}^{(0)}&=\Cl,\quad\tD_{q,i+1}^{(0)}=\Dl,\label{def of tC 0}\\
	\tC_{q,i+1}&=\underbrace{\overbrace{\tG_{q,i+1}+\tG_{q,i+1}^{\top}}^{\tC_{q,i+1}^{(1),m}}+\Gl\tG_{q,i+1}^{\top}+\tG_{q,i+1}\Gl^{\top}}_{\tC_{q,i+1}^{(1)}}+\underbrace{\tG_{q,i+1}\tG_{q,i+1}^{\top}}_{\tC_{q,i+1}^{(2)}=\tC_{q,i+1}^{(2),m}},\label{def of tC}\\
	\tr\tC_{q,i+1}&=\underbrace{\overbrace{2\tr\tG_{q,i+1}}^{\tr\tC_{q,i+1}^{(1),m}}+2\tr(\Gl\tG_{q,i+1}^{\top})}_{\tr\tC_{q,i+1}^{(1)}}+\underbrace{\tr(\tG_{q,i+1}\tG_{q,i+1}^{\top})}_{\tr\tC_{q,i+1}^{(2)}=\tr\tC_{q,i+1}^{(2),m}},\label{def of tr tC 0}\\
	\tD_{q,i+1}&=\underbrace{\overbrace{\tG_{q,i+1}+\tG_{q,i+1}^{\top}}^{\tD_{q,i+1}^{(1),m}}+\Gl^{\top}\tG_{q,i+1}+\tG_{q,i+1}^{\top}\Gl}_{\tD_{q,i+1}^{(1)}}+\underbrace{\tG_{q,i+1}^{\top}\tG_{q,i+1}}_{\tD_{q,i+1}^{(2)}=\tD_{q,i+1}^{(2),m}}\label{def of tD}.
\end{align}
\begin{Rmk}
For convenience, we set
$
\tC_{q,i+1}^{(k)}=\tD_{q,i+1}^{(k)}=0$ for $k\geqslant3.
$
\end{Rmk}

\item The perturbation of the principal invariants $j_1$ and $j_2$ of the matrix $\Cl$:
\begin{align}
	\tj_1^{(0)}&=\tr\Cl,\quad\tj_2^{(0)}=\frac{1}{2}((\tr\Cl)^2-\tr\Cl^2),\label{def of tj 0}\\
	\tj_1&=\underbrace{\overbrace{2\tr(\tG_{q,i+1})}^{\tj_1^{(1),m}}+2\tr(\Gl\tG_{q,i+1}^{\top})}_{\tj_1^{(1)}}+\underbrace{\tr(\tG_{q,i+1}\tG_{q,i+1}^{\top})}_{\tj_1^{(2)}=\tj_1^{(2),m}}, \label{def of tj1}\\
	\tj_2&=\sum_{k=1}^{4}\underbrace{\sum_{r+s=k}\frac{1}{2}\left(\tj_1^{(r)}\tj_1^{(s)}-\tr(\tC_{q,i+1}^{(r)}\tC_{q,i+1}^{(s)})\right)}_{\tj_2^{(k)}},\label{def of tj2}\\
	\tj_2^{(2),m}&=\frac{1}{2}\left(\tj_1^{(1),m}\tj_1^{(1),m}-\tr(\tC_{q,i+1}^{(1),m}\tC_{q,i+1}^{(1),m})\right)=2(\tr\tG_{q,i+1})^2-\tr(\tG_{q,i+1}\tG_{q,i+1}^{\top}+\tG_{q,i+1}^2)\label{def of tj2 2m}.
\end{align}
\begin{Rmk}
	We set $\tj_1^{(k)}=0$ for $k\geqslant 3$, and $\tj_2^{(k)}=0$ for $k\geqslant 5.$
\end{Rmk}
\item The perturbation of coefficients $\Lambda_1$ and $\Lambda_2$:
\begin{align}
	\tL_1^{(0)}&=2\left(\sgm_{11}\tj_1^{(0)}+\sgm_{12}\tj_2^{(0)}+\frac{\sgm_{111}}{2}(\tj_1^{(0)})^2\right),\quad\tL_2^{(0)}=2\left(\sgm_2+\sgm_{12}\tj_1^{(0)}+\sgm_{22}\tj_2^{(0)}\right),\label{def of tL 012}\\
	\tL_1&=2\sgm_{11}\tj_1+2\sgm_{12}\tj_2+2\sgm_{111}\tj_1^{(0)}\tj_1+\sgm_{111}\tj_1^2=\sum_{k=1}^4\underbrace{\left(2\sgm_{11}\tj_1^{(k)}+2\sgm_{12}\tj_2^{(k)}+\sum_{r+s=k}\left(\sgm_{111}\tj_1^{(r)}\tj_1^{(s)}\right)\right)}_{\tL_1^{(k)}},\label{def of tL1}\\
	\tL_2&=2\sgm_{12}\tj_1+2\sgm_{22}\tj_2=\sum_{k=1}^4\underbrace{2\sgm_{12}\tj_1^{(k)}+2\sgm_{22}\tj_2^{(k)}}_{\tL_2^{(k)}},\label{def of tL2}
\end{align}
and their main part
\begin{align}
	\tL_1^{(1),m}&=2\sgm_{11}\tj_1^{(1),m}=4\sgm_{11}\tr\tG_{q,i+1},\label{def of tL1 1m}\\
	\tL_1^{(2),m}&=2\sgm_{11}\tj_1^{(2),m}+2\sgm_{12}\tj_2^{(2),m}+\sgm_{111}\tj_1^{(1),m}\tj_1^{(1),m}\nonumber\\
	&=2\sgm_{11}\tr(\tG_{q,i+1}\tG_{q,i+1}^{\top})+2\sgm_{12}(2(\tr\tG_{q,i+1})^2-\tr(\tG_{q,i+1}\tG_{q,i+1}^{\top}+\tG_{q,i+1}^2))+4\sgm_{111}(\tr\tG_{q,i+1})^2,\label{def of tL1 2m}\\
	\tL_2^{(1),m}&=2\sgm_{12}\tj_1^{(1),m}=4\sgm_{12}\tr\tG_{q,i+1},\label{def of tL2 1m}\\
	\tL_2^{(2),m}&=2\sgm_{12}\tj_1^{(2),m}+2\sgm_{22}\tj_2^{(2),m}=2\sgm_{12}\tr(\tG_{q,i+1}\tG_{q,i+1}^{\top})+2\sgm_{22}(2(\tr\tG_{q,i+1})^2-\tr(\tG_{q,i+1}\tG_{q,i+1}^{\top}+\tG_{q,i+1}^2)).\label{def of tL2 2m}
\end{align}
\begin{Rmk}
	We set $\tL_1^{(k)}=0$, and $\tL_2^{(k)}=0$ for $k\geqslant 5.$
\end{Rmk}
\item The perturbation of the second Piola--Kirchhoff stress tensor 	$\Sgm_{\Gl}$:
\begin{align}
	\tS_{q,i+1}=\Sgm_{G_{q,i+1}}-\Sgm_{\Gl}&=\sum_{k=1}^6\left(\tL_1^{(k)}\Id+\sum_{r+s=k}\tL_2^{(r)}(\tr\tC_{q,i+1}^{(s)}\Id-\tD_{q,i+1}^{(s)})\right)=\tS_{q,i+1}^{(1)}+\tS_{q,i+1}^{(2)}+\tS_{q,i+1}^{(\geqslant3)}\label{def of tS},
\end{align}
where
\begin{align}
	\tS_{q,i+1}^{(1)}&=\tL_1^{(1)}\Id+\tL_2^{(1)}(\tr \Cl\Id-\Dl)+\tL_2^{(0)}(\tr\tC_{q,i+1}^{(1)}\Id-\tD_{q,i+1}^{(1)}),\label{def of tS1}\\
	\tS_{q,i+1}^{(2)}&=\tL_1^{(2)}\Id+\sum_{r+s=2}\tL_2^{(r)}(\tr\tC_{q,i+1}^{(s)}\Id-\tD_{q,i+1}^{(s)}),\label{def of tS2}\\
	\tS_{q,i+1}^{(\geqslant3)}&=\sum_{k=3}^{6}\left(\tL_1^{(k)}\Id+\sum_{r+s=k}\tL_2^{(r)}(\tr\tC_{q,i+1}^{(s)}\Id-\tD_{q,i+1}^{(s)})\right). \label{def of tS3}
\end{align}

For the sake of the estimates below, we provide the full form of $\tS_{q,i+1}^{(1)}$ and  $\tS_{q,i+1}^{(2)}$ here.
\begin{equation}\label{def of tS_1}
	\begin{aligned}
		\tS_{q,i+1}^{(1)}=&4\left((\sgm_{11}+(\sgm_{12}+\sgm_{111})\tj_1^{(0)})\tr((\Id+\Gl)\tG_{q,i+1}^{\top})-\sgm_{12}\tr(\Cl(\Id+\Gl)\tG_{q,i+1}^{\top}))\right)\Id\\
		&+4\left((\sgm_{12}+\sgm_{22}\tj_1^{(0)})\tr((\Id+\Gl)\tG_{q,i+1}^{\top})-\sgm_{22}\tr(\Cl(\Id+\Gl)\tG_{q,i+1}^{\top})\right)(\tr \Cl\Id-\Dl)\\
		&+\tL_2^{(0)}\left(2\tr((\Id+\Gl)\tG_{q,i+1}^{\top})\Id-(\tG_{q,i+1}+\tG_{q,i+1}^{\top}+\Gl^{\top}\tG_{q,i+1}+\tG_{q,i+1}^{\top}\Gl)\right),
	\end{aligned}
\end{equation}
\begin{equation}\label{def of tS_2}
	\begin{aligned}
		\tS_{q,i+1}^{(2)}=&2\left((\sgm_{11}+(\sgm_{12}+\sgm_{111})\tj_1^{(0)})\tr(\tG_{q,i+1}\tG_{q,i+1}^{\top})-\sgm_{12}\tr(\Cl\tG_{q,i+1}\tG_{q,i+1}^{\top})\right)\Id\\
		&+\left(4(\sgm_{12}+\sgm_{111})(\tr((\Id+\Gl)\tG_{q,i+1}^{\top}))^2-\sgm_{12}\tr((\tG_{q,i+1}+\tG_{q,i+1}^{\top}+\Gl^{\top}\tG_{q,i+1}+\tG_{q,i+1}^{\top}\Gl)^2)\right)\Id\\
		&+\tL_2^{(0)}(\tr(\tG_{q,i+1}\tG_{q,i+1}^{\top})\Id-\tG_{q,i+1}^{\top}\tG_{q,i+1})\\
		&+4\left((\sgm_{12}+\sgm_{22}\tj_1^{(0)})\tr((\Id+\Gl)\tG_{q,i+1}^{\top})-\sgm_{22}\tr(\Cl(\Id+\Gl)\tG_{q,i+1}^{\top})\right)\\
		&\quad\cdot\left(2\tr((\Id+\Gl)\tG_{q,i+1}^{\top})\Id-(\tG_{q,i+1}+\tG_{q,i+1}^{\top}+\Gl^{\top}\tG_{q,i+1}+\tG_{q,i+1}^{\top}\Gl)\right)\\
		&+2((\sgm_{12}+\sgm_{22}\tj_1^{(0)})\tr(\tG_{q,i+1}\tG_{q,i+1}^{\top})-\sgm_{22}\tr(\Cl\tG_{q,i+1}\tG_{q,i+1}^{\top}))(\tr \Cl\Id-\Dl)\\
		&+\sgm_{22}\left(4(\tr((\Id+\Gl)\tG_{q,i+1}^{\top}))^2-\tr((\tG_{q,i+1}+\tG_{q,i+1}^{\top}+\Gl^{\top}\tG_{q,i+1}+\tG_{q,i+1}^{\top}\Gl)^2)\right)(\tr \Cl\Id-\Dl).
	\end{aligned}
\end{equation}
Moreover, we could give the main parts of $\tS_{q,i+1}^{(1)}$ and $\tS_{q,i+1}^{(2)}$:
\begin{align}
	\tS_{q,i+1}^{(1),m}&=\tL_1^{(1),m}\Id+2\sgm_2(\tr\tC_{q,i+1}^{(1),m}\Id-\tD_{q,i+1}^{(1),m})\nonumber\\
	&=4\sgm_{11}\tr\tG_{q,i+1}\Id + 2\sgm_2(2\tr(\tG_{q,i+1})\Id-\tG_{q,i+1}-\tG_{q,i+1}^{\top})
	=\lambda\tr\tG_{q,i+1}\Id+\mu(\tG_{q,i+1}+\tG_{q,i+1}^{\top}), \label{def of tS_1 m}\\
	\tS_{q,i+1}^{(2),m}&=\tL_1^{(2),m}\Id+\tL_2^{(1),m}(\tr\tC_{q,i+1}^{(1),m}\Id-\tD_{q,i+1}^{(1),m})+2\sgm_2(\tr\tC_{q,i+1}^{(2),m}\Id-\tD_{q,i+1}^{(2),m})\nonumber\\
	&=(2\sgm_{11}\tr(\tG_{q,i+1}\tG_{q,i+1}^{\top})+2\sgm_{12}(2(\tr\tG_{q,i+1})^2-\tr(\tG_{q,i+1}\tG_{q,i+1}^{\top}+\tG_{q,i+1}^2))+4\sgm_{111}(\tr\tG_{q,i+1})^2)\Id\nonumber\\
	&\quad+4\sgm_{12}\tr\tG_{q,i+1}(2\tr\tG_{q,i+1}\Id-\tG_{q,i+1}-\tG_{q,i+1}^{\top})\nonumber\\
	&\quad+2\sgm_2(\tr(\tG_{q,i+1}\tG_{q,i+1}^{\top})\Id-\tG_{q,i+1}^{\top}\tG_{q,i+1}))\nonumber\\
	&=\left(2(\sgm_{11}-\sgm_{12}+\sgm_2)\tr(\tG_{q,i+1}\tG_{q,i+1}^{\top})+4(3\sgm_{12}+\sgm_{111})(\tr\tG_{q,i+1})^2-2\sgm_{12}\tr(\tG_{q,i+1}^2)\right)\Id\nonumber\\
	&\quad-2\sgm_2\tG_{q,i+1}^{\top}\tG_{q,i+1}-4\sgm_{12}\tr\tG_{q,i+1}(\tG_{q,i+1}+\tG_{q,i+1}^{\top}).\label{def of tS_2 m}
\end{align}
\item The perturbation of the stored-energy function	$\sgm_{\Gl}$:
\begin{align}
\tsgm_{q,i+1}=&\sgm_{G_{q,i+1}}-\sgm_{\Gl}\nonumber\\
=&\sum_{k=1}^8\underbrace{\left(\sgm_2\tj_2^{(k)}+\sum_{r_1+r_2=k}\left(\frac{\sgm_{11}}{2}\tj_1^{(r_1)}\tj_1^{(r_2)}+\frac{\sgm_{22}}{2}\tj_2^{(r_1)}\tj_2^{(r_2)}+\sgm_{12}\tj_1^{(r_1)}\tj_2^{(r_2)}\right)+\frac{\sgm_{111}}{6}\sum_{r_1+r_2+r_3=k}\tj_1^{(r_1)}\tj_1^{(r_2)}\tj_1^{(r_3)}\right)}_{\tsgm_{q,i+1}^{(k)}}.
\end{align}
For convenience, we set
$
\tsgm_{q,i+1}^{(k)}=0,\ k\geqslant9,
$
and define
$
\tsgm_{q,i+1}^{(\geqslant3)}
:=\sum_{k=3}^{8}\tsgm_{q,i+1}^{(k)}.
$
Moreover, we provide the full form of $\tsgm_{q,i+1}^{(1)}$ and  $\tsgm_{q,i+1}^{(2)}$ here.
\begin{align*}
\tsgm_{q,i+1}^{(1)}=&\sgm_2\tj_2^{(1)}+\sgm_{11}\tj_1^{(0)}\tj_1^{(1)}+\sgm_{22}\tj_2^{(0)}\tj_2^{(1)}+\sgm_{12}(\tj_1^{(0)}\tj_2^{(1)}+\tj_1^{(1)}\tj_2^{(0)})+\frac{\sgm_{111}}{2}(\tj_1^{(0)})^2\tj_1^{(1)},\\
\tsgm_{q,i+1}^{(2)}=&\sgm_2\tj_2^{(2)}+\frac{\sgm_{11}}{2}(\tj_1^{(1)})^2+\sgm_{11}\tj_1^{(0)}\tj_1^{(2)}+\frac{\sgm_{22}}{2}(\tj_2^{(1)})^2+\sgm_{22}\tj_2^{(0)}\tj_2^{(2)}+\sgm_{12}\tj_1^{(1)}\tj_2^{(1)}+\sgm_{12}(\tj_1^{(0)}\tj_2^{(2)}+\tj_1^{(2)}\tj_2^{(0)})\\
&+\frac{\sgm_{111}}{2}(\tj_1^{(0)}(\tj_1^{(1)})^2+(\tj_1^{(0)})^2\tj_1^{(2)}).
\end{align*}
The main part of $\tsgm_{q,i+1}^{(2)}$ can be written as
\begin{align*}
	\tsgm_{q,i+1}^{(2),m}=&\sgm_2\tj_2^{(2),m}+\frac{\sgm_{11}}{2}(\tj_1^{(1),m})^2=2(\sgm_{2}+\sgm_{11})(\tr(\tG_{q,i+1}))^2-\sgm_2\tr(\tG_{q,i+1}\tG_{q,i+1}^{\top}+\tG_{q,i+1}^2).
\end{align*}
\end{enumerate}
\section{Definition of the perturbation }\label{Definition of the perturbation}  
In this section, we first introduce the building blocks which will be used in the construction of the perturbation.  Next, we give the construction of the perturbation. Based on it, we will  give the estimates on the perturbation. 
\subsection{Definition of building blocks}\label{Definition of Building blocks}
\begin{lm}\label{construction of building blocks}
	Given an integer $L\geqslant 2$, a nonzero vector $f\in\Z^2$, a positive integer $\tl$, and parameters $\lambda^*$ and $\mu^*$ satisfying $\mu^*>0$, $\lambda^*+2\mu^*>0$, and $\lambda^*+\mu^*\neq 0$, we consider the equation
	\begin{align*}
		\pa_{tt}w-\mu^*\Delta w-(\lambda^*+\mu^*)\nb(\Div w)+A(\nb^2w)=0,
	\end{align*} 
	where $w:(-\infty,\infty)\times\T^2\rightarrow\mathbb C^2$ and $(A(\nabla^2w))_k=A^{km}_{nr}\partial_{nr}w_m$, with $A^{km}_{nr}=A^{km}_{nr}(t)\in C^L(\R;\R)$.
	
	We seek a nontrivial complex-valued approximate solution of the form
	\begin{align}
		w_{A,f}=(f+a_A(t)f^{\perp})e^{i\xi_{A,f}},
	\end{align}
	where $f^{\perp}=(-f_2,f_1)\in\Z^2$ satisfies $f^{\perp}\perp f$, $|f^\perp|=|f|$, and $\xi_{A,f}=\tl(f\cdot x-c_A(t))$. Here, $a_A(t)$ and $c_A(t)$ are real-valued functions to be determined by the coefficients $A^{km}_{nr}$ and the vector $f$. Since $\tl\in\N$ and $f\in\Z^2$, the function $w_{A,f}$ is well-defined on $\T^2$.
	
	Suppose that, for all indices $1\leqslant k,m,n,r\leqslant2$, the coefficients $A^{km}_{nr}$ satisfy
	$
	\nrm{A^{km}_{nr}}_{C_t^0}\leqslant C\varepsilon
	$
	for some constant $C>0$, and
	$
	\nrm{A^{km}_{nr}}_{C_t^N}\leqslant\tA\tlA^N,\ 1\leqslant N\leqslant L,
	$
	for some constants $0<\tA\leqslant1$ and $\tlA\geqslant1$. Then, there exists $\varepsilon_2^*=\varepsilon_2^*(\lambda^*,\mu^*,C)>0$ such that, for any $0<\varepsilon<\varepsilon_2^*$, there exist real-valued functions $a_A\in C^L(\R)$ and $c_A\in C^{L+1}(\R)$, with $c_A(0)=0$, for which the following estimates hold:
	\begin{equation}\label{est on other dirct cof}
		\begin{aligned}
			&\nrm{a_A}_{C_t^0}\lesssim_{\lambda^*,\mu^*,C}\varepsilon\leqslant\frac{1}{10},&&
			\nrm{a_A}_{C_t^N}\lesssim_{\lambda^*,\mu^*,C,L}\tA\tlA^N,&&1\leqslant N\leqslant L,\\
			&\nrm{\pt c_A-(\lambda^*+2\mu^*)^{\frac{1}{2}}|f|}_{C_t^0}\lesssim_{\lambda^*,\mu^*,C}\varepsilon|f|,&&
			\nrm{\pt^N c_A}_{C_t^0}\lesssim_{\lambda^*,\mu^*,C,L}\tA\tlA^{N-1}|f|,&&2\leqslant N\leqslant L.
		\end{aligned}
	\end{equation}
	Moreover, the corresponding function $w_{A,f}$ satisfies
	\begin{equation}\label{eq of building block}
		\begin{aligned}
			&\pa_{tt}w_{A,f}-\mu^*\Delta w_{A,f}-(\lambda^*+\mu^*)\nb(\Div w_{A,f})+A(\nb^2w_{A,f})\\
			=&\left(-i\tl\partial_{tt}c_Af+\left(\partial_{tt}a_A-i\tl\partial_{tt}c_Aa_A-2i\tl\pt c_A\pt a_A\right)f^{\perp}\right)e^{i\xi_{A,f}}.
		\end{aligned}
	\end{equation}
	Since all the coefficients of the equation are real-valued, $\overline{w_{A,f}}$ satisfies the complex conjugate of \eqref{eq of building block}. Consequently,
	$
	w_{A,f}+\overline{w_{A,f}}=2\Re w_{A,f}
	$
	is a real-valued approximate solution.
\end{lm}
	\begin{proof}
		For convenience, we denote $f^{(1)} = f$ and $f^{(2)}=f^{\perp}$, and define  coefficients 
		$$\tilde{A}^{pd}_{ij} = A(f^{(i)}, f^{(j)}, f^{(d)}, f^{(p)})= A^{km}_{nr} f^{(i)}_{n} f^{(j)}_{r} f^{(d)}_{m} f^{(p)}_{k} |f|^{-2}.$$
		Next, we define $w_{A,1} := f^{(1)} e^{i\xi_{A,f}}$ and 	$w_{A,2}:=a_{A}f^{(2)}e^{i\xi_{A,f}}
		$, where $\xi_{A,f}=\tl(f\cdot x-c_A(t))$, and $a_A$ and $c_A$ will be chosen later. Noting
		\begin{align*}
		A(\nb^2w_{A,1})&=\sum_{p=1}^2(A^{km}_{nr}\pa_{nr}(w_{A,1})_mf^{(p)}_k)|f|^{-2}f^{(p)}=-\tl^2\sum_{p=1}^2\tilde{A}^{p1}_{11}f^{(p)}e^{i\xi_{A,f}},\\
		A(\nb^2w_{A,2})&=\sum_{p=1}^2(A^{km}_{nr}\pa_{nr}(w_{A,2})_mf^{(p)}_k)|f|^{-2}f^{(p)}=-\tl^2a_A\sum_{p=1}^2\tilde{A}^{p2}_{11}f^{(p)}e^{i\xi_{A,f}},
		\end{align*}
		we could calculate
		\begin{align*}
			&\quad\pa_{tt}w_{A,1}-\mu^*\Delta w_{A,1}-(\lambda^*+\mu^*)\nb(\Div w_{A,1})+A(\nb^2w_{A,1})\\
			&=\left(\tl^2\left(((\lambda^*+2\mu^*)|f|^2-(\pt c_A)^2)f-\sum_{p=1}^2\tilde{A}^{p1}_{11}f^{(p)}\right)-i\tl\partial_{tt}c_Af\right)e^{i\xi_{A,f}},\\
			&\quad\pa_{tt}w_{A,2}-\mu^*\Delta w_{A,2}-(\lambda^*+\mu^*)\nb\Div w_{A,2}+A(\nb^2 w_{A,2})\\
			&=\left(\tl^2a_A\left((\mu^*|f|^2-(\pt c_A)^2)f^{(2)}-\sum_{p=1}^2\tilde{A}^{p2}_{11}f^{(p)}\right)+(\partial_{tt}a_A-i\tl\partial_{tt}c_Aa_A-2i\tl\pt c_A\pt a_A)f^{(2)}\right)e^{i\xi_{A,f}}.
		\end{align*}
		So we  need the following equation
		\begin{equation}
			\left\{
			\begin{aligned}
				&(\lambda^*+2\mu^*)|f|^2-(\pt c_A)^2-\tilde{A}^{11}_{11}-a_{A}\tilde{A}^{12}_{11}=0, \\
				&a_{A}(\mu^*|f|^2-(\pt c_A)^2)-\tilde{A}^{21}_{11}-a_{A}\tilde{A}^{22}_{11}=0.
			\end{aligned}\right.
		\end{equation}
		It can be rewritten as 
		\begin{equation}
			\left\{
			\begin{aligned}
				&(\lambda^*+2\mu^*)|f|^2-\tilde{A}^{11}_{11}-a_{A}\tilde{A}^{12}_{11}=(\pt c_A)^2,\\
				&|f|^{-2}\tilde{A}^{12}_{11}a_{A}^2+\left(|f|^{-2}\tilde{A}^{11}_{11}-|f|^{-2}\tilde{A}^{22}_{11}-(\lambda^*+\mu^*)\right)a_{A}=|f|^{-2}\tilde{A}^{21}_{11},
			\end{aligned}\right.
		\end{equation}
		and we could choose $a_{A}$ and $c_A$ as
		\begin{align}
			a_{A}&=\left\{
			\begin{aligned}
			\frac{2|f|^{-2}\tilde{A}^{21}_{11}}{\left(|f|^{-2}\tilde{A}^{11}_{11}-|f|^{-2}\tilde{A}^{22}_{11}-(\lambda^*+\mu^*)\right)+\left(\left(|f|^{-2}\tilde{A}^{11}_{11}-|f|^{-2}\tilde{A}^{22}_{11}-(\lambda^*+\mu^*)\right)^2+4|f|^{-4}\tilde{A}^{12}_{11}\tilde{A}^{21}_{11}\right)^{\frac{1}{2}}},\quad \lambda^*+\mu^*<0,\\
			\frac{2|f|^{-2}\tilde{A}^{21}_{11}}{\left(|f|^{-2}\tilde{A}^{11}_{11}-|f|^{-2}\tilde{A}^{22}_{11}-(\lambda^*+\mu^*)\right)-\left(\left(|f|^{-2}\tilde{A}^{11}_{11}-|f|^{-2}\tilde{A}^{22}_{11}-(\lambda^*+\mu^*)\right)^2+4|f|^{-4}\tilde{A}^{12}_{11}\tilde{A}^{21}_{11}\right)^{\frac{1}{2}}},\quad \lambda^*+\mu^*>0,
			\end{aligned}\right.\nonumber\\
			c_A(t)&=\int_0^t\left((\lambda^*+2\mu^*)|f|^2-\tilde{A}^{11}_{11}(\tau)-a_A(\tau)\tilde{A}^{12}_{11}(\tau)\right)^{\frac{1}{2}}\rd\tau\nonumber\\
			&=(\lambda^*+2\mu^*)^{\frac{1}{2}}|f|t-\int_0^t\frac{\tilde{A}^{11}_{11}(\tau)+a_A(\tau)\tilde{A}^{12}_{11}(\tau)}{\left((\lambda^*+2\mu^*)|f|^2-\tilde{A}^{11}_{11}(\tau)-a_A(\tau)\tilde{A}^{12}_{11}(\tau)\right)^{\frac{1}{2}}+(\lambda^*+2\mu^*)^{\frac{1}{2}}|f|}\rd\tau.\nonumber
		\end{align}
		Let $w_{A,f}=w_{A,1}+w_{A,2}$. Under the assumptions of the lemma, the explicit formulas for $a_A$ and $c_A$ above, together with the standard composition estimates, yield \eqref{est on other dirct cof}. Combining the identities for $w_{A,1}$ and $w_{A,2},$ we obtain \eqref{eq of building block}, which completes the proof.
	\end{proof}
\begin{Rmk}
	Note that the construction of the building blocks here is based on the assumption that $\lambda^*$ and $\mu^*$ are constants. In general, we take them as the Lam{\'e} constants $\lambda$ and $\mu$. In the case where they are functions of $(t,x)$, we can take the constants as $\lambda(t(I),x(I))$ and $\mu(t(I),x(I))$ for each truncated region, respectively. This is because our building blocks will be used in different truncated regions, and the regions are small enough so that the difference between $\lambda(t,x)$, $\mu(t,x)$ and $\lambda(t(I),x(I))$, $\mu(t(I),x(I))$ is sufficiently small.
\end{Rmk}
\begin{Rmk}
	Note that $A^{km}_{nr}$ can be a time-independent constant. In this case, $\partial_t c_A$ can also be taken as a constant, and we could obtain a simpler version of the lemma.
\end{Rmk}
\subsection{Definition of the perturbation}
Up to now, we are ready to construct the perturbations by using the building blocks defined before. For convenience, we denote $ h^{\upsilon} = h(t, 2\pi\upsilon \mu_{q,i}) $ for any function $ h \in C^0(\mcI^{q,i}_{3\ell_{q,i}} \times \T^2) $. For the term $\tS_{q,i+1}^{(1)}$ mentioned in Section \ref{Some notations}, for its components, we replace $\Gl$, $\Cl$, and $\tG_{q,i+1}$ with $\Gl^{\upsilon}$, $\Cl^{\upsilon}$, and $\nabla w$, respectively, and denote it as $\tS_{q,i+1}^{(1),\upsilon,w}$. 

Notice that, for  $t\in \supp_t\te_{q,i}+3\tau_{q,i}$, we have $c_{q,i}=\sgm^*\te_{q,i}\Id^{<i+1>}+c_{q,i+1}$, and $c_{q,i+1}$ is a constant matrix, which leads to $c_{q,i}\in C^{\infty}((\supp_t\te_{q,i}+3\tau_{q,i})\times\T^2)$. 
Then, for each $I=(s, \upsilon)\in \mathscr{I}$ with $s\tau_{q,i}\in\supp_t\te_{q,i}+\tau_{q,i}$, by using Lemma \ref{construction of building blocks}, we could choose $\lambda^*=\lambda$, $\mu^*=\mu$, and define $(A_I)^{km}_{nr}$ as
\begin{equation}\label{def of A_I}
\begin{aligned}
	(A_I)^{km}_{nr}=&-(\Sgm_{\Gl}+\Rl+c_{q,i})_{nr}\delta_{km}+\lambda\delta_{kr}\delta_{mn}+\mu\delta_{km}\delta_{nr}+\mu\delta_{kn}\delta_{mr}\\
	&-4(\Id+G_{\ell,i})_{kr} \left((\sgm_{11}+(\sgm_{12}+\sgm_{111})\tj_1^{(0)})(\Id+\Gl)_{mn}-\sgm_{12}(\Cl(\Id+\Gl))_{mn}\right)\\
	&-4((\Id+G_{\ell,i})(\tr \Cl\Id-\Dl))_{kr} \left((\sgm_{12}+\sgm_{22}\tj_1^{(0)})(\Id+\Gl)_{mn}-\sgm_{22}(\Cl(\Id+\Gl))_{mn}\right)\\
	&-(\tL_2^{(0)})\left(2(\Id+\Gl)_{mn}(\Id+\Gl)_{kr}-((\Id+\Gl)(\Id+\Gl^{\top}))_{km}\delta_{nr}-(\Id+\Gl)_{kn}(\Id+\Gl)_{mr}\right).
\end{aligned}
\end{equation}
Next, we define $(A_I^{\upsilon})^{km}_{nr}$ which is obtained by replacing all $\Gl$ and $\Cl$ in $(A_I)^{km}_{nr}$ with $\Gl^{\upsilon}$ and $\Cl^{\upsilon}$, respectively. Then, we have
\begin{align*}
	&(A_I^{\upsilon})^{km}_{nr}\partial_{n}w_m\\
	=&-(\nabla w(\Sgm_{\Gl}+\Rl+c_{q,i})^{\upsilon})_{kr}+(\lambda\tr(\nabla w)\Id+\mu\nabla w+\mu(\nabla w)^{\top})_{kr}\\
	&-  4\left(\left((\sgm_{11}+(\sgm_{12}+\sgm_{111})\tj_1^{(0)})^{\upsilon}\tr((\Id+\Gl)^{\upsilon}(\nabla w)^{\top})-\sgm_{12}\tr((\Cl(\Id+\Gl))^{\upsilon}(\nabla w)^{\top})\right)(\Id+G_{\ell,i})^{\upsilon}\right)_{kr}\\
	&-4   \left(\left((\sgm_{12}+\sgm_{22}\tj_1^{(0)})^{\upsilon}\tr((\Id+\Gl)^{\upsilon}(\nabla w)^{\top})-\sgm_{22}\tr((\Cl(\Id+\Gl))^{\upsilon}(\nabla w)^{\top})\right)((\Id+G_{\ell,i})(\tr \Cl\Id-\Dl))^{\upsilon}\right)_{kr}\\
	&-(\tL_2^{(0)})^{\upsilon}\left(2\tr((\Id+\Gl)^{\upsilon}(\nabla w)^{\top})(\Id+\Gl)^{\upsilon}\Id-((\Id+\Gl)(\Id+\Gl^{\top}))^{\upsilon}\nabla w-(\Id+\Gl)^{\upsilon}(\nabla w)^{\top}(\Id+\Gl)^{\upsilon}\right)_{kr}\\
	=&\left(-\nabla w(\Sgm_{\Gl}+\Rl+c_{q,i})^{\upsilon}-(\Id+\Gl)^{\upsilon}\tS_{q,i+1}^{(1),\upsilon,w}+\lambda\tr(\nabla w)\Id+\mu\nabla w+\mu(\nabla w)^{\top}\right)_{kr},
\end{align*}
and  obtain
\begin{align*}
	(A_I^{\upsilon}(\nb^2 w))_k=&\partial_r((A_I^{\upsilon})^{km}_{nr}\pa_{n}w_m)
	=\left(-\Div\left(\nabla w(\Sgm_{\Gl}+\Rl+c_{q,i})^{\upsilon}+(\Id+\Gl)^{\upsilon}\tS_{q,i+1}^{(1),\upsilon,w}-(\lambda\tr(\nabla w)\Id+\mu\nabla w+\mu(\nabla w)^{\top})\right)\right)_k.
\end{align*}
Let $\xi_{A_I^{\upsilon},f_{i+1}}=[I]\left(f_{i+1}\cdot x-c_{A_I^{\upsilon}}\right)$ and 
\begin{align*}
	w_{A_I^{\upsilon},f_{i+1}}&=(f_{i+1}+a_{A_I^{\upsilon}}f_{i+1}^{\perp})e^{i\lambda_{q,i+1}\xi_{A_I^{\upsilon},f_{i+1}}}=(i\lambda_{q,i+1}[I]|f_{i+1}|^2)^{-1}\Div((f_{i+1}+a_{A_I^{\upsilon}}f_{i+1}^{\perp})\otimes f_{i+1}e^{i\lambda_{q,i+1}\xi_{A_I^{\upsilon},f_{i+1}}}),
\end{align*}
where $c_{A_I^{\upsilon}}$ and $a_{A_I^{\upsilon}}$ can be chosen by Lemma \ref{construction of building blocks}. Then, we could calculate
\begin{align}
	&\pa_{tt}w_{A_I^{\upsilon},f_{i+1}}-\mu\Delta w_{A_I^{\upsilon},f_{i+1}}-(\lambda+\mu)\nb(\Div w_{A_I^{\upsilon},f_{i+1}})+A_I^{\upsilon}(\nb^2 w_{A_I^{\upsilon},f_{i+1}})\nonumber\\
	=&\pa_{tt}w_{A_I^{\upsilon},f_{i+1}}-\mu\Delta w_{A_I^{\upsilon},f_{i+1}}-(\lambda+\mu)\nb(\Div w_{A_I^{\upsilon},f_{i+1}})\nonumber\\
	&-\Div\left(\nabla w_{A_I^{\upsilon},f_{i+1}}(\Sgm_{\Gl}+\Rl+c_{q,i})^{\upsilon}+(\Id+\Gl)^{\upsilon}\tS_{q,i+1}^{(1),\upsilon,w_{A_I^{\upsilon},f_{i+1}}}-(\lambda+\mu)\tr(\nabla w_{A_I^{\upsilon},f_{i+1}})\Id-\mu\nabla w_{A_I^{\upsilon},f_{i+1}}\right)\nonumber\\
	=&\pa_{tt}w_{A_I^{\upsilon},f_{i+1}}-\Div\left(\nabla w_{A_I^{\upsilon},f_{i+1}}(\Sgm_{\Gl}+\Rl+c_{q,i})^{\upsilon}+(\Id+\Gl)^{\upsilon}\tS_{q,i+1}^{(1),\upsilon,w_{A_I^{\upsilon},f_{i+1}}}\right).\nonumber
\end{align} 

By \eqref{pp of te_q,i}, \eqref{pp of supp l,i}, and
\eqref{est on R_li low}, we have
$
\te_{q,i}(t)=\delta_{q+1}
\
\text{on }\supp_tR_{\ell,i},
$
and
$
\nrm{
	(\sgm^*\delta_{q+1})^{-1}R_{\ell,i}
}_0
<
\tilde{c}_0.
$
Consequently, the quotient
$(\sgm^*\te_{q,i})^{-1}R_{\ell,i}$ admits a smooth extension
to $\mcI^{q,i}\times\T^2$, defined by
$$
(\sgm^*\te_{q,i})^{-1}R_{\ell,i}
:=
(\sgm^*\delta_{q+1})^{-1}R_{\ell,i}.
$$
With this convention, we define the weight coefficient
$d_{q,i+1}$ by
\begin{align}
	d_{q,i+1}(t,x)
	:=
	\te_{q,i}^{\frac{1}{2}}(t)
	\Gamma_{f_{i+1}}
	\left(
	\Id+
	(\sgm^*\te_{q,i})^{-1}R_{\ell,i}
	\right).
	\label{def of d_q,i+1}
\end{align}

For convenience, for each $I=(s,\upsilon)\in\mathscr{I}$, we set $\xi_I:=\xi_{A_I^{\upsilon},f_{i+1}}$ and use the following notations:
\begin{align}
	\Pi_I&=e^{i\lambda_{q,i+1}\xi_I}-e^{-i\lambda_{q,i+1}\xi_I},
	\qquad
	\tPi_I=e^{i\lambda_{q,i+1}\xi_I}+e^{-i\lambda_{q,i+1}\xi_I},
	\label{def of Pi_I}\\
	\Pi_{I,J}&=\Pi_I\Pi_J
	=e^{i\lambda_{q,i+1}(\xi_I+\xi_J)}
	+e^{-i\lambda_{q,i+1}(\xi_I+\xi_J)}
	-e^{i\lambda_{q,i+1}(\xi_I-\xi_J)}
	-e^{-i\lambda_{q,i+1}(\xi_I-\xi_J)},
	\label{def of Pi_IJ}\\
	\Pi_{I,J,K}&=\Pi_I\Pi_J\Pi_K
	=(e^{i\lambda_{q,i+1}\xi_I}-e^{-i\lambda_{q,i+1}\xi_I})
	(e^{i\lambda_{q,i+1}\xi_J}-e^{-i\lambda_{q,i+1}\xi_J})
	(e^{i\lambda_{q,i+1}\xi_K}-e^{-i\lambda_{q,i+1}\xi_K})
	\nonumber\\
	&=e^{i\lambda_{q,i+1}(\xi_I+\xi_J+\xi_K)}
	+e^{i\lambda_{q,i+1}(\xi_I-\xi_J-\xi_K)}
	+e^{i\lambda_{q,i+1}(-\xi_I+\xi_J-\xi_K)}
	+e^{i\lambda_{q,i+1}(-\xi_I-\xi_J+\xi_K)}
	\nonumber\\
	&\quad-e^{i\lambda_{q,i+1}(\xi_I+\xi_J-\xi_K)}
	-e^{i\lambda_{q,i+1}(\xi_I-\xi_J+\xi_K)}
	-e^{i\lambda_{q,i+1}(-\xi_I+\xi_J+\xi_K)}
	-e^{i\lambda_{q,i+1}(-\xi_I-\xi_J-\xi_K)}.
	\label{def of Pi_IJK}
\end{align}
Here $I,J,K\in\mathscr{I}$ are arbitrary. And we could divide the terms composed of products of $\Pi_{I}$ and $\Pi_{I'}$ into low-frequency parts and high-frequency parts, for example $\Pi_{I}\Pi_{I'}$ can be written as
\begin{align*}
\Pi_{I}\Pi_{I'}=\underbrace{\frac{1}{(2\pi)^2}\int_{\T^2}\Pi_{I}\Pi_{I'}\rd x}_{\mPO\Pi_{I}\Pi_{I'}}+\underbrace{\Pi_{I}\Pi_{I'}-\frac{1}{(2\pi)^2}\int_{\T^2}\Pi_{I}\Pi_{I'}\rd x}_{\mPG\Pi_{I}\Pi_{I'}}.
\end{align*}
Then, we could first define  $\tV_{q,i+1,p}$ as
\begin{align}
	\tV_{q,i+1,p}= \sum_I\tV_{q,i+1,p}^I&=\sum_I\frac{\theta_I\chi_Id_{q,i+1}\tilde{f}_{A_I^{\upsilon}}\otimes f_{i+1}}{i\sqrt{2}\lambda_{q,i+1}^2[I]^2|f_{i+1}|^4}\Pi_{I}=\sum_{I}\frac{V_{q,i+1,p}^{I,cof}}{i\lambda_{q,i+1}^2[I]^2|f_{i+1}|^2}\Pi_{I}, \label{def of tV_q k i+1,p}
\end{align}
where $\sum_I=\sum_{s}\sum_{\upsilon}$ and
\begin{equation}\label{def of cof of tV_p}
\begin{aligned}
	&\tilde{f}_{A_I^{\upsilon}}=f_{i+1}+a_{A_I^{\upsilon}}(t)f_{i+1}^{\perp},  \quad\gamma_{q,i+1}^{I}=\frac{\theta_I\chi_Id_{q,i+1}}{\sqrt{2}|f_{i+1}|^2},\quad V_{q,i+1,p}^{I,cof}=\gamma_{q,i+1}^{I}\tilde{f}_{A_I^{\upsilon}}\otimes f_{i+1}.
\end{aligned}
\end{equation}

Next, we could define the principal part of the perturbation $\tu_{q,i+1}$ as 
\begin{align}
	\tu_{q,i+1,p}=&\Div\tV_{q,i+1,p}= \sum_I\tu_{q,i+1,p}^I\nonumber\\
	=&\sum_I\frac{\theta_I\chi_Id_{q,i+1}\tilde{f}_{A_I^{\upsilon}}}{\sqrt{2}\lambda_{q,i+1}[I]|f_{i+1}|^2}\tPi_{I}+\sum_I\frac{(f_{i+1}\cdot\nabla)(\theta_I\chi_Id_{q,i+1})\tilde{f}_{A_I^{\upsilon}} }{i\sqrt{2}\lambda_{q,i+1}^2[I]^2|f_{i+1}|^4}\Pi_{I}\nonumber\\
	=&\sum_I\frac{1}{\lambda_{q,i+1}[I]}\left(\gamma_{q,i+1}^I\tPi_{I}+\gamma_{q,i+1,c}^I\Pi_{I}\right)\tilde{f}_{A_I^{\upsilon}}\label{def1 of tu_q i+1,p}\\
	=&\sum_{I}\frac{1}{\lambda_{q,i+1}[I]}(u_{q,i+1,p}^{I,cof}e^{i\lambda_{q,i+1}\xi_{A_I^{\upsilon},f_{i+1}}}+\ou_{q,i+1,p}^{I,cof}e^{-i\lambda_{q,i+1}\xi_{A_I^{\upsilon},f_{i+1}}})\tilde{f}_{A_I^{\upsilon}}, \label{def2 of tu_q i+1,p}
\end{align}
where
\begin{equation}
\label{def of u_q,i+1,I}
\begin{aligned}
	\gamma_{q,i+1,c}^I=\frac{(f_{i+1}\cdot\nabla)\gamma_{q,i+1}^{I}}{i\lambda_{q,i+1}[I]|f_{i+1}|^2},\quad u_{q,i+1,p}^{I,cof}=\gamma_{q,i+1}^{I}+\frac{(f_{i+1}\cdot\nabla)\gamma_{q,i+1}^{I}}{i\lambda_{q,i+1}[I]|f_{i+1}|^2},\quad
	\ou_{q,i+1,p}^{I,cof}=\gamma_{q,i+1}^{I}-\frac{(f_{i+1}\cdot\nabla)\gamma_{q,i+1}^{I}}{i\lambda_{q,i+1}[I]|f_{i+1}|^2}.
\end{aligned}
\end{equation}
Moreover, for any $I,J\in\mathscr I$ with $\nrm{I-J}>1$, we obtain
\begin{align}
	&\supp V_{q,i+1,p}^{I,cof}\bigcap\supp V_{q,i+1,p}^{J,cof}=\emptyset,\nonumber\\
	&\supp\gamma_{q,i+1}^{I}\bigcap\supp\gamma_{q,i+1}^{J}
	=\supp\gamma_{q,i+1,c}^{I}\bigcap\supp\gamma_{q,i+1,c}^{J}
	=\supp\gamma_{q,i+1}^{I}\bigcap\supp\gamma_{q,i+1,c}^{J}
	=\emptyset,\nonumber\\
	&\supp u_{q,i+1,p}^{I,cof}\bigcap\supp u_{q,i+1,p}^{J,cof}
	=\supp\ou_{q,i+1,p}^{I,cof}\bigcap\supp\ou_{q,i+1,p}^{J,cof}
	=\supp u_{q,i+1,p}^{I,cof}\bigcap\supp\ou_{q,i+1,p}^{J,cof}
	=\emptyset.
	\label{pp of supp cof}
\end{align}
Here, for $I=(s(I),\upsilon(I))$ and $J=(s(J),\upsilon(J))$, we define
$$
\nrm{I-J}:=\max\left\{|s(I)-s(J)|,\nrm{\upsilon(I)-\upsilon(J)}_{\ell^\infty}\right\},
\qquad
\nrm{\upsilon(I)-\upsilon(J)}_{\ell^\infty}
:=\max_{i=1,2}\left|\upsilon_i(I)-\upsilon_i(J)\right|.
$$
The main part of the perturbation can be also written as
\begin{align}
	\tu_{q,i+1,p}
	&=\sum_{I}\frac{1}{\lambda_{q,i+1}[I]}(u_{q,i+1,p}^{I,cof}w_{A_I^{\upsilon},f_{i+1}}+\ou_{q,i+1,p}^{I,cof}\ow_{A_I^{\upsilon},f_{i+1}}).\label{def 2 of tu_i+1p}
\end{align}

Then, we could calculate
\begin{align*}
	&\tG_{q,i+1,p}=\sum_I\tG_{q,i+1,p}^{I}=\sum_I\nb\tu_{q,i+1,p}^{I}\\
	=&\underbrace{i\sum_{I}\gamma_{q,i+1}^{I}f_{i+1}\otimes f_{i+1}\Pi_{I}}_{\tG_{q,i+1,m}}+\underbrace{\overbrace{i\sum_{I}\gamma_{q,i+1}^{I}a_{A_I^{\upsilon}}(t)f_{i+1}^{\perp}\otimes f_{i+1}\Pi_{I}}^{\tG_{q,i+1,s1}}+\overbrace{\sum_{I}\frac{1}{\lambda_{q,i+1}[I]}(\tilde{f}_{A_I^{\upsilon}}\otimes\nb \gamma_{q,i+1,c}^I\Pi_{I}+\tilde{f}_{A_I^{\upsilon}}\otimes\gamma_{q,i+1,s}^I\tPi_{I}}^{\tG_{q,i+1,s2}})}_{\tG_{q,i+1,s}},\\
	&\pt \tu_{q,i+1,p}=\sum_I\pt \tu_{q,i+1,p}^{I}\\
	=&\underbrace{-i\sum_{I}(\lambda+2\mu)^{\frac{1}{2}}|f_{i+1}|\gamma_{q,i+1}^{I}f_{i+1}\Pi_{I}}_{\pttu_{q,i+1,m}}\\
	&+\underbrace{\overbrace{i\sum_{I}\left(((\lambda+2\mu)^{\frac{1}{2}}|f_{i+1}|-\pt c_{A_I^\upsilon})f_{i+1}-a_{A_I^{\upsilon}}(t)\pt c_{A_I^\upsilon}f_{i+1}^{\perp}\right)\gamma_{q,i+1}^{I}\Pi_{I}}^{\pttu_{q,i+1,s1}}+\overbrace{\sum_{I}\frac{1}{\lambda_{q,i+1}[I]}(\pt (\gamma_{q,i+1,c}^I\tilde{f}_{A_I^{\upsilon}}\Pi_{I})+\pt( \gamma_{q,i+1}^{I}\tilde{f}_{A_I^{\upsilon}})\tPi_{I})}^{\pttu_{q,i+1,s2}}}_{\pttu_{q,i+1,s}}.
\end{align*}
where $\gamma_{q,i+1,s}^{I}:=\nb \gamma_{q,i+1}^{I}+i\lambda_{q,i+1}[I]\gamma_{q,i+1,c}^{I} f_{i+1}$.

For convenience, we use the following notations:
\begin{align}
	\tu_{q,i+1,k}^{(\upsilon)}&=\sum_{s:I=(s,\upsilon)}\tu_{q,i+1,k}^I,\ \tV_{q,i+1,k}^{(\upsilon)}=\sum_{s:I=(s,\upsilon)}\tV_{q,i+1,k}^I,\ \tG_{q,i+1,k}^{(\upsilon)}=\sum_{s:I=(s,\upsilon)}\tG_{q,i+1,k}^I,\ \tS_{q,i+1,k}^{(1),(\upsilon)}=\sum_{s:I=(s,\upsilon)}\tS_{q,i+1,k}^{(1),I},\label{def of tu_i upsilon}
\end{align}
where the subscript $k=p,m,s,s1,s2$, and $\tS_{q,i+1,p}^{(1),I}$, $\tS_{q,i+1,m}^{(1),I}$, $\tS_{q,i+1,s}^{(1),I}$, $\tS_{q,i+1,s1}^{(1),I}$, and $\tS_{q,i+1,s2}^{(1),I}$  are obtained by replacing $\tG_{q,i+1}$ in $\tS_{q,i+1}^{(1)}$ mentioned in Section \ref{Some notations}, with $\tG_{q,i+1,p}^I$, $\tG_{q,i+1,m}^I$, $\tG_{q,i+1,s}^I$, $\tG_{q,i+1,s1}^I$,  and $\tG_{q,i+1,s2}^I$. Here, we give the specific form of them.
 \begin{enumerate}
	\item  The specific form of  $\tS_{q,i+1,m}^{(1),(\upsilon)}=\tS_{q,i+1,m1}^{(1),(\upsilon)}+\tS_{q,i+1,m2}^{(1),(\upsilon)}$:
	\begin{equation}\label{def of tS_1m}
		\begin{aligned}
			\tS_{q,i+1,m}^{(1),(\upsilon)}=&\underbrace{\sum_{s:I=(s,\upsilon)}i\gamma_{q,i+1}^{I}\tS_{q,i+1,m}^{(1),cofm}\Pi_{I}}_{\tS_{q,i+1,m1}^{(1),(\upsilon)}}+\underbrace{\sum_{s:I=(s,\upsilon)}i\gamma_{q,i+1}^{I}\tS_{q,i+1,m}^{(1),cofc}\Pi_{I}}_{\tS_{q,i+1,m2}^{(1),(\upsilon)}},
		\end{aligned}
	\end{equation}
	where 
	\begin{align}
	\tS_{q,i+1,m}^{(1),cofm}
	=&4\left((\sgm_{11}+\sgm_{2})|f_{i+1}|^2\Id-\sgm_2(f_{i+1}\otimes f_{i+1})\right),\label{def of tS_1m cofm}\\
	\tS_{q,i+1,m}^{(1),cofc}
	=&4((2\sgm_{12}+\sgm_{111})\tj_1^{(0)}+\sgm_{22}\tj_2^{(0)})|f_{i+1}|^2\Id\nonumber\\
	&+4\left(f_{i+1}^{\top}\left((\sgm_{11}+\sgm_2+(2\sgm_{12}+\sgm_{111})\tj_1^{(0)}+\sgm_{22}\tj_2^{(0)})\Gl-\sgm_{12}\Cl(\Id+\Gl)\right)f_{i+1}\right)\Id\nonumber\\
	&+4\left(f_{i+1}^{\top}\left((\sgm_{12}+\sgm_{22}\tj_1^{(0)})(\Id+\Gl)-\sgm_{22}\Cl(\Id+\Gl)\right)f_{i+1}\right)(\tr \Cl\Id-\Dl)\label{def of tS_1m cofc}\\
	&-4(\sgm_{12}\tj_1^{(0)}+\sgm_{22}\tj_2^{(0)})(f_{i+1}\otimes f_{i+1})-2(\sgm_2+\sgm_{12}\tj_1^{(0)}+\sgm_{22}\tj_2^{(0)})\left((\Gl^{\top}f_{i+1})\otimes f_{i+1}+f_{i+1}\otimes (\Gl^{\top}f_{i+1})\right)\nonumber.
	\end{align}
	Next, we denote  $\tS_{q,i+1,m1}^{(1)}=\sum_{\upsilon}\tS_{q,i+1,m1}^{(1),(\upsilon)}$ and $\tS_{q,i+1,m2}^{(1)}=\sum_{\upsilon}\tS_{q,i+1,m2}^{(1),(\upsilon)}$.
	\item  The specific form of  $\tS_{q,i+1,s1}^{(1),(\upsilon)}$:
	\begin{equation}\label{def of tS_1s1}
		\begin{aligned}
			\tS_{q,i+1,s1}^{(1),(\upsilon)}=&\sum_{s:I=(s,\upsilon)}i\gamma_{q,i+1}^{I}a_{A_I^{\upsilon}}(t)\tS_{q,i+1,s1}^{(1),cof}\Pi_{I},
		\end{aligned}
	\end{equation}
	where
	\begin{equation} \label{def of tS_1s1 cof}
		\begin{aligned}
			\tS_{q,i+1,s1}^{(1),cof}
			=&4\left((f_{i+1}^{\perp})^{\top}\left((\sgm_{11}+\sgm_2+(2\sgm_{12}+\sgm_{111})\tj_1^{(0)}+\sgm_{22}\tj_2^{(0)})\Gl-\sgm_{12}\Cl(\Id+\Gl) \right)f_{i+1}\right)\Id\\
			&+4\left((f_{i+1}^{\perp})^{\top}\left((\sgm_{12}+\sgm_{22}\tj_1^{(0)})\Gl-\sgm_{22}\Cl(\Id+\Gl)\right) f_{i+1}\right)(\tr \Cl\Id-\Dl)\\
			&-2(\sgm_2+\sgm_{12}\tj_1^{(0)}+\sgm_{22}\tj_2^{(0)})\left(((\Id+\Gl^{\top})f_{i+1}^{\perp})\otimes f_{i+1}+f_{i+1}\otimes ((\Id+\Gl^{\top})f_{i+1}^{\perp})\right).
		\end{aligned}
	\end{equation}
	
	\item  The specific form of  $\tS_{q,i+1,s2}^{(1),(\upsilon)}$:
	\begin{equation}\label{def of tS_1s2}
		\begin{aligned}
			\tS_{q,i+1,s2}^{(1),(\upsilon)}=&\sum_{s:I=(s,\upsilon)}\frac{1}{\lambda_{q,i+1}[I]}\left(\tS_{q,i+1,s2}^{(1),I,cof1}\Pi_{I}+\tS_{q,i+1,s2}^{(1),I,cof2}\tPi_{I}\right),
		\end{aligned}
	\end{equation}
	where 
	\begin{equation} \label{def of tS_1s2 cof}
		\begin{aligned}
			\tS_{q,i+1,s2}^{(1),I,cof1}
			=&4\left(\tilde{f}_{A_I^{\upsilon}}^{\top}\left((\sgm_{11}+\sgm_{2}+(2\sgm_{12}+\sgm_{111})\tj_1^{(0)}+\sgm_{22}\tj_2^{(0)})(\Id+\Gl)-\sgm_{12}\Cl(\Id+\Gl)\right)\nb \gamma_{q,i+1,c}^{I}\right)\Id\\
			&+4\left(\tilde{f}_{A_I^{\upsilon}}^{\top}\left((\sgm_{12}+\sgm_{22}\tj_1^{(0)})(\Id+\Gl)-\sgm_{22}\Cl(\Id+\Gl)\right)\nb \gamma_{q,i+1,c}^{I}\right)(\tr \Cl\Id-\Dl)\\
			&-2(\sgm_2+\sgm_{12}\tj_1^{(0)}+\sgm_{22}\tj_2^{(0)})\left(((\Id+\Gl^{\top})\tilde{f}_{A_I^{\upsilon}})\otimes \nb \gamma_{q,i+1,c}^{I}+\nb \gamma_{q,i+1,c}^{I}\otimes ((\Id+\Gl^{\top})\tilde{f}_{A_I^{\upsilon}})\right),\\
			\tS_{q,i+1,s2}^{(1),I,cof2}
			=&4\left(\tilde{f}_{A_I^{\upsilon}}^{\top}\left((\sgm_{11}+\sgm_{2}+(2\sgm_{12}+\sgm_{111})\tj_1^{(0)}+\sgm_{22}\tj_2^{(0)})(\Id+\Gl)-\sgm_{12}\Cl(\Id+\Gl)\right)\gamma_{q,i+1,s}^{I}\right)\Id\\
			&+4\left(\tilde{f}_{A_I^{\upsilon}}^{\top}\left((\sgm_{12}+\sgm_{22}\tj_1^{(0)})(\Id+\Gl)-\sgm_{22}\Cl(\Id+\Gl)\right)\gamma_{q,i+1,s}^{I}\right)(\tr \Cl\Id-\Dl)\\
			&-2(\sgm_2+\sgm_{12}\tj_1^{(0)}+\sgm_{22}\tj_2^{(0)})\left(((\Id+\Gl^{\top})\tilde{f}_{A_I^{\upsilon}})\otimes \gamma_{q,i+1,s}^{I}+ \gamma_{q,i+1,s}^{I}\otimes ((\Id+\Gl^{\top})\tilde{f}_{A_I^{\upsilon}})\right).
		\end{aligned}
	\end{equation}

\end{enumerate}

In order to ensure that the divergence of the new Reynolds errors, defined later in Section~\ref{Definition of the new Reynolds error}, satisfies \eqref{eq of linear and angular momenta in L co} with $y=x+\ul+\tu_{q,i+1}$, we need two additional terms $\tu_{q,i+1,L}$ and $\tu_{q,i+1,M}$.  Moreover, we provide two parameters to determine the range where angular momentum correction is required.
\begin{align}
	S_{q,i}^{min}:=\lfloor(\min\supp_{t}\te_{q,i})/\tau_{q,i}\rfloor, \quad S_{q,i}^{max}:=\lceil T/\tau_{q,i} \rceil + 1.
\end{align}
Then, we could define a cutoff function $\theta_{q,i}^*(t)\in C^\infty(\mcI^{q,i})$ satisfying
	\begin{equation}\label{pp of theta^*}
	\theta_{q,i}^*=\left\lbrace
	\begin{aligned}
		&1,&&t\in (S_{q,i}^{min}\tau_{q,i},T+\tau_{q,i}],\\
		&\theta^*(\tau_{q,i}^{-1}(t-(S_{q,i}^{min}\tau_{q,i}-\tau_{q,i}))),&&t\in [S_{q,i}^{min}\tau_{q,i}-\tau_{q,i},S_{q,i}^{min}\tau_{q,i}],\\
		&0,&&t\in[-\tau_{q,i},S_{q,i}^{min}\tau_{q,i}-\tau_{q,i}),
	\end{aligned}\right.
\end{equation}
where $\theta^*$ is defined as \eqref{def of theta^*}, and
\begin{equation}
	\begin{aligned}
	&\nrm{\theta_{q,i}^*}_{C_t^r(\mcI^{q,i})}\lesssim_r\tau_{q,i}^{-r}.
	\end{aligned}
\end{equation} 
\begin{Rmk}
	In the proof of Proposition \ref{Proposition 1}, we usually have
	\begin{align}\label{time case 1}
		&0 < \min\supp_{t}\te_{q,i} < T + \tau_{q,i}, \quad 
		0 < S_{q,i}^{\text{min}} <  S_{q,i}^{max}.
	\end{align}
\end{Rmk}

We need another  time-cutoff function: $\Theta_{0}\in C_c^\infty(\R)$ satisfies $\Theta_{0}=1$ on $[0,1]$ and $\Theta_0=0$ on $(-1/4,5/4)^c$. Then, we could give the following cutoff function
\begin{equation}\label{def of Theta_q,i}
\Theta_{q,i}^{(S)}(t)=
\Theta_0(\tau_{q,i}^{-1}t-S),\quad S_{q,i}^{min}\leqslant S\leqslant S_{q,i}^{max},
\end{equation}
and $\Theta_S(t):=\Theta_{q,i}^{(S)}(\tau_{q,i}t)$.

In order to define $\tu_{q,i+1,L}$ and $\tu_{q,i+1,M}$, we first introduce compactly supported smooth functions $\psi,\Psi\in C_c^\infty(\R^2)$ satisfying
\begin{align}\label{pp of psi}
	\psi=\Delta\Psi,\quad\int_{\R^2}\psi(x)\ \rd x=0, \quad\int_{\R^2}\psi^2(x)\ \rd x=1, \quad\text{and}\quad \supp\psi\cup\supp\Psi\subseteq B\left(0,\frac1{16}\right).
\end{align}
In the following, for any function $f\in C_c^\infty(\R^2)$, we set 
\begin{align*}
f_{L}^{(\upsilon)}=f\left(\frac{x}{\mu_{q,i}}-\frac{\pi}{2}e_1-2\pi\upsilon\right),\quad f_{M}^{(\upsilon)}=f\left(\frac{x}{\mu_{q,i}}-\frac{\pi}{2}e_2-2\pi\upsilon\right),
\end{align*}
where $e_i,i=1,2$ denotes the standard unit vector. 

\begin{Rmk}
	For notational simplicity, if $D^\alpha$ is a spatial differential operator, we use $D^\alpha f_k^{(\upsilon)}$ to denote the function obtained by first applying $D^\alpha$ to $f$ and then performing the above translation and rescaling; namely,
	$
	D^\alpha f_k^{(\upsilon)}:=(D^\alpha f)_k^{(\upsilon)},\ k=L,M.
	$
	Thus, $D^\alpha f_k^{(\upsilon)}$ should not be confused with $D_x^\alpha(f_k^{(\upsilon)})$. More precisely,
	$
	D_x^\alpha(f_k^{(\upsilon)})=\mu_{q,i}^{-|\alpha|}D^\alpha f_k^{(\upsilon)},\ k=L,M.
	$
	In particular,
	$
	\nabla_x f_k^{(\upsilon)}=\mu_{q,i}^{-1}\nabla f_k^{(\upsilon)}
	$
	and
	$
	\Delta_x f_k^{(\upsilon)}=\mu_{q,i}^{-2}\Delta f_k^{(\upsilon)}.
	$
\end{Rmk}

And then, we define $\tV_{q,i+1,L}$ and $\tV_{q,i+1,M}$ as
\begin{align}
	\tV_{q,i+1,k} &=\sum_{\upsilon}\tV_{q,i+1,k}^{(\upsilon)}=\sum_{\upsilon} g_k^{(\upsilon)}(t)\otimes\nabla\Psi_{k}^{(\upsilon)},\quad k=L,M,\label{def of tV_q k}
\end{align}
where
$
	g_L^{(\upsilon)}(t)=\theta_{q,i}^*(t)(\varepsilon\mu_{q,i}^2\delta_{q+1},g_{L,2}^{(\upsilon)}(t))^{\top},\
	g_M^{(\upsilon)}(t)=\theta_{q,i}^*(t)(\varepsilon\mu_{q,i}^2\delta_{q+1},g_{M,2}^{(\upsilon)}(t))^{\top}
$
are smooth functions of time $t$, and
\begin{align}
	g_{L,2}^{(\upsilon)}(t)=\sum_{S=S_{q,i}^{min}}^{S_{q,i}^{max}}\Theta_{q,i}^{(S)}(t)g_{L,2}^{(S,\upsilon)}(\tau_{q,i}^{-1}t),\quad g_{M,2}^{(\upsilon)}(t)=\sum_{S=S_{q,i}^{min}}^{S_{q,i}^{max}}\Theta_{q,i}^{(S)}(t)g_{M,2}^{(S,\upsilon)}(\tau_{q,i}^{-1}t),\label{def of g_LM2}
\end{align}
which will be defined later. Then, $\tu_{q,i+1,L}$, $\tu_{q,i+1,M}$, $\tG_{q,i+1,L}$, and $\tG_{q,i+1,M}$ can be defined as
\begin{align}
	\tu_{q,i+1,k}&=\Div \tV_{q,i+1,k}=\sum_{\upsilon}\tu_{q,i+1,k}^{(\upsilon)}=\sum_{\upsilon}\mu_{q,i}^{-1}g_k^{(\upsilon)}(t)\psi_{k}^{(\upsilon)},&& k=L,M,\label{def of tu_q k}\\
	\tG_{q,i+1,k}&=\nb \tu_{q,i+1,k}=\sum_{\upsilon}\tG_{q,i+1,k}^{(\upsilon)}=\sum_{\upsilon}\mu_{q,i}^{-2}g_k^{(\upsilon)}(t)\otimes\nb\psi_{k}^{(\upsilon)},&& k=L,M.\label{def of tG_q k}
\end{align}

For convenience, we define the correction term as
\begin{align}
&\tu_{q,i+1,ac}^{(\upsilon)}=\tu_{q,i+1,L}^{(\upsilon)}+\tu_{q,i+1,M}^{(\upsilon)},&&\tV_{q,i+1,ac}^{(\upsilon)}=\tV_{q,i+1,L}^{(\upsilon)}+\tV_{q,i+1,M}^{(\upsilon)},&&\tG_{q,i+1,ac}^{(\upsilon)}=\tG_{q,i+1,L}^{(\upsilon)}+\tG_{q,i+1,M}^{(\upsilon)},\label{def of tuVG_ac up}\\
&\tu_{q,i+1,ac}=\tu_{q,i+1,L}+\tu_{q,i+1,M},
&&\tV_{q,i+1,ac}=\tV_{q,i+1,L}+\tV_{q,i+1,M},
&&\tG_{q,i+1,ac}=\tG_{q,i+1,L}+\tG_{q,i+1,M}.\label{def of tuVG_ac}
\end{align}
Similarly, we could define $\tS_{q,i+1,L}^{(1),(\upsilon)}$, $\tS_{q,i+1,M}^{(1),(\upsilon)}$, and  $\tS_{q,i+1,ac}^{(1),(\upsilon)}$ which correspond to the term $\tS_{q,i+1}^{(1)}$ mentioned in Section \ref{Some notations}. For its components, we replace $\tG_{q,i+1}$ with $\tG_{q,i+1,L}^{(\upsilon)}$, $\tG_{q,i+1,M}^{(\upsilon)}$, and $\tG_{q,i+1,ac}^{(\upsilon)}$ respectively. Then, we have
\begin{equation}\label{def of tS_1ac}
	\begin{aligned}
	\tS_{q,i+1,ac}^{(1),(\upsilon)}=&\tS_{q,i+1,L}^{(1),(\upsilon)}+\tS_{q,i+1,M}^{(1),(\upsilon)},\quad\tS_{q,i+1,ac}^{(1)}=\tS_{q,i+1,L}^{(1)}+\tS_{q,i+1,M}^{(1)}=\sum_{\upsilon}\left(\tS_{q,i+1,L}^{(1),(\upsilon)}+\tS_{q,i+1,M}^{(1),(\upsilon)}\right),
	\end{aligned}
\end{equation}
where 
\begin{equation} \label{def of tS_1ac dec}
	\begin{aligned}
		\tS_{q,i+1,L}^{(1),(\upsilon)}
		=&4\mu_{q,i}^{-2}\left((g_L^{(\upsilon)}(t))^\top\left((\sgm_{11}+\sgm_{2}+(2\sgm_{12}+\sgm_{111})\tj_1^{(0)}+\sgm_{22}\tj_2^{(0)})(\Id+\Gl)-\sgm_{12}\Cl(\Id+\Gl)\right)\nabla\psi_{L}^{(\upsilon)}\right)\Id\\
		&+4\mu_{q,i}^{-2}\left((g_L^{(\upsilon)}(t))^\top\left((\sgm_{12}+\sgm_{22}\tj_1^{(0)})(\Id+\Gl)-\sgm_{22}\Cl(\Id+\Gl)\right)\nabla \psi_{L}^{(\upsilon)}\right)(\tr \Cl\Id-\Dl)\\	&-2\mu_{q,i}^{-2}(\sgm_2+\sgm_{12}\tj_1^{(0)}+\sgm_{22}\tj_2^{(0)})\left(((\Id+\Gl^{\top})g_L^{(\upsilon)}(t))\otimes \nabla \psi_{L}^{(\upsilon)}+\nabla \psi_{L}^{(\upsilon)}\otimes ((\Id+\Gl^{\top})g_L^{(\upsilon)}(t))\right),\\
		\tS_{q,i+1,M}^{(1),(\upsilon)}
		=&4\mu_{q,i}^{-2}\left((g_M^{(\upsilon)}(t))^\top\left((\sgm_{11}+\sgm_{2}+(2\sgm_{12}+\sgm_{111})\tj_1^{(0)}+\sgm_{22}\tj_2^{(0)})(\Id+\Gl)-\sgm_{12}\Cl(\Id+\Gl)\right)\nabla\psi_{M}^{(\upsilon)}\right)\Id\\
		&+4\mu_{q,i}^{-2}\left((g_M^{(\upsilon)}(t))^\top\left((\sgm_{12}+\sgm_{22}\tj_1^{(0)})(\Id+\Gl)-\sgm_{22}\Cl(\Id+\Gl)\right)\nabla \psi_{M}^{(\upsilon)}\right)(\tr \Cl\Id-\Dl)\\
		&-2\mu_{q,i}^{-2}(\sgm_2+\sgm_{12}\tj_1^{(0)}+\sgm_{22}\tj_2^{(0)})\left(((\Id+\Gl^{\top})g_M^{(\upsilon)}(t))\otimes \nabla \psi_{M}^{(\upsilon)}+\nabla \psi_{M}^{(\upsilon)}\otimes ((\Id+\Gl^{\top})g_M^{(\upsilon)}(t))\right).
		\end{aligned}
\end{equation}

Notice that they satisfy the following properties: 
\begin{enumerate}
	\item The supports of  $\tV_{q,i+1,L}$, $\tV_{q,i+1,M}$, $\tu_{q,i+1,L}$, $\tu_{q,i+1,M}$, $\tG_{q,i+1,L}$, $\tG_{q,i+1,M}$,  $\tS_{q,i+1,L}^{(1)}$, and $\tS_{q,i+1,M}^{(1)}$ satisfy
	\begin{equation}\label{pp of supp tu_LMC 0}
	\begin{aligned}
		&\supp\tV_{q,i+1,L}\bigcap\supp\tV_{q,i+1,M}=\emptyset,
		&&\supp\tu_{q,i+1,L}\bigcap\supp\tu_{q,i+1,M}=\emptyset,\\
		&\supp\tG_{q,i+1,L}\bigcap\supp\tG_{q,i+1,M}=\emptyset,
		&&\supp\tS_{q,i+1,L}^{(1)}\bigcap\supp\tS_{q,i+1,M}^{(1)}=\emptyset.
	\end{aligned}
	\end{equation}
	 
		\item The supports of $\tV_{q,i+1,L}^{(\upsilon)}$, $\tV_{q,i+1,M}^{(\upsilon)}$,
	$\tu_{q,i+1,L}^{(\upsilon)}$, $\tu_{q,i+1,M}^{(\upsilon)}$, $\tG_{q,i+1,L}^{(\upsilon)}$, $\tG_{q,i+1,M}^{(\upsilon)}$, $\tS_{q,i+1,L}^{(1),(\upsilon)}$, and $\tS_{q,i+1,M}^{(1),(\upsilon)}$ satisfy, for $I=(s,\upsilon)\in\mathscr I$,
	\begin{equation} \label{pp of supp tu_LMC 1}
		\begin{aligned}
			&\supp_x\tV_{q,i+1,L}^{(\upsilon)},\supp_x\tV_{q,i+1,M}^{(\upsilon)}\subseteq \set{x\mid\chi_I(x)=\chi_{\upsilon}(\mu_{q,i}^{-1}x)=1},\\
			&\supp_x\tu_{q,i+1,L}^{(\upsilon)},\supp_x\tu_{q,i+1,M}^{(\upsilon)}\subseteq \set{x\mid\chi_I(x)=\chi_{\upsilon}(\mu_{q,i}^{-1}x)=1},\\
			&\supp_x\tG_{q,i+1,L}^{(\upsilon)},\supp_x\tG_{q,i+1,M}^{(\upsilon)}\subseteq \set{x\mid\chi_I(x)=\chi_{\upsilon}(\mu_{q,i}^{-1}x)=1},\\
			&\supp_x\tS_{q,i+1,L}^{(1),(\upsilon)},\supp_x\tS_{q,i+1,M}^{(1),(\upsilon)}\subseteq \set{x\mid\chi_I(x)=\chi_{\upsilon}(\mu_{q,i}^{-1}x)=1}.
		\end{aligned}
	\end{equation}
			Here and below, support relations involving a fixed spatial index are understood on the corresponding chosen lifts to $\R^2$. Let $e_L=e_1$ and $e_M=e_2$. By the definitions and \eqref{pp of psi}, if $x$ belongs to the support of any correction term indexed by $(k,\upsilon)$, where $k=L,M$, then
		$
		\left|\mu_{q,i}^{-1}x-\frac{\pi}{2}e_k-2\pi\upsilon\right|<\frac{1}{16}.
		$
		Consequently,
		$
		\left|\mu_{q,i}^{-1}x-2\pi\upsilon\right|_\infty\leqslant\frac{\pi}{2}+\frac{1}{16}<\frac{7\pi}{8},
		$
		and hence $\chi_\upsilon(\mu_{q,i}^{-1}x)=1$. If $\upsilon'\neq\upsilon$, then
		$
		\left|\mu_{q,i}^{-1}x-2\pi\upsilon'\right|_\infty\geqslant2\pi-\frac{\pi}{2}-\frac{1}{16}=\frac{3\pi}{2}-\frac{1}{16}>\frac{9\pi}{8},
		$
		which implies that $\chi_{\upsilon'}(\mu_{q,i}^{-1}x)=0$. Since every principal perturbation indexed by $I'=(s',\upsilon')$ is supported in $\supp_x\chi_{I'}$, the correction terms indexed by $\upsilon$ are disjoint from all principal perturbations whose spatial index is different from $\upsilon$. More precisely, for any $I'=(s',\upsilon')\in\mathscr I$ with $\upsilon'\neq\upsilon$, we have
		\begin{align*}
			&\left(\supp_x\tV_{q,i+1,L}^{(\upsilon)}\cup\supp_x\tV_{q,i+1,M}^{(\upsilon)}\right)\cap\supp_x\tV_{q,i+1,p}^{I'}=\emptyset,\\
			&\left(\supp_x\tu_{q,i+1,L}^{(\upsilon)}\cup\supp_x\tu_{q,i+1,M}^{(\upsilon)}\right)\cap\supp_x\tu_{q,i+1,p}^{I'}=\emptyset,\\
			&\left(\supp_x\tG_{q,i+1,L}^{(\upsilon)}\cup\supp_x\tG_{q,i+1,M}^{(\upsilon)}\right)\cap\supp_x\tG_{q,i+1,p}^{I'}=\emptyset,\\
			&\left(\supp_x\tS_{q,i+1,L}^{(1),(\upsilon)}\cup\supp_x\tS_{q,i+1,M}^{(1),(\upsilon)}\right)\cap\supp_x\tS_{q,i+1,p}^{(1),I'}=\emptyset.
		\end{align*}
\end{enumerate}

\begin{Rmk}
	The above support properties imply that the angular correction indexed by $\upsilon$ interacts only with the principal perturbations having the same spatial index $\upsilon$. This eliminates the cross terms involving different spatial cells and makes the construction of $g_L^{(\upsilon)}(t)$ and $g_M^{(\upsilon)}(t)$ more concise.
\end{Rmk}

Then, for each fixed $\upsilon$, we decompose $\tV_{q,i+1}^{(\upsilon)}$, $\tu_{q,i+1}^{(\upsilon)}$, $\tG_{q,i+1}^{(\upsilon)}$, and $\tS_{q,i+1}^{(1),(\upsilon)}$ as follows:
\begin{align*}
\tV_{q,i+1}^{(\upsilon)}=\tV_{q,i+1,p}^{(\upsilon)}+\tV_{q,i+1,ac}^{(\upsilon)},\quad \tu_{q,i+1}^{(\upsilon)}=\tu_{q,i+1,p}^{(\upsilon)}+\tu_{q,i+1,ac}^{(\upsilon)},\quad\tG_{q,i+1}^{(\upsilon)}=\tG_{q,i+1,p}^{(\upsilon)}+\tG_{q,i+1,ac}^{(\upsilon)},\quad\tS_{q,i+1}^{(1),(\upsilon)}=\tS_{q,i+1,p}^{(1),(\upsilon)}+\tS_{q,i+1,ac}^{(1),(\upsilon)}.
\end{align*}
Moreover, we could give the specific form of  $\tS_{q,i+1}^{(1),(\upsilon)}$ as
	\begin{equation}\label{specific def of tS_1}
	\begin{aligned}
	\tS_{q,i+1}^{(1),(\upsilon)}
	=&\underbrace{\sum_{s:I=(s,\upsilon)}i\gamma_{q,i+1}^{I}\tS_{q,i+1,m}^{(1),cofm}\Pi_{I}}_{\sum_{s:I=(s,\upsilon)}\tS_{q,i+1}^{(1),I,cofm}\Pi_{I}}+\underbrace{\sum_{s:I=(s,\upsilon)}\left(i\gamma_{q,i+1}^{I}(\tS_{q,i+1,m}^{(1),cofc}+a_{A_I^{\upsilon}}(t)\tS_{q,i+1,s1}^{(1),cof})+\frac{\tS_{q,i+1,s2}^{(1),I,cof1}}{\lambda_{q,i+1}[I]}\right)\Pi_{I}}_{\sum_{s:I=(s,\upsilon)}\tS_{q,i+1}^{(1),I,cof1}\Pi_{I}}\\
	&+\underbrace{\sum_{s:I=(s,\upsilon)}\frac{\tS_{q,i+1,s2}^{(1),I,cof2}}{\lambda_{q,i+1}[I]}\tPi_{I}}_{\sum_{s:I=(s,\upsilon)}\tS_{q,i+1}^{(1),I,cof2}\tPi_{I}}+\tS_{q,i+1,L}^{(1),(\upsilon)}+\tS_{q,i+1,M}^{(1),(\upsilon)}.
	\end{aligned}
\end{equation}
Next, we will  divide $\tS_{q,i+1}^{(2),m}$ into two parts $\tS_{q,i+1,p}^{(2),m}$ and $\tS_{q,i+1,s}^{(2),m}$, and give the specific form of $\tS_{q,i+1}^{(2),m}$:
	\begin{align}
		\tS_{q,i+1,p}^{(2),m}:=&\sum_{\upsilon}\tS_{q,i+1,p}^{(2),m,(\upsilon)}\nonumber\\
		=&\sum_{\upsilon}\sum_{\nrm{\tup-\upsilon}\leqslant1}\left(2(\sgm_{11}-\sgm_{12}+\sgm_2)\tr(\tG_{q,i+1,m}^{(\upsilon)}(\tG_{q,i+1,m}^{(\tup)})^{\top})+4(3\sgm_{12}+\sgm_{111})\tr\tG_{q,i+1,m}^{(\upsilon)}\tr\tG_{q,i+1,m}^{(\tup)}\right)\Id\nonumber\\
		&-\sum_{\upsilon}\sum_{\nrm{\tup-\upsilon}\leqslant1}\left(2\sgm_{12}\tr(\tG_{q,i+1,m}^{(\upsilon)}\tG_{q,i+1,m}^{(\tup)})\right)\Id\nonumber\\
		&-\sum_{\upsilon}\sum_{\nrm{\tup-\upsilon}\leqslant1}\left(2\sgm_2(\tG_{q,i+1,m}^{(\upsilon)})^{\top}\tG_{q,i+1,m}^{(\tup)}+4\sgm_{12}\tr\tG_{q,i+1,m}^{(\upsilon)}(\tG_{q,i+1,m}^{(\tup)}+(\tG_{q,i+1,m}^{(\tup)})^{\top})\right)\nonumber\\
		=&-\sum_{\upsilon}\sum_{s:I=(s,\upsilon)}\sum_{\substack{I'=(s',\tup)\in\mathscr I\\\nrm{I'-I}\leqslant1}}|f_{i+1}|^4(\gamma_{q,i+1}^I\gamma_{q,i+1}^{I'})\left(\sgm^*\Id-(2\sgm_2+8\sgm_{12})\frac{f_{i+1}}{|f_{i+1}|}\otimes \frac{f_{i+1}}{|f_{i+1}|}\right)\Pi_{I,I'},\label{def of tS_mm}\\
		\tS_{q,i+1,s}^{(2),m}
		=&2(\sgm_{11}-\sgm_{12}+\sgm_2)\tr\left(\tG_{q,i+1,m}(\tG_{q,i+1,s}+\tG_{q,i+1,ac})^{\top}+(\tG_{q,i+1,s}+\tG_{q,i+1,ac})(\tG_{q,i+1})^{\top}\right)\Id\nonumber\\
		&+4(3\sgm_{12}+\sgm_{111})\left(\tr(\tG_{q,i+1,m})\tr(\tG_{q,i+1,s}+\tG_{q,i+1,ac})+\tr(\tG_{q,i+1,s}+\tG_{q,i+1,ac})\tr(\tG_{q,i+1})\right)\Id\nonumber\\
		&-2\sgm_{12}\tr\left(\tG_{q,i+1,m}(\tG_{q,i+1,s}+\tG_{q,i+1,ac})+(\tG_{q,i+1,s}+\tG_{q,i+1,ac})\tG_{q,i+1}\right)\Id\label{def of tS_ms}\\
		&-2\sgm_2\left((\tG_{q,i+1,m})^{\top}(\tG_{q,i+1,s}+\tG_{q,i+1,ac})+(\tG_{q,i+1,s}+\tG_{q,i+1,ac})^{\top}\tG_{q,i+1}\right)\nonumber\\
		&-4\sgm_{12}\tr\tG_{q,i+1,m}(\tG_{q,i+1,s}+\tG_{q,i+1,ac}+(\tG_{q,i+1,s}+\tG_{q,i+1,ac})^{\top})\nonumber\\
		&-4\sgm_{12}\tr(\tG_{q,i+1,s}+\tG_{q,i+1,ac})(\tG_{q,i+1}+(\tG_{q,i+1})^{\top}).\nonumber
	\end{align}

Before we give the detailed construction of $g_L^{(\upsilon)}$ and $g_M^{(\upsilon)}$, we need to calculate the new Reynolds error $R_{q,i+1}$ after adding perturbation $\tu_{q,i+1}:=\tu_{q,i+1,p}+\tu_{q,i+1,ac}$ to $u_{\ell,i}$. We could divide $\Div((\Id+G_{q,i+1})R_{q,i+1})$ into three parts:
\begin{equation}\label{def of Reynold error 1}
\begin{aligned}
	&\Div((\Id+G_{q,i+1})R_{q,i+1})\\
	=&\underbrace{\pa_{tt}(\tu_{q,i+1,p}+\tu_{q,i+1,L})-\Div\left(\tG_{q,i+1}(\Sgm_{\Gl}+\Rl+c_{q,i})+(\Id+G_{q,i+1})\tS_{q,i+1}^{(1)}-(\Id+G_{q,i+1})\tG_{q,i+1,m}\tS_{q,i+1,m1}^{(1)}\right)}_{\Div ((\Id+G_{q,i+1})R_{P})}\\
	&+\underbrace{\Div\left((\Id+G_{q,i+1})(\sgm^*\te_{q,i}\Id^{<i+1>}+\Rl-\tG_{q,i+1,m}\tS_{q,i+1,m1}^{(1)}-\tS_{q,i+1}^{(2)}-\tS_{q,i+1}^{(\geqslant3)})\right)}_{\Div( (\Id+G_{q,i+1})R_O)=\Div( (\Id+G_{q,i+1})R_{O1})+\Div( (\Id+G_{q,i+1})R_{O2})+\Div( (\Id+G_{q,i+1})R_{O3})}+\underbrace{\pa_{tt}\tu_{q,i+1,M}+\Div\left(R_{m,i}\right)}_{\Div ((\Id+G_{q,i+1})R_{M})},
\end{aligned}
\end{equation} 
The second term naturally has the form $\Div( (\Id+G_{q,i+1})R_O)$ for some symmetric matrix $R_O$ due to its special form, so we only need to consider the other  terms.
Our goal is to ensure that the other terms above each can be divided into different compactly supported parts which satisfy \eqref{eq of linear and angular momenta in L co} individually. Noting the divergence form of the perturbation terms, the first integral condition of $ \eqref{eq of linear and angular momenta in L co} $ is naturally satisfied, so we mainly focus on the second condition, which is the conservation of angular momentum. This is the reason for constructing $ \tu_{q,i+1,L} $ and $ \tu_{q,i+1,M} $.

We first consider the following decomposition:
\begin{equation}\label{divide of Reynold error 1}
\begin{aligned}
&\pa_{tt}\tu_{q,i+1,M}+\Div R_{m,i}=\sum_{\upsilon}\left(\pa_{tt}\tu_{q,i+1,M}^{(\upsilon)}+\Div (\chi_\upsilon^2(\mu_{q,i}^{-1}x)R_{m,i})\right),\\
&\pa_{tt}(\tu_{q,i+1,p}+\tu_{q,i+1,L})-\Div\left(\tG_{q,i+1}(\Sgm_{\Gl}+\Rl+c_{q,i})+(\Id+G_{q,i+1})(\tS_{q,i+1}^{(1)}-\tG_{q,i+1,m}\tS_{q,i+1,m1}^{(1)})\right)\\
=&\sum_{\upsilon}\Big(\pa_{tt}\tu_{q,i+1,p}^{(\upsilon)}+\pa_{tt}\tu_{q,i+1,L}^{(\upsilon)}-\Div\Big(\tG_{q,i+1}^{(\upsilon)}(\Sgm_{\Gl}+\Rl+c_{q,i})+(\Id+G_{q,i+1})(\tS_{q,i+1}^{(1),(\upsilon)}-\tG_{q,i+1,m}\tS_{q,i+1,m1}^{(1),(\upsilon)})\Big)\Big),\\
&\Div\left((\Id+G_{q,i+1})(\sgm^*\te_{q,i}\Id^{<i+1>}+\Rl-\tG_{q,i+1,m}\tS_{q,i+1,m1}^{(1)}-\tS_{q,i+1}^{(2)}-\tS_{q,i+1}^{(\geqslant3)})\right)\\
=&\sum_{\upsilon}\Div\left((\Id+G_{q,i+1})\left(\sgm^*\chi_{\upsilon}^2(\mu_{q,i}^{-1}x)d_{q,i+1}^2\left(\Id-\frac{f_{i+1}}{|f_{i+1}|}\otimes \frac{f_{i+1}}{|f_{i+1}|}\right)-\tG_{q,i+1,m}\tS_{q,i+1,m1}^{(1),(\upsilon)}-\tS_{q,i+1,p}^{(2),m,(\upsilon)}\right)\right)\\
&\quad+\Div\left((\Id+G_{q,i+1})\left(\sgm^*\te_{q,i}\Id^{<i+1>}+\Rl-\sgm^*d_{q,i+1}^2\left(\Id-\frac{f_{i+1}}{|f_{i+1}|}\otimes \frac{f_{i+1}}{|f_{i+1}|}\right)-\tS_{q,i+1,s}^{(2),m}-\tS_{q,i+1}^{(2),c}-\tS_{q,i+1}^{(\geqslant3)}\right)\right).
\end{aligned}
\end{equation}
For each fixed $\upsilon$, all terms in \eqref{divide of Reynold error 1} indexed by $\upsilon$ are supported in $\supp\chi_\upsilon(\mu_{q,i}^{-1}\cdot)$. The final term in the last line is unlocalized and is treated separately through $R_{O2}$ and $R_{O3}$. 
\begin{Rmk}
	Although
	$
	\Div R_{m,i}
	=
	\sum_{\upsilon}\Div\left(\chi_\upsilon^2(\mu_{q,i}^{-1}x)R_{m,i}\right)
	=
	\sum_{\upsilon}\chi_\upsilon^2(\mu_{q,i}^{-1}x)\Div R_{m,i},
	$
	the two decompositions yield different local angular momentum estimates. We use the former, which gives better local control.
\end{Rmk}

So we need to choose proper $\tu_{q,i+1,ac}^{(\upsilon)}$ so that the following integrals
vanish for every $\upsilon$. Due to \eqref{pp of psi}, \eqref{pp of supp tu_LMC 0}, \eqref{pp of supp tu_LMC 1},  and  	for any smooth vector field $a=a(x)$
and any compactly supported matrix field $K$, 
\begin{align}
	\int_{\T^2}a\times \Div K\rd x&=\int_{\T^2}(a_1\partial_jK_{2j}-a_2\partial_jK_{1j})\rd x=\int_{\T^2}(K_{1j}\partial_ja_2-K_{2j}\partial_ja_1)\rd x:=\int_{\T^2}(K(\nabla a)^\top)_{12-21}\rd x,\label{int of a times divR}
\end{align}
we have
\begin{align*}
0=&\int_{\T^2} (y_{\ell,i}+\tu_{q,i+1})\times\left(\pa_{tt}\tu_{q,i+1,M}^{(\upsilon)}+\Div (\chi_\upsilon^2(\mu_{q,i}^{-1}x)R_{m,i})\right)\rd x\\
=&\int_{\T^2} y_{\ell,i}\times\Div (\chi_\upsilon^2(\mu_{q,i}^{-1}x)R_{m,i})\rd x+\int_{\T^2} \sum_{\|\tup-\upsilon\|\leqslant 1}\tu_{q,i+1,p}^{(\tup)}\times\Div (\chi_\upsilon^2(\mu_{q,i}^{-1}x)R_{m,i})\rd x+\int_{\T^2} \tu_{q,i+1,ac}^{(\upsilon)}\times\Div R_{m,i}\rd x\\
&+\int_{\T^2} y_{\ell,i}\times\pa_{tt}(\Div(\tV_{q,i+1,M}^{(\upsilon)}))\rd x+\int_{\T^2} \Div(\tV_{q,i+1,p}^{(\upsilon)})\times\pa_{tt}\tu_{q,i+1,M}^{(\upsilon)}\rd x+\int_{\T^2} \tu_{q,i+1,M}^{(\upsilon)}\times\pa_{tt}\tu_{q,i+1,M}^{(\upsilon)}\rd x\\
=&\int_{\T^2}  (\chi_\upsilon^2(\mu_{q,i}^{-1}x)R_{m,i}(\Id+\Gl^{\top}))_{12-21}\rd x+\int_{\T^2} \sum_{\|\tup-\upsilon\|\leqslant 1}\tu_{q,i+1,p}^{(\tup)}\times\Div (\chi_\upsilon^2(\mu_{q,i}^{-1}x)R_{m,i})\rd x\\
&+\mu_{q,i}^{-1}\left(g_M^{(\upsilon)}(t)\times\int_{\T^2} \psi_{M}^{(\upsilon)}\Div R_{m,i}\rd x+g_L^{(\upsilon)}(t)\times\int_{\T^2} \psi_{L}^{(\upsilon)}\Div R_{m,i}\rd x\right)\\
&+\int_{\T^2}  (\pa_{tt}\tV_{q,i+1,M}^{(\upsilon)}(\Id+\Gl^{\top}))_{12-21}\rd x+\int_{\T^2}  (\pa_{tt}\tG_{q,i+1,M}^{(\upsilon)}(\tV_{q,i+1,p}^{(\upsilon)})^{\top})_{12-21}\rd x+g_M^{(\upsilon)}(t)\times\pa_{tt}g_M^{(\upsilon)}(t),\\
0=&\int_{\T^2} (y_{\ell,i}+\tu_{q,i+1})\times\left(\pa_{tt}\tu_{q,i+1,p}^{(\upsilon)}+\pa_{tt}\tu_{q,i+1,L}^{(\upsilon)}-\Div(\tG_{q,i+1}^{(\upsilon)}(\Sgm_{\Gl}+\Rl+c_{q,i}))\right)\rd x\\
&-\int_{\T^2} (y_{\ell,i}+\tu_{q,i+1})\times\Div\left((\Id+G_{q,i+1})(\tS_{q,i+1}^{(1),(\upsilon)}-\tG_{q,i+1,m}\tS_{q,i+1,m1}^{(1),(\upsilon)})\right)\rd x\\
=&\int_{\T^2} (y_{\ell,i}+\sum_{\nrm{\tup-\upsilon}\leqslant1}\tu_{q,i+1,p}^{(\tup)}+\tu_{q,i+1,ac}^{(\upsilon)})\times(\pa_{tt}\Div(\tV_{q,i+1,p}^{(\upsilon)}))\rd x\\
&-\int_{\T^2} (y_{\ell,i}+\tu_{q,i+1})\times\Div(\tG_{q,i+1,p}^{(\upsilon)}(\Sgm_{\Gl}+\Rl+c_{q,i}))\rd x\\
&-\int_{\T^2} (y_{\ell,i}+\tu_{q,i+1,p}^{(\upsilon)}+\tu_{q,i+1,ac}^{(\upsilon)})\times\Div(\tG_{q,i+1,ac}^{(\upsilon)}(\Sgm_{\Gl}+\Rl+c_{q,i}))\rd x\\
&+\int_{\T^2} y_{\ell,i}\times\pa_{tt}(\Div(\tV_{q,i+1,L}^{(\upsilon)}))\rd x+\int_{\T^2} \Div(\tV_{q,i+1,p}^{(\upsilon)})\times\pa_{tt}\tu_{q,i+1,L}^{(\upsilon)}\rd x+\int_{\T^2} \tu_{q,i+1,L}^{(\upsilon)}\times\pa_{tt}\tu_{q,i+1,L}^{(\upsilon)}\rd x\\
=&\int_{\T^2} (\pa_{tt}\tV_{q,i+1,p}^{(\upsilon)}(\Id+\Gl+\sum_{\nrm{\tup-\upsilon}\leqslant1}\tG_{q,i+1,p}^{(\tup)})^{\top})_{12-21}\rd x\\
&-\int_{\T^2} (\tG_{q,i+1,p}^{(\upsilon)}(\Sgm_{\Gl}+\Rl+c_{q,i})(\Id+\Gl+\sum_{\nrm{\tup-\upsilon}=1}\tG_{q,i+1,p}^{(\tup)})^{\top})_{12-21}\rd x\\
&-\int_{\T^2} (\tG_{q,i+1,ac}^{(\upsilon)}(\Sgm_{\Gl}+\Rl+c_{q,i})(\Id+\Gl^{\top}))_{12-21}\rd x+\int_{\T^2} (\pa_{tt}\tV_{q,i+1,p}^{(\upsilon)}(\tG_{q,i+1,ac}^{(\upsilon)})^{\top})_{12-21}\rd x\\
&+\int_{\T^2} (\partial_{tt}\tV_{q,i+1,L}^{(\upsilon)}(\Id+\Gl^{\top}))_{12-21}\rd x+\int_{\T^2}  (\pa_{tt}\tG_{q,i+1,L}^{(\upsilon)}(\tV_{q,i+1,p}^{(\upsilon)})^{\top})_{12-21}\rd x+g_L^{(\upsilon)}(t)\times\pa_{tt}g_L^{(\upsilon)}(t),
\end{align*}
where $y_{\ell,i}=x+u_{\ell,i}$, and we have used  that
\begin{align*}
	\tG_{q,i+1,m}\tS_{q,i+1,m1}^{(1),(\upsilon)}=&-4\sgm_{11}\sum_{\|\tup-\upsilon\|\leqslant 1}\sum_{s:I=(s,\upsilon)}\sum_{s':I'=(s',\tup)}\gamma_{q,i+1}^{I}\gamma_{q,i+1}^{I'} |f_{i+1}|^2(f_{i+1}\otimes f_{i+1})\Pi_{I,I'},
\end{align*} 
is a symmetric matrix.

We first consider the case $\theta_{q,i}^*\equiv1$,  $t\in[S_{q,i}^{min}\tau_{q,i}, (S_{q,i}^{max}+1)\tau_{q,i}]$.
Then, we can rewrite the above equations as
\begin{align*}
	0=&\int_{\T^2}  (\chi_\upsilon^2(\mu_{q,i}^{-1}x)R_{m,i}(\Id+\Gl^{\top}))_{12-21}\rd x+\int_{\T^2} \sum_{\|\tup-\upsilon\|\leqslant 1}\tu_{q,i+1,p}^{(\tup)}\times\Div (\chi_\upsilon^2(\mu_{q,i}^{-1}x)R_{m,i})\rd x\\
	&+\varepsilon\mu_{q,i}\delta_{q+1}\int_{\T^2} (\psi_{M}^{(\upsilon)}+\psi_{L}^{(\upsilon)})(\Div R_{m,i})_2\rd x-\mu_{q,i}^{-1}\left(g_{M,2}^{(\upsilon)}(t)\int_{\T^2} \psi_{M}^{(\upsilon)}(\Div R_{m,i})_1\rd x+g_{L,2}^{(\upsilon)}(t)\int_{\T^2} \psi_{L}^{(\upsilon)}(\Div R_{m,i})_1\rd x\right)\\
	&+\left(\varepsilon\mu_{q,i}^2\delta_{q+1}-\int_{\T^2} ((\Id+\Gl)\nabla\Psi_{M}^{(\upsilon)})_{1}\rd x-\mu_{q,i}^{-2}\int_{\T^2} (\tV_{q,i+1,p}^{(\upsilon)}\nabla\psi_{M}^{(\upsilon)})_1\rd x\right)\pa_{tt}g_{M,2}^{(\upsilon)}(t),\\
	0=&\int_{\T^2} (\pa_{tt}\tV_{q,i+1,p}^{(\upsilon)}(\Id+\Gl+\sum_{\nrm{\tup-\upsilon}\leqslant1}\tG_{q,i+1,p}^{(\tup)})^{\top})_{12-21}\rd x\\
	&-\int_{\T^2} (\tG_{q,i+1,p}^{(\upsilon)}(\Sgm_{\Gl}+\Rl+c_{q,i})(\Id+\Gl+\sum_{\nrm{\tup-\upsilon}=1}\tG_{q,i+1,p}^{(\tup)})^{\top})_{12-21}\rd x\\
	&-\varepsilon\delta_{q+1}\int_{\T^2} ((\pa_{tt}\tV_{q,i+1,p}^{(\upsilon)}+(\Id+\Gl)(\Sgm_{\Gl}+\Rl+c_{q,i}))\nabla(\psi_{M}^{(\upsilon)}+\psi_{L}^{(\upsilon)}))_{2}\rd x\\
	&+\mu_{q,i}^{-2}g_{M,2}^{(\upsilon)}(t)\int_{\T^2}( (\pa_{tt}\tV_{q,i+1,p}^{(\upsilon)}+(\Id+\Gl)(\Sgm_{\Gl}+\Rl+c_{q,i}))\nabla\psi_{M}^{(\upsilon)})_{1}\rd x\\
	&+\mu_{q,i}^{-2}g_{L,2}^{(\upsilon)}(t)\int_{\T^2}( (\pa_{tt}\tV_{q,i+1,p}^{(\upsilon)}+(\Id+\Gl)(\Sgm_{\Gl}+\Rl+c_{q,i}))\nabla\psi_{L}^{(\upsilon)})_{1}\rd x\\
	&+\left(\varepsilon\mu_{q,i}^2\delta_{q+1}-\int_{\T^2} ((\Id+\Gl)\nabla\Psi_{L}^{(\upsilon)})_{1}\rd x-\mu_{q,i}^{-2}\int_{\T^2} (\tV_{q,i+1,p}^{(\upsilon)}\nabla\psi_{L}^{(\upsilon)})_1\rd x\right)\pa_{tt}g_{L,2}^{(\upsilon)}(t).
\end{align*}
For each fixed $\upsilon$, this is a linear ODE for $g_{L,2}^{(\upsilon)}$ and $g_{M,2}^{(\upsilon)}$, which can be written as
\begin{align}
	H^{(\upsilon)}\pa_{tt}U^{(\upsilon)}+B^{(\upsilon)}U^{(\upsilon)}=E^{(\upsilon)},\label{def of ODE of U}
\end{align}
where $U^{(\upsilon)}=(g_{L,2}^{(\upsilon)}(t),g_{M,2}^{(\upsilon)}(t))^{\top}$, $H^{(\upsilon)}=(H_{ij}^{(\upsilon)})\in\R^{2\times2}$, $B^{(\upsilon)}=(B_{ij}^{(\upsilon)})\in\R^{2\times2}$, and $E^{(\upsilon)}=(E_i^{(\upsilon)})\in\R^2$.
\begin{align*}
	H_{11}^{(\upsilon)}=&\varepsilon\mu_{q,i}^2\delta_{q+1}-\int_{\T^2} ((\Id+\Gl)\nabla\Psi_{L}^{(\upsilon)})_{1}\rd x-\mu_{q,i}^{-2}\int_{\T^2} (\tV_{q,i+1,p}^{(\upsilon)}\nabla\psi_{L}^{(\upsilon)})_1\rd x,\quad H_{12}^{(\upsilon)}=0,\\
	H_{22}^{(\upsilon)}=&\varepsilon\mu_{q,i}^2\delta_{q+1}-\int_{\T^2} ((\Id+\Gl)\nabla\Psi_{M}^{(\upsilon)})_{1}\rd x-\mu_{q,i}^{-2}\int_{\T^2} (\tV_{q,i+1,p}^{(\upsilon)}\nabla\psi_{M}^{(\upsilon)})_1\rd x,\quad H_{21}^{(\upsilon)}=0,\\
	B_{11}^{(\upsilon)}=&\mu_{q,i}^{-2}\int_{\T^2} ((\pa_{tt}\tV_{q,i+1,p}^{(\upsilon)}+(\Id+\Gl)(\Sgm_{\Gl}+\Rl+c_{q,i}))\nabla\psi_{L}^{(\upsilon)})_{1}\rd x,\quad B_{21}^{(\upsilon)}=-\mu_{q,i}^{-1}\int_{\T^2} \psi_{L}^{(\upsilon)}(\Div R_{m,i})_1\rd x,\\
	B_{12}^{(\upsilon)}=&\mu_{q,i}^{-2}\int_{\T^2} ((\pa_{tt}\tV_{q,i+1,p}^{(\upsilon)}+(\Id+\Gl)(\Sgm_{\Gl}+\Rl+c_{q,i}))\nabla\psi_{M}^{(\upsilon)})_{1}\rd x,\quad
	B_{22}^{(\upsilon)}=-\mu_{q,i}^{-1}\int_{\T^2} \psi_{M}^{(\upsilon)}(\Div R_{m,i})_1\rd x,\\
	E_1^{(\upsilon)}=&-\int_{\T^2} (\pa_{tt}\tV_{q,i+1,p}^{(\upsilon)}(\Id+\Gl+\sum_{\nrm{\tup-\upsilon}\leqslant1}\tG_{q,i+1,p}^{(\tup)})^{\top})_{12-21}\rd x\\
	&+\int_{\T^2} (\tG_{q,i+1,p}^{(\upsilon)}(\Sgm_{\Gl}+\Rl+c_{q,i})(\Id+\Gl+\sum_{\nrm{\tup-\upsilon}=1}\tG_{q,i+1,p}^{(\tup)})^{\top})_{12-21}\rd x\\
	&+\varepsilon\delta_{q+1}\int_{\T^2} ((\pa_{tt}\tV_{q,i+1,p}^{(\upsilon)}+(\Id+\Gl)(\Sgm_{\Gl}+\Rl+c_{q,i}))\nabla(\psi_{M}^{(\upsilon)}+\psi_{L}^{(\upsilon)}))_{2}\rd x,\\
	E_2^{(\upsilon)}=&-\int_{\T^2}  (\chi_\upsilon^2(\mu_{q,i}^{-1}x)R_{m,i}(\Id+\Gl^{\top}))_{12-21}\rd x-\int_{\T^2} \sum_{\|\tup-\upsilon\|\leqslant 1}\tu_{q,i+1,p}^{(\tup)}\times\Div (\chi_\upsilon^2(\mu_{q,i}^{-1}x)R_{m,i})\rd x\\
	&-\varepsilon\mu_{q,i}\delta_{q+1}\int_{\T^2} (\psi_{M}^{(\upsilon)}+\psi_{L}^{(\upsilon)})(\Div R_{m,i})_2\rd x.
\end{align*}
\begin{Rmk}
Even though all the functions here depend only on time $t$, for the sake of notational consistency and simplicity, we adopt the partial derivative symbol $\partial_t$ to represent derivatives with respect to $t$ instead of the standard derivative symbol $\frac{\rd}{\rd t}$. This convention not only simplifies the notation but also aligns with potential future analyses involving multiple variables.
\end{Rmk}
By \eqref{pp of te_q,i} and \eqref{pp of theta^*}, we have
\begin{align*}
	[-\tau_{q,i},S_{q,i}^{min}\tau_{q,i})
	\bigcap\supp_{t}\te_{q,i}
	=\emptyset.
\end{align*}
 Moreover, by the definition of $S_{q,i}^{min}$ and the continuity of
 $\te_{q,i}$, we also have
 $\te_{q,i}(S_{q,i}^{min}\tau_{q,i})=0$.
Consequently, for $t\in[-\tau_{q,i},S_{q,i}^{min}\tau_{q,i}]$, we have $\te_{q,i}=0$, $\tu_{q,i+1,p}=\ul=0$, $\tV_{q,i+1,p}=R_{m,i}=\Rl=0$, and $\Gl=0,$ which leads to
\begin{align}\label{special HBE}
	H_{11}^{(\upsilon)}&=H_{22}^{(\upsilon)}=\varepsilon\mu_{q,i}^2\delta_{q+1},\quad H_{12}^{(\upsilon)}=H_{21}^{(\upsilon)}=B_{11}^{(\upsilon)}=B_{12}^{(\upsilon)}=B_{21}^{(\upsilon)}=B_{22}^{(\upsilon)}=E_1^{(\upsilon)}=E_2^{(\upsilon)}=0.
\end{align}

Recall
\eqref{def of g_LM2}, for $S_{q,i}^{min}\leqslant S\leqslant S_{q,i}^{max}$, we need to define $g_{L,2}^{(S,\upsilon)}$ and $g_{M,2}^{(S,\upsilon)}$. In particular, the definitions of $g_{L,2}^{(S+1,\upsilon)}$ and $g_{M,2}^{(S+1,\upsilon)}$ depend on the definitions of $g_{L,2}^{(S,\upsilon)}$ and $g_{M,2}^{(S,\upsilon)}$.
Here, we first choose $g_{L,2}^{(S_{q,i}^{min},\upsilon)}$ and $g_{M,2}^{(S_{q,i}^{min},\upsilon)}$ to satisfy $U^{(S_{q,i}^{min},\upsilon)}=0, t\in(-\infty,S_{q,i}^{min}\tau_{q,i})$, and
\begin{equation}\left\lbrace\label{def of ODE of U^-1}
	\begin{aligned}
		&H^{(\upsilon)}\pa_{tt}U^{(S_{q,i}^{min},\upsilon)}+B^{(\upsilon)}U^{(S_{q,i}^{min},\upsilon)}=E^{(\upsilon)},\quad t\in[S_{q,i}^{min}\tau_{q,i},(S_{q,i}^{min}+5/4)\tau_{q,i}],\\
		&U^{(S_{q,i}^{min},\upsilon)}(S_{q,i}^{min}\tau_{q,i})=(0,0)^{\top},\quad\pa_tU^{(S_{q,i}^{min},\upsilon)}(S_{q,i}^{min}\tau_{q,i})=(0,0)^{\top},
	\end{aligned}\right.
\end{equation}
where $U^{(S_{q,i}^{min},\upsilon)}(t)=(g_{L,2}^{(S_{q,i}^{min},\upsilon)}(\tau_{q,i}^{-1}t),g_{M,2}^{(S_{q,i}^{min},\upsilon)}(\tau_{q,i}^{-1}t))^{\top}$. By a change of variables, we can rewrite the above equation as
\begin{equation}\left\lbrace\label{eq of ODE of U^-1}
	\begin{aligned}
		&\tau_{q,i}^{-2}\tH^{(\upsilon)}\pa_{tt}\tU^{(S_{q,i}^{min},\upsilon)}+\tB^{(\upsilon)}\tU^{(S_{q,i}^{min},\upsilon)}=\tE^{(\upsilon)},\quad t\in[S_{q,i}^{min},S_{q,i}^{min}+5/4],\\
		&\tU^{(S_{q,i}^{min},\upsilon)}(S_{q,i}^{min})=(0,0)^{\top},\quad\pa_t\tU^{(S_{q,i}^{min},\upsilon)}(S_{q,i}^{min})=(0,0)^{\top},
	\end{aligned}\right.
\end{equation}
where $\tU^{(S_{q,i}^{min},\upsilon)}(t)=(g_{L,2}^{(S_{q,i}^{min},\upsilon)}(t),g_{M,2}^{(S_{q,i}^{min},\upsilon)}(t))^{\top}$, $\tH^{(\upsilon)}(t)=H^{(\upsilon)}(\tau_{q,i}t)$, $\tB^{(\upsilon)}(t)=B^{(\upsilon)}(\tau_{q,i}t)$, and $\tE^{(\upsilon)}(t)=E^{(\upsilon)}(\tau_{q,i}t)$.
Assuming that $g_{L,2}^{(S-1,\upsilon)}$ and $g_{M,2}^{(S-1,\upsilon)}$ have already been defined, we now choose $g_{L,2}^{(S,\upsilon)}$ and $g_{M,2}^{(S,\upsilon)}$ to satisfy for $S_{q,i}^{min}+1\leqslant S\leqslant S_{q,i}^{max}$, $\tU^{(S,\upsilon)}=0, t\in(-\infty,S)$, and
\begin{equation}\left\lbrace\label{def of ODE of U^s+1}
	\begin{aligned}
		&\tau_{q,i}^{-2}\tH^{(\upsilon)}\pa_{tt}(\tU^{(S,\upsilon)}+\Theta_{S-1}\tU^{(S-1,\upsilon)})+\tB^{(\upsilon)}(\tU^{(S,\upsilon)}+\Theta_{S-1}\tU^{(S-1,\upsilon)})=\tE^{(\upsilon)},\quad t\in[S,S+5/4],\\
		&\tU^{(S,\upsilon)}(S)=(0,0)^{\top},\quad\pa_t\tU^{(S,\upsilon)}(S)=(0,0)^{\top},
	\end{aligned}\right.
\end{equation}
where $\tU^{(S,\upsilon)}(t)=(g_{L,2}^{(S,\upsilon)}(t),g_{M,2}^{(S,\upsilon)}(t))^{\top}$. Then, we could obtain
\begin{equation*}
	\tH^{(\upsilon)}\pa_{tt}\tU^{(S,\upsilon)}+\tau_{q,i}^{2}\tB^{(\upsilon)}\tU^{(S,\upsilon)}=\left\lbrace
	\begin{aligned}
		&\tau_{q,i}^{2}\tE^{(\upsilon)}(1-\Theta_{S-1})-2\pt\Theta_{S-1}\tH^{(\upsilon)}\pt\tU^{(S-1,\upsilon)}-\pa_{tt}\Theta_{S-1}\tH^{(\upsilon)}\tU^{(S-1,\upsilon)},\ t\in[S,S+1/4],\\
		&\tau_{q,i}^{2}\tE^{(\upsilon)},\hspace{208pt} t\in(S+1/4,S+5/4].
	\end{aligned}\right.
\end{equation*}

To sum up, we could obtain
\begin{align*}
	\tau_{q,i}^{-2}\tH^{(\upsilon)}\pa_{tt}\Big(\sum_{S=S_{q,i}^{min}}^{S_{q,i}^{max}}\Theta_{S}\tU^{(S,\upsilon)}\Big)+\tB^{(\upsilon)}\Big(\sum_{S=S_{q,i}^{min}}^{S_{q,i}^{max}}\Theta_{S}\tU^{(S,\upsilon)}\Big)=\tE^{(\upsilon)},\quad t\in [S_{q,i}^{min},S_{q,i}^{max}+1],
\end{align*}
which is equivalent to
\begin{align*}
	H^{(\upsilon)}\pa_{tt}\Big(\sum_{S=S_{q,i}^{min}}^{S_{q,i}^{max}}\Theta_{q,i}^{(S)}U^{(S,\upsilon)}\Big)+B^{(\upsilon)}\Big(\sum_{S=S_{q,i}^{min}}^{S_{q,i}^{max}}\Theta_{q,i}^{(S)} U^{(S,\upsilon)}\Big)=E^{(\upsilon)},\quad t\in [S_{q,i}^{min}\tau_{q,i},(S_{q,i}^{max}+1)\tau_{q,i}].
\end{align*}
Moreover, since $g_{L,2}^{(S,\upsilon)}$ and $g_{M,2}^{(S,\upsilon)}$ vanish on $(-\infty,S]$, the functions
$g_{L,2}^{(\upsilon)}(t)=\sum_{S=S_{q,i}^{min}}^{S_{q,i}^{max}}\Theta_{q,i}^{(S)}(t)g_{L,2}^{(S,\upsilon)}(\tau_{q,i}^{-1}t)$ and
$g_{M,2}^{(\upsilon)}(t)=\sum_{S=S_{q,i}^{min}}^{S_{q,i}^{max}}\Theta_{q,i}^{(S)}(t)g_{M,2}^{(S,\upsilon)}(\tau_{q,i}^{-1}t)$
vanish for $t\in[-\tau_{q,i},S_{q,i}^{min}\tau_{q,i}]$.
Then, we need to consider the angular momentum in the region 
$$
[S_{q,i}^{min} \tau_{q,i}-\tau_{q,i}, S_{q,i}^{min}\tau_{q,i}].
$$
The equation can be written as
\begin{align*}
	&\int_{\T^2} (x+\tu_{q,i+1,ac})\times(\pa_{tt}\tu_{q,i+1,ac}^{(\upsilon)}-\Div(\tG_{q,i+1,ac}^{(\upsilon)}c_{q,i}))\rd x\\
	=&\int_{\T^2}  (\pa_{tt}\tV_{q,i+1,M}^{(\upsilon)}+\pa_{tt}\tV_{q,i+1,L}^{(\upsilon)})_{12-21}\rd x-\int_{\T^2}  ((\tG_{q,i+1,L}^{(\upsilon)}+\tG_{q,i+1,M}^{(\upsilon)})c_{q,i})_{12-21}\rd x+g_M^{(\upsilon)}(t)\times\pa_{tt}g_M^{(\upsilon)}(t)+g_L^{(\upsilon)}(t)\times\pa_{tt}g_L^{(\upsilon)}(t)
	=0,
\end{align*}
where we have used the fact that $c_{q,i}=c_{q,i+1}$ is a constant matrix in $\T^2$ for $t\in[S_{q,i}^{min} \tau_{q,i}-\tau_{q,i}, S_{q,i}^{min}\tau_{q,i}]$, and
\begin{align*}
	\int_{\T^2}  (\pa_{tt}\tV_{q,i+1,L}^{(\upsilon)})_{12-21}\rd x=\int_{\T^2}  (\pa_{tt}g_{L}^{(\upsilon)}(t)\otimes\nb\Psi_{L}^{(\upsilon)})_{12-21}\rd x&=0,\\
	\int_{\T^2}  (\pa_{tt}\tV_{q,i+1,M}^{(\upsilon)})_{12-21}\rd x=\int_{\T^2}  (\pa_{tt}g_{M}^{(\upsilon)}(t)\otimes\nb\Psi_{M}^{(\upsilon)})_{12-21}\rd x&=0,\\
	\int_{\T^2}  (\tG_{q,i+1,L}^{(\upsilon)}c_{q,i})_{12-21}\rd x=\int_{\T^2}  (g_{L}^{(\upsilon)}(t)\otimes c_{q,i}\nabla\psi_L^{(\upsilon)})_{12-21}\rd x&=0,\\
	\int_{\T^2}  (\tG_{q,i+1,M}^{(\upsilon)}c_{q,i})_{12-21}\rd x=\int_{\T^2}  (g_{M}^{(\upsilon)}(t)\otimes c_{q,i}\nabla\psi_M^{(\upsilon)})_{12-21}\rd x&=0,\\
	g_L^{(\upsilon)}(t)\times\pa_{tt}g_L^{(\upsilon)}(t)=\theta_{q,i}^*\pa_{tt}\theta_{q,i}^*(\varepsilon\mu_{q,i}^2\delta_{q+1},0)^{\top}\times(\varepsilon\mu_{q,i}^2\delta_{q+1},0)^{\top}&=0,\\
	g_M^{(\upsilon)}(t)\times\pa_{tt}g_M^{(\upsilon)}(t)=\theta_{q,i}^*\pa_{tt}\theta_{q,i}^*(\varepsilon\mu_{q,i}^2\delta_{q+1},0)^{\top}\times(\varepsilon\mu_{q,i}^2\delta_{q+1},0)^{\top}&=0.
\end{align*}

At this point, we have ensured that the right-hand error terms in \eqref{divide of Reynold error 1} satisfy angular momentum conservation, i.e., \eqref{eq of linear and angular momenta in L co}, over the entire interval $\mcI^{q,i}$.

Finally, we will give the following estimates on the perturbation. The following estimates can be obtained directly from their definitions.
\begin{pp}\label{est on perturbation}
	For every $b>5$, there exists $\varepsilon_3^*=\varepsilon_3^*(b,\oM)>0$ such that, whenever $0<\varepsilon<\varepsilon_3^*$, the following properties hold:
	\begin{enumerate}
		\item The following time-support inclusions hold:
		\begin{align}
			\supp_{t}(\tV_{q,i+1,p},\tu_{q,i+1,p},\tG_{q,i+1,p})&\subseteq\supp_{t} \te_{q,i}\subseteq\supp_{t}(u_{q,i},R_{q,i})+\ell_{q,i}+\lambda_{q}^{-1},\label{pp of supp tu_q,i+1 p}\\
			\supp_{t}(\tV_{q,i+1,ac},\tu_{q,i+1,ac},\tG_{q,i+1,ac})&\subseteq\supp_{t}\te_{q,i}+2\tau_{q,i}\subseteq\supp_{t}(u_{q,i},R_{q,i})+3\tau_{q,i}+\ell_{q,i}+\lambda_{q}^{-1}.\label{pp of supp tu_q,i+1 ac}
		\end{align}
		\item For every fixed integer $N\geqslant0$ and $0\leqslant r\leqslant5$, we have the following estimates on these coefficients for $0\leqslant i\leqslant2$:
		\begin{align}
			\nrm{\pa_t^r\gamma_{q,i+1}^{I}}_N+\lambda_{q,i+1}\mu_{q,i}\nrm{\pa_t^r\gamma_{q,i+1,c}^{I}}_N\lesssim&_{\sgm,N,r} \tau_{q,i}^{-r}\mu_{q,i}^{-N}\dlt_{q+1}^{\frac{1}{2}},\label{est on gamma_I}\\
			\nrm{\pa_t^rV_{q,i+1,p}^{I,cof}}_N+\nrm{\pa_t^ru_{q,i+1,p}^{I,cof}}_N\lesssim&_{\sgm,N,r} \tau_{q,i}^{-r}\mu_{q,i}^{-N}\dlt_{q+1}^{\frac{1}{2}},\label{est on uV_I}\\
			\varepsilon^{-1}\nrm{\tS_{q,i+1,m}^{(1),cofc}}_0+\nrm{\tS_{q,i+1,s1}^{(1),cof}}_0+\delta_{q+1}^{-\frac{1}{2}}\mu_{q,i}\left(\lambda_{q,i+1}\mu_{q,i}\nrm{\tS_{q,i+1,s2}^{(1),I,cof1}}_0+\nrm{\tS_{q,i+1,s2}^{(1),I,cof2}}_0\right)\lesssim&_{\sgm}1,\label{est on cof of tS_p 0}\\
			\varepsilon^{-1}\nrm{\pt^r\tS_{q,i+1,m}^{(1),cofc}}_N+		\nrm{\pt^r\tS_{q,i+1,s1}^{(1),cof}}_N+\delta_{q+1}^{-\frac{1}{2}}\mu_{q,i}\left(\lambda_{q,i+1}\mu_{q,i}\nrm{\pt^r\tS_{q,i+1,s2}^{(1),I,cof1}}_N+\nrm{\pt^r\tS_{q,i+1,s2}^{(1),I,cof2}}_N\right)\lesssim&_{\sgm,N,r}\tau_{q,i}^{-r}\mu_{q,i}^{-N},\label{est on cof of tS_p N}\\
			\nrm{\tS_{q,i+1}^{(1),cofm}}_0+	\varepsilon^{-1}\nrm{\tS_{q,i+1}^{(1),cof1}}_0+	\lambda_{q,i+1}\mu_{q,i}\nrm{\tS_{q,i+1}^{(1),cof2}}_0\lesssim&_{\sgm,N,r}\delta_{q+1}^{\frac{1}{2}},\label{est on cof of tS}\\
			\nrm{\pt^r\tS_{q,i+1}^{(1),cofm}}_N+	\varepsilon^{-1}\nrm{\pt^r\tS_{q,i+1}^{(1),cof1}}_N+	\lambda_{q,i+1}\mu_{q,i}	\nrm{\pt^r\tS_{q,i+1}^{(1),cof2}}_N\lesssim&_{\sgm,N,r}\tau_{q,i}^{-r}\mu_{q,i}^{-N}\delta_{q+1}^{\frac{1}{2}},\label{est on cof of tS N}
		\end{align}
		where 
		$\nrm{\cdot}_N=\nrm{\cdot}_{C^0(\mcI^{q,i};C^N(\T^2))}$.
		\item For every fixed integer $N\geqslant0$ and $0\leqslant r\leqslant5$, we have the following estimates on $\tV_{q,i+1}$, $\tu_{q,i+1}$, $\tG_{q,i+1}$, and $\tS_{q,i+1}$ for $0\leqslant i\leqslant2$. In particular, the estimate \eqref{est on tS_p} holds for every spatial index $\upsilon$, with an implicit constant independent of $\upsilon$:
		\begin{align}
			\nrm{\pa_t^r\tV_{q,i+1,p}}_N+\mu_{q,i}\nrm{\pa_t^r(f_{i+1}^{\perp}\cdot\nb)\tV_{q,i+1,p}}_N&\lesssim_{\sgm,N,r}\lambda_{q,i+1}^{N+r-2} \dlt_{q+1}^{\frac{1}{2}}, \label{est on tV_p}\\
			\nrm{\pa_t^r\tu_{q,i+1,p}}_N+\mu_{q,i}\nrm{\pa_t^r(f_{i+1}^{\perp}\cdot\nb)\tu_{q,i+1,p}}_N&\lesssim_{\sgm,N,r}\lambda_{q,i+1}^{N+r-1} \dlt_{q+1}^{\frac{1}{2}}, \label{est on tu_p}\\
			\nrm{\pa_t^r\pttu_{q,i+1,m}}_N+\varepsilon^{-1}\nrm{\pa_t^r\pttu_{q,i+1,s}}_N&\lesssim_{\sgm,N,r}\lambda_{q,i+1}^{N+r-1} \dlt_{q+1}^{\frac{1}{2}}, \label{est on pt tu_p}\\
			\nrm{\pa_t^r\tG_{q,i+1,p}}_N+\varepsilon^{-1}\nrm{\pa_t^r\tG_{q,i+1,s}}_N&\lesssim_{\sgm,N,r}\lambda_{q,i+1}^{N+r} \dlt_{q+1}^{\frac{1}{2}}, \label{est on tG_p}\\
			\nrm{\pa_t^r(f_{i+1}^{\perp}\cdot\nb)\tG_{q,i+1,p}}_N+\varepsilon^{-1}\nrm{\pa_t^r(f_{i+1}^{\perp}\cdot\nb)\tG_{q,i+1,s}}_N
			&\lesssim_{\sgm,N,r}\mu_{q,i}^{-1}\lambda_{q,i+1}^{N+r} \dlt_{q+1}^{\frac{1}{2}}, \label{est on fp nb tG_p}\\
			\nrm{\pt^r\tS_{q,i+1,m1}^{(1),(\upsilon)}}_N
			+\varepsilon^{-1}
			\nrm{\pt^r\tS_{q,i+1,m2}^{(1),(\upsilon)}}_N
			+\varepsilon^{-1}
			\nrm{\pt^r\tS_{q,i+1,s1}^{(1),(\upsilon)}}_N
			+\lambda_{q,i+1}\mu_{q,i}
			\nrm{\pt^r\tS_{q,i+1,s2}^{(1),(\upsilon)}}_N
			&\lesssim_{\sgm,N,r}
			\lambda_{q,i+1}^{N+r}\delta_{q+1}^{\frac12}.
			\label{est on tS_p}\\
			\mu_{q,i}^{-2}\nrm{\pt^{r}\tV_{q,i+1,L}}_N+\mu_{q,i}^{-1}\nrm{\pt^{r}\tu_{q,i+1,L}}_N+\nrm{\pt^{r}\tG_{q,i+1,L}}_N&\lesssim_{\sgm,N,r}\mu_{q,i}^{-N-r}\varepsilon\delta_{q+1}, \label{est on tVuG L}\\
			\mu_{q,i}^{-2}\nrm{\pt^{r}\tV_{q,i+1,M}}_N+\mu_{q,i}^{-1}\nrm{\pt^{r}\tu_{q,i+1,M}}_N+\nrm{\pt^{r}\tG_{q,i+1,M}}_N&\lesssim_{\sgm,N,r}\mu_{q,i}^{-N-r}\varepsilon\delta_{q+1}, \label{est on tVuG M}\\
			\mu_{q,i}^{-2}\nrm{\pt^{r}\tV_{q,i+1,ac}}_N+\mu_{q,i}^{-1}\nrm{\pt^{r}\tu_{q,i+1,ac}}_N+\nrm{\pt^{r}\tG_{q,i+1,ac}}_N&\lesssim_{\sgm,N,r}\mu_{q,i}^{-N-r}\varepsilon\delta_{q+1}, \label{est on tVuG ac}\\
			\nrm{\pt^r\tS_{q,i+1,L}^{(1)}}_N+\nrm{\pt^r\tS_{q,i+1,M}^{(1)}}_N+\nrm{\pt^r\tS_{q,i+1,ac}^{(1)}}_N&\lesssim_{\sgm,N,r}\mu_{q,i}^{-N-r}\varepsilon\delta_{q+1},\label{est on tS_L}\\
			\nrm{\pa_t^r\tu_{q,i+1}}_N+\mu_{q,i}\nrm{\pa_t^r(f_{i+1}^{\perp}\cdot\nb)\tu_{q,i+1}}_N
			&\lesssim_{\sgm,N,r}\lambda_{q,i+1}^{N+r}\lambda_{q,i}^{-1}\dlt_{q+1}^{\frac{1}{2}}, \label{est on nb uq i+1}\\
			\nrm{\pa_t^r\tG_{q,i+1}}_N+\mu_{q,i}\nrm{\pa_t^r(f_{i+1}^{\perp}\cdot\nb)\tG_{q,i+1}}_N
			&\lesssim_{\sgm,N,r}\lambda_{q,i+1}^{N+r} \dlt_{q+1}^{\frac{1}{2}}, \label{est on nb Gq i+1}
		\end{align}
		where  
		$\nrm{\cdot}_N=\nrm{\cdot}_{C^0(\mcI^{q,i};C^N(\T^2))}$. 
	\end{enumerate}
	Moreover, the implicit constants in \eqref{est on gamma_I}--\eqref{est on nb Gq i+1} can be chosen to be independent of $\oM$ for $0\leqslant N+r\leqslant 3$. 
\end{pp}
\begin{proof}
	Notice that, for  $t\in \supp_t\te_{q,i}+3\tau_{q,i}$, we have $c_{q,i}=\sgm^*\te_{q,i}\Id^{<i+1>}+c_{q,i+1}$, and $c_{q,i+1}$ is a constant matrix, which leads to $c_{q,i}\in C^{\infty}((\supp_t\te_{q,i}+3\tau_{q,i})\times\T^2)$ and  $\nrm{c_{q,i}}_{C_t^r(\supp_t\te_{q,i}+3\tau_{q,i})}\lesssim_r\lambda_{q}^r\delta_{q+1}$.
	By the definition of $(A_I)^{km}_{nr}$ in \eqref{def of A_I}, \eqref{pp of te_q,i}, and \eqref{est on u_li 0}--\eqref{est on R_li high}, we obtain, for every $I=(s,\upsilon)\in\mathscr I$ satisfying $s\tau_{q,i}\in\supp_t\te_{q,i}+\tau_{q,i}$ and every $r\geqslant1$,
	
	\begin{align}
		\nrm{(A_I^{\upsilon})^{km}_{nr}}_{C_t^0}\lesssim&_{\sgm}\nrm{(\Sgm_{\Gl}+\Rl+c_{q,i})^{\upsilon}}_{C_t^0}+\nrm{(\Id+\Gl)^{\upsilon}}_{0}(\nrm{\Gl^{\upsilon}}_{C_t^0}+\nrm{\Cl^{\upsilon}(\Id+\Gl)^{\upsilon}}_{C_t^0})\nonumber\\
		&+\nrm{(\Id+\Gl)^{\upsilon}(\tr\Cl-\Dl)^{\upsilon}}_{C_t^0}(\nrm{(\Id+\Gl)^{\upsilon}}_{C_t^0}+\nrm{\Cl^{\upsilon}(\Id+\Gl)^{\upsilon}}_{C_t^0})\nonumber\\
		&+\nrm{(\tL_2^{(0)})^{\upsilon}-\sgm_2}_{C_t^0}\nrm{(\Id+\Gl)^{\upsilon}(\Id+\Gl)^{\upsilon}}_{C_t^0}+\nrm{(\Id+\Gl)^{\upsilon}(\Id+\Gl)^{\upsilon}-\Id}_{C_t^0}\nonumber\\
		\lesssim&_\sgm\varepsilon,\nonumber\\
		\nrm{(A_I^{\upsilon})^{km}_{nr}}_{C_t^r}\lesssim&_{\sgm}\nrm{(\Sgm_{\Gl}+\Rl+c_{q,i})^{\upsilon}}_{C_t^r}+\sum_{r_1+r_2=r}\nrm{(\Id+\Gl)^{\upsilon}}_{C_t^{r_1}}(\nrm{\Gl^{\upsilon}}_{C_t^{r_2}}+\nrm{\Cl^{\upsilon}(\Id+\Gl)^{\upsilon}}_{C_t^{r_2}})\nonumber\\
		&+\sum_{r_1+r_2=r}\nrm{(\Id+\Gl)^{\upsilon}(\tr\Cl-\Dl)^{\upsilon}}_{C_t^{r_1}}(\nrm{(\Id+\Gl)^{\upsilon}}_{C_t^{r_2}}+\nrm{\Cl^{\upsilon}(\Id+\Gl)^{\upsilon}}_{C_t^{r_2}})\nonumber\\
		&+\sum_{r_1+r_2=r}\nrm{(\tL_2^{(0)})^{\upsilon}-\sgm_2}_{C_t^{r_1}}\nrm{(\Id+\Gl)^{\upsilon}(\Id+\Gl)^{\upsilon}}_{C_t^{r_2}}+\nrm{(\Id+\Gl)^{\upsilon}(\Id+\Gl)^{\upsilon}-\Id}_{C_t^{r}}\nonumber\\
		\lesssim&_{\sgm,r}\ell_{q,i}^{-r}\delta_{q,i}^{\frac{1}{2}},\nonumber
	\end{align}
	where $\nrm{\cdot}_{C_t^r}=\nrm{\cdot}_{C^r(\supp_t\te_{q,i}+3\tau_{q,i})}$ and the implicit constants do not depend on $\oM$. Since the coefficients $(A_I^\upsilon)^{km}_{nr}$ are smooth, for each fixed integer $r$ we choose $L\geqslant\max\{2,r\}$. We apply Lemma \ref{construction of building blocks}. Thus, there exists $\varepsilon_{31}^*>0$ such that, for every $0<\varepsilon<\varepsilon_{31}^*$,
	\begin{align}
		&\nrm{a_{A_I^\upsilon}}_{C_t^0}\lesssim_{\sgm}\varepsilon\leqslant\frac{1}{10},\\
		&\nrm{\pt c_{A_I^\upsilon}-(\lambda+2\mu)^{\frac{1}{2}}|f_{i+1}|}_{C_t^0}\lesssim_{\sgm}\varepsilon|f_{i+1}|,\\
		&\nrm{a_{A_I^\upsilon}}_{C_t^r}\lesssim_{\sgm,r}\ell_{q,i}^{-r}\delta_{q,i}^{\frac{1}{2}},&&r\geqslant1,\\
		&\nrm{\pt^r c_{A_I^\upsilon}}_{C_t^0}\lesssim_{\sgm,r}\ell_{q,i}^{1-r}\delta_{q,i}^{\frac{1}{2}}|f_{i+1}|,&&r\geqslant2.
	\end{align}
	\eqref{est on uV_I} follows immediately from these estimates and the definitions in \eqref{def of cof of tV_p} and \eqref{def of u_q,i+1,I}. Moreover, \eqref{pp of supp tu_q,i+1 p} follows from \eqref{pp of te_q,i}, \eqref{pp of supp l,i},  \eqref{def of tV_q k i+1,p}, and \eqref{def of cof of tV_p}. By \eqref{def of tS_1m cofm}--\eqref{def of tS_1s2 cof}, \eqref{specific def of tS_1}, \eqref{est on gamma_I}, and \eqref{est on uV_I},  we could obtain \eqref{est on cof of tS_p 0}--\eqref{est on cof of tS N}.
	
	Next, we consider the estimates for $\tV_{q,i+1,p}$, $\tu_{q,i+1,p}$, $\tG_{q,i+1,p}$ and $\tS_{q,i+1,p}^{(1)}$. The local stress estimates below are uniform with respect to $\upsilon$, since the estimates for the corresponding coefficients are uniform in $I=(s,\upsilon)$. For $0\leqslant N+r\leqslant 3$, we obtain
	\begin{align*}
		\nrm{\pt^r\tV_{q,i+1,p}}_N\lesssim&\lambda_{q,i+1}^{-2}\sum_{N_1+N_2=N}\sum_{r_1+r_2=r}\nrm{\pt^{r_1}\tV_{q,i+1,p  }^{I,cof}}_{N_1}\nrm{\pt^{r_2}\Pi_{I}}_{N_2}\lesssim_{\sgm,N,r}\lambda_{q,i+1}^{N+r-2}\delta_{q+1}^{\frac{1}{2}},\\
		\nrm{\pt^r\tu_{q,i+1,p}}_N=&\nrm{\pt^r\Div\tV_{q,i+1,p}}_{N}\lesssim_{\sgm,N,r}\lambda_{q,i+1}^{N+r-1}\delta_{q+1}^{\frac{1}{2}},\\
		\nrm{\pt^r\tG_{q,i+1,p}}_N=&\nrm{\pt^r\nabla\tu_{q,i+1,p}}_{N}\lesssim_{\sgm,N,r}\lambda_{q,i+1}^{N+r}\delta_{q+1}^{\frac{1}{2}},\\
		\nrm{\pt^r\tG_{q,i+1,s1}}_N\lesssim&\sum_{N_1+N_2=N}\sum_{r_1+r_2=r}\nrm{\pt^{r_1}(a_{A_I^\upsilon}\gamma_{q,i+1}^{I})}_{N_1}\nrm{\pt^{r_2}\Pi_{I}}_{N_2}\lesssim_{\sgm,N,r}\lambda_{q,i+1}^{N+r}\varepsilon\delta_{q+1}^{\frac{1}{2}},\\
		\nrm{\pt^r\tG_{q,i+1,s2}}_N\lesssim&\lambda_{q,i+1}^{-1}\sum_{N_1+N_2=N}\sum_{r_1+r_2=r}\left(\nrm{\pt^{r_1}(\tilde{f}_{A_I^{\upsilon}}\otimes\nabla\gamma_{q,i+1,c})}_{N_1}\nrm{\pt^{r_2}\Pi_{I}}_{N_2}+\nrm{\pt^{r_1}(\tilde{f}_{A_I^{\upsilon}}\otimes\gamma_{q,i+1,s})}_{N_1}\nrm{\pt^{r_2}\tPi_{I}}_{N_2}\right)\\
		\lesssim&_{\sgm,N,r}(\lambda_{q,i+1}\mu_{q,i})^{-1}\lambda_{q,i+1}^{N+r}\delta_{q+1}^{\frac{1}{2}},\\
		\nrm{\pt^r\pttu_{q,i+1,p}}_N\lesssim&\sum_{N_1+N_2=N}\sum_{r_1+r_2=r}\nrm{\pt^{r_1}\gamma_{q,i+1}^{I}}_{N_1}\nrm{\pt^{r_2}\Pi_{I}}_{N_2}\lesssim_{\sgm,N,r}\lambda_{q,i+1}^{N+r}\delta_{q+1}^{\frac{1}{2}},\\
		\nrm{\pt^r\pttu_{q,i+1,s1}}_N\lesssim&\sum_{N_1+N_2=N}\sum_{r_1+r_2=r}\nrm{\pt^{r_1}(\gamma_{q,i+1}^{I}(((\lambda+2\mu)^{\frac{1}{2}}|f_{i+1}|-\pt c_{A_I^\upsilon})f_{i+1}-a_{A_I^\upsilon}\pt c_{A_I^\upsilon}f_{i+1}^{\perp}))}_{N_1}\nrm{\pt^{r_2}\Pi_{I}}_{N_2}
		\lesssim_{\sgm,N,r}\lambda_{q,i+1}^{N+r}\varepsilon\delta_{q+1}^{\frac{1}{2}},\\
		\nrm{\pt^r\pttu_{q,i+1,s2}}_N\lesssim&\lambda_{q,i+1}^{-1}\left(\sum_{N_1+N_2=N}\sum_{r_1+r_2=r+1}\nrm{\pt^{r_1}(\gamma_{q,i+1,c}\tilde{f}_{A_I^{\upsilon}})}_{N_1}\nrm{\pt^{r_2}\Pi_{I}}_{N_2}+\sum_{N_1+N_2=N}\sum_{r_1+r_2=r}\nrm{\pt^{r_1+1}(\gamma_{q,i+1}\tilde{f}_{A_I^{\upsilon}})}_{N_1}\nrm{\pt^{r_2}\tPi_{I}}_{N_2}\right)\\
		\lesssim&_{\sgm,N,r}(\lambda_{q,i+1}\mu_{q,i})^{-1}\lambda_{q,i+1}^{N+r}\delta_{q+1}^{\frac{1}{2}},\\
		\nrm{\pt^r\tS_{q,i+1,m2}^{(1),(\upsilon)}}_N\lesssim&\sum_{N_1+N_2+N_3=N}\sum_{r_1+r_2+r_3=r}\nrm{\pt^{r_1}\gamma_{q,i+1}^I}_{N_1}\nrm{\pt^{r_2}\tS_{q,i+1,m}^{(1),cofc}}_{N_2}\nrm{\pt^{r_3}\Pi_{I}}_{N_3}\lesssim_{\sgm,N,r}\lambda_{q,i+1}^{N+r}\varepsilon\delta_{q+1}^{\frac{1}{2}},\\
		\nrm{\pt^r\tS_{q,i+1,s1}^{(1),(\upsilon)}}_N\lesssim&\sum_{N_1+N_2+N_3=N}\sum_{r_1+r_2+r_3=r}\nrm{\pt^{r_1}(\gamma_{q,i+1}^Ia_{A_I^{\upsilon}})}_{N_1}\nrm{\pt^{r_2}\tS_{q,i+1,s1}^{(1),cofm}}_{N_2}\nrm{\pt^{r_3}\Pi_{I}}_{N_3}\lesssim_{\sgm,N,r}\lambda_{q,i+1}^{N+r}\varepsilon\delta_{q+1}^{\frac{1}{2}},\\
		\nrm{\pt^r\tS_{q,i+1,s2}^{(1),(\upsilon)}}_N\lesssim&\lambda_{q,i+1}^{-1}\hspace{-5pt}\sum_{N_1+N_2=N}\sum_{r_1+r_2=r}\left(\nrm{\pt^{r_1}\tS_{q,i+1,s1}^{(1),cof1}}_{N_1}\nrm{\pt^{r_2}\Pi_{I}}_{N_2}+\nrm{\pt^{r_1}\tS_{q,i+1,s1}^{(1),cof2}}_{N_1}\nrm{\pt^{r_2}\tPi_{I}}_{N_2}\right)
		\lesssim_{\sgm,N,r}(\lambda_{q,i+1}\mu_{q,i})^{-1}\lambda_{q,i+1}^{N+r}\delta_{q+1}^{\frac{1}{2}}.
	\end{align*}
	Moreover, by using
	\begin{align*}
		(f_{i+1}^{\perp}\cdot\nabla) e^{i\lambda_{q,i+1}\xi_{A_I^{\upsilon},f_{i+1}}}=(f_{i+1}^{\perp}\cdot\nabla) e^{-i\lambda_{q,i+1}\xi_{A_I^{\upsilon},f_{i+1}}}=(f_{i+1}^{\perp}\cdot\nabla)\Pi_{I}=(f_{i+1}^{\perp}\cdot\nabla)\tPi_{I}=0,
	\end{align*}	
	we could obtain
	\begin{align*}
		\nrm{\pt^r(f_{i+1}^{\perp}\cdot\nabla)\tV_{q,i+1,p}}_N\lesssim&\lambda_{q,i+1}^{-2}\sum_{N_1+N_2=N}\sum_{r_1+r_2=r}\nrm{\pt^{r_1}(f_{i+1}^{\perp}\cdot\nabla)\tV_{q,i+1,p  }^{I,cof}}_{N_1}\nrm{\pt^{r_2}\Pi_{I}}_{N_2}\lesssim_{\sgm,N,r}\mu_{q,i}^{-1}\lambda_{q,i+1}^{N+r-2}\delta_{q+1}^{\frac{1}{2}},\\
		\nrm{\pt^r(f_{i+1}^{\perp}\cdot\nabla)\tu_{q,i+1,p}}_N\lesssim&\lambda_{q,i+1}^{-1}\sum_{N_1+N_2=N}\sum_{r_1+r_2=r}\nrm{\pt^{r_1}(f_{i+1}^{\perp}\cdot\nabla)\tu_{q,i+1,p}^{I,cof}}_{N_1}\nrm{\pt^{r_2}e^{i\lambda_{q,i+1}\xi_{A_I^{\upsilon},f_{i+1}}}}_{N_2}\lesssim_{\sgm,N,r}\mu_{q,i}^{-1}\lambda_{q,i+1}^{N+r-1}\delta_{q+1}^{\frac{1}{2}},\\
		\nrm{\pt^r(f_{i+1}^{\perp}\cdot\nabla)\tG_{q,i+1,p}}_N\lesssim&\sum_{N_1+N_2=N}\sum_{r_1+r_2=r}\nrm{\pt^{r_1}(f_{i+1}^{\perp}\cdot\nabla)\gamma_{q,i+1}^{I}}_{N_1}\nrm{\pt^{r_2}\Pi_{I}}_{N_2}\lesssim_{\sgm,N,r}\mu_{q,i}^{-1}\lambda_{q,i+1}^{N+r}\delta_{q+1}^{\frac{1}{2}},\\
		\nrm{\pt^r(f_{i+1}^{\perp}\cdot\nabla)\tG_{q,i+1,s1}}_N\lesssim&\sum_{N_1+N_2=N}\sum_{r_1+r_2=r}\nrm{\pt^{r_1}(f_{i+1}^{\perp}\cdot\nabla)(a_{A_I^\upsilon}\gamma_{q,i+1}^{I})}_{N_1}\nrm{\pt^{r_2}\Pi_{I}}_{N_2}\lesssim_{\sgm,N,r}\mu_{q,i}^{-1}\lambda_{q,i+1}^{N+r}\varepsilon\delta_{q+1}^{\frac{1}{2}},\\
		\nrm{\pt^r(f_{i+1}^{\perp}\cdot\nabla)\tG_{q,i+1,s2}}_N\lesssim&\lambda_{q,i+1}^{-1}\sum_{N_1+N_2+N_3=N}\sum_{r_1+r_2+r_3=r}\nrm{\pt^{r_1}\tilde{f}_{A_I^{\upsilon}}}_{N_1}\nrm{\pt^{r_2}(f_{i+1}^{\perp}\cdot\nabla)\nabla\gamma_{q,i+1,c}}_{N_2}\nrm{\pt^{r_3}\Pi_{I}}_{N_3}\\
		&+\lambda_{q,i+1}^{-1}\sum_{N_1+N_2+N_3=N}\sum_{r_1+r_2+r_3=r}\nrm{\pt^{r_1}\tilde{f}_{A_I^{\upsilon}}}_{N_1}\nrm{\pt^{r_2}(f_{i+1}^{\perp}\cdot\nabla)\gamma_{q,i+1,s}}_{N_2}\nrm{\pt^{r_3}\tPi_{I}}_{N_3}\\
		\lesssim&_{\sgm,N,r}\mu_{q,i}^{-1}(\lambda_{q,i+1}\mu_{q,i})^{-1}\lambda_{q,i+1}^{N+r}\delta_{q+1}^{\frac{1}{2}}.
	\end{align*}
	Up to now, we have proved \eqref{est on tV_p}--\eqref{est on tS_p}.
	
	In order to get estimates for $\tV_{q,i+1,ac}$, $\tu_{q,i+1,ac}$, and $\tG_{q,i+1,ac}$, we need to get estimate for $g_{L,2}^{(\upsilon)}(t)$ and $g_{M,2}^{(\upsilon)}(t)$. Recall that 
	\begin{equation*}\left\lbrace
		\begin{aligned}
			&\pa_{tt}\tU^{(S_{q,i}^{min},\upsilon)}+\tau_{q,i}^{2}(\tH^{(\upsilon)})^{-1}\tB^{(\upsilon)}\tU^{(S_{q,i}^{min},\upsilon)}=\tau_{q,i}^{2}(\tH^{(\upsilon)})^{-1}\tE^{(\upsilon)},\quad t\in[S_{q,i}^{min},5/4+S_{q,i}^{min}],\\
			&\tU^{(S_{q,i}^{min},\upsilon)}(S_{q,i}^{min})=(0,0)^{\top},\quad\pa_t\tU^{(S_{q,i}^{min},\upsilon)}(S_{q,i}^{min})=(0,0)^{\top},
		\end{aligned}\right.
	\end{equation*}
	where $\tU^{(S_{q,i}^{min},\upsilon)}(t)=(g_{L,2}^{(S_{q,i}^{min},\upsilon)}(t),g_{M,2}^{(S_{q,i}^{min},\upsilon)}(t))^{\top},\ \tH^{(\upsilon)}(t)=H^{(\upsilon)}(\tau_{q,i}t),\ \tB^{(\upsilon)}(t)=B^{(\upsilon)}(\tau_{q,i}t),\ \tE^{(\upsilon)}(t)=E^{(\upsilon)}(\tau_{q,i}t),$  and
	\begin{equation*}
		\pa_{tt}\tU^{(S,\upsilon)}+\tau_{q,i}^{2}(\tH^{(\upsilon)})^{-1}\tB^{(\upsilon)}\tU^{(S,\upsilon)}=\left\lbrace
		\begin{aligned}
			&\tau_{q,i}^{2}(\tH^{(\upsilon)})^{-1}\tE^{(\upsilon)}(1-\Theta_{S-1})-2\pt\Theta_{S-1}\pt\tU^{(S-1,\upsilon)}-\pa_{tt}\Theta_{S-1}\tU^{(S-1,\upsilon)}, \ t\in[S,S+1/4],\\
			&\tau_{q,i}^{2}(\tH^{(\upsilon)})^{-1}\tE^{(\upsilon)},\hspace{174pt} t\in(S+1/4,S+5/4],
		\end{aligned}\right.
	\end{equation*}
	where $S_{q,i}^{min}+1\leqslant S\leqslant S_{q,i}^{max}$ and $\tU^{(S,\upsilon)}(t)=(g_{L,2}^{(S,\upsilon)}(t),g_{M,2}^{(S,\upsilon)}(t))^{\top}$. 
	
	So we should first give estimates for $H^{(\upsilon)}(t)$, $B^{(\upsilon)}(t)$, $\tH^{(\upsilon)}(t)$, and $\tB^{(\upsilon)}(t)$. 
	For $b>5$, we could find $\varepsilon^*_{32}(b,\oM)$ such that for $\varepsilon<\varepsilon^*_{32}$,
	\begin{align}
		\oM\mu_{q,i}\lambda_{q,i}\leqslant\varepsilon^2 \delta_{q+1}\delta_{q,i}^{-\frac{1}{2}}\leqslant\varepsilon^2\delta_{q+1}^{\frac{1}{2}},        \label{pp of Lambda2}
	\end{align} 
	and we have the following estimates for $0\leqslant r\leqslant 5$,
	\begin{enumerate}
		\item Estimates on $H^{(\upsilon)}(t)$ and $\tH^{(\upsilon)}(t)$.
		
		We could first calculate
		\begin{align*}
			H_{11}^{(\upsilon)}=&\varepsilon\mu_{q,i}^2\delta_{q+1}-\int_{\T^2} ((\Id+\Gl)\nabla\Psi_{L}^{(\upsilon)})_{1}\rd x-\mu_{q,i}^{-2}\int_{\T^2} (\tV_{q,i+1,p}^{(\upsilon)}\nabla\psi_{L}^{(\upsilon)})_1\rd x\\ =&\varepsilon\mu_{q,i}^2\delta_{q+1}+\mu_{q,i}\int_{\T^2} (\Psi_{L}^{(\upsilon)}\Div \Gl)_{1}\rd x-\sum_{s:I=(s,\upsilon)}\int_{\T^2} \frac{( V_{q,i+1,p}^{I,cof}\nabla\psi_{L}^{(\upsilon)}\Pi_{I})_1}{i(\lambda_{q,i+1}\mu_{q,i}[I]|f_{i+1}|)^2}\rd x.
		\end{align*}
		and then, by using  \eqref{pp of Lambda1}, \eqref{pp of supp tu_LMC 1},  \eqref{pp of Lambda2}, and Lemma \ref{est on int operator}, we have for $0\leqslant r\leqslant5$,
		\begin{align}
			\Nrm{\sum_{s:I=(s,\upsilon)}\int_{\T^2} \frac{( V_{q,i+1,p}^{I,cof}\nabla\psi_{L}^{(\upsilon)}\Pi_{I})_1}{i(\lambda_{q,i+1}\mu_{q,i}[I]|f_{i+1}|)^2}\rd x}_{C_{t}^r}
			\lesssim&\lambda_{q,i+1}^{r-2}\frac{\nrm{ V_{q,i+1,p}^{I,cof}\nabla\psi_{L}^{(\upsilon)}}_{10}}{(\lambda_{q,i+1}[I]|f_{i+1}|)^{10}}
			\lesssim(\lambda_{q,i+1}\mu_{q,i})^{-10}\lambda_{q,i+1}^{r-2}\delta_{q+1}^{\frac{1}{2}}\leqslant\mu_{q,i}^{-r}\frac{\varepsilon\mu_{q,i}^2\delta_{q+1}}{4},\nonumber\\
			\Nrm{\mu_{q,i}\int_{\T^2} (\Psi_{L}^{(\upsilon)}\Div \Gl)_{1}\rd x}_{C_{t}^r}
			\lesssim&\mu_{q,i}^3\Nrm{\pt^r(\Psi_{L}^{(\upsilon)}\Div \Gl)}_{0}
			\lesssim\mu_{q,i}^{3-r}\oM\lambda_{q,i}\delta_{q,i}^{\frac{1}{2}}\leqslant\mu_{q,i}^{-r}\frac{\varepsilon\mu_{q,i}^2\delta_{q+1}}{4},\nonumber
		\end{align}
		for sufficiently small $\varepsilon$.
		It is easy to obtain, for every $t$,
		\begin{align*}
			\frac{1}{2}\varepsilon\mu_{q,i}^2\delta_{q+1}
			\leqslant H_{11}^{(\upsilon)}(t)
			\leqslant
			\frac{3}{2}\varepsilon\mu_{q,i}^2\delta_{q+1},
		\end{align*}
		and, for $1\leqslant r\leqslant5$,
		\begin{align*}
			\nrm{\pt^rH_{11}^{(\upsilon)}}_{C_t^0}
			\leqslant&
			\Nrm{\sum_{s:I=(s,\upsilon)}
				\int_{\T^2}
				\frac{
					(V_{q,i+1,p}^{I,cof}
					\nabla\psi_L^{(\upsilon)}\Pi_I)_1
				}{
					i(\lambda_{q,i+1}\mu_{q,i}[I]|f_{i+1}|)^2
				}
				\rd x}_{C_t^r}
			+
			\Nrm{\mu_{q,i}
				\int_{\T^2}
				(\Psi_L^{(\upsilon)}\Div\Gl)_1
				\rd x}_{C_t^r}
			\leqslant
			\mu_{q,i}^{-r}
			\frac{\varepsilon\mu_{q,i}^2\delta_{q+1}}{2}.
		\end{align*}
		Similarly, for every $t$,
		\begin{align*}
			\frac{1}{2}\varepsilon\mu_{q,i}^2\delta_{q+1}
			\leqslant H_{22}^{(\upsilon)}(t)
			\leqslant
			\frac{3}{2}\varepsilon\mu_{q,i}^2\delta_{q+1},
		\end{align*}
		and, for $1\leqslant r\leqslant5$,
		\begin{align*}
			\nrm{\pt^rH_{22}^{(\upsilon)}}_{C_t^0}
			\leqslant
			\mu_{q,i}^{-r}
			\frac{\varepsilon\mu_{q,i}^2\delta_{q+1}}{2}.
		\end{align*}
		
		Since $H_{12}^{(\upsilon)}=H_{21}^{(\upsilon)}=0$, the matrix $H^{(\upsilon)}(t)$ is diagonal and
		invertible for every $t$, with
		$$
		(H^{(\upsilon)})^{-1}(t)
		=
		\begin{pmatrix}
			(H_{11}^{(\upsilon)})^{-1}(t)&0\\
			0&(H_{22}^{(\upsilon)})^{-1}(t)
		\end{pmatrix}.
		$$
		Consequently, we have the following estimates for $H^{(\upsilon)}(t)$,
		$\tH^{(\upsilon)}(t)$, and their inverses:
		\begin{equation}\label{est on H and tH}
			\begin{aligned}
				\nrm{H^{(\upsilon)}}_{C_t^0}
				\leqslant&
				\frac{3}{2}
				\varepsilon\mu_{q,i}^2\delta_{q+1},
				\qquad
				\nrm{\pt^rH^{(\upsilon)}}_{C_t^0}
				\lesssim_r
				\mu_{q,i}^{-r}
				\varepsilon\mu_{q,i}^2\delta_{q+1},
				&&1\leqslant r\leqslant5,
				\\
				\nrm{\tH^{(\upsilon)}}_{C_t^0}
				\leqslant&
				\frac{3}{2}
				\varepsilon\mu_{q,i}^2\delta_{q+1},
				\qquad
				\nrm{\pt^r\tH^{(\upsilon)}}_{C_t^0}
				\lesssim_r
				(\mu_{q,i}^{-1}\tau_{q,i})^r
				\varepsilon\mu_{q,i}^2\delta_{q+1},
				&&1\leqslant r\leqslant5,
				\\
				\nrm{(H^{(\upsilon)})^{-1}}_{C_t^0}
				\leqslant&
				2\varepsilon^{-1}
				\mu_{q,i}^{-2}\delta_{q+1}^{-1},
				\qquad
				\nrm{(\tH^{(\upsilon)})^{-1}}_{C_t^0}
				\leqslant
				2\varepsilon^{-1}
				\mu_{q,i}^{-2}\delta_{q+1}^{-1},
				\\
				\nrm{\pt^r(H^{(\upsilon)})^{-1}}_{C_t^0}
				\lesssim_r&
				\mu_{q,i}^{-r}
				\varepsilon^{-1}
				\mu_{q,i}^{-2}\delta_{q+1}^{-1},
				&&1\leqslant r\leqslant5,
				\\
				\nrm{\pt^r(\tH^{(\upsilon)})^{-1}}_{C_t^0}
				\lesssim_r&
				(\mu_{q,i}^{-1}\tau_{q,i})^r
				\varepsilon^{-1}
				\mu_{q,i}^{-2}\delta_{q+1}^{-1},
				&&1\leqslant r\leqslant5,
			\end{aligned}
		\end{equation}
		where  we have used
		$
		\pt (H^{(\upsilon)})^{-1}=-(H^{(\upsilon)})^{-1}\pt H^{(\upsilon)} (H^{(\upsilon)})^{-1}.
		$
		
		\item Estimates on $B^{(\upsilon)}(t)$ and $\tB^{(\upsilon)}(t)$.
		We could first calculate
		\begin{align*}
			B_{11}^{(\upsilon)}=&\mu_{q,i}^{-2}\int_{\T^2} ((\pa_{tt}\tV_{q,i+1,p}^{(\upsilon)}+(\Id+\Gl)(\Sgm_{\Gl}+\Rl+c_{q,i}))\nabla\psi_{L}^{(\upsilon)})_{1}\rd x\\
			=&\sum_{s:I=(s,\upsilon)}\int_{\T^2} \frac{(\pa_{tt}(V_{q,i+1,p}^{I,cof}\Pi_I)\nabla\psi_{L}^{(\upsilon)})_{1}}{i(\lambda_{q,i+1}\mu_{q,i}[I]|f_{i+1}|)^2}\rd x
			-\mu_{q,i}\int_{\T^2} (\Psi_{L}^{(\upsilon)}\Delta\Div((\Id+\Gl)(\Sgm_{\Gl}+\Rl+c_{q,i})))_{1}\rd x\\
			B_{21}^{(\upsilon)}=&-\mu_{q,i}^{-1}\int_{\T^2} \psi_{L}^{(\upsilon)}(\Div R_{m,i})_1\rd x
			=-\mu_{q,i}\int_{\T^2} \Psi_{L}^{(\upsilon)}\Delta(\Div R_{m,i})_1\rd x.
		\end{align*}
		and then, by using  \eqref{pp of Lambda1}, \eqref{pp of supp tu_LMC 1},  \eqref{pp of Lambda2}, and Lemma \ref{est on int operator},  we could similarly obtain
		\begin{align*}
			&\Nrm{\sum_{s:I=(s,\upsilon)}\int_{\T^2} \frac{(\pa_{tt}(V_{q,i+1,p}^{I,cof}\Pi_I)\nabla\psi_{L}^{(\upsilon)})_{1}}{i(\lambda_{q,i+1}\mu_{q,i}[I]|f_{i+1}|)^2}\rd x}_{C_{t}^r}\nonumber\\
			\lesssim&\lambda_{q,i+1}^{r-2}\frac{\nrm{\pa_{tt} V_{q,i+1,p}^{I,cof}\nabla\psi_{L}^{(\upsilon)}}_{10}}{(\lambda_{q,i+1}[I]|f_{i+1}|)^{10}}+\lambda_{q,i+1}^{r-1}\frac{\nrm{\pt(\pt c_{A_I^\upsilon} V_{q,i+1,p}^{I,cof})\nabla\psi_{L}^{(\upsilon)}}_{10}}{(\lambda_{q,i+1}[I]|f_{i+1}|)^{10}}+\lambda_{q,i+1}^{r}\frac{\nrm{(\pt c_{A_I^\upsilon})^2 V_{q,i+1,p}^{I,cof}\nabla\psi_{L}^{(\upsilon)}}_{10}}{(\lambda_{q,i+1}[I]|f_{i+1}|)^{10}}\nonumber\\
			\lesssim&(\lambda_{q,i+1}\mu_{q,i})^{-10}\lambda_{q,i+1}^{r}\delta_{q+1}^{\frac{1}{2}}\lesssim\mu_{q,i}^{-r}\varepsilon^4\delta_{q+1}^2,\\
			&\Nrm{\mu_{q,i}\int_{\T^2} (\Psi_{L}^{(\upsilon)}\Delta\Div((\Id+\Gl)(\Sgm_{\Gl}+\Rl+c_{q,i})))_{1}\rd x}_{C_{t}^r}\\
			\lesssim&\mu_{q,i}^3\Nrm{\pt^r(\Psi_{L}^{(\upsilon)}\Delta\Div((\Id+\Gl)(\Sgm_{\Gl}+\Rl+c_{q,i})))}_{0}\nonumber
			\lesssim_{\sgm}\mu_{q,i}^{2-r}\oM\lambda_{q,i}^2\delta_{q,i}^{\frac{1}{2}}\lesssim_{\sgm}\mu_{q,i}^{-r}\varepsilon^4\delta_{q+1}^{\frac{3}{2}},\\
			&\Nrm{\mu_{q,i}\int_{\T^2} \Psi_{L}^{(\upsilon)}\Delta(\Div R_{m,i})_1\rd x}_{C_{t}^r}\\
			\lesssim&\mu_{q,i}^3\Nrm{\pt^r(\Psi_{L}^{(\upsilon)}\Delta(\Div R_{m,i})_1)}_{0}
			\lesssim_{\sgm}\mu_{q,i}^{2-r}\oM^2\lambda_{q,i}^2\delta_{q,i}\lesssim_{\sgm}\mu_{q,i}^{-r}\varepsilon^4\delta_{q+1}^2,
		\end{align*}
		for sufficiently small $\varepsilon$.
		It is easy to obtain
		\begin{align*}
			\nrm{B_{11}^{(\upsilon)}}_{C_t^r}\leqslant&\Nrm{\sum_{s:I=(s,\upsilon)}\int_{\T^2} \frac{(\pa_{tt}(V_{q,i+1,p}^{I,cof}\Pi_I)\nabla\psi_{L}^{(\upsilon)})_{1}}{i(\lambda_{q,i+1}\mu_{q,i}[I]|f_{i+1}|)^2}\rd x}_{C_{t}^r}
			+\Nrm{\mu_{q,i}\int_{\T^2} (\Psi_{L}^{(\upsilon)}\Delta\Div((\Id+\Gl)(\Sgm_{\Gl}+\Rl+c_{q,i})))_{1}\rd x}_{C_{t}^r}\\
			\lesssim&_{\sgm}\mu_{q,i}^{-r}\varepsilon^4\delta_{q+1}^{\frac{3}{2}},\\
			\nrm{B_{21}^{(\upsilon)}}_{C_t^r}\leqslant&\Nrm{\mu_{q,i}\int_{\T^2} \Psi_{L}^{(\upsilon)}\Delta(\Div R_{m,i})_1\rd x}_{C_{t}^r}\lesssim_{\sgm}\mu_{q,i}^{-r}\varepsilon^4\delta_{q+1}^{\frac{3}{2}}.
		\end{align*}
		Similarly, we could obtain
		\begin{align*}
			\nrm{B_{12}^{(\upsilon)}}_{C_t^r}
			\lesssim_{\sgm}\mu_{q,i}^{-r}\varepsilon^4\delta_{q+1}^{\frac{3}{2}},\quad
			\nrm{B_{22}^{(\upsilon)}}_{C_t^r}\lesssim_{\sgm}\mu_{q,i}^{-r}\varepsilon^4\delta_{q+1}^{\frac{3}{2}}.
		\end{align*}
		Up to now, we could give the following estimates for $B^{(\upsilon)}(t)$ and $\tB^{(\upsilon)}(t)$:
		\begin{align}
			\nrm{B^{(\upsilon)}}_{C_t^r}\lesssim_{\sgm}\mu_{q,i}^{-r}\varepsilon^4\delta_{q+1}^{\frac{3}{2}},\quad\nrm{\tB^{(\upsilon)}}_{C_t^r}\lesssim_{\sgm}\mu_{q,i}^{-r}\tau_{q,i}^r\varepsilon^4\delta_{q+1}^{\frac{3}{2}},\quad 0\leqslant r\leqslant 5.\label{est on B and tB}
		\end{align}
		
		\item Estimates on $E^{(\upsilon)}(t)$ and $\tE^{(\upsilon)}(t)$.
		We could first calculate
		\begin{align*}
			E_1^{(\upsilon)}=&-\int_{\T^2} (\pa_{tt}\tV_{q,i+1,p}^{(\upsilon)}(\Id+\Gl+\sum_{\nrm{\tup-\upsilon}\leqslant1}\tG_{q,i+1,p}^{(\tup)})^{\top})_{12-21}\rd x\\
			&+\int_{\T^2} (\tG_{q,i+1,p}^{(\upsilon)}(\Sgm_{\Gl}+\Rl+c_{q,i})(\Id+\Gl+\sum_{\nrm{\tup-\upsilon}=1}\tG_{q,i+1,p}^{(\tup)})^{\top})_{12-21}\rd x\\
			&+\varepsilon\delta_{q+1}\int_{\T^2} ((\pa_{tt}\tV_{q,i+1,p}^{(\upsilon)}+(\Id+\Gl)(\Sgm_{\Gl}+\Rl+c_{q,i}))\nabla(\psi_{M}^{(\upsilon)}+\psi_{L}^{(\upsilon)}))_{2}\rd x\\
			=&-\sum_{\nrm{\tup-\upsilon}\leqslant1}\int_{\T^2} (\pa_{tt}\tV_{q,i+1,p}^{(\upsilon)}(\tG_{q,i+1,p}^{(\tup)})^{\top})_{12-21}\rd x
			-\sum_{s:I=(s,\upsilon)}\int_{\T^2} \frac{(\pa_{tt}(V_{q,i+1,p}^{I,cof}\Pi_I)(\Id+\Gl)^{\top})_{12-21}}{i(\lambda_{q,i+1}[I]|f_{i+1}|)^2}\rd x\\
			&+\sum_{s:I=(s,\upsilon)}\int_{\T^2} \frac{( V_{q,i+1,p}^{I,cof}(\nabla\Div((\Id+\Gl)(\Sgm_{\Gl}+\Rl+c_{q,i})))^{\top})_{12-21}\Pi_{I}}{i(\lambda_{q,i+1}[I]|f_{i+1}|)^2}\rd x\\
			&+\int_{\T^2} (\tG_{q,i+1,p}^{(\upsilon)}(\Sgm_{\Gl}+\Rl+c_{q,i})(\sum_{\nrm{\tup-\upsilon}=1}\tG_{q,i+1,p}^{(\tup)})^{\top})_{12-21}\rd x
			+\varepsilon\delta_{q+1}
			\sum_{s:I=(s,\upsilon)}\int_{\T^2} \frac{(\pa_{tt}(V_{q,i+1,p}^{I,cof}\Pi_I)\nabla(\psi_{M}^{(\upsilon)}+\psi_{L}^{(\upsilon)}))_{2}}{i(\lambda_{q,i+1}[I]|f_{i+1}|)^2}\rd x\\
			&+\varepsilon\mu_{q,i}^2\delta_{q+1}\int_{\T^2} (\nabla\Div((\Id+\Gl)(\Sgm_{\Gl}+\Rl+c_{q,i}))\nabla(\Psi_{M}^{(\upsilon)}+\Psi_{L}^{(\upsilon)}))_{2}\rd x,\\
			E_2^{(\upsilon)}=&-\int_{\T^2}  (\chi_\upsilon^2(\mu_{q,i}^{-1}x)R_{m,i}(\Id+\Gl^{\top}))_{12-21}\rd x-\int_{\T^2} \sum_{\|\tup-\upsilon\|\leqslant 1}\tu_{q,i+1,p}^{(\tup)}\times\Div (\chi_\upsilon^2(\mu_{q,i}^{-1}x)R_{m,i})\rd x\\
			&-\varepsilon\mu_{q,i}\delta_{q+1}\int_{\T^2} (\psi_{M}^{(\upsilon)}+\psi_{L}^{(\upsilon)})(\Div R_{m,i})_2\rd x\\
			=&-\int_{\T^2}  (\chi_\upsilon^2(\mu_{q,i}^{-1}x)R_{m,i}(\Id+\Gl^{\top}))_{12-21}\rd x+\varepsilon\mu_{q,i}^2\delta_{q+1}\int_{\T^2} \nabla(\Psi_{M}^{(\upsilon)}+\Psi_{L}^{(\upsilon)})\cdot\nabla(\Div R_{m,i})_2\rd x\\
			&+\sum_{\|\tup-\upsilon\|\leqslant 1}\int_{\T^2} (\tV_{q,i+1,p}^{(\tup)}(\nabla\Div (\chi_\upsilon^2(\mu_{q,i}^{-1}x)R_{m,i}))^{\top})_{12-21}\rd x\\
			=&-\int_{\T^2}  (\chi_\upsilon^2(\mu_{q,i}^{-1}x)R_{m,i}(\Id+\Gl^{\top}))_{12-21}\rd x+\varepsilon\mu_{q,i}^2\delta_{q+1}\int_{\T^2} \nabla(\Psi_{M}^{(\upsilon)}+\Psi_{L}^{(\upsilon)})\cdot\nabla(\Div R_{m,i})_2\rd x\\
			&+\sum_{\|\tup-\upsilon\|\leqslant 1}\sum_{s:I=(s,\tup)}\int_{\T^2} \frac{( V_{q,i+1,p}^{I,cof}(\nabla\Div (\chi_\upsilon^2(\mu_{q,i}^{-1}x)R_{m,i}))^{\top})_{12-21}\Pi_{I}}{i(\lambda_{q,i+1}[I]|f_{i+1}|)^2}\rd x,
		\end{align*}
		and then, by using  \eqref{pp of Lambda1}, \eqref{pp of supp tu_LMC 1},  \eqref{pp of Lambda2}, and Lemma \ref{est on int operator},  we could similarly obtain
		\begin{align*}
			&\Nrm{\sum_{s:I=(s,\upsilon)}\int_{\T^2} \frac{(\pa_{tt}(V_{q,i+1,p}^{I,cof}\Pi_I)(\Id+\Gl)^{\top})_{12-21}}{i(\lambda_{q,i+1}[I]|f_{i+1}|)^2}\rd x}_{C_{t}^r}\nonumber\\
			\lesssim&\mu_{q,i}^2\lambda_{q,i+1}^{r}\left(\frac{\nrm{\pa_{tt} V_{q,i+1,p}^{I,cof}(\Id+\Gl)^{\top}}_{10}}{\lambda_{q,i+1}^2(\lambda_{q,i+1}[I]|f_{i+1}|)^{10}}+\frac{\nrm{\pt(\pt c_{A_I^\upsilon} V_{q,i+1,p}^{I,cof})(\Id+\Gl)^{\top}}_{10}}{\lambda_{q,i+1}(\lambda_{q,i+1}[I]|f_{i+1}|)^{10}}+\frac{\nrm{(\pt c_{A_I^\upsilon})^2 V_{q,i+1,p}^{I,cof}(\Id+\Gl)^{\top}}_{10}}{(\lambda_{q,i+1}[I]|f_{i+1}|)^{10}}\right)\\
			\lesssim&(\lambda_{q,i+1}\mu_{q,i})^{-10}\mu_{q,i}^2\lambda_{q,i+1}^{r}\delta_{q+1}^{\frac{1}{2}}\lesssim\mu_{q,i}^{-r}\varepsilon^3\mu_{q,i}^2\delta_{q+1}^2,\\
			&\Nrm{\sum_{s:I=(s,\upsilon)}\int_{\T^2} \frac{( V_{q,i+1,p}^{I,cof}(\nabla\Div((\Id+\Gl)(\Sgm_{\Gl}+\Rl+c_{q,i})))^{\top})_{12-21}\Pi_{I}}{i(\lambda_{q,i+1}[I]|f_{i+1}|)^2}\rd x}_{C_{t}^r}\nonumber\\
			\lesssim&\mu_{q,i}^2\lambda_{q,i+1}^{r}\frac{\nrm{ V_{q,i+1,p}^{I,cof}(\nabla\Div((\Id+\Gl)(\Sgm_{\Gl}+\Rl+c_{q,i})))^{\top}}_{10}}{\lambda_{q,i+1}^2(\lambda_{q,i+1}[I]|f_{i+1}|)^{10}}\\
			\lesssim&_{\sgm}(\lambda_{q,i+1}\mu_{q,i})^{-10}\mu_{q,i}^2\lambda_{q,i+1}^{r-2}\oM\lambda_{q,i}^2\delta_{q,i}^{\frac{1}{2}}\delta_{q+1}^{\frac{1}{2}}\lesssim\mu_{q,i}^{-r}\varepsilon^3\mu_{q,i}^2\delta_{q+1}^2,\\
			&\Nrm{\varepsilon\delta_{q+1}
				\sum_{s:I=(s,\upsilon)}\int_{\T^2} \frac{(\pa_{tt}(V_{q,i+1,p}^{I,cof}\Pi_I)\nabla(\psi_{M}^{(\upsilon)}+\psi_{L}^{(\upsilon)}))_{2}}{i(\lambda_{q,i+1}[I]|f_{i+1}|)^2}\rd x}_{C_t^r}\\
			\lesssim&\mu_{q,i}^2\lambda_{q,i+1}^{r}\varepsilon\delta_{q+1}\left(\frac{\nrm{\pa_{tt} V_{q,i+1,p}^{I,cof}\nabla(\psi_{M}^{(\upsilon)}+\psi_{L}^{(\upsilon)})}_{10}}{\lambda_{q,i+1}^2(\lambda_{q,i+1}[I]|f_{i+1}|)^{10}}+\frac{\nrm{\pt(\pt c_{A_I^\upsilon} V_{q,i+1,p}^{I,cof})\nabla(\psi_{M}^{(\upsilon)}+\psi_{L}^{(\upsilon)})}_{10}}{\lambda_{q,i+1}(\lambda_{q,i+1}[I]|f_{i+1}|)^{10}}\right)\\
			&+\mu_{q,i}^2\lambda_{q,i+1}^{r}\varepsilon\delta_{q+1}\frac{\nrm{(\pt c_{A_I^\upsilon})^2 V_{q,i+1,p}^{I,cof}\nabla(\psi_{M}^{(\upsilon)}+\psi_{L}^{(\upsilon)})}_{10}}{(\lambda_{q,i+1}[I]|f_{i+1}|)^{10}}\\
			\lesssim&(\lambda_{q,i+1}\mu_{q,i})^{-10}\mu_{q,i}^2\lambda_{q,i+1}^{r}\varepsilon\delta_{q+1}^{\frac{3}{2}}\lesssim\mu_{q,i}^{-r}\varepsilon^3\mu_{q,i}^2\delta_{q+1}^2,\\
			&\Nrm{\varepsilon\mu_{q,i}^2\delta_{q+1}\int_{\T^2} (\nabla\Div((\Id+\Gl)(\Sgm_{\Gl}+\Rl+c_{q,i}))\nabla(\Psi_{M}^{(\upsilon)}+\Psi_{L}^{(\upsilon)}))_{2}\rd x}_{C_t^r}\\
			\lesssim&\varepsilon\mu_{q,i}^4\delta_{q+1}\Nrm{\nabla\Div((\Id+\Gl)(\Sgm_{\Gl}+\Rl+c_{q,i}))\nabla(\Psi_{M}^{(\upsilon)}+\Psi_{L}^{(\upsilon)})}_{C_t^r}\\
			\lesssim&_{\sgm}\varepsilon\mu_{q,i}^{4-r}\oM\lambda_{q,i}^2\delta_{q,i}^{\frac{1}{2}}\delta_{q+1}\lesssim_{\sgm}\mu_{q,i}^{-r}\varepsilon^3\mu_{q,i}^{2}\delta_{q+1}^2,\\
			&\Nrm{\int_{\T^2}  (\chi_\upsilon^2(\mu_{q,i}^{-1}x)R_{m,i}(\Id+\Gl^{\top}))_{12-21}\rd x}_{C_t^r}\\
			\lesssim&\mu_{q,i}^2\Nrm{ \pt^r(\chi_\upsilon^2(\mu_{q,i}^{-1}x)R_{m,i}(\Id+\Gl^{\top}))}_{0}\lesssim_{\sgm}\mu_{q,i}^{2-r}\ell_{q,i}^2\oM^2\lambda_{q,i}^2\delta_{q,i}\lesssim_{\sgm}\mu_{q,i}^{-r}\varepsilon^3\mu_{q,i}^2\delta_{q+1}^2,\\
			&\Nrm{\varepsilon\mu_{q,i}^2\delta_{q+1}\int_{\T^2} \nabla(\Psi_{M}^{(\upsilon)}+\Psi_{L}^{(\upsilon)})\cdot\nabla(\Div R_{m,i})_2\rd x}_{C_t^r}\\
			\lesssim&\varepsilon\mu_{q,i}^4\delta_{q+1}\Nrm{\pt^r( \nabla(\Psi_{M}^{(\upsilon)}+\Psi_{L}^{(\upsilon)})\cdot\nabla(\Div R_{m,i})_2)}_{0}\lesssim_{\sgm}\varepsilon\mu_{q,i}^{4-r}\delta_{q+1}\oM^2\lambda_{q,i}^2\delta_{q,i}\lesssim_{\sgm}\mu_{q,i}^{-r}\varepsilon^3\mu_{q,i}^{2}\delta_{q+1}^{2},\\
			&\Nrm{\sum_{\|\tup-\upsilon\|\leqslant 1}\sum_{s:I=(s,\tup)}\int_{\T^2} \frac{( V_{q,i+1,p}^{I,cof}(\nabla\Div (\chi_\upsilon^2(\mu_{q,i}^{-1}x)R_{m,i}))^{\top})_{12-21}\Pi_{I}}{i(\lambda_{q,i+1}[I]|f_{i+1}|)^2}\rd x}_{C_t^r}\\
			\lesssim&\mu_{q,i}^2\lambda_{q,i+1}^{r}\frac{\nrm{( V_{q,i+1,p}^{I,cof}\nabla\Div (\chi_\upsilon^2(\mu_{q,i}^{-1}x)R_{m,i}))^{\top}}_{10}}{\lambda_{q,i+1}^2(\lambda_{q,i+1}[I]|f_{i+1}|)^{10}}
			\lesssim_{\sgm}(\lambda_{q,i+1}\mu_{q,i})^{-10}\mu_{q,i}^2\lambda_{q,i+1}^{r-2}\oM^2\lambda_{q,i}^2\delta_{q,i}\delta_{q+1}^{\frac{1}{2}}\lesssim_{\sgm}\mu_{q,i}^{-r}\varepsilon^3\mu_{q,i}^2\delta_{q+1}^2,
		\end{align*}
		for sufficiently small $\varepsilon$. Noting 
		\begin{align*}
			&(\tG_{q,i+1,p}^{(\upsilon)}(\Sgm_{\Gl}+\Rl+c_{q,i})(\sum_{\nrm{\tup-\upsilon}=1}\tG_{q,i+1,p}^{(\tup)})^{\top})_{12-21}\\=&-\sum_{s:I=(s,\upsilon)}
		\sum_{\substack{I'=(s',\tup)\in\mathscr I\\
		\nrm{\tup-\upsilon}=1}}\gamma_{q,i+1}^I\gamma_{q,i+1}^{I'}((\tilde{f}_{A_I^{\upsilon}}\otimes f_{i+1})(\Sgm_{\Gl}+\Rl+c_{q,i})(f_{i+1}\otimes \tilde{f}_{A_{I'}^{\tup}}))_{12-21}\Pi_{I,I'}\\
			&+\sum_{s:I=(s,\upsilon)}
		\sum_{\substack{I'=(s',\tup)\in\mathscr I\\
		\nrm{\tup-\upsilon}=1}}\frac{i\gamma_{q,i+1}^{I'}}{\lambda_{q,i+1}[I]}((\tilde{f}_{A_I^{\upsilon}}\otimes \nabla \gamma_{q,i+1,c}^I)(\Sgm_{\Gl}+\Rl+c_{q,i})(f_{i+1}\otimes \tilde{f}_{A_{I'}^{\tup}}))_{12-21}\Pi_{I,I'}\\
			&+\sum_{s:I=(s,\upsilon)}
		\sum_{\substack{I'=(s',\tup)\in\mathscr I\\
		\nrm{\tup-\upsilon}=1}}\frac{i\gamma_{q,i+1}^{I'}}{\lambda_{q,i+1}[I]}((\tilde{f}_{A_I^{\upsilon}}\otimes \gamma_{q,i+1,s}^I)(\Sgm_{\Gl}+\Rl+c_{q,i})(f_{i+1}\otimes \tilde{f}_{A_{I'}^{\tup}}))_{12-21}\Pi_{I'}\tPi_{I}\\
			&+\sum_{s:I=(s,\upsilon)}
		\sum_{\substack{I'=(s',\tup)\in\mathscr I\\
		\nrm{\tup-\upsilon}=1}}\frac{i\gamma_{q,i+1}^{I}}{\lambda_{q,i+1}[I']}((\tilde{f}_{A_I^{\upsilon}}\otimes f_{i+1})(\Sgm_{\Gl}+\Rl+c_{q,i})(\nabla\gamma_{q,i+1,c}^{I'}\otimes \tilde{f}_{A_{I'}^{\tup}}))_{12-21}\Pi_{I,I'}\\
			&+\sum_{s:I=(s,\upsilon)}
		\sum_{\substack{I'=(s',\tup)\in\mathscr I\\
		\nrm{\tup-\upsilon}=1}}\frac{i\gamma_{q,i+1}^{I}}{\lambda_{q,i+1}[I']}((\tilde{f}_{A_I^{\upsilon}}\otimes f_{i+1})(\Sgm_{\Gl}+\Rl+c_{q,i})(\gamma_{q,i+1,s}^{I'}\otimes \tilde{f}_{A_{I'}^{\tup}}))_{12-21}\Pi_{I}\tPi_{I'}\\
			&+\sum_{s:I=(s,\upsilon)}
		\sum_{\substack{I'=(s',\tup)\in\mathscr I\\
		\nrm{\tup-\upsilon}=1}}\frac{1}{\lambda_{q,i+1}^2[I][I']}((\tilde{f}_{A_I^{\upsilon}}\otimes \nabla\gamma_{q,i+1,c}^I)(\Sgm_{\Gl}+\Rl+c_{q,i})(\nabla\gamma_{q,i+1,c}^{I'}\otimes \tilde{f}_{A_{I'}^{\tup}}))_{12-21}\Pi_{I,I'}\\
			&+\sum_{s:I=(s,\upsilon)}
		\sum_{\substack{I'=(s',\tup)\in\mathscr I\\
		\nrm{\tup-\upsilon}=1}}\frac{1}{\lambda_{q,i+1}^2[I][I']}((\tilde{f}_{A_I^{\upsilon}}\otimes \nabla\gamma_{q,i+1,c}^I)(\Sgm_{\Gl}+\Rl+c_{q,i})(\gamma_{q,i+1,s}^{I'}\otimes \tilde{f}_{A_{I'}^{\tup}}))_{12-21}\Pi_{I}\tPi_{I'}\\
			&+\sum_{s:I=(s,\upsilon)}
		\sum_{\substack{I'=(s',\tup)\in\mathscr I\\
		\nrm{\tup-\upsilon}=1}}\frac{1}{\lambda_{q,i+1}^2[I][I']}((\tilde{f}_{A_I^{\upsilon}}\otimes \gamma_{q,i+1,s}^{I})(\Sgm_{\Gl}+\Rl+c_{q,i})(\nabla\gamma_{q,i+1,c}^{I'}\otimes \tilde{f}_{A_{I'}^{\tup}}))_{12-21}\tPi_{I}\Pi_{I'}\\
			&+\sum_{s:I=(s,\upsilon)}
		\sum_{\substack{I'=(s',\tup)\in\mathscr I\\
		\nrm{\tup-\upsilon}=1}}\frac{1}{\lambda_{q,i+1}^2[I][I']}((\tilde{f}_{A_I^{\upsilon}}\otimes \gamma_{q,i+1,s}^I)(\Sgm_{\Gl}+\Rl+c_{q,i})(\gamma_{q,i+1,s}^{I'}\otimes \tilde{f}_{A_{I'}^{\tup}}))_{12-21}\tPi_{I}\tPi_{I'},
		\end{align*}
		due to the definition of $[I]$ in \eqref{def of [I]} and \eqref{pp of supp cof}, we should only consider $I = (s, \upsilon)$ and $I' = (s', \tup)$ with $\nrm{I - I'} \leqslant 1$, which can be divided into three cases:
		\begin{enumerate}
			\item $\nrm{\upsilon - \tup} = 1$, which is equivalent to $|s' - s| \leqslant 1$ and $\nrm{\upsilon - \tup} = 1$. 
			\begin{align}
				|[I]-[I']|=\left|[s]+\sum_{i=1}^22^i[\upsilon_i]-[s']-\sum_{i=1}^22^i[\tup_i]\right|\geqslant\left|\sum_{i=1}^22^i[\upsilon_i]-\sum_{i=1}^22^i[\tup_i]\right|-\left|[s]-[s']\right|\geqslant 1.\label{case 1}
			\end{align} 
			\item $\nrm{\upsilon - \tup} = 0,|s' - s|=1$. 
			\begin{align}
				|[I]-[I']|=\left|[s]-[s']\right|= 1.\label{case 2}
			\end{align} 
			\item $\nrm{I - I'} = 0$.
			\begin{align}
				[I]=[I'].\label{case 3}
			\end{align}
		\end{enumerate}
		Then, if $\nrm{I - I'} = 1$, terms like $\Pi_{I}\Pi_{I'}$, $\Pi_{I}\tPi_{I'}$, and $\tPi_{I}\tPi_{I'}$ do not produce low-frequency components, and their frequencies satisfy $\gtrsim \lambda_{q,i+1}$ i.e,  $\Pi_{I}\Pi_{I'}=\mPG\Pi_{I}\Pi_{I'}$.
		\begin{Rmk}
			The ones we commonly use are
			\begin{align*}
				&\mPO(\Pi_I\tPi_{I'})=\mPO(\Pi_{I,I'})=\mPO(\tPi_I\tPi_{I'})=0, 
				&& \nrm{I-I'}= 1,\\
				&\mPO(\Pi_I\tPi_{I'})=0,\quad\mPO(\Pi_{I,I'})=-2,\quad\mPO(\tPi_I\tPi_{I'})=2,&&I=I'.
			\end{align*}
		\end{Rmk}

		Then, by using  \eqref{pp of Lambda1}, \eqref{pp of supp tu_LMC 1},  \eqref{pp of Lambda2}, and Lemma \ref{est on int operator},  we could similarly obtain
		\begin{align*}
			\Nrm{\int_{\T^2} (\tG_{q,i+1,p}^{(\upsilon)}(\Sgm_{\Gl}+\Rl+c_{q,i})(\sum_{\nrm{\tup-\upsilon}=1}\tG_{q,i+1,p}^{(\tup)})^{\top})_{12-21}\rd x}_{C_t^r}
			\lesssim&_{\sgm}(\lambda_{q,i+1}\mu_{q,i})^{-10}\lambda_{q,i+1}^{r}\varepsilon\mu_{q,i}^2\delta_{q+1}\\
			\lesssim&_{\sgm}\mu_{q,i}^{-r}\varepsilon^3\mu_{q,i}^2\delta_{q+1}^2.
		\end{align*}
		Noting
		\begin{align*}
			&\sum_{\nrm{\tup-\upsilon}\leqslant1}(\pa_{tt}\tV_{q,i+1,p}^{(\upsilon)}(\tG_{q,i+1,p}^{(\tup)})^{\top})_{12-21}\\
			=&\sum_{s:I=(s,\upsilon)}\sum_{\nrm{\tup-\upsilon}\leqslant1}\frac{1}{i\lambda_{q,i+1}^2[I]^2|f_{i+1}|^2}(\pa_{tt}(\gamma_{q,i+1}^I\Pi_I\tilde{f}_{A_I^\upsilon})\otimes( \tG_{q,i+1,p}^{(\tup)}f_{i+1}))_{12-21}\\
			=&\sum_{s:I=(s,\upsilon)}
			\sum_{\substack{I'=(s',\tup)\in\mathscr I\\
			\nrm{\tup-\upsilon}\leqslant1}}\left(\frac{\gamma_{q,i+1}^{I'}\Pi_{I'}}{\lambda_{q,i+1}^2[I]^2}+\frac{f_{i+1}\cdot\gamma_{q,i+1,s}^{I'}\tPi_{I'}}{i\lambda_{q,i+1}^3[I]^2[I']|f_{i+1}|^2}+\frac{f_{i+1}\cdot\nb \gamma_{q,i+1,c}^{I'}\Pi_{I'}}{i\lambda_{q,i+1}^3[I]^2[I']|f_{i+1}|^2}\right)(\pa_{tt}(\gamma_{q,i+1}^I\Pi_I\tilde{f}_{A_I^\upsilon})\otimes \tilde{f}_{A_{I'}^{\tup}})_{12-21},
		\end{align*}
		we focus exclusively on the case $I=I'$, which leads to the production of low-frequency components. The analysis for cases that do not produce low-frequency components is similar to the preceding calculations. We could calculate
		\begin{align*}
			&\sum_{s:I=(s,\upsilon)}(\pa_{tt}\tV_{q,i+1,p}^{(\upsilon)}(\tG_{q,i+1,p}^{(\upsilon)})^{\top})_{12-21}\\
			=&\sum_{s:I=(s,\upsilon)}\left(\frac{\gamma_{q,i+1}^{I}\Pi_{I}}{\lambda_{q,i+1}^2[I]^2}+\frac{f_{i+1}\cdot\gamma_{q,i+1,s}^{I}\tPi_{I}}{i\lambda_{q,i+1}^3[I]^3|f_{i+1}|^2}+\frac{f_{i+1}\cdot\nb \gamma_{q,i+1,c}^{I}\Pi_{I}}{i\lambda_{q,i+1}^3[I]^3|f_{i+1}|^2}\right)(2\pa_{t}(\gamma_{q,i+1}^I\Pi_I)\pa_{t}a_{A_{I}^{\upsilon}}+\gamma_{q,i+1}^I\Pi_I\pa_{t}^2a_{A_{I}^{\upsilon}})(f_{i+1}^{\perp}\otimes f_{i+1})_{12-21},
		\end{align*}
		and
		\begin{align*}
			&\left(\frac{\gamma_{q,i+1}^{I}\Pi_{I}}{\lambda_{q,i+1}^2[I]^2}+\frac{f_{i+1}\cdot\gamma_{q,i+1,s}^{I}\tPi_{I}}{i\lambda_{q,i+1}^3[I]^3|f_{i+1}|^2}+\frac{f_{i+1}\cdot\nb \gamma_{q,i+1,c}^{I}\Pi_{I}}{i\lambda_{q,i+1}^3[I]^3|f_{i+1}|^2}\right)(2\pa_{t}(\gamma_{q,i+1}^I\Pi_I)\pa_{t}a_{A_{I}^{\upsilon}}+\gamma_{q,i+1}^I\Pi_I\pa_{t}^2a_{A_{I}^{\upsilon}})\\
			=&\left(\frac{\gamma_{q,i+1}^{I}\Pi_{I}}{\lambda_{q,i+1}^2[I]^2}+\frac{f_{i+1}\cdot\gamma_{q,i+1,s}^{I}\tPi_{I}}{i\lambda_{q,i+1}^3[I]^3|f_{i+1}|^2}+\frac{f_{i+1}\cdot\nb \gamma_{q,i+1,c}^{I}\Pi_{I}}{i\lambda_{q,i+1}^3[I]^3|f_{i+1}|^2}\right)(2\pa_{t}\gamma_{q,i+1}^I\Pi_I\pa_{t}a_{A_{I}^{\upsilon}}+i2\lambda_{q,i+1}[I]\gamma_{q,i+1}^I\tPi_I\pa_{t}a_{A_{I}^{\upsilon}}+\gamma_{q,i+1}^I\Pi_I\pa_{t}^2a_{A_{I}^{\upsilon}})\\
			=&\frac{4f_{i+1}\cdot \gamma_{q,i+1,s}^{I}\gamma_{q,i+1}^I\pa_{t}a_{A_{I}^{\upsilon}}}{\lambda_{q,i+1}^2[I]^2|f_{i+1}|^2}-\left(\frac{\gamma_{q,i+1}^{I}}{\lambda_{q,i+1}^2[I]^2}+\frac{f_{i+1}\cdot\nabla\gamma_{q,i+1,c}^{I}}{i\lambda_{q,i+1}^3[I]^3|f_{i+1}|^2}\right)(4\pa_{t}\gamma_{q,i+1}^I\pa_{t}a_{A_{I}^{\upsilon}}+2\gamma_{q,i+1}^I\pa_{t}^2a_{A_{I}^{\upsilon}})
			\\
			&+\left(\frac{\gamma_{q,i+1}^{I}}{\lambda_{q,i+1}^2[I]^2}+\frac{f_{i+1}\cdot\nabla\gamma_{q,i+1,c}^{I}}{i\lambda_{q,i+1}^3[I]^3|f_{i+1}|^2}\right)(2\pa_{t}\gamma_{q,i+1}^I\pa_{t}a_{A_{I}^{\upsilon}}+\gamma_{q,i+1}^I\pa_{t}^2a_{A_{I}^{\upsilon}})(\Pi_{I}^2+2)
			+\frac{2f_{i+1}\cdot \gamma_{q,i+1,s}^{I}\gamma_{q,i+1}^I\pa_{t}a_{A_{I}^{\upsilon}}}{\lambda_{q,i+1}^2[I]^2|f_{i+1}|^2}(\tPi_{I}^2-2)\\
			&+\left(\frac{2i\gamma_{q,i+1}^{I}}{\lambda_{q,i+1}[I]}+\frac{2f_{i+1}\cdot\nb\gamma_{q,i+1,c}^{I}}{\lambda_{q,i+1}^2[I]^2|f_{i+1}|^2}\right)(\gamma_{q,i+1}^I\pa_{t}a_{A_{I}^{\upsilon}})\Pi_{I}\tPi_I
			+\frac{f_{i+1}\cdot \gamma_{q,i+1,s}^{I}}{i\lambda_{q,i+1}^3[I]^3|f_{i+1}|^2}(2\pa_{t}\gamma_{q,i+1}^I\pa_{t}a_{A_{I}^{\upsilon}}+\gamma_{q,i+1}^I\pa_{t}^2a_{A_{I}^{\upsilon}})\Pi_{I}\tPi_{I}.
		\end{align*}
		Among these, $\Pi_{I}\tPi_{I}$, $(\tPi_{I}^2 - 2)$, and $(\Pi_{I}^2 + 2)$ do not contain low-frequency components. Therefore, the key lies in the estimation of the first term.
		\begin{align*}
			&\Nrm{\int_{\T^2}\frac{4f_{i+1}\cdot \gamma_{q,i+1,s}^{I}\gamma_{q,i+1}^I\pa_{t}a_{A_{I}^{\upsilon}}}{\lambda_{q,i+1}^2[I]^2|f_{i+1}|^2}-\left(\frac{\gamma_{q,i+1}^{I}}{\lambda_{q,i+1}^2[I]^2}+\frac{f_{i+1}\cdot\nb\gamma_{q,i+1,c}^{I}}{i\lambda_{q,i+1}^3[I]^3|f_{i+1}|^2}\right)(4\pa_{t}\gamma_{q,i+1}^I\pa_{t}a_{A_{I}^{\upsilon}}+2\gamma_{q,i+1}^I\pa_{t}^2a_{A_{I}^{\upsilon}})\rd x}_{C_t^r}\\
			\lesssim&\mu_{q,i}^2\lambda_{q,i+1}^{-2}\sum_{r_1+r_2=r}\nrm{\pt^{r_1} \gamma_{q,i+1,s}^I}_0\nrm{\pt^{r_2}(\gamma_{q,i+1}^I\pt a_{A_{I}^{\upsilon}})}_0\\
			&+\mu_{q,i}^2\sum_{r_1+r_2=r}(\lambda_{q,i+1}^{-2}\nrm{\pt^{r_1}\gamma_{q,i+1}^I}_0+\lambda_{q,i+1}^{-3}\nrm{\pt^{r_1}\nb \gamma_{q,i+1,c}^I}_0)(\nrm{\pt^{r_2}(\pt\gamma_{q,i+1}^I\pt a_{A_{I}^{\upsilon}})}_0+\nrm{\pt^{r_2}(\gamma_{q,i+1}^I\pt^2 a_{A_{I}^{\upsilon}})}_0)\\
			\lesssim&_{\sgm}\mu_{q,i}^2\tau_{q,i}^{-r}(\lambda_{q,i+1}\mu_{q,i})^{-2}\delta_{q,i}^{\frac{1}{2}}\delta_{q+1}\lesssim_{\sgm}\mu_{q,i}^{-r}\varepsilon^3\mu_{q,i}^2\delta_{q+1}^2.
		\end{align*}
		In summary, we obtain
		\begin{align*}
			\Nrm{\int_{\T^2} \sum_{\nrm{\tup-\upsilon}\leqslant1}(\pa_{tt}\tV_{q,i+1,p}^{(\upsilon)}(\tG_{q,i+1,p}^{(\tup)})^{\top})_{12-21}\rd x}_{C_t^r}
			\lesssim_{\sgm}\mu_{q,i}^{-r}\varepsilon^3\mu_{q,i}^2\delta_{q+1}^2.	
		\end{align*}
		Then, we could get
		\begin{align*}
			\nrm{E_{1}^{(\upsilon)}}_{C_t^r}
			\lesssim_{\sgm}\mu_{q,i}^{-r}\varepsilon^3\mu_{q,i}^2\delta_{q+1}^2,\quad
			\nrm{E_{2}^{(\upsilon)}}_{C_t^r}\lesssim_{\sgm}\mu_{q,i}^{-r}\varepsilon^3\mu_{q,i}^2\delta_{q+1}^2.
		\end{align*}
		Up to now, we could give the following estimates for $E^{(\upsilon)}(t)$ and $\tE^{(\upsilon)}(t)$:
		\begin{align}
			\nrm{E^{(\upsilon)}}_{C_t^r}\lesssim_{\sgm}\mu_{q,i}^{-r}\varepsilon^3\mu_{q,i}^2\delta_{q+1}^2,\quad\nrm{\tE^{(\upsilon)}}_{C_t^r}\lesssim_{\sgm}\mu_{q,i}^{-r}\tau_{q,i}^r\varepsilon^3\mu_{q,i}^2\delta_{q+1}^2,\quad 0\leqslant r\leqslant 5.\label{est on E and tE}
		\end{align}
	\end{enumerate}
	
	Next, we could give the estimates for $\tau_{q,i}^2(\tH^{(\upsilon)})^{-1}\tB^{(\upsilon)}$ and  $\tau_{q,i}^2(\tH^{(\upsilon)})^{-1}\tE^{(\upsilon)}$:
	\begin{align*}
		\nrm{\tau_{q,i}^2(\tH^{(\upsilon)})^{-1}\tB^{(\upsilon)}}_{C_t^0}\leqslant&\tau_{q,i}^2\nrm{(\tH^{(\upsilon)})^{-1}}_{C_t^0}\nrm{\tB^{(\upsilon)}}_{C_t^0}\lesssim_{\sgm}\tau_{q,i}^2\varepsilon^{3}\mu_{q,i}^{-2}\delta_{q+1}^{\frac{1}{2}}\lesssim_{\sgm}\varepsilon^3,\\
		\nrm{\tau_{q,i}^2(\tH^{(\upsilon)})^{-1}\tE^{(\upsilon)}}_{C_t^0}\leqslant&\tau_{q,i}^2\nrm{(\tH^{(\upsilon)})^{-1}}_{C_t^0}\nrm{\tE^{(\upsilon)}}_{C_t^0}\lesssim_{\sgm}\tau_{q,i}^2\varepsilon^2\delta_{q+1}\lesssim_{\sgm}\varepsilon^2\mu_{q,i}^2\delta_{q+1},\\
		\nrm{\tau_{q,i}^2(\tH^{(\upsilon)})^{-1}\tB^{(\upsilon)}}_{C_t^r}\lesssim&\tau_{q,i}^2\sum_{r_1+r_2=r}\nrm{(\tH^{(\upsilon)})^{-1}}_{C_t^{r_1}}\nrm{\tB^{(\upsilon)}}_{C_t^{r_2}}\lesssim_{\sgm}\mu_{q,i}^{-r}\tau_{q,i}^r\tau_{q,i}^2\varepsilon^{3}\mu_{q,i}^{-2}\delta_{q+1}^{\frac{1}{2}}\lesssim_{\sgm}\mu_{q,i}^{-r}\tau_{q,i}^r\varepsilon^3,\\
		\nrm{\tau_{q,i}^2(\tH^{(\upsilon)})^{-1}\tE^{(\upsilon)}}_{C_t^r}\lesssim&\tau_{q,i}^2\sum_{r_1+r_2=r}\nrm{(\tH^{(\upsilon)})^{-1}}_{C_t^{r_1}}\nrm{\tE^{(\upsilon)}}_{C_t^{r_2}}\lesssim_{\sgm}\mu_{q,i}^{-r}\tau_{q,i}^r\tau_{q,i}^2\varepsilon^2\delta_{q+1}\lesssim_{\sgm}\mu_{q,i}^{-r}\tau_{q,i}^r\varepsilon^2\mu_{q,i}^2\delta_{q+1}.
	\end{align*}
	
	We could calculate
	\begin{equation*}\left\lbrace
		\begin{aligned}
			&\pa_{tt}\pt^r\tU^{(S_{q,i}^{min},\upsilon)}+\tau_{q,i}^{2}(\tH^{(\upsilon)})^{-1}\tB^{(\upsilon)}\pt^r\tU^{(S_{q,i}^{min},\upsilon)}\\
			&\hspace{50pt}=-\sum_{k=1}^r\binom{r}{k}\left(\pt^k(\tau_{q,i}^{2}(\tH^{(\upsilon)})^{-1}\tB^{(\upsilon)})\pt^{r-k}\tU^{(S_{q,i}^{min},\upsilon)}\right)+\tau_{q,i}^{2}\pt^{r}((\tH^{(\upsilon)})^{-1}\tE^{(\upsilon)}),\quad t\in[S_{q,i}^{min},5/4+S_{q,i}^{min}],\\
			&\pt^r\tU^{(S_{q,i}^{min},\upsilon)}(S_{q,i}^{min})=(0,0)^{\top},\quad\pa_t^{r+1}\tU^{(S_{q,i}^{min},\upsilon)}(S_{q,i}^{min})=(0,0)^{\top},
		\end{aligned}\right.
	\end{equation*}
	and for $S_{q,i}^{min}+1\leqslant S\leqslant S_{q,i}^{max},$
	\begin{align}
		&\pa_{tt}\pt^r\tU^{(S,\upsilon)}+\tau_{q,i}^{2}(\tH^{(\upsilon)})^{-1}\tB^{(\upsilon)}\pt^r\tU^{(S,\upsilon)}\nonumber\\
		=&\left\lbrace
		\begin{aligned}
			&-\sum_{k=1}^r\binom{r}{k}\left(\pt^k(\tau_{q,i}^{2}(\tH^{(\upsilon)})^{-1}\tB^{(\upsilon)})\pt^{r-k}\tU^{(S,\upsilon)}\right)\\
			&\quad+\pt^r\left(\tau_{q,i}^{2}(\tH^{(\upsilon)})^{-1}\tE^{(\upsilon)}(1-\Theta_{S-1})-2\pt\Theta_{S-1}\pt\tU^{(S-1,\upsilon)}-\pa_{tt}\Theta_{S-1}\tU^{(S-1,\upsilon)}\right),&& t\in[S,S+1/4],\\
			&-\sum_{k=1}^r\binom{r}{k}\left(\pt^k(\tau_{q,i}^{2}(\tH^{(\upsilon)})^{-1}\tB^{(\upsilon)})\pt^{r-k}\tU^{(S,\upsilon)}\right)+\pt^r\left(\tau_{q,i}^{2}(\tH^{(\upsilon)})^{-1}\tE^{(\upsilon)}\right),&& t\in(S+1/4,S+5/4].
		\end{aligned}\right.\nonumber
	\end{align}
	
	Next, by using \eqref{ode ypp} in Appendix \ref{Estimate for nonautonomous linear differential systems}, we could obtain by induction
	\begin{align}
		\nrm{\pa_{tt}\tU^{(S_{q,i}^{min},\upsilon)}}_{C_t^0}\lesssim&_{\sgm}\varepsilon^2\mu_{q,i}^2\delta_{q+1}\leqslant\varepsilon\mu_{q,i}^2\delta_{q+1},\\ \nrm{\pa_{tt}\tU^{(S_{q,i}^{min},\upsilon)}}_{C_t^r}\lesssim&\sum_{k=1}^r\nrm{\tau_{q,i}^{2}(\tH^{(\upsilon)})^{-1}\tB^{(\upsilon)}}_{C_t^k}\nrm{\tU^{(S_{q,i}^{min},\upsilon)}}_{C_t^{r-k}}+\nrm{\tau_{q,i}^{2}(\tH^{(\upsilon)})^{-1}\tE^{(\upsilon)}}_{C_t^r}\nonumber\\
		\lesssim&_{\sgm}\mu_{q,i}^{-r}\tau_{q,i}^r\varepsilon^2\mu_{q,i}^2\delta_{q+1},&&1\leqslant r\leqslant 5,\\
		\nrm{\pa_{tt}\tU^{(S,\upsilon)}}_{C_t^0}\leqslant&\nrm{\tau_{q,i}^{2}(\tH^{(\upsilon)})^{-1}\tE^{(\upsilon)}}_{C_t^0}+\nrm{\pt\Theta_{S-1}\pt\tU^{(S-1,\upsilon)}}_{C_t^0}+\nrm{\pa_{tt}\Theta_{S-1}\tU^{(S-1,\upsilon)}}_{C_t^0}\nonumber\\
		\lesssim&_{\sgm}\varepsilon^2\mu_{q,i}^2\delta_{q+1}\leqslant\varepsilon\mu_{q,i}^2\delta_{q+1},\\
		\nrm{\pa_{tt}\tU^{(S,\upsilon)}}_{C_t^r}\lesssim&\sum_{k=1}^r\nrm{\tau_{q,i}^{2}(\tH^{(\upsilon)})^{-1}\tB^{(\upsilon)}}_{C_{t}^{k}}\nrm{\tU^{(S,\upsilon)}}_{C_{t}^{r-k}}+\nrm{\tau_{q,i}^{2}(\tH^{(\upsilon)})^{-1}\tE^{(\upsilon)}}_{C_t^r}\nonumber\\	&+\nrm{\tau_{q,i}^{2}(\tH^{(\upsilon)})^{-1}\tE^{(\upsilon)}(1-\Theta_{S-1})}_{C_t^r}+\nrm{\pt\Theta_{S-1}\pt\tU^{(S-1,\upsilon)}}_{C_t^r}+\nrm{\pa_{tt}\Theta_{S-1}\tU^{(S-1,\upsilon)}}_{C_t^r}\nonumber\\
		\lesssim&_{\sgm}\mu_{q,i}^{-r}\tau_{q,i}^r\varepsilon^2\mu_{q,i}^2\delta_{q+1},&&1\leqslant r\leqslant 5,
	\end{align}
	for sufficiently small $\varepsilon$. 
	And then, uniformly in $\upsilon$, we have for $k=L,M$ and $1\leqslant r\leqslant5$,
	\begin{align*}
		\nrm{g_k^{(\upsilon)}}_{C_t^0}\leqslant&(\nrm{g_{k,2}^{(\upsilon)}}_{C_t^0}+\varepsilon\mu_{q,i}^2\delta_{q+1})\nrm{\theta_{q,i}^*}_{C_t^0}\lesssim\varepsilon\mu_{q,i}^2\delta_{q+1},\\
		\nrm{g_k^{(\upsilon)}}_{C_t^r}\lesssim&\sum_{r_1+r_2=r}\nrm{g_{k,2}^{(\upsilon)}}_{C_t^{r_1}}\nrm{\theta_{q,i}^*}_{C_t^{r_2}}+\varepsilon\mu_{q,i}^2\delta_{q+1}\nrm{\theta_{q,i}^*}_{C_t^r}\\
		\lesssim&\sum_{r_1+r_2=r}\tau_{q,i}^{-r_1}\nrm{\sum_S\Theta_S\tU^{(S,\upsilon)}}_{C_t^{r_1}}\nrm{\theta_{q,i}^*}_{C_t^{r_2}}+\varepsilon\mu_{q,i}^2\delta_{q+1}\nrm{\theta_{q,i}^*}_{C_t^r}\\
		\lesssim_{\sgm}&\mu_{q,i}^{-r}\varepsilon\mu_{q,i}^2\delta_{q+1}.
	\end{align*}
	Using the finite overlap of the supports and the translation invariance of the localized bump functions, we are now ready to estimate $\tV_{q,i+1,ac}$, $\tu_{q,i+1,ac}$, $\tG_{q,i+1,ac}$, and $\tS_{q,i+1,ac}^{(1)}$:
	\begin{align*}
		\nrm{\pt^{r}\tV_{q,i+1,k}}_N\lesssim&\sup_{\upsilon}\nrm{\pt^{r}g_k^{(\upsilon)}}_0\sup_{\upsilon}\nrm{\nb\Psi_k^{(\upsilon)}}_N\lesssim_{\sgm,N,r}\mu_{q,i}^{-N-r}\varepsilon\mu_{q,i}^2\delta_{q+1},&&k=L,M,\\
		\nrm{\pt^{r}\tu_{q,i+1,k}}_N\lesssim&\mu_{q,i}^{-1}\sup_{\upsilon}\nrm{\pt^{r}g_k^{(\upsilon)}}_0\sup_{\upsilon}\nrm{\psi_k^{(\upsilon)}}_N\lesssim_{\sgm,N,r}\mu_{q,i}^{-N-r}\varepsilon\mu_{q,i}\delta_{q+1},&&k=L,M,\\
		\nrm{\pt^{r}\tG_{q,i+1,k}}_N\lesssim&\mu_{q,i}^{-2}\sup_{\upsilon}\nrm{\pt^{r}g_k^{(\upsilon)}}_0\sup_{\upsilon}\nrm{\nb\psi_k^{(\upsilon)}}_N\lesssim_{\sgm,N,r}\mu_{q,i}^{-N-r}\varepsilon\delta_{q+1},&&k=L,M,\\
		\nrm{\pt^{r}\tV_{q,i+1,ac}}_N\leqslant&\nrm{\pt^{r}\tV_{q,i+1,L}}_N+\nrm{\pt^{r}\tV_{q,i+1,M}}_N\lesssim_{\sgm,N,r}\mu_{q,i}^{-N-r}\varepsilon\mu_{q,i}^2\delta_{q+1},\\
		\nrm{\pt^{r}\tu_{q,i+1,ac}}_N\leqslant&\nrm{\pt^{r}\tu_{q,i+1,L}}_N+\nrm{\pt^{r}\tu_{q,i+1,M}}_N\lesssim_{\sgm,N,r}\mu_{q,i}^{-N-r}\varepsilon\mu_{q,i}\delta_{q+1},\\
		\nrm{\pt^{r}\tG_{q,i+1,ac}}_N\leqslant&\nrm{\pt^{r}\tG_{q,i+1,L}}_N+\nrm{\pt^{r}\tG_{q,i+1,M}}_N\lesssim_{\sgm,N,r}\mu_{q,i}^{-N-r}\varepsilon\delta_{q+1},\\
				\nrm{\pt^r\tS_{q,i+1,L}^{(1)}}_N\lesssim&\mu_{q,i}^{-2}\sum_{k=0}^6\sum_{N_1+\cdots+N_{k+1}=N}\sum_{r_1+\cdots+r_{k+1}=r}\prod_{l=1}^k\nrm{\pt^{r_l}\Gl}_{N_l}\sup_{\upsilon}\nrm{\pt^{r_{k+1}}g_L^{(\upsilon)}}_{0}\sup_{\upsilon}\nrm{\nb\psi_{L}^{(\upsilon)}}_{N_{k+1}}\lesssim_{\sgm,N,r}\mu_{q,i}^{-N-r}\varepsilon\delta_{q+1},\\
		\nrm{\pt^r\tS_{q,i+1,M}^{(1)}}_N\lesssim&\mu_{q,i}^{-2}\sum_{k=0}^6\sum_{N_1+\cdots+N_{k+1}=N}\sum_{r_1+\cdots+r_{k+1}=r}\prod_{l=1}^k\nrm{\pt^{r_l}\Gl}_{N_l}\sup_{\upsilon}\nrm{\pt^{r_{k+1}}g_M^{(\upsilon)}}_{0}\sup_{\upsilon}\nrm{\nb\psi_{M}^{(\upsilon)}}_{N_{k+1}}\lesssim_{\sgm,N,r}\mu_{q,i}^{-N-r}\varepsilon\delta_{q+1},\\
		\nrm{\pt^r\tS_{q,i+1,ac}^{(1)}}_N\leqslant&\nrm{\pt^r\tS_{q,i+1,L}^{(1)}}_N+\nrm{\pt^r\tS_{q,i+1,M}^{(1)}}_N\lesssim_{\sgm,N,r}\mu_{q,i}^{-N-r}\varepsilon\delta_{q+1}.
	\end{align*}
	Finally, we choose $\varepsilon_3^*>0$ to be no larger than $\varepsilon_{31}^*$ and all the additional smallness thresholds used in the proof. Then all the preceding estimates hold for every $0<\varepsilon<\varepsilon_3^*$.
	At this point, \eqref{est on tVuG L}--\eqref{est on tS_L} have been proved. Using \eqref{est on tu_p}, \eqref{est on tG_p}, and \eqref{est on tVuG L}--\eqref{est on tS_L}, we can derive \eqref{est on nb uq i+1}--\eqref{est on nb Gq i+1}. Finally, combining \eqref{pp of te_q,i}, \eqref{pp of supp l,i}, \eqref{pp of theta^*}, and \eqref{def of tV_q k}--\eqref{def of g_LM2}, we obtain \eqref{pp of supp tu_q,i+1 ac}.
\end{proof}

\section{Estimates on the new Reynolds error and energy}\label{Estimates on the new Reynolds error and energy}
\subsection{Estimates for the inverse divergence operator}
\label{Estimates for the inverse divergence operator}

By \eqref{est on u_qi low}, we have
$$
\nrm{u_{q,i+1}}_{0}\leqslant2\varepsilon,
\qquad
\nrm{\nabla u_{q,i+1}}_{0},\ \nrm{\pt u_{q,i+1}}_{0}\leqslant2\varepsilon.
$$
Moreover, by \eqref{est on u_li N}, \eqref{est on tu_p}, \eqref{est on tVuG ac}, and \eqref{pp of Lambda1}, after decreasing $\varepsilon$ if necessary, we obtain, for $2\leqslant N+r\leqslant4$,
\begin{align*}
	\nrm{\pt^ru_{q,i+1}}_N
	&\leqslant\nrm{\pt^r\ul}_N+\nrm{\pt^r\tu_{q,i+1,p}}_N+\nrm{\pt^r\tu_{q,i+1,ac}}_N\\
	&\lesssim_{N,r}\ell_{q,i}^{2-N-r}\oM\lambda_{q,i}\delta_{q,i}^{\frac12}
	+\lambda_{q,i+1}^{N+r-1}\delta_{q+1}^{\frac12}
	+\mu_{q,i}^{1-N-r}\varepsilon\delta_{q+1}
	\leqslant2\varepsilon\lambda_{q,i+1}^{N+r-1}.
\end{align*}
Therefore, $y_{q,i+1}=x+u_{q,i+1}$ satisfies the assumptions of Proposition \ref{lm: asym div eq} with $d=L=2$, $\tlu=\lambda_{q,i+1}$, and deformation size $2\varepsilon$. Throughout this subsection, we also assume that $2\varepsilon\leqslant\varepsilon_0(2)$.

Based on Proposition \ref{lm: asym div eq}, we construct compactly
supported solutions to the asymmetric divergence equation when the
vector field $U$ takes the form
$U=\sum_m a_m(t,x)e^{i\lambda_{q,i+1}\xi_m}+U^{low}$, namely,
\begin{align}\label{asym div eq for ae}
	\Div(F_{q,i+1}R) =\Div((\Id + G_{q,i+1})R) = U = \underbrace{\sum_{m}a_m(t,x)e^{i\lambda_{q,i+1}\xi_m}}_{U^{high}} + U^{low}.
\end{align}
Here and below, the sum over $m$ is finite. We assume that, for all
$t\in\mcI^{q,i}$,
\begin{align*}
	\supp_xa_m(t,\cdot)\bigcup\supp_xU^{low}(t,\cdot)
	\subseteq
	\supp_x\chi_\upsilon(\mu_{q,i}^{-1}\cdot)
	\subseteq
	B(2\pi\upsilon\mu_{q,i},2\pi\mu_{q,i}).
\end{align*}
We introduce the following proposition.

\begin{pp}[Compactly supported solutions to the asymmetric divergence equation with special $U$] \label{lm: asym div eq of ae}
	Consider a smooth vector field
	$
	U=\sum_m a_m(t,x)e^{i\lambda_{q,i+1}\xi_m}+U^{low},
	$
	where the sum over $m$ is finite and
	$
	\xi_m:\mcI^{q,i}\times\R^2\longrightarrow\R
	$
	is a smooth scalar function satisfying, uniformly in $m$,
	$
	f_{i+1}\cdot\nb\xi_m
	\text{ is constant in }(t,x),
	\
	f_{i+1}^{\perp}\cdot\nb\xi_m=0,
	\
	1\leqslant|f_{i+1}\cdot\nb\xi_m|\leqslant C.
	$
	and
	$
	\nrm{\pt^re^{i\lambda_{q,i+1}\xi_m}}_N
	\lesssim\lambda_{q,i+1}^{N+r}
	$
	for $N+r=0,\ldots,4$.
	Assume that there exists $\upsilon\in\Z^2$ such that
	\begin{align}
		\supp_{x} a_m(t,\cdot)\bigcup \supp_{x} U^{low}(t,\cdot)\subseteq \supp_x\chi_\upsilon(\mu_{q,i}^{-1}\cdot) \subseteq B(2\pi\upsilon\mu_{q,i},2\pi\mu_{q,i}),\quad\forall t \in \mcI^{q,i},\label{pp of supp a in L co}
	\end{align}
	and
	$
	\supp_ta_m\bigcup\supp_tU^{low}\subseteq\supp_tU
	$
	for every $m$. Suppose also that $U$ has vanishing linear and
	angular moments in Lagrangian coordinates, namely, for $j,l=1,2$
	and all $t\in\mcI^{q,i}$,
	\begin{align}
		&\int_{\R^2} U_{l}(t,x) \, \rd x =0,\quad \int_{\R^2} ((y_{q,i+1})_{j} U_{l} - (y_{q,i+1})_{l} U_{j})(t,x) \, \rd x =0. \nonumber
	\end{align}
	
	Let $\nrm{\cdot}_{N}=\nrm{\cdot}_{C^0(\mcI^{q,i};C^N(\R^2))}$ and assume that $a_m$ and $U^{low}$ satisfy
	\begin{equation}\label{est on ain L co}
		\begin{aligned}
			\nrm{\pt^r a_m}_{N}&\lesssim\nrm{a_m}_{0}\lambda_{q,i+1}^r\mu_{q,i}^{-N},&&N+r=0,\ldots,4,\\
			\nrm{\pt^r U^{low}}_{N}&\lesssim\nrm{U^{low}}_{0}\lambda_{q,i+1}^{N+r},&&N+r=0,\ldots,4.
		\end{aligned}
	\end{equation}
	
	Then there exists a solution $\uR[U]$ to the asymmetric divergence equation \eqref{asym div eq for ae}, depending linearly on $(a_m)_m$ and $U^{low}$ for the fixed phases $(\xi_m)_m$, with the following properties:
	\begin{enumerate}
		\item There exists an absolute constant $C>0$ such that the support of $\uR[U]$ is contained in the ball $B(2\pi\upsilon\mu_{q,i}, 2(1+C\varepsilon)\pi\mu_{q,i})$ for all $t\in\mcI^{q,i}$, i.e.,
		\begin{equation} \label{pp of supp rR in L co}
			\supp_{x} \uR[U](t,\cdot) \subseteq B(2\pi\upsilon\mu_{q,i}, 2(1+C\varepsilon)\pi\mu_{q,i}),\quad \forall t\in \mcI^{q,i}.
		\end{equation}
		
		\item The time support of $\uR[U]$ is contained in the time support of $U$, i.e.,
		\begin{equation} \label{pp of time supp uRlj}
			\supp_{t} \uR[U]\subseteq \supp_{t} U.
		\end{equation}
		
		\item For $N + r = 0, 1, 2$,
		\begin{equation} \label{est of nabla rR in L co}
			\nrm{\pt^r\uR[U]}_{N} \lesssim \lambda_{q,i+1}^{N+r}\left(\lambda_{q,i+1}^{-1}\sum_m\nrm{ a_m}_{0}+\mu_{q,i}\nrm{U^{low}}_{0}\right).
		\end{equation}
	\end{enumerate}
\end{pp}

\begin{proof}
	By the standing assumption, $\Id+G_{q,i+1}$ is invertible and
	$\nrm{(\Id+G_{q,i+1})^{-1}}_0\lesssim1$. For a vector
	$P\in\R^2$, we use the decomposition
	\begin{align*}
		P&=P_{f_{i+1},||}f_{i+1}+P_{f_{i+1},\perp}f_{i+1}^{\perp}=\frac{P\cdot f_{i+1}}{|f_{i+1}|^2}f_{i+1}+\frac{P\cdot f_{i+1}^{\perp}}{|f_{i+1}^{\perp}|^2}f_{i+1}^{\perp}.
	\end{align*}
	
	Let $g_m=(\Id+G_{q,i+1})^{-1}a_m$. We have
	\begin{align*}
		a_me^{i\lambda_{q,i+1}\xi_m}
		=&(\Id+G_{q,i+1})g_me^{i\lambda_{q,i+1}\xi_m}\\
		=&\Div\left(\frac{(\Id+G_{q,i+1})g_m\otimes f_{i+1}}{i\lambda_{q,i+1}(f_{i+1}\cdot\nb\xi_m)}e^{i\lambda_{q,i+1}\xi_m}\right)-(f_{i+1}\cdot\nabla)\left(\frac{(\Id+G_{q,i+1})g_m}{i\lambda_{q,i+1}(f_{i+1}\cdot\nb\xi_m)}\right)e^{i\lambda_{q,i+1}\xi_m}\\
		=&\Div\left((\Id+G_{q,i+1})\frac{(g_m)_{f_{i+1},||}f_{i+1}\otimes f_{i+1}+(g_m)_{f_{i+1},\perp}(f_{i+1}^{\perp}\otimes f_{i+1}+f_{i+1}\otimes f_{i+1}^{\perp})}{i\lambda_{q,i+1}(f_{i+1}\cdot\nb\xi_m)}e^{i\lambda_{q,i+1}\xi_m}\right)\\
		&- \left((f_{i+1}\cdot\nabla)\left(\frac{a_m}{i\lambda_{q,i+1}(f_{i+1}\cdot\nb\xi_m)}\right)+(f_{i+1}^{\perp}\cdot\nabla)\left(\frac{ (g_m)_{f_{i+1},\perp}(\Id+G_{q,i+1})f_{i+1}}{i\lambda_{q,i+1}(f_{i+1}\cdot\nb\xi_m)}\right)\right) e^{i\lambda_{q,i+1}\xi_m}.
	\end{align*}
	Define the symmetric matrix
	\begin{align*}
		Q_{a_m}:=&\frac{(g_m)_{f_{i+1},||}f_{i+1}\otimes f_{i+1}+(g_m)_{f_{i+1},\perp}(f_{i+1}^{\perp}\otimes f_{i+1}+f_{i+1}\otimes f_{i+1}^{\perp})}{i\lambda_{q,i+1}(f_{i+1}\cdot\nabla\xi_m)}e^{i\lambda_{q,i+1}\xi_m}.
	\end{align*}
	Then
	\begin{align*}
		\sum_{m}a_me^{i\lambda_{q,i+1}\xi_m}
		&=\sum_{m}\Div\left((\Id+G_{q,i+1})Q_{a_m}\right)+\sum_{m}a_{m}^{(1)} e^{i\lambda_{q,i+1}\xi_m},
	\end{align*}
	where
	\begin{align*}
		a_{m}^{(1)}=&-\left((f_{i+1}\cdot\nabla)\left(\frac{a_m}{i\lambda_{q,i+1}(f_{i+1}\cdot\nb\xi_m)}\right)+(f_{i+1}^{\perp}\cdot\nabla)\left(\frac{ (g_m)_{f_{i+1},\perp}(\Id+G_{q,i+1})f_{i+1}}{i\lambda_{q,i+1}(f_{i+1}\cdot\nb\xi_m)}\right)\right)\\
		=&-\frac{(f_{i+1}\cdot\nabla)a_m+(f_{i+1}^{\perp}\cdot\nabla)((g_m)_{f_{i+1},\perp}(\Id+G_{q,i+1}))f_{i+1}}{i\lambda_{q,i+1}(f_{i+1}\cdot\nb\xi_m)}.
	\end{align*}
	Set $Q=\sum_mQ_{a_m}$. Since $Q$ is symmetric and compactly supported and
	$F_{q,i+1}=\nabla y_{q,i+1}$, integration by parts shows that
	$\Div(F_{q,i+1}Q)$ has vanishing linear and angular moments in
	Lagrangian coordinates. 
	Therefore,
	$
	\sum_ma_m^{(1)}e^{i\lambda_{q,i+1}\xi_m}+U^{low}
	$
	has vanishing linear and angular moments in Lagrangian coordinates.
	
	Notice that
	\begin{align*}
		(f_{i+1}^{\perp}\cdot\nabla)(g_m)_{f_{i+1},\perp}
		=&\frac{\left(-(\Id+G_{q,i+1})^{-1}(f_{i+1}^{\perp}\cdot\nabla)G_{q,i+1}(\Id+G_{q,i+1})^{-1}a_m+(\Id+G_{q,i+1})^{-1}(f_{i+1}^{\perp}\cdot\nabla)a_m\right)\cdot f_{i+1}^{\perp}}{|f_{i+1}^{\perp}|^2}.
	\end{align*}
	Moreover, for $0\leqslant N+r\leqslant2$, we have
	\begin{align*}
		\nrm{\pt^rG_{q,i+1}}_{N+1}&\lesssim\nrm{\pt^rG_{\ell,i}}_{N+1}+\nrm{\pt^r\tG_{q,i+1}}_{N+1}
		\lesssim\ell_{q,i}^{-N-r}\oM\lambda_{q,i}\dlt_{q,i}^{\frac{1}{2}}+\lambda_{q,i+1}^{N+r+1}\dlt_{q+1}^{\frac{1}{2}}\lesssim\lambda_{q,i+1}^{N+r+1}\dlt_{q+1}^{\frac{1}{2}},\\
		\nrm{\pt^r(f_{i+1}^{\perp}\cdot\nabla)G_{q,i+1}}_N&\lesssim\nrm{\pt^r(f_{i+1}^{\perp}\cdot\nabla)G_{\ell,i}}_N+\nrm{\pt^r(f_{i+1}^{\perp}\cdot\nabla)\tG_{q,i+1}}_N\\
		&\lesssim\ell_{q,i}^{-N-r}\oM\lambda_{q,i}\dlt_{q,i}^{\frac{1}{2}}+\lambda_{q,i+1}^{N+r}\mu_{q,i}^{-1}\dlt_{q+1}^{\frac{1}{2}}\lesssim\mu_{q,i}^{-1}\lambda_{q,i+1}^{N+r}\dlt_{q+1}^{\frac{1}{2}}.
	\end{align*}
	we could use similar way as \eqref{der of F^-1} to obtain for $r\geqslant 0$,
	\begin{align*}
		\nrm{\nb (\Id+G_{q,i+1})^{-1}}_{N}\lesssim&\sum_{N_1+N_2+N_3=N}\nrm{(\Id+G_{q,i+1})^{-1}}_{N_1}\nrm{ (\Id+G_{q,i+1})^{-1}}_{N_2}\nrm{\nabla G_{q,i+1}}_{N_3}\lesssim\lambda_{q,i+1}^{N+1}\delta_{q+1}^{\frac{1}{2}},\\
		\nrm{\pt^{r+1} (\Id+G_{q,i+1})^{-1}}_{N}\lesssim&\sum_{N_1+N_2+N_3=N}\sum_{r_1+r_2+r_3=r}\nrm{\pt^{r_1}(\Id+G_{q,i+1})^{-1}}_{N_1}\nrm{\pt^{r_2}(\Id+G_{q,i+1})^{-1}}_{N_2}\nrm{\pt^{1+r_3} G_{q,i+1}}_{N_3}\\
		\lesssim&\lambda_{q,i+1}^{N+r+1}\delta_{q+1}^{\frac{1}{2}},\\
		\nrm{\pt^{r}(f_{i+1}^{\perp}\cdot\nb) (\Id+G_{q,i+1})^{-1}}_{N}\lesssim&\sum_{N_1+N_2+N_3=N}\sum_{r_1+r_2+r_3=r}\nrm{\pt^{r_1}(\Id+G_{q,i+1})^{-1}}_{N_1}\nrm{\pt^{r_2}(\Id+G_{q,i+1})^{-1}}_{N_2}\nrm{\pt^{r_3}((f_{i+1}^{\perp}\cdot\nb) G_{q,i+1})}_{N_3}\\
		\lesssim&\mu_{q,i}^{-1}\lambda_{q,i+1}^{N+r}\delta_{q+1}^{\frac{1}{2}},
	\end{align*}
	and, for $0\leqslant N+r\leqslant2$,
	\begin{align*}
		\nrm{\pt^r(g_m)_{f_{i+1},||}}_{N}
		+\nrm{\pt^r(g_m)_{f_{i+1},\perp}}_{N}
		\lesssim&
		\sum_{N_1+N_2=N}\sum_{r_1+r_2=r}
		\nrm{\pt^{r_1}(\Id+G_{q,i+1})^{-1}}_{N_1}
		\nrm{\pt^{r_2}a_m}_{N_2}\\
		\lesssim&
		\begin{cases}
			\lambda_{q,i+1}^{r}\nrm{a_m}_{0},
			&N=0,\\
			\lambda_{q,i+1}^{N+r}\delta_{q+1}^{\frac{1}{2}}\nrm{a_m}_{0},
			&N\geqslant1,
		\end{cases}\\
		\nrm{\pt^r(f_{i+1}^{\perp}\cdot\nabla)(g_m)_{f_{i+1},\perp}}_{N}
		\lesssim&
		\sum_{N_1+N_2=N}\sum_{r_1+r_2=r}
		\nrm{\pt^{r_1}(f_{i+1}^{\perp}\cdot\nb)(\Id+G_{q,i+1})^{-1}}_{N_1}
		\nrm{\pt^{r_2}a_m}_{N_2}\\
		&+\sum_{N_1+N_2=N}\sum_{r_1+r_2=r}
		\nrm{\pt^{r_1}(\Id+G_{q,i+1})^{-1}}_{N_1}
		\nrm{\pt^{r_2}(f_{i+1}^{\perp}\cdot\nb)a_m}_{N_2}\\
		\lesssim&
		\begin{cases}
			\mu_{q,i}^{-1}\lambda_{q,i+1}^{r}\nrm{a_m}_{0},
			&N=0,\\
			\mu_{q,i}^{-1}\lambda_{q,i+1}^{N+r}
			\delta_{q+1}^{\frac{1}{2}}\nrm{a_m}_{0},
			&N\geqslant1.
		\end{cases}
	\end{align*}
	In particular, since $\delta_{q+1}^{\frac{1}{2}}\leqslant1$, we have
	\begin{align*}
		\nrm{\pt^r(g_m)_{f_{i+1},||}}_{N}
		+\nrm{\pt^r(g_m)_{f_{i+1},\perp}}_{N}
		&\lesssim
		\lambda_{q,i+1}^{N+r}\nrm{a_m}_{0},\\
		\nrm{\pt^r(f_{i+1}^{\perp}\cdot\nabla)(g_m)_{f_{i+1},\perp}}_{N}
		&\lesssim
		\mu_{q,i}^{-1}\lambda_{q,i+1}^{N+r}\nrm{a_m}_{0},
		\qquad
		0\leqslant N+r\leqslant2.
	\end{align*}
	
	Since $f_{i+1}\cdot\nabla\xi_m$ is constant and bounded away from zero, the definition of $Q_{a_m}$ and the estimates for the phases imply that, for $0\leqslant N+r\leqslant2$,
	\begin{align*}
		\nrm{\pt^rQ_{a_m}}_{N}
		\lesssim&
		\lambda_{q,i+1}^{-1}
		\sum_{N_1+N_2=N}\sum_{r_1+r_2=r}
		\left(
		\nrm{\pt^{r_1}(g_m)_{f_{i+1},||}}_{N_1}
		+
		\nrm{\pt^{r_1}(g_m)_{f_{i+1},\perp}}_{N_1}
		\right)
		\nrm{\pt^{r_2}e^{i\lambda_{q,i+1}\xi_m}}_{N_2}
		\lesssim
		\lambda_{q,i+1}^{N+r-1}\nrm{a_m}_{0}.
	\end{align*}
	Moreover, by the definition of $a_m^{(1)}$,
	\begin{align*}
		\nrm{\pt^ra_m^{(1)}}_{N}
		\lesssim&
		\lambda_{q,i+1}^{-1}
		\nrm{\pt^r(f_{i+1}\cdot\nb)a_m}_{N}\\
		&+\lambda_{q,i+1}^{-1}
		\sum_{N_1+N_2=N}\sum_{r_1+r_2=r}
		\nrm{\pt^{r_1}(f_{i+1}^{\perp}\cdot\nb)(g_m)_{f_{i+1},\perp}}_{N_1}
		\nrm{\pt^{r_2}(\Id+G_{q,i+1})}_{N_2}\\
		&+\lambda_{q,i+1}^{-1}
		\sum_{N_1+N_2=N}\sum_{r_1+r_2=r}
		\nrm{\pt^{r_1}(g_m)_{f_{i+1},\perp}}_{N_1}
		\nrm{\pt^{r_2}(f_{i+1}^{\perp}\cdot\nb)G_{q,i+1}}_{N_2}\\
		\lesssim&
		\mu_{q,i}^{-1}\lambda_{q,i+1}^{N+r-1}\nrm{a_m}_{0},
		\qquad
		0\leqslant N+r\leqslant2.
	\end{align*}
	Consequently,
	\begin{align*}
		\nrm{\pt^r\left(
			\sum_ma_m^{(1)}e^{i\lambda_{q,i+1}\xi_m}
			+
			U^{low}
			\right)}_N
		\lesssim
		\lambda_{q,i+1}^{N+r}
		\left(
		(\lambda_{q,i+1}\mu_{q,i})^{-1}
		\sum_m\nrm{a_m}_{0}
		+
		\nrm{U^{low}}_{0}
		\right),
		\qquad
		0\leqslant N+r\leqslant2.
	\end{align*}
	
	We now define
	\begin{align}
		\uR[U]:=\sum_{m}Q_{a_m}
		+\tilde{R}^{(u_{q,i+1})}
		\left[
		\sum_{m}a_{m}^{(1)}e^{i\lambda_{q,i+1}\xi_m}
		+
		U^{low}
		\right].
	\end{align}
	We apply Proposition \ref{lm: asym div eq} to the second term with
	$\tlu=\tlU=\lambda_{q,i+1}$ and
	$\tmu=2\pi\mu_{q,i}$. By \eqref{pp of Lambda1},
	$\mu_{q,i}^{-1}\leqslant\lambda_{q,i+1}$, and hence all the
	hypotheses of Proposition \ref{lm: asym div eq} are satisfied.
	It follows that \eqref{pp of supp rR in L co} holds. Moreover,
	since the temporal supports of $Q_{a_m}$ and $a_m^{(1)}$ are
	contained in that of $a_m$, the assumed time-support inclusions and Proposition
	\ref{lm: asym div eq} imply \eqref{pp of time supp uRlj}. Finally,
	\begin{align*}
		\nrm{\pt^r\uR[U]}_{N}
		&\lesssim
		\sum_{m}\nrm{\pt^{r}Q_{a_m}}_{N}
		+\nrm{\pt^{r}\tilde{R}^{(u_{q,i+1})}
			\left[
			\sum_{m}a_m^{(1)}e^{i\lambda_{q,i+1}\xi_m}
			+
			U^{low}
			\right]}_{N}\\
		&\lesssim
		\lambda_{q,i+1}^{N+r-1}\sum_m\nrm{a_m}_{0}
		+\mu_{q,i}\lambda_{q,i+1}^{N+r}
		\left(
		(\lambda_{q,i+1}\mu_{q,i})^{-1}\sum_m\nrm{a_m}_{0}
		+
		\nrm{U^{low}}_{0}
		\right)\\
		&\lesssim
		\lambda_{q,i+1}^{N+r}
		\left(
		\lambda_{q,i+1}^{-1}\sum_m\nrm{a_m}_{0}
		+
		\mu_{q,i}\nrm{U^{low}}_{0}
		\right),
		\qquad
		N+r=0,1,2.\qedhere
	\end{align*}
\end{proof}

\begin{Rmk}
	The high-frequency components of products involving $\Pi_I$ and
	$\tPi_{I'}$, such as $\mPG\Pi_I\Pi_{I'}$, can be written as finite
	sums of oscillatory terms of the form in Proposition
	\ref{lm: asym div eq of ae}, whose phases satisfy the required
	assumptions. We denote the corresponding amplitudes by $a_m[U]$;
	their spatial and temporal supports are contained in those of the
corresponding localized vector field $U$,
	and all subsequent estimates for $a_m[U]$ are understood with
	respect to this representation.
\end{Rmk}

\begin{Rmk}
	Although Proposition \ref{lm: asym div eq of ae} is stated for compactly supported vector fields on $\R^2$, we apply it to spatially localized components of periodic vector fields on $\T^2$, with all periodic coefficients understood through their $2\pi$-periodic lifts to $\R^2$. Thus, each $U_{\upsilon}$ is regarded as a compactly supported smooth vector field on $\R^2$, independently of the choice of a fundamental domain.
	
	Since $\mu_{q,i}^{-1}\in2\Z$, we have
	$$
	U_{\upsilon+\mu_{q,i}^{-1}k}(t,x+2\pi k)
	=
	U_{\upsilon}(t,x),
	\qquad
	\upsilon,k\in\Z^2.
	$$
	Choosing one representative from each class of $\Z^2/\mu_{q,i}^{-1}\Z^2$ and defining the remaining local solutions by translation, we may also arrange that
	$$
	\uR[U_{\upsilon+\mu_{q,i}^{-1}k}](t,x+2\pi k)
	=
	\uR[U_{\upsilon}](t,x).
	$$
	Consequently, $\sum_{\upsilon\in\Z^2}U_{\upsilon}$ and $\sum_{\upsilon\in\Z^2}\uR[U_{\upsilon}]$ are $2\pi$-periodic in $x$ and define smooth fields on $\T^2$.
	
	Moreover, Proposition \ref{lm: asym div eq of ae} gives
	$$
	\supp_x\uR[U_{\upsilon}](t,\cdot)
	\subseteq
	B\left(2\pi\upsilon\mu_{q,i},2(1+C\varepsilon)\pi\mu_{q,i}\right).
	$$
	These balls have uniformly finite overlap, and hence
	$$
	\nrm{\pt^r\sum_{\upsilon\in\Z^2}\uR[U_{\upsilon}]}_{N}
	\lesssim
	\sup_{\upsilon\in\Z^2}\nrm{\pt^r\uR[U_{\upsilon}]}_{N}.
	$$
	Since the sums are locally finite, differentiation commutes with summation, and therefore
	$$
	\Div\left(F_{q,i+1}\sum_{\upsilon\in\Z^2}\uR[U_{\upsilon}]\right)
	=
	\sum_{\upsilon\in\Z^2}U_{\upsilon}
	\qquad\text{on }\T^2.
	$$
	Finally, since $\mu_{q,i}^{-1}\in2\Z$, the translation $\upsilon\mapsto\upsilon+\mu_{q,i}^{-1}k$ preserves the parity of each component of $\upsilon$ and hence the value of $[I]$.
\end{Rmk}
\subsection{Definition of the new Reynolds error}\label{Definition of the new Reynolds error}
In the previous section, we have constructed the perturbation.  Here, we  will add the perturbation $\tu_{q,i+1}$ to $u_{q,i}$, defining $u_{q,i+1}=\ul+\tu_{q,i+1}$, $G_{q,i+1}=\Gl+\tG_{q,i+1}$.  Based on the previous calculation \eqref{divide of Reynold error 1} and the definition of $\uR$ in Section \ref{Estimates for the inverse divergence operator}, we could define the new Reynolds error $R_{q,i+1}$ as 
\begin{align}
	R_{q,i+1}=\sgm^*\te_{q,i}\Id^{<i+1>}+\Rl-\sgm^*d_{q,i+1}^2\left(\Id-\frac{f_{i+1}}{|f_{i+1}|}\otimes \frac{f_{i+1}}{|f_{i+1}|}\right)+\delta R_{q,i+1},\label{def of R_q,i+1}
\end{align}
where
\begin{align}
	R_{P}:=&\sum_{\upsilon}\uR\Big[\pa_{tt}\tu_{q,i+1,p}^{(\upsilon)}+\pa_{tt}\tu_{q,i+1,L}^{(\upsilon)}-\Div\Big(\tG_{q,i+1}^{(\upsilon)}(\Sgm_{\Gl}+\Rl+c_{q,i})+(\Id+G_{q,i+1})\tS_{q,i+1}^{(1),(\upsilon)}\Big)\nonumber\\
	&\quad\quad\quad\quad+\Div\Big((\Id+G_{q,i+1})\tG_{q,i+1,m}\tS_{q,i+1,m1}^{(1),(\upsilon)}\Big)\Big],\label{def of R_P}\\
	R_{M}:=&\sum_{\upsilon}\uR\left[\pa_{tt}\tu_{q,i+1,M}^{(\upsilon)}+\Div (\chi_\upsilon^2(\mu_{q,i}^{-1}x)R_{m,i})\right],\label{def of R_M}\\
	R_{O1}:=&\sum_{\upsilon}\uR\left[\Div\left((\Id+G_{q,i+1})\left(\sgm^*\chi_{\upsilon}^2(\mu_{q,i}^{-1}x)d_{q,i+1}^2\left(\Id-\frac{f_{i+1}}{|f_{i+1}|}\otimes \frac{f_{i+1}}{|f_{i+1}|}\right)-\tG_{q,i+1,m}\tS_{q,i+1,m1}^{(1),(\upsilon)}-\tS_{q,i+1,p}^{(2),m,(\upsilon)}\right)\right)\right],\label{def of R_O1}\\
	R_{O2}:=&-\tS_{q,i+1,s}^{(2),m}-\tS_{q,i+1}^{(2),c}-\tS_{q,i+1}^{(\geqslant3)},\label{def of R_O2}\\
	R_{O3}:=&\sgm^*\te_{q,i}\Id^{<i+1>}+\Rl-\sgm^*d_{q,i+1}^2\left(\Id-\frac{f_{i+1}}{|f_{i+1}|}\otimes \frac{f_{i+1}}{|f_{i+1}|}\right),\label{def of R_O3}\\
	R_{O}:=&R_{O1}+R_{O2}+R_{O3},\label{def of R_O}\\
	\delta R_{q,i+1}:=&R_P+R_M+R_{O1}+R_{O2}\label{def of deltaR}.
\end{align}

\subsection{Estimates on the new Reynolds error}\label{Estimates on the new Reynolds error}
In this subsection, we estimate the new Reynolds stress $R_{q,i+1}$ and its derivatives. For convenience, if the implicit constants in the estimates in this section depend on $\sgm$ and $\oM$, we will not write it out.  For the remaining sections, we set $\nrm{\cdot}_N =\nrm{\cdot}_{C^0(\mcI^{q,i}; C^N(\T^2))}$
\begin{pp}\label{pp of Reynolds error}
	Let the parameters $\varepsilon^*_1$, $\varepsilon^*_2$, and
	$\varepsilon^*_3$ be as in the statements of
	Proposition \ref{pp of Mollification},
	Lemma \ref{construction of building blocks}, and
	Proposition \ref{est on perturbation}, respectively.
	Then, for $b>5$, we can find
	$$
	\varepsilon^*_0
	=
	\varepsilon^*_0(b,\oM)
	\leqslant
	\min
	\left\{
	\varepsilon^*_1,
	\varepsilon^*_2,
	\varepsilon^*_3,
	\frac{\varepsilon_0(2)}{2}
	\right\}
	$$
	such that, for every $0<\varepsilon<\varepsilon_0^*$, $0\leqslant i\leqslant2$, and
	$0\leqslant N+r\leqslant2$, we have
	\begin{align}
		\nrm{\pa_t^r\dlt R_{q,i+1}}_{N}&\leqslant C_{b,\sgm,\oM}\lambda_{q,i+1}^{N+r}\left(\varepsilon\delta_{q+1}+(\lambda_{q,i+1}\mu_{q,i})^{-1}\delta_{q+1}^{\frac{1}{2}}\right)
		\leqslant\frac{1}{128} \tilde{c}_0^4 \lambda_{q,i+1}^{N+r} \dlt_{q+2},\label{est on delta R_q i+1}
	\end{align}
	where $C_{b,\sgm,\oM}$ depends only upon $b$, $\sgm$,  and $\oM$ in Proposition \ref{Proposition 1}. Moreover, the support of $R_{q,i+1}$, $\dlt R_{q,i+1}$  and $R_{O3}$ satisfies 
	\begin{align}\label{pp of supp dR}
	\supp_{t}(R_{q,i+1}, \dlt R_{q,i+1}, R_{O3})\subseteq \supp_{t} R_{q,i}+ 3\tau_{q,i}+\ell_{q,i}+\lambda_{q}^{-1}.
	\end{align}
\end{pp}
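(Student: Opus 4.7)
The plan is to estimate each of the four pieces $R_P$, $R_M$, $R_{O1}$, $R_{O2}$ making up $\delta R_{q,i+1}$ separately, and then combine them. The four pieces have very different structures, but all eventually reduce to estimates on the inverse divergence operator $\uR$ (Proposition~\ref{lm: asym div eq of ae}) applied to well-understood oscillatory quantities, together with direct Leibniz-rule bounds on $R_{O2}$. Throughout, the zero linear and angular momentum conditions required to apply $\uR$ have been arranged in Section~\ref{Definition of the perturbation} by the explicit construction of $\tu_{q,i+1,L}$ and $\tu_{q,i+1,M}$ via the ODE system \eqref{def of ODE of U}; this was precisely the purpose of introducing these correction terms, and I will simply invoke it here.

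For $R_P$, I would first decompose the input to $\uR$ using the defining identity of the building block in Lemma~\ref{construction of building blocks}: by the choice \eqref{def of A_I} of $(A_I)^{km}_{nr}$, the principal contributions to
$\pa_{tt}\tu_{q,i+1,p}^{(\upsilon)} - \Div(\tG_{q,i+1,p}^{(\upsilon)}(\Sgm_{\Gl}+\Rl+c_{q,i}) + (\Id+\Gl)\tS_{q,i+1}^{(1),(\upsilon),m})$
vanish when $w_{A_I^\upsilon,f_{i+1}}$ is plugged in. The surviving terms are the $O(\lambda_{q,i+1}^{-1})$ error from Lemma~\ref{construction of building blocks}, commutators between $\mu_{q,i}^{-1}$-scale cutoffs and $\lambda_{q,i+1}$-scale oscillations, and the pieces involving $G_{q,i+1} - \Gl^{\upsilon}$ (spatial discrepancy of the coefficient freezing). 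Each such surviving term is of the high-frequency form $a_m(t,x)\,\Pi_I\Pi_J\cdots$ (or $\tPi$) with $\nrm{a_m}_0\lesssim \mu_{q,i}^{-1}\lambda_{q,i+1}\delta_{q+1}^{1/2}$ and $\pt,\nabla$ estimates compatible with the hypotheses of Proposition~\ref{lm: asym div eq of ae}. Applying that proposition gives a gain of one factor $\lambda_{q,i+1}^{-1}$, yielding $\nrm{\pt^r R_P}_N\lesssim \lambda_{q,i+1}^{N+r}(\lambda_{q,i+1}\mu_{q,i})^{-1}\delta_{q+1}^{1/2}$, which is the second contribution in \eqref{est on delta R_q i+1}.

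For $R_M$, I would first observe that $\pa_{tt}\tu_{q,i+1,M}^{(\upsilon)} + \Div(\chi_\upsilon^2(\mu_{q,i}^{-1}x)R_{m,i})$ is a purely low-frequency term (at spatial scale $\mu_{q,i}$), so inserting it into Proposition~\ref{lm: asym div eq of ae} with $a_m\equiv 0$ and $U^{\text{low}}$ the full integrand gives $\nrm{\pt^r R_M}_N\lesssim \lambda_{q,i+1}^{N+r}\mu_{q,i}\nrm{U^{\text{low}}}_0$. Using \eqref{est on R_mi} and \eqref{est on tVuG M} together with the parameter relations in \eqref{pp of Lambda1} yields an $\varepsilon\delta_{q+1}$-type bound. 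Similarly, $R_{O1}$ is handled by using the geometric lemma cancellation: the identity
$\sum_I i\gamma_{q,i+1}^I \tS_{q,i+1,m}^{(1),cofm}\Pi_I \otimes (\mathrm{coefficients})$ combined with $\tS_{q,i+1,p}^{(2),m,(\upsilon)}$ from \eqref{def of tS_mm} reproduces exactly $\sgm^* \chi_\upsilon^2 d_{q,i+1}^2(\Id - \tfrac{f_{i+1}}{|f_{i+1}|}\otimes\tfrac{f_{i+1}}{|f_{i+1}|})$ in its low frequency part, so only the genuinely high-frequency remainder $\mPG(\Pi_I\Pi_{I'})$ is fed into $\uR$, again producing an $\varepsilon\delta_{q+1}$-type bound through Proposition~\ref{lm: asym div eq of ae}.

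The term $R_{O2}$ is handled by direct Leibniz-rule estimates using the bounds on $\tG_{q,i+1,s}$, $\tG_{q,i+1,ac}$, and the higher-order $\tS_{q,i+1}^{(2),c}$, $\tS_{q,i+1}^{(\geqslant 3)}$ from Proposition~\ref{est on perturbation}; here the extra smallness comes either from an explicit $\varepsilon$ (in $\tG_{q,i+1,s}$ and $\tG_{q,i+1,ac}$) or from the cubic and higher powers of $|\tG_{q,i+1}|\lesssim \delta_{q+1}^{1/2}$, both of which give the desired $\varepsilon\delta_{q+1}$ bound. Summing all four contributions yields $\nrm{\pt^r\delta R_{q,i+1}}_N \leq C_{b,\sgm,\oM}\lambda_{q,i+1}^{N+r}(\varepsilon\delta_{q+1} + (\lambda_{q,i+1}\mu_{q,i})^{-1}\delta_{q+1}^{1/2})$. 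The second (final) inequality in \eqref{est on delta R_q i+1} is a pure parameter-counting step: by \eqref{def of parameter} and \eqref{def of parameter 1}, both $\varepsilon\delta_{q+1}/\delta_{q+2}$ and $(\lambda_{q,i+1}\mu_{q,i})^{-1}\delta_{q+1}^{1/2}/\delta_{q+2}$ tend to zero as $\varepsilon\to 0$ once $b>5$, so choosing $\varepsilon^*_0$ small enough absorbs the constant $128 C_{b,\sgm,\oM}/\tilde{c}_0^4$. The support statement \eqref{pp of supp dR} follows from \eqref{pp of supp tu_q,i+1 p}--\eqref{pp of supp tu_q,i+1 ac}, \eqref{pp of time supp uRlj}, \eqref{pp of supp l,i}, and the observation that $R_{O3}$ inherits its support from $\te_{q,i}$ and $\Rl$ via \eqref{pp of te_q,i}.

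The main obstacle will be the careful bookkeeping in $R_P$: one must verify that \emph{every} non-cancelling term really does carry at least one factor of $(\lambda_{q,i+1}\mu_{q,i})^{-1}$ or $\varepsilon$ after the inverse divergence $\uR$ is applied, and in particular that the decomposition $\tS_{q,i+1}^{(1),(\upsilon)} = \sum_I(\tS_{q,i+1}^{(1),I,cofm}\Pi_I + \tS_{q,i+1}^{(1),I,cof1}\Pi_I + \tS_{q,i+1}^{(1),I,cof2}\tPi_I)+\tS_{q,i+1,ac}^{(1),(\upsilon)}$ from \eqref{specific def of tS_1} matches the building block cancellation \eqref{eq of building block} term by term, with discrepancies only in lower-order cofficients $\tS_{q,i+1,m2}^{(1),(\upsilon)}$, $\tS_{q,i+1,s1}^{(1),(\upsilon)}$, $\tS_{q,i+1,s2}^{(1),(\upsilon)}$, $\tS_{q,i+1,ac}^{(1)}$ that by \eqref{est on tS_p}--\eqref{est on tS_L} carry the needed smallness.
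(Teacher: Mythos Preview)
Your overall architecture matches the paper's: split $\delta R_{q,i+1}$ into $R_P,R_M,R_{O1},R_{O2}$, push $R_P,R_M,R_{O1}$ through Proposition~\ref{lm: asym div eq of ae}, and estimate $R_{O2}$ directly. However, there is a genuine gap in your treatment of $R_{O1}$, and a smaller arithmetic slip in $R_P$.

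For $R_{O1}$: after the low-frequency cancellation via the geometric lemma, the high-frequency input to $\uR$ is
\[
\Div\Bigl((\Id+G_{q,i+1})\,\sgm^*|f_{i+1}|^4\gamma_{q,i+1}^I\gamma_{q,i+1}^{I'}\Bigl(\Id-\tfrac{f_{i+1}}{|f_{i+1}|}\otimes\tfrac{f_{i+1}}{|f_{i+1}|}\Bigr)\,\mPG\Pi_{I,I'}\Bigr).
\]
If the divergence lands on the fast phase $\mPG\Pi_{I,I'}$, you get $a_m\sim\lambda_{q,i+1}\delta_{q+1}$, and the single $\lambda_{q,i+1}^{-1}$ gain from Proposition~\ref{lm: asym div eq of ae} returns only $\delta_{q+1}$, which does \emph{not} close. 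The paper uses an additional null-structure identity you did not mention: since $\nabla\Pi_{I,I'}$ is parallel to $f_{i+1}$ and $(\Id-\tfrac{f_{i+1}}{|f_{i+1}|}\otimes\tfrac{f_{i+1}}{|f_{i+1}|})f_{i+1}=0$, one has
\[
\Div\Bigl(\Bigl(\Id-\tfrac{f_{i+1}}{|f_{i+1}|}\otimes\tfrac{f_{i+1}}{|f_{i+1}|}\Bigr)\mPG\Pi_{I,I'}\Bigr)=0.
\]
Hence the divergence falls only on the slow amplitude $(\Id+\Gl)\gamma^I\gamma^{I'}$, giving $a_m\sim\mu_{q,i}^{-1}\delta_{q+1}$ and the correct bound $(\lambda_{q,i+1}\mu_{q,i})^{-1}\delta_{q+1}$. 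A companion cancellation handles the piece multiplied by $\tG_{q,i+1}$: both $\tG_{q,i+1,m}\propto f_{i+1}\otimes f_{i+1}$ and $\tG_{q,i+1,s1}\propto f_{i+1}^\perp\otimes f_{i+1}$ annihilate $(\Id-\tfrac{f_{i+1}}{|f_{i+1}|}\otimes\tfrac{f_{i+1}}{|f_{i+1}|})$ from the left, leaving only the smaller $\tG_{q,i+1,s2}+\tG_{q,i+1,ac}$. Your proposal treats $R_{O1}$ as routine and claims an $\varepsilon\delta_{q+1}$ bound without invoking either of these cancellations; as written it would not close.

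For $R_P$: your stated amplitude $\nrm{a_m}_0\lesssim\mu_{q,i}^{-1}\lambda_{q,i+1}\delta_{q+1}^{1/2}$ is too large by one factor of $\lambda_{q,i+1}$, and your own arithmetic is inconsistent (that bound plus one $\lambda_{q,i+1}^{-1}$ gain gives $\mu_{q,i}^{-1}\delta_{q+1}^{1/2}$, not $(\lambda_{q,i+1}\mu_{q,i})^{-1}\delta_{q+1}^{1/2}$). The paper's detailed computation of the coefficients $W_{q,i+1,p}^{I,cof,\parallel}$ and $W_{q,i+1,p}^{I,cof,\perp}$ after the building-block cancellation shows the correct size is $\mu_{q,i}^{-1}\delta_{q+1}^{1/2}$. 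You also do not separate out the nonlinear piece $\tG_{q,i+1}\tS_{q,i+1}^{(1),(\upsilon)}-(\Id+G_{q,i+1})\tG_{q,i+1,m}\tS_{q,i+1,m1}^{(1),(\upsilon)}$, which requires its own low/high decomposition and contributes the $\varepsilon\delta_{q+1}$ term to $R_P$; the paper treats this as a third piece $\Xi_{nolinear}$ alongside $\Xi_{ac}$ and $\Xi_{linear}$.
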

We will consider \eqref{def of R_q,i+1} and  estimate the separate terms $ R_{P}$, $ R_{M}$, and $R_{O}$.  For $R_{O2}$, we use a direct estimate. The term $R_{O3}$ is retained explicitly in $R_{q,i+1}$ and is handled in the proof of the inductive proposition. For $R_P$, $R_{O1}$, and $R_M$, we apply Proposition \ref{lm: asym div eq of ae}.

Next, we present a fact that will be repeatedly used: For any $b > 5$ and any constant $\tilde{C}_{b,\sgm,\oM}$ depending only on $b$, $\sgm$, and $\oM$, there exists a sufficiently small $\varepsilon_1 = \varepsilon_1(b,\sgm,\oM)$ such that if $\varepsilon < \varepsilon_1$, we have
$$
\tilde{C}_{b,\sgm,\oM}\left( \varepsilon \delta_{q+1} + (\lambda_{q,i+1}\mu_{q,i})^{-1} \delta_{q+1}^{\frac{1}{2}} + \mu_{q,i}\lambda_{q,i} \delta_{q,i}^{\frac{1}{2}} \delta_{q+1}^{\frac{1}{2}} \right)
\leqslant \frac{1}{1024} \tilde{c}_0^4 \delta_{q+2}.
$$

\subsubsection{Estimates on the principal error} 
Recall 
\begin{align*}
	R_{P}:=&\sum_{\upsilon}\uR\Big[\pa_{tt}\tu_{q,i+1,p}^{(\upsilon)}+\pa_{tt}\tu_{q,i+1,L}^{(\upsilon)}-\Div\Big(\tG_{q,i+1}^{(\upsilon)}(\Sgm_{\Gl}+\Rl+c_{q,i})+(\Id+G_{q,i+1})\tS_{q,i+1}^{(1),(\upsilon)}\Big)\nonumber\\
	&\quad\quad\quad\quad+\Div\Big((\Id+G_{q,i+1})\tG_{q,i+1,m}\tS_{q,i+1,m1}^{(1),(\upsilon)}\Big)\Big],
\end{align*}
we could divide it into three parts:
\begin{align*}
	&\pa_{tt}\tu_{q,i+1,p}^{(\upsilon)}+\pa_{tt}\tu_{q,i+1,L}^{(\upsilon)}-\Div\Big(\tG_{q,i+1}^{(\upsilon)}(\Sgm_{\Gl}+\Rl+c_{q,i})+(\Id+G_{q,i+1})\tS_{q,i+1}^{(1),(\upsilon)}-(\Id+G_{q,i+1})\tG_{q,i+1,m}\tS_{q,i+1,m1}^{(1),(\upsilon)}\Big)\\
	=&\underbrace{\pa_{tt}\tu_{q,i+1,L}^{(\upsilon)}-\Div\Big(\tG_{q,i+1,ac}^{(\upsilon)}(\Sgm_{\Gl}+\Rl+c_{q,i})+(\Id+\Gl)\tS_{q,i+1,ac}^{(1),(\upsilon)}\Big)}_{\Xi_{ac}}\\
	&+\underbrace{\pa_{tt}\tu_{q,i+1,p}^{(\upsilon)}-\Div\Big(\tG_{q,i+1,p}^{(\upsilon)}(\Sgm_{\Gl}+\Rl+c_{q,i})+(\Id+\Gl)\tS_{q,i+1,p}^{(1),(\upsilon)}\Big)}_{\Xi_{linear}}\\
	&-\underbrace{\Div\Big(\tG_{q,i+1}\tS_{q,i+1}^{(1),(\upsilon)}-(\Id+G_{q,i+1})\tG_{q,i+1,m}\tS_{q,i+1,m1}^{(1),(\upsilon)}\Big)}_{\Xi_{nonlinear}}.
\end{align*}
For the first term $\Xi_{ac}$, we have
\begin{equation}\label{est on Xi_ac}
	\begin{aligned}
		&\Nrm{\pt^r\left(\pa_{tt}\tu_{q,i+1,L}^{(\upsilon)}-\Div\Big(\tG_{q,i+1,ac}^{(\upsilon)}(\Sgm_{\Gl}+\Rl+c_{q,i})+(\Id+\Gl)\tS_{q,i+1,ac}^{(1),(\upsilon)}\Big)\right)}_N\\
		\lesssim&\Nrm{\pt^{r+2}\tu_{q,i+1,L}^{(\upsilon)}}_N+\Nrm{\pt^r\Div\Big(\tG_{q,i+1,ac}^{(\upsilon)}(\Sgm_{\Gl}+\Rl+c_{q,i})+(\Id+\Gl)\tS_{q,i+1,ac}^{(1),(\upsilon)}\Big)}_N\\
		\lesssim&\mu_{q,i}^{-N-r-1}\varepsilon\delta_{q+1}.
	\end{aligned}
\end{equation}
The second part $\Xi_{linear}$ can be written as
\begin{align*}
	(\Xi_{linear})_k=&\left(\pa_{tt}\tu_{q,i+1,p}^{(\upsilon)}-\Div\Big(\tG_{q,i+1,p}^{(\upsilon)}(\Sgm_{\Gl}+\Rl+c_{q,i})+(\Id+\Gl)\tS_{q,i+1,p}^{(1),(\upsilon)}\Big)\right)_k\\
	=&\sum_{s:I=(s,\upsilon)}\frac{1}{\lambda_{q,i+1}[I]}(u_{q,i+1,p}^{I,cof}\pa_{tt}w_{A_I^{\upsilon},f_{i+1}}+\ou_{q,i+1,p}^{I,cof}\pa_{tt}\ow_{A_I^{\upsilon},f_{i+1}})_k\\
	&-\sum_{s:I=(s,\upsilon)}\frac{\lambda+\mu}{\lambda_{q,i+1}[I]}((u_{q,i+1,p}^{I,cof}\nb\Div w_{A_I^{\upsilon},f_{i+1}}+\ou_{q,i+1,p}^{I,cof}\nb\Div\ow_{A_I^{\upsilon},f_{i+1}}))_k\\
	&-\sum_{s:I=(s,\upsilon)}\frac{\mu}{\lambda_{q,i+1}[I]}( (u_{q,i+1,p}^{I,cof}\Delta w_{A_I^{\upsilon},f_{i+1}}+\ou_{q,i+1,p}^{I,cof}\Delta\ow_{A_I^{\upsilon},f_{i+1}}))_k\\
	&+\sum_{s:I=(s,\upsilon)}\frac{(A_I^{\upsilon})_{nr}^{km}}{\lambda_{q,i+1}[I]}(u_{q,i+1,p}^{I,cof}\partial_{nr}(w_{A_I^{\upsilon},f_{i+1}})_m+\ou_{q,i+1,p}^{I,cof}\partial_{nr}(\ow_{A_I^{\upsilon},f_{i+1}})_m)\\
	&+\sum_{s:I=(s,\upsilon)}\frac{1}{\lambda_{q,i+1}[I]}(\pa_{tt}u_{q,i+1,p}^{I,cof}w_{A_I^{\upsilon},f_{i+1}}+\pa_{tt}\ou_{q,i+1,p}^{I,cof}\ow_{A_I^{\upsilon},f_{i+1}}+2\pa_{t}u_{q,i+1,p}^{I,cof}\pa_{t}w_{A_I^{\upsilon},f_{i+1}}+2\pa_{t}\ou_{q,i+1,p}^{I,cof}\pa_{t}\ow_{A_I^{\upsilon},f_{i+1}})_k\\
	&-\sum_{s:I=(s,\upsilon)}\frac{\lambda+\mu}{\lambda_{q,i+1}[I]}(\Div w_{A_I^{\upsilon},f_{i+1}}\nb u_{q,i+1,p}^{I,cof}+\Div\ow_{A_I^{\upsilon},f_{i+1}}\nb\ou_{q,i+1,p}^{I,cof}+\nb(w_{A_I^{\upsilon},f_{i+1}}\cdot\nb u_{q,i+1,p}^{I,cof}+\ow_{A_I^{\upsilon},f_{i+1}}\cdot\nb \ou_{q,i+1,p}^{I,cof}))_k\\
	&-\sum_{s:I=(s,\upsilon)}\frac{\mu}{\lambda_{q,i+1}[I]}(2\nb w_{A_I^{\upsilon},f_{i+1}}\nb u_{q,i+1,p}^{I,cof}+2\nb\ow_{A_I^{\upsilon},f_{i+1}}\nb\ou_{q,i+1,p}^{I,cof}+\Delta(u_{q,i+1,p}^{I,cof})w_{A_I^{\upsilon},f_{i+1}}+\Delta(\ou_{q,i+1,p}^{I,cof})\ow_{A_I^{\upsilon},f_{i+1}}))_k\\
	&+\sum_{s:I=(s,\upsilon)}\frac{(A_I)_{nr}^{km}-(A_I^{\upsilon})_{nr}^{km}}{\lambda_{q,i+1}[I]}(u_{q,i+1,p}^{I,cof}\partial_{nr}(w_{A_I^{\upsilon},f_{i+1}})_m+\ou_{q,i+1,p}^{I,cof}\partial_{nr}(\ow_{A_I^{\upsilon},f_{i+1}})_m))\\
	&+\sum_{s:I=(s,\upsilon)}\frac{1}{\lambda_{q,i+1}[I]}\pa_r((A_I)_{nr}^{km})\partial_{n}(u_{q,i+1,p}^{I,cof}(w_{A_I^{\upsilon},f_{i+1}})_m+\ou_{q,i+1,p}^{I,cof}(\ow_{A_I^{\upsilon},f_{i+1}})_m)\\
	&+\sum_{s:I=(s,\upsilon)}\frac{1}{\lambda_{q,i+1}[I]}(A_I)_{nr}^{km}(\partial_{nr}(u_{q,i+1,p}^{I,cof})(w_{A_I^{\upsilon},f_{i+1}})_m+\partial_{nr}(\ou_{q,i+1,p}^{I,cof})(\ow_{A_I^{\upsilon},f_{i+1}})_m)\\
	&+\sum_{s:I=(s,\upsilon)}\frac{1}{\lambda_{q,i+1}[I]}(A_I)_{nr}^{km}(\partial_{n}(u_{q,i+1,p}^{I,cof})\partial_{r}(w_{A_I^{\upsilon},f_{i+1}})_m+\partial_{n}(\ou_{q,i+1,p}^{I,cof})\partial_{r}(\ow_{A_I^{\upsilon},f_{i+1}})_m)\\
	&+\sum_{s:I=(s,\upsilon)}\frac{1}{\lambda_{q,i+1}[I]}(A_I)_{nr}^{km}(\partial_{r}(u_{q,i+1,p}^{I,cof})\partial_{n}(w_{A_I^{\upsilon},f_{i+1}})_m+\partial_{r}(\ou_{q,i+1,p}^{I,cof})\partial_{n}(\ow_{A_I^{\upsilon},f_{i+1}})_m)\\
	=&\sum_{s:I=(s,\upsilon)}\left((W_{q,i+1,p}^{I,cof,||}f_{i+1}+W_{q,i+1,p}^{I,cof,\perp}f_{i+1}^{\perp})e^{i\lambda_{q,i+1}\xi_{A_I^{\upsilon},f_{i+1}}}+(\oW_{q,i+1,p}^{I,cof,||}f_{i+1}+\oW_{q,i+1,p}^{I,cof,\perp}f_{i+1}^{\perp})e^{-i\lambda_{q,i+1}\xi_{A_I^{\upsilon},f_{i+1}}}\right)_k,
\end{align*}
where
\begin{align*}
	W_{q,i+1,p}^{I,cof,||}=&-i\left( u_{q,i+1,p}^{I,cof}\partial_{tt}c_{A_I^{\upsilon}}+2\pt u_{q,i+1,p}^{I,cof}\pt c_{A_I^{\upsilon}}+(\lambda+\mu)((f_{i+1}\cdot\nabla u_{q,i+1,p}^{I,cof})+(\tilde{f}_{A_I^{\upsilon}}\cdot\nabla u_{q,i+1,p}^{I,cof}))+2\mu (f_{i+1}\cdot\nabla u_{q,i+1,p}^{I,cof})\right)\\
	&+\frac{1}{\lambda_{q,i+1}[I]}\left(\pa_{tt}u_{q,i+1,p}^{I,cof}-(\lambda+\mu)|f_{i+1}|^{-2}(f_{i+1}\cdot\nabla)(\tilde{f}_{A_I^{\upsilon}}\cdot\nabla u_{q,i+1,p}^{I,cof})-\mu\Delta u_{q,i+1,p}^{I,cof}\right)\\
	&+\lambda_{q,i+1}[I]|f_{i+1}|^{-2}\left((A_I^{\upsilon})_{nr}^{km}-(A_I)_{nr}^{km}\right)u_{q,i+1,p}^{I,cof}(f_{i+1})_n(f_{i+1})_r(\tilde{f}_{A_{I}^{\upsilon}})_m(f_{i+1})_k\\
	&+i|f_{i+1}|^{-2}\left(\pa_r((A_I)_{nr}^{km}u_{q,i+1,p}^{I,cof})(f_{i+1})_n+(A_I)_{nr}^{km}\partial_{n}(u_{q,i+1,p}^{I,cof})(f_{i+1})_r\right)(\tilde{f}_{A_{I}^{\upsilon}})_m(f_{i+1})_k\\
	&+\frac{1}{\lambda_{q,i+1}[I]|f_{i+1}|^{2}}\left(\pa_r((A_I)_{nr}^{km})\partial_{n}(u_{q,i+1,p}^{I,cof})+(A_I)_{nr}^{km}\partial_{nr}(u_{q,i+1,p}^{I,cof})\right)(\tilde{f}_{A_{I}^{\upsilon}})_m(f_{i+1})_k\\
	W_{q,i+1,p}^{I,cof,\perp}=&-i\left(u_{q,i+1,p}^{I,cof}(a_{A_I^{\upsilon}}\partial_{tt}c_{A_I^{\upsilon}}+2\pt a_{A_I^{\upsilon}}\pt c_{A_I^{\upsilon}})+2\pt u_{q,i+1,p}^{I,cof}a_{A_I^{\upsilon}}\pt c_{A_I^{\upsilon}}\right)\\
	&-i\left((\lambda+\mu)(f_{i+1}^{\perp}\cdot\nabla u_{q,i+1,p}^{I,cof})+2\mu  (f_{i+1}\cdot\nabla u_{q,i+1,p}^{I,cof})a_{A_I^{\upsilon}}\right)\\
	&+\frac{1}{\lambda_{q,i+1}[I]}\left(u_{q,i+1,p}^{I,cof}\partial_{tt}a_{A_I^{\upsilon}}+\pa_{tt}u_{q,i+1,p}^{I,cof}a_{A_I^{\upsilon}}+2\pt u_{q,i+1,p}^{I,cof}\pt a_{A_I^{\upsilon}}\right)\\
	&-\frac{1}{\lambda_{q,i+1}[I]}\left((\lambda+\mu)|f_{i+1}^{\perp}|^{-2}(f_{i+1}^{\perp}\cdot\nabla)(\tilde{f}_{A_I^{\upsilon}}\cdot\nabla u_{q,i+1,p}^{I,cof})+\mu\Delta u_{q,i+1,p}^{I,cof}a_{A_I^\upsilon}\right)\\
	&+\lambda_{q,i+1}[I]|f_{i+1}^{\perp}|^{-2}\left((A_I^{\upsilon})_{nr}^{km}-(A_I)_{nr}^{km}\right)u_{q,i+1,p}^{I,cof}(f_{i+1})_n(f_{i+1})_r(\tilde{f}_{A_{I}^{\upsilon}})_m(f_{i+1}^{\perp})_k\\
	&+i|f_{i+1}^{\perp}|^{-2}\left(\pa_r((A_I)_{nr}^{km}u_{q,i+1,p}^{I,cof})(f_{i+1})_n+(A_I)_{nr}^{km}\partial_{n}(u_{q,i+1,p}^{I,cof})(f_{i+1})_r\right)(\tilde{f}_{A_{I}^{\upsilon}})_m(f_{i+1}^{\perp})_k\\
	&+\frac{1}{\lambda_{q,i+1}[I]|f_{i+1}^{\perp}|^2}\left(\pa_r((A_I)_{nr}^{km})\partial_{n}(u_{q,i+1,p}^{I,cof})+(A_I)_{nr}^{km}\partial_{nr}(u_{q,i+1,p}^{I,cof})\right)(\tilde{f}_{A_{I}^{\upsilon}})_m(f_{i+1}^{\perp})_k.
\end{align*}
The following estimates immediately follow from Proposition \ref{est on perturbation}:
\begin{align*}
	\nrmrN{W_{q,i+1,p}^{I,cof,||}}\lesssim&\nrmrN{ u_{q,i+1,p}^{I,cof}\partial_{tt}c_{A_I^{\upsilon}}}+\nrmrN{\pt u_{q,i+1,p}^{I,cof}\pt c_{A_I^{\upsilon}}}+\nrmrN{f_{i+1}\cdot\nabla u_{q,i+1,p}^{I,cof}}+\nrmrN{\tilde{f}_{A_I^{\upsilon}}\cdot\nabla u_{q,i+1,p}^{I,cof}}\\
	&+\lambda_{q,i+1}^{-1}\left(\nrmrN{\pa_{tt}u_{q,i+1,p}^{I,cof}}+\nrmrN{(f_{i+1}\cdot\nabla)(\tilde{f}_{A_I^{\upsilon}}\cdot\nabla u_{q,i+1,p}^{I,cof})}+\nrmrN{\Delta u_{q,i+1,p}^{I,cof}}\right),\\
	&+\lambda_{q,i+1}\nrmrN{((A_I^{\upsilon})_{nr}^{km}-(A_I)_{nr}^{km})
		u_{q,i+1,p}^{I,cof}\tilde{f}_{A_I^{\upsilon}}}\\
	&+\sum_{r_1+r_2=r}\sum_{N_1+N_2=N}\left(\nRMr{\nabla((A_I)_{nr}^{km})u_{q,i+1,p}^{I,cof}}{N_1}{r_1}+\nRMr{(A_I)_{nr}^{km}\nabla u_{q,i+1,p}^{I,cof}}{N_1}{r_1}\right)\nRMr{\tilde{f}_{A_{I}^{\upsilon}}}{N_2}{r_2}\\
	&+\sum_{r_1+r_2=r}\sum_{N_1+N_2=N}\lambda_{q,i+1}^{-1}\left(\nRMr{\nb((A_I)_{nr}^{km})\nb u_{q,i+1,p}^{I,cof}}{N_1}{r_1}+\nRMr{(A_I)_{nr}^{km}\nabla^2(u_{q,i+1,p}^{I,cof})}{N_1}{r_1}\right)\nRMr{\tilde{f}_{A_{I}^{\upsilon}}}{N_2}{r_2},\\
	\lesssim& \tau_{q,i}^{-r}\mu_{q,i}^{-N}\left(\mu_{q,i}^{-1}\delta_{q,i}^{\frac{1}{2}}\delta_{q+1}^{\frac{1}{2}}+\mu_{q,i}^{-1}\delta_{q+1}^{\frac{1}{2}}+\lambda_{q,i+1}^{-1}(\tau_{q,i}^{-2}\delta_{q+1}^{\frac{1}{2}}+\mu_{q,i}^{-2}\delta_{q+1}^{\frac{1}{2}})+\lambda_{q,i+1}(\mu_{q,i}\lambda_{q,i}\delta_{q,i}^{\frac{1}{2}}\delta_{q+1}^{\frac{1}{2}})\right)\\
	&+\tau_{q,i}^{-r}\mu_{q,i}^{-N}\left(\lambda_{q,i}\delta_{q,i}^{\frac{1}{2}}\delta_{q+1}^{\frac{1}{2}}+\mu_{q,i}^{-1}\varepsilon\delta_{q+1}^{\frac{1}{2}}+\lambda_{q,i+1}^{-1}(\lambda_{q,i}\delta_{q,i}^{\frac{1}{2}}\mu_{q,i}^{-1}\delta_{q+1}^{\frac{1}{2}}+\mu_{q,i}^{-2}\varepsilon\delta_{q+1}^{\frac{1}{2}})\right)\\
	\lesssim&\tau_{q,i}^{-r}\mu_{q,i}^{-N-1}\delta_{q+1}^{\frac{1}{2}},\\
	\nrmrN{W_{q,i+1,p}^{I,cof,\perp}}\lesssim&\nrmrN{u_{q,i+1,p}^{I,cof}a_{A_I^{\upsilon}}\partial_{tt}c_{A_I^{\upsilon}}}+\nrmrN{u_{q,i+1,p}^{I,cof}\pt a_{A_I^{\upsilon}}\pt c_{A_I^{\upsilon}})}+\nrmrN{\pt u_{q,i+1,p}^{I,cof}a_{A_I^{\upsilon}}\pt c_{A_I^{\upsilon}}}\\
	&+\nrmrN{f_{i+1}^{\perp}\cdot\nabla u_{q,i+1,p}^{I,cof})}+\nrmrN{f_{i+1}\cdot\nabla u_{q,i+1,p}^{I,cof})a_{A_I^{\upsilon}}}\\
	&+\lambda_{q,i+1}^{-1}\left(\nrmrN{u_{q,i+1,p}^{I,cof}\partial_{tt}a_{A_I^{\upsilon}}}+\nrmrN{\pt u_{q,i+1,p}^{I,cof}\pt a_{A_I^{\upsilon}}}+\nrmrN{\pa_{tt}u_{q,i+1,p}^{I,cof}a_{A_I^{\upsilon}}}\right)\\
	&+\lambda_{q,i+1}^{-1}\left(\nrmrN{(f_{i+1}^{\perp}\cdot\nabla)(\tilde{f}_{A_I^{\upsilon}}\cdot\nabla u_{q,i+1,p}^{I,cof})}+\nrmrN{\Delta u_{q,i+1,p}^{I,cof}a_{A_I^\upsilon}}\right)\\
	&+\lambda_{q,i+1}\nrmrN{((A_I^{\upsilon})_{nr}^{km}-(A_I)_{nr}^{km})
		u_{q,i+1,p}^{I,cof}\tilde{f}_{A_I^{\upsilon}}}\\
	&+\sum_{r_1+r_2=r}\sum_{N_1+N_2=N}\left(\nRMr{\nb((A_I)_{nr}^{km})u_{q,i+1,p}^{I,cof}}{N_1}{r_1}+\nRMr{(A_I)_{nr}^{km}\nb u_{q,i+1,p}^{I,cof}}{N_1}{r_1}\right)\nRMr{\tilde{f}_{A_{I}^{\upsilon}}}{N_2}{r_2}\\
	&+\sum_{r_1+r_2=r}\sum_{N_1+N_2=N}\lambda_{q,i+1}^{-1}\left(\nRMr{\nb((A_I)_{nr}^{km})\nb u_{q,i+1,p}^{I,cof}}{N_1}{r_1}+\nRMr{(A_I)_{nr}^{km}\nb^2u_{q,i+1,p}^{I,cof}}{N_1}{r_1}\right)\nRMr{\tilde{f}_{A_{I}^{\upsilon}}}{N_2}{r_2}\\
	\lesssim& \tau_{q,i}^{-r}\mu_{q,i}^{-N}\left(\mu_{q,i}^{-1}\delta_{q,i}^{\frac{1}{2}}\delta_{q+1}^{\frac{1}{2}}+\mu_{q,i}^{-1}\delta_{q+1}^{\frac{1}{2}}+\lambda_{q,i+1}^{-1}(\tau_{q,i}^{-2}\delta_{q+1}^{\frac{1}{2}}+\mu_{q,i}^{-2}\delta_{q+1}^{\frac{1}{2}})+\lambda_{q,i+1}(\mu_{q,i}\lambda_{q,i}\delta_{q,i}^{\frac{1}{2}}\delta_{q+1}^{\frac{1}{2}})\right)\\
	&+\tau_{q,i}^{-r}\mu_{q,i}^{-N}\left(\lambda_{q,i}\delta_{q,i}^{\frac{1}{2}}\delta_{q+1}^{\frac{1}{2}}+\mu_{q,i}^{-1}\varepsilon\delta_{q+1}^{\frac{1}{2}}+\lambda_{q,i+1}^{-1}(\lambda_{q,i}\delta_{q,i}^{\frac{1}{2}}\mu_{q,i}^{-1}\delta_{q+1}^{\frac{1}{2}}+\mu_{q,i}^{-2}\varepsilon\delta_{q+1}^{\frac{1}{2}})\right)\\
	\lesssim&\tau_{q,i}^{-r}\mu_{q,i}^{-N-1}\delta_{q+1}^{\frac{1}{2}},
\end{align*}
where we have used
\begin{align*}
	\nrm{(A_I^{\upsilon})_{nr}^{km}-(A_I)_{nr}^{km}}_0\lesssim&\mu_{q,i}\nrm{(A_I)_{nr}^{km}}_1\lesssim\mu_{q,i}\lambda_{q,i}\delta_{q,i}^{\frac{1}{2}},\\
	\nrm{((A_I^{\upsilon})_{nr}^{km}-(A_I)_{nr}^{km})u_{q,i+1,p}^{I,cof}\tilde{f}_{A_I^{\upsilon}}}_0\lesssim&\nRM{(A_I^{\upsilon})_{nr}^{km}-(A_I)_{nr}^{km}}{0}\nRM{u_{q,i+1,p}^{I,cof}}{0}\nRM{\tilde{f}_{A_I^{\upsilon}}}{0}
	\lesssim\mu_{q,i}\lambda_{q,i}\delta_{q,i}^{\frac{1}{2}}\delta_{q+1}^{\frac{1}{2}},\\
	\nrmrN{((A_I^{\upsilon})_{nr}^{km}-(A_I)_{nr}^{km})u_{q,i+1,p}^{I,cof}\tilde{f}_{A_I^{\upsilon}}}\lesssim&\sum_{r_1+r_2+r_3=r}\sum_{N_1+N_2+N_3=N}\nRMr{(A_I^{\upsilon})_{nr}^{km}-(A_I)_{nr}^{km}}{N_1}{r_1}\nRMr{u_{q,i+1,p}^{I,cof}}{N_2}{r_2}\nRMr{\tilde{f}_{A_I^{\upsilon}}}{N_3}{r_3}\\
	\lesssim&\tau_{q,i}^{-r}\mu_{q,i}^{-N}\mu_{q,i}\lambda_{q,i}\delta_{q,i}^{\frac{1}{2}}\delta_{q+1}^{\frac{1}{2}}.
\end{align*}
Then, we have
\begin{align}\label{est on am linear}
\nrm{\pt^ra_m[\Xi_{linear}]}_{N}\lesssim\tau_{q,i}^{-r}\mu_{q,i}^{-N-1}\delta_{q+1}^{\frac{1}{2}}.
\end{align}
Notice that
\begin{align}
	\tG_{q,i+1,m}\tS_{q,i+1,m1}^{(1),(\upsilon)}
	=&\underbrace{4\sgm_{11}\chi^2_{\upsilon}(\mu_{q,i}^{-1}x)d_{q,i+1}^2\left(\frac{f_{i+1}}{|f_{i+1}|}\otimes \frac{f_{i+1}}{|f_{i+1}|}\right)}_{(\tG_{q,i+1,m}\tS_{q,i+1,m1}^{(1),(\upsilon)})^{low}}\label{def of GSG low}\\
	&-\underbrace{4\sgm_{11}|f_{i+1}|^2(f_{i+1}\otimes f_{i+1})\sum_{s:I=(s,\upsilon)}\big((\gamma_{q,i+1}^{I})^2 (\Pi_{I,I}+2)+\sum_{\substack{I'=(s',\tup)\in\mathscr I\\\nrm{I-I'}=1}}\gamma_{q,i+1}^{I}\gamma_{q,i+1}^{I'} \Pi_{I,I'}\big)}_{(\tG_{q,i+1,m}\tS_{q,i+1,m1}^{(1),(\upsilon)})^{high}},\label{def of GSG high}
\end{align}
we have
	\begin{align}
		G_{q,i+1}\tG_{q,i+1,m}\tS_{q,i+1,m1}^{(1),(\upsilon)}
		=&(G_{\ell,i}+\tG_{q,i+1,ac})\tG_{q,i+1,m}\tS_{q,i+1,m1}^{(1),(\upsilon)}+\tG_{q,i+1,p}\tG_{q,i+1,m}\tS_{q,i+1,m1}^{(1),(\upsilon)}\nonumber\\
		=&-4\sgm_{11}\sum_{s:I=(s,\upsilon)}\sum_{\substack{I'=(s',\tup)\in\mathscr I\\\nrm{I'-I}\leqslant1}}\gamma_{q,i+1}^{I}\gamma_{q,i+1}^{I'}|f_{i+1}|^2(((\Gl+\tG_{q,i+1,ac})f_{i+1})\otimes f_{i+1})\Pi_{I,I'}\nonumber\\
		&-4\sgm_{11}\sum_{s:I=(s,\upsilon)}\sum_{\substack{I'=(s',\tup)\in\mathscr I\\\nrm{I'-I}\leqslant1}}\sum_{\substack{I''=(s'',\ttup)\in\mathscr I\\\nrm{I''-I}\leqslant1}}i\gamma_{q,i+1}^{I}\gamma_{q,i+1}^{I'}\gamma_{q,i+1}^{I''}|f_{i+1}|^4(\tilde{f}_{A_{I''}^{\ttup}}\otimes f_{i+1})\Pi_{I,I',I''}\label{def of GtGtS}\\
		&-4\sgm_{11}\sum_{s:I=(s,\upsilon)}\sum_{\substack{I'=(s',\tup)\in\mathscr I\\\nrm{I'-I}\leqslant1}}\sum_{\substack{I''=(s'',\ttup)\in\mathscr I\\\nrm{I''-I}\leqslant1}}\frac{1}{\lambda_{q,i+1}[I'']}\Big(\gamma_{q,i+1}^{I}\gamma_{q,i+1}^{I'}|f_{i+1}|^2(\tilde{f}_{A_{I''}^{\ttup}}\otimes f_{i+1})\nonumber\\
		&\hspace{60pt}\cdot(f_{i+1}\cdot\gamma_{q,i+1,s}^{I''} \Pi_{I,I'}\tPi_{I''}+f_{i+1}\cdot\nb \gamma_{q,i+1,c}^{I''}\Pi_{I,I',I''})\Big),\nonumber
	\end{align}
and then, we could define
\begin{equation*}
	\begin{aligned}
		&(G_{q,i+1}\tG_{q,i+1,m}\tS_{q,i+1,m1}^{(1),(\upsilon)})^{low}\\
		=&-4\sgm_{11}\sum_{s:I=(s,\upsilon)}\sum_{\substack{I'=(s',\tup)\in\mathscr I\\\nrm{I'-I}\leqslant1}}
		\gamma_{q,i+1}^{I}\gamma_{q,i+1}^{I'}|f_{i+1}|^2
		(((\Gl+\tG_{q,i+1,ac})f_{i+1})\otimes f_{i+1})\mPO\Pi_{I,I'}\\
		&-4\sgm_{11}\sum_{s:I=(s,\upsilon)}\sum_{\substack{I'=(s',\tup)\in\mathscr I\\\nrm{I'-I}\leqslant1}}
		\sum_{\substack{I''=(s'',\ttup)\in\mathscr I\\\nrm{I''-I}\leqslant1}}
		i\gamma_{q,i+1}^{I}\gamma_{q,i+1}^{I'}\gamma_{q,i+1}^{I''}|f_{i+1}|^4
		(\tilde{f}_{A_{I''}^{\ttup}}\otimes f_{i+1})\mPO\Pi_{I,I',I''}\\
		&-4\sgm_{11}\sum_{s:I=(s,\upsilon)}\sum_{\substack{I'=(s',\tup)\in\mathscr I\\\nrm{I'-I}\leqslant1}}
		\sum_{\substack{I''=(s'',\ttup)\in\mathscr I\\\nrm{I''-I}\leqslant1}}
		\frac{1}{\lambda_{q,i+1}[I'']}\Big(\gamma_{q,i+1}^{I}\gamma_{q,i+1}^{I'}|f_{i+1}|^2
		(\tilde{f}_{A_{I''}^{\ttup}}\otimes f_{i+1})\\
		&\hspace{60pt}\cdot\big(f_{i+1}\cdot\gamma_{q,i+1,s}^{I''}
		\mPO(\Pi_{I,I'}\tPi_{I''})
		+f_{i+1}\cdot\nb\gamma_{q,i+1,c}^{I''}
		\mPO\Pi_{I,I',I''}\big)\Big).
	\end{aligned}
\end{equation*}
and
\begin{equation*}
	\begin{aligned}
		&(G_{q,i+1}\tG_{q,i+1,m}\tS_{q,i+1,m1}^{(1),(\upsilon)})^{high}\\
		=&-4\sgm_{11}\sum_{s:I=(s,\upsilon)}\sum_{\substack{I'=(s',\tup)\in\mathscr I\\\nrm{I'-I}\leqslant1}}
		\gamma_{q,i+1}^{I}\gamma_{q,i+1}^{I'}|f_{i+1}|^2
		(((\Gl+\tG_{q,i+1,ac})f_{i+1})\otimes f_{i+1})\mPG\Pi_{I,I'}\\
		&-4\sgm_{11}\sum_{s:I=(s,\upsilon)}\sum_{\substack{I'=(s',\tup)\in\mathscr I\\\nrm{I'-I}\leqslant1}}
		\sum_{\substack{I''=(s'',\ttup)\in\mathscr I\\\nrm{I''-I}\leqslant1}}
		i\gamma_{q,i+1}^{I}\gamma_{q,i+1}^{I'}\gamma_{q,i+1}^{I''}|f_{i+1}|^4
		(\tilde{f}_{A_{I''}^{\ttup}}\otimes f_{i+1})\mPG\Pi_{I,I',I''}\\
		&-4\sgm_{11}\sum_{s:I=(s,\upsilon)}\sum_{\substack{I'=(s',\tup)\in\mathscr I\\\nrm{I'-I}\leqslant1}}
		\sum_{\substack{I''=(s'',\ttup)\in\mathscr I\\\nrm{I''-I}\leqslant1}}
		\frac{1}{\lambda_{q,i+1}[I'']}\Big(\gamma_{q,i+1}^{I}\gamma_{q,i+1}^{I'}|f_{i+1}|^2
		(\tilde{f}_{A_{I''}^{\ttup}}\otimes f_{i+1})\\
		&\hspace{60pt}\cdot\big(f_{i+1}\cdot\gamma_{q,i+1,s}^{I''}
		\mPG(\Pi_{I,I'}\tPi_{I''})
		+f_{i+1}\cdot\nb\gamma_{q,i+1,c}^{I''}
		\mPG\Pi_{I,I',I''}\big)\Big).
	\end{aligned}
\end{equation*}
Notice that the spatial zero-frequency components may still oscillate in time. In particular,
\begin{align*}
	\nrm{\pt^r\mPO\Pi_{I,I',I''}}_0
	+\nrm{\pt^r\mPO(\Pi_{I,I'}\tPi_{I''})}_0
	\lesssim\lambda_{q,i+1}^{r}.
\end{align*}
Next, we have
\begin{equation}\label{est on am nolinear 1}
	\begin{aligned}
		&\nrm{\pt^r(G_{q,i+1}\tG_{q,i+1,m}
			\tS_{q,i+1,m1}^{(1),(\upsilon)})^{low}}_{N}\\
		\lesssim&\lambda_{q,i+1}^{r}\mu_{q,i}^{-N}
		\Big(\delta_{q+1}(\varepsilon+\varepsilon\delta_{q+1})
		+\delta_{q+1}^{\frac{3}{2}}
		+(\lambda_{q,i+1}\mu_{q,i})^{-1}\delta_{q+1}^{\frac{3}{2}}
		+(\lambda_{q,i+1}\mu_{q,i})^{-2}\delta_{q+1}^{\frac{3}{2}}\Big)
		\lesssim\lambda_{q,i+1}^{r}\mu_{q,i}^{-N}
		\varepsilon\delta_{q+1},\\
		&\nrm{\pt^ra_m[(G_{q,i+1}\tG_{q,i+1,m}
			\tS_{q,i+1,m1}^{(1),(\upsilon)})^{high}]}_{N}\\
		\lesssim&\mu_{q,i}^{-N-r}
		\Big(\delta_{q+1}(\varepsilon+\varepsilon\delta_{q+1})
		+\delta_{q+1}^{\frac{3}{2}}
		+(\lambda_{q,i+1}\mu_{q,i})^{-1}\delta_{q+1}^{\frac{3}{2}}
		+(\lambda_{q,i+1}\mu_{q,i})^{-2}\delta_{q+1}^{\frac{3}{2}}\Big)
		\lesssim\mu_{q,i}^{-N-r}\varepsilon\delta_{q+1}.
	\end{aligned}
\end{equation}
Similarly, we could calculate
\begin{align*}
	\tG_{q,i+1}\tS_{q,i+1}^{(1),(\upsilon)}\hspace{-3pt}-\tG_{q,i+1,m}\tS_{q,i+1,m1}^{(1),(\upsilon)}
	=&(\tG_{q,i+1,s1}+\tG_{q,i+1,s2}+\tG_{q,i+1,ac})\tS_{q,i+1}^{(1),(\upsilon)}
	\hspace{-3pt}+\tG_{q,i+1,m}(\tS_{q,i+1,m2}^{(1),(\upsilon)}+\tS_{q,i+1,s1}^{(1),(\upsilon)}+\tS_{q,i+1,s2}^{(1),(\upsilon)}+\tS_{q,i+1,ac}^{(1),(\upsilon)}),
\end{align*}
where
\begin{align*}
	&(\tG_{q,i+1,s1}+\tG_{q,i+1,s2}+\tG_{q,i+1,ac})\tS_{q,i+1}^{(1),(\upsilon)}\\
	=&\sum_{\nrm{\tup-\upsilon}\leqslant 1}(\tG_{q,i+1,s1}^{(\tup)}+\tG_{q,i+1,s2}^{(\tup)}+\tG_{q,i+1,ac}^{(\tup)})\tS_{q,i+1}^{(1),(\upsilon)}\\
	=& \sum_{s:I=(s,\upsilon)}\tG_{q,i+1,ac}^{(\upsilon)}((\tS_{q,i+1}^{(1),I,cofm}+\tS_{q,i+1}^{(1),I,cof1})\Pi_{I}+\tS_{q,i+1}^{(1),I,cof2}\tPi_{I})+\tG_{q,i+1,ac}^{(\upsilon)}\tS_{q,i+1,ac}^{(1),(\upsilon)}\\
	&+\sum_{s:I=(s,\upsilon)}\sum_{\substack{I'=(s',\tup)\in\mathscr I\\\nrm{I'-I}\leqslant1}}i\gamma_{q,i+1}^{I'}a_{A_{I'}^{\tup}}(t)f_{i+1}^{\perp}\otimes((\tS_{q,i+1}^{(1),I,cofm}+\tS_{q,i+1}^{(1),I,cof1})f_{i+1}\Pi_{I,I'}+\tS_{q,i+1}^{(1),I,cof2}f_{i+1}\tPi_{I}\Pi_{I'})\\
	&+\sum_{s:I=(s,\upsilon)}\sum_{\substack{I'=(s',\tup)\in\mathscr I\\\nrm{I'-I}\leqslant1}}\frac{1}{\lambda_{q,i+1}[I']}\tilde{f}_{A_{I'}^{\tup}}\otimes((\tS_{q,i+1}^{(1),I,cofm}+\tS_{q,i+1}^{(1),I,cof1})(\gamma_{q,i+1,s}^{I'}\tPi_{I'}\Pi_{I}+\nb \gamma_{q,i+1,c}^{I'}\Pi_{I,I'}))\\
	&+\sum_{s:I=(s,\upsilon)}\sum_{\substack{I'=(s',\tup)\in\mathscr I\\\nrm{I'-I}\leqslant1}}\frac{1}{\lambda_{q,i+1}[I']}\tilde{f}_{A_{I'}^{\tup}}\otimes(\tS_{q,i+1}^{(1),I,cof2}(\gamma_{q,i+1,s}^{I'}\tPi_{I'}\tPi_{I}+\nb \gamma_{q,i+1,c}^{I'}\Pi_{I'}\tPi_{I}))\\
	&+\sum_{s:I=(s,\upsilon)}i\gamma_{q,i+1}^{I}a_{A_I^{\upsilon}}(t)f_{i+1}^{\perp}\otimes(\tS_{q,i+1,ac}^{(1),(\upsilon)}f_{i+1})\Pi_{I}+\sum_{s:I=(s,\upsilon)}\frac{1}{\lambda_{q,i+1}[I]}\tilde{f}_{A_{I}^{\upsilon}}\otimes(\tS_{q,i+1,ac}^{(1),(\upsilon)}(\gamma_{q,i+1,s}^{I}\tPi_{I}+\nb \gamma_{q,i+1,c}^{I}\Pi_{I})),
\end{align*}
and
\begin{align*}
	&\tG_{q,i+1,m}(\tS_{q,i+1,m2}^{(1),(\upsilon)}+\tS_{q,i+1,s1}^{(1),(\upsilon)}+\tS_{q,i+1,s2}^{(1),(\upsilon)}+\tS_{q,i+1,ac}^{(1),(\upsilon)})\\
	=&\sum_{s:I=(s,\upsilon)}\sum_{\substack{I'=(s',\tup)\in\mathscr I\\\nrm{I'-I}\leqslant1}}i\gamma_{q,i+1}^{I'}f_{i+1}\otimes(\tS_{q,i+1}^{(1),I,cof1}f_{i+1})\Pi_{I',I}+\sum_{s:I=(s,\upsilon)}\sum_{\substack{I'=(s',\tup)\in\mathscr I\\\nrm{I'-I}\leqslant1}}i\gamma_{q,i+1}^{I'}f_{i+1}\otimes(\tS_{q,i+1}^{(1),I,cof2}f_{i+1})\Pi_{I'}\tPi_{I}\\
&+\sum_{s:I=(s,\upsilon)}i\gamma_{q,i+1}^{I}f_{i+1}\otimes(\tS_{q,i+1,ac}^{(1),(\upsilon)}f_{i+1})\Pi_{I}.
\end{align*}
Then, we could define
\begin{align*}
	&(\tG_{q,i+1}\tS_{q,i+1}^{(1),(\upsilon)}-\tG_{q,i+1,m}\tS_{q,i+1,m1}^{(1),(\upsilon)})^{low}\\
	:=& \tG_{q,i+1,ac}^{(\upsilon)}\tS_{q,i+1,ac}^{(1),(\upsilon)}+\sum_{s:I=(s,\upsilon)}\sum_{\substack{I'=(s',\tup)\in\mathscr I\\\nrm{I'-I}\leqslant1}}i\gamma_{q,i+1}^{I'}a_{A_{I'}^{\tup}}(t)f_{i+1}^{\perp}\otimes((\tS_{q,i+1}^{(1),I,cofm}+\tS_{q,i+1}^{(1),I,cof1})f_{i+1}\mPO(\Pi_{I,I'}))\\
	&+\sum_{s:I=(s,\upsilon)}\sum_{\substack{I'=(s',\tup)\in\mathscr I\\\nrm{I'-I}\leqslant1}}\frac{1}{\lambda_{q,i+1}[I']}\tilde{f}_{A_{I'}^{\tup}}\otimes((\tS_{q,i+1}^{(1),I,cofm}+\tS_{q,i+1}^{(1),I,cof1})\nb \gamma_{q,i+1,c}^{I'})\mPO(\Pi_{I',I})\\
	&+\sum_{s:I=(s,\upsilon)}\sum_{\substack{I'=(s',\tup)\in\mathscr I\\\nrm{I'-I}\leqslant1}}\frac{1}{\lambda_{q,i+1}[I']}\tilde{f}_{A_{I'}^{\tup}}\otimes(\tS_{q,i+1}^{(1),I,cof2}\gamma_{q,i+1,s}^{I'})\mPO(\tPi_{I'}\tPi_{I})\\
	&+\sum_{s:I=(s,\upsilon)}\sum_{\substack{I'=(s',\tup)\in\mathscr I\\\nrm{I'-I}\leqslant1}}i\gamma_{q,i+1}^{I'}f_{i+1}\otimes(\tS_{q,i+1}^{(1),I,cof1}f_{i+1})\mPO(\Pi_{I',I}),
\end{align*}
and
\begin{align*}
	&(\tG_{q,i+1}\tS_{q,i+1}^{(1),(\upsilon)}-\tG_{q,i+1,m}\tS_{q,i+1,m1}^{(1),(\upsilon)})^{high}\\
	:=& \sum_{s:I=(s,\upsilon)}\tG_{q,i+1,ac}^{(\upsilon)}((\tS_{q,i+1}^{(1),I,cofm}+\tS_{q,i+1}^{(1),I,cof1})\Pi_{I}+\tS_{q,i+1}^{(1),I,cof2}\tPi_{I})\\
	&+\sum_{s:I=(s,\upsilon)}\sum_{\substack{I'=(s',\tup)\in\mathscr I\\\nrm{I'-I}\leqslant1}}i\gamma_{q,i+1}^{I'}a_{A_{I'}^{\tup}}(t)f_{i+1}^{\perp}\otimes((\tS_{q,i+1}^{(1),I,cofm}+\tS_{q,i+1}^{(1),I,cof1})f_{i+1}\mPG(\Pi_{I,I'})+\tS_{q,i+1}^{(1),I,cof2}f_{i+1}\mPG(\tPi_{I}\Pi_{I'}))\\
	&+\sum_{s:I=(s,\upsilon)}\sum_{\substack{I'=(s',\tup)\in\mathscr I\\\nrm{I'-I}\leqslant1}}\frac{1}{\lambda_{q,i+1}[I']}\tilde{f}_{A_{I'}^{\tup}}\otimes((\tS_{q,i+1}^{(1),I,cofm}+\tS_{q,i+1}^{(1),I,cof1})(\gamma_{q,i+1,s}^{I'}\mPG(\tPi_{I'}\Pi_{I})+\nb \gamma_{q,i+1,c}^{I'}\mPG(\Pi_{I',I})))\\
	&+\sum_{s:I=(s,\upsilon)}\sum_{\substack{I'=(s',\tup)\in\mathscr I\\\nrm{I'-I}\leqslant1}}\frac{1}{\lambda_{q,i+1}[I']}\tilde{f}_{A_{I'}^{\tup}}\otimes(\tS_{q,i+1}^{(1),I,cof2}(\gamma_{q,i+1,s}^{I'}\mPG(\tPi_{I'}\tPi_{I})+\nb \gamma_{q,i+1,c}^{I'}\mPG(\Pi_{I'}\tPi_{I})))\\
	&+\sum_{s:I=(s,\upsilon)}i\gamma_{q,i+1}^{I}a_{A_{I}^{\upsilon}}(t)f_{i+1}^{\perp}\otimes(\tS_{q,i+1,ac}^{(1),(\upsilon)}f_{i+1})\Pi_{I}+\sum_{s:I=(s,\upsilon)}\frac{1}{\lambda_{q,i+1}[I]}\tilde{f}_{A_{I}^{\upsilon}}\otimes(\tS_{q,i+1,ac}^{(1),(\upsilon)}(\gamma_{q,i+1,s}^{I}\tPi_{I}+\nb \gamma_{q,i+1,c}^{I}\Pi_{I}))\\
	&+\sum_{s:I=(s,\upsilon)}\sum_{\substack{I'=(s',\tup)\in\mathscr I\\\nrm{I'-I}\leqslant1}}i\gamma_{q,i+1}^{I'}f_{i+1}\otimes(\tS_{q,i+1}^{(1),I,cof1}f_{i+1})\mPG(\Pi_{I',I})\\
	&+\sum_{s:I=(s,\upsilon)}\sum_{\substack{I'=(s',\tup)\in\mathscr I\\\nrm{I'-I}\leqslant1}}i\gamma_{q,i+1}^{I'}f_{i+1}\otimes(\tS_{q,i+1}^{(1),I,cof2}f_{i+1})\mPG(\Pi_{I'}\tPi_{I})+\sum_{s:I=(s,\upsilon)}i\gamma_{q,i+1}^{I}f_{i+1}\otimes(\tS_{q,i+1,ac}^{(1),(\upsilon)}f_{i+1})\Pi_{I}.
\end{align*}
Similarly, we could give
	\begin{align}
		&\nrm{\pt^r(\tG_{q,i+1}\tS_{q,i+1}^{(1),(\upsilon)}-\tG_{q,i+1,m}\tS_{q,i+1,m1}^{(1),(\upsilon)})^{low}}_{N}+\nrm{\pt^ra_m[(\tG_{q,i+1}\tS_{q,i+1}^{(1),(\upsilon)}-\tG_{q,i+1,m}\tS_{q,i+1,m1}^{(1),(\upsilon)})^{high}]}_{N}\nonumber\\
		\lesssim&\sum_{r_1+r_2=r}\sum_{N_1+N_2=N}\nrm{\pt^{r_1}(\tG_{q,i+1,ac}^{(\upsilon)})}_{N_1}\left(\nrm{\pt^{r_2}(\tS_{q,i+1}^{(1),I,cofm})}_{N_2}+\nrm{\pt^{r_2}(\tS_{q,i+1}^{(1),I,cof1})}_{N_2}+\nrm{\pt^{r_2}(\tS_{q,i+1}^{(1),I,cof2})}_{N_2}\right)\nonumber\\
		&+\sum_{r_1+r_2=r}\sum_{N_1+N_2=N}\nrm{\pt^{r_1}(a_{A_I^\upsilon}\gamma_{q,i+1}^{I})}_{N_1}\left(\nrm{\pt^{r_2}(\tS_{q,i+1}^{(1),I,cofm})}_{N_2}+\nrm{\pt^{r_2}(\tS_{q,i+1}^{(1),I,cof1})}_{N_2}+\nrm{\pt^{r_2}(\tS_{q,i+1}^{(1),I,cof2})}_{N_2}\right)\nonumber\\
		&+\lambda_{q,i+1}^{-1}\sum_{r_1+r_2=r}\sum_{N_1+N_2=N}\left(\nrm{\pt^{r_1}(\gamma_{q,i+1,s}^{I})}_{N_1}+\nrm{\pt^{r_1}(\nb\gamma_{q,i+1,c}^{I})}_{N_1}\right)\left(\nrm{\pt^{r_2}(\tS_{q,i+1}^{(1),I,cofm})}_{N_2}+\nrm{\pt^{r_2}(\tS_{q,i+1}^{(1),I,cof1})}_{N_2}\right)\label{est on am nolinear 2}\\
		&+\lambda_{q,i+1}^{-1}\sum_{r_1+r_2=r}\sum_{N_1+N_2=N}\left(\nrm{\pt^{r_1}(\gamma_{q,i+1,s}^{I})}_{N_1}+\nrm{\pt^{r_1}(\nb\gamma_{q,i+1,c}^{I})}_{N_1}\right)\nrm{\pt^{r_2}(\tS_{q,i+1}^{(1),I,cof2})}_{N_2}\nonumber\\
		&+\sum_{r_1+r_2=r}\sum_{N_1+N_2=N}\nrm{\pt^{r_1}(\gamma_{q,i+1}^{I})}_{N_1}\left(\nrm{\pt^{r_2}(\tS_{q,i+1}^{(1),I,cof1})}_{N_2}+\nrm{\pt^{r_2}(\tS_{q,i+1}^{(1),I,cof2})}_{N_2}+\nrm{\pt^{r_2}(\tS_{q,i+1,ac}^{(1),(\upsilon)})}_{N_2}\right)\nonumber\\
		\lesssim&\mu_{q,i}^{-N-r}\left(\varepsilon\delta_{q+1}^{\frac{3}{2}}+\varepsilon\delta_{q+1}+(\lambda_{q,i+1}\mu_{q,i})^{-1}\delta_{q+1}+(\lambda_{q,i+1}\mu_{q,i})^{-2}\delta_{q+1}+\varepsilon\delta_{q+1}\right)
		\lesssim\mu_{q,i}^{-N-r}\varepsilon\delta_{q+1}.\nonumber
	\end{align}
By using Proposition \ref{lm: asym div eq of ae} and \eqref{est on Xi_ac}--\eqref{est on am nolinear 2}, and choosing 
\begin{align*}
	U_P^{high}=&\Div\left(-(\tG_{q,i+1}\tS_{q,i+1}^{(1),(\upsilon)}-\tG_{q,i+1,m}\tS_{q,i+1,m1}^{(1),(\upsilon)})^{high}+(G_{q,i+1}\tG_{q,i+1,m}\tS_{q,i+1,m1}^{(1),(\upsilon)})^{high}\right)\\
	&+\sum_{s:I=(s,\upsilon)}(W_{q,i+1,p}^{I,cof,||}f_{i+1}+W_{q,i+1,p}^{I,cof,\perp}f_{i+1}^{\perp})e^{i\lambda_{q,i+1}\xi_{A_I^{\upsilon},f_{i+1}}}+\sum_{s:I=(s,\upsilon)}(\oW_{q,i+1,p}^{I,cof,||}f_{i+1}+\oW_{q,i+1,p}^{I,cof,\perp}f_{i+1}^{\perp})e^{-i\lambda_{q,i+1}\xi_{A_I^{\upsilon},f_{i+1}}},\\
	U_P^{low}=&\Div\left(-(\tG_{q,i+1}\tS_{q,i+1}^{(1),(\upsilon)}-\tG_{q,i+1,m}\tS_{q,i+1,m1}^{(1),(\upsilon)})^{low}+(G_{q,i+1}\tG_{q,i+1,m}\tS_{q,i+1,m1}^{(1),(\upsilon)})^{low}\right)\\
	&+\pa_{tt}\tu_{q,i+1,L}^{(\upsilon)}-\Div\Big(\tG_{q,i+1,ac}^{(\upsilon)}(\Sgm_{\Gl}+\Rl+c_{q,i})+(\Id+\Gl)\tS_{q,i+1,ac}^{(1),(\upsilon)}\Big),
\end{align*}
we obtain that the principal error $R_P$ satisfies
\begin{align}
	\nrm{\pt^rR_P}_N\lesssim&\lambda_{q,i+1}^{N+r}\left(\lambda_{q,i+1}^{-1}\nrm{U_P^{high}}_0+\mu_{q,i}\nrm{U_P^{low}}_0\right)
	\lesssim\lambda_{q,i+1}^{N+r}(\varepsilon\delta_{q+1}+(\lambda_{q,i+1}\mu_{q,i})^{-1}\delta_{q+1}^{\frac{1}{2}}).\label{est on R_P}
\end{align}
\subsubsection{Estimates on the mediation error}
Recall that 
\begin{align*}
	R_{M}=\sum_{\upsilon}\uR\left[\pa_{tt}\tu_{q,i+1,M}^{(\upsilon)}+\Div (\chi_\upsilon^2(\mu_{q,i}^{-1}x)R_{m,i})\right],
\end{align*}
we could calculate
\begin{equation}\label{est on R_M 1}
	\begin{aligned}
	\nrm{\pt^r(\pa_{tt}\tu_{q,i+1,M}^{(\upsilon)}+\Div (\chi_\upsilon^2(\mu_{q,i}^{-1}x)R_{m,i}))}_N
	\lesssim&\nrm{\pt^{r+2}\tu_{q,i+1,M}^{(\upsilon)}}_N+\nrm{\pt^r\Div (\chi_\upsilon^2(\mu_{q,i}^{-1}x)R_{m,i})}_N\\
	\lesssim&\mu_{q,i}^{-N-r-1}(\varepsilon\delta_{q+1}+\ell_{q,i}^2\lambda_{q,i}^2\delta_{q+1})\lesssim\mu_{q,i}^{-N-r-1}\varepsilon\delta_{q+1}.
	\end{aligned}
\end{equation}
By using Proposition \ref{lm: asym div eq of ae} and \eqref{est on R_M 1}, and choosing 
\begin{align*}
	U_M^{low}=&\pa_{tt}\tu_{q,i+1,M}^{(\upsilon)}+\Div (\chi_\upsilon^2(\mu_{q,i}^{-1}x)R_{m,i}),
\end{align*}
we obtain that the mediation error $R_M$ satisfies
\begin{align}
	\nrm{\pt^rR_M}_N\lesssim&\lambda_{q,i+1}^{N+r}\mu_{q,i}\nrm{U_M^{low}}_0
	\lesssim\lambda_{q,i+1}^{N+r}\varepsilon\delta_{q+1}.\label{est on R_M}
\end{align}
\subsubsection{Estimates on the oscillation error}
Recall that
\begin{align*}
	R_{O1}=&\sum_{\upsilon}\uR\left[\Div\left((\Id+G_{q,i+1})\left(\sgm^*\chi_{\upsilon}^2(\mu_{q,i}^{-1}x)d_{q,i+1}^2\left(\Id-\frac{f_{i+1}}{|f_{i+1}|}\otimes \frac{f_{i+1}}{|f_{i+1}|}\right)-\tG_{q,i+1,m}\tS_{q,i+1,m1}^{(1),(\upsilon)}-\tS_{q,i+1,p}^{(2),m,(\upsilon)}\right)\right)\right],\\
	R_{O2}=&-\tS_{q,i+1,s}^{(2),m}-\tS_{q,i+1}^{(2),c}-\tS_{q,i+1}^{(\geqslant3)},
\end{align*}
From \eqref{def of tS_mm}, we could define
\begin{align*}
	(\tS_{q,i+1,p}^{(2),m,(\upsilon)})^{low}
	&=-\sum_{s:I=(s,\upsilon)}\sum_{\substack{I'=(s',\tup)\in\mathscr I\\
\nrm{I'-I}\leqslant1}}|f_{i+1}|^4\gamma_{q,i+1}^I\gamma_{q,i+1}^{I'}\left(\sgm^*\Id-(2\sgm_2+8\sgm_{12})\frac{f_{i+1}}{|f_{i+1}|}\otimes \frac{f_{i+1}}{|f_{i+1}|}\right)\mPO\Pi_{I,I'}\\
	&=\sum_{s:I=(s,\upsilon)}2|f_{i+1}|^4(\gamma_{q,i+1}^I)^2\left(\sgm^*\Id-(2\sgm_2+8\sgm_{12})\frac{f_{i+1}}{|f_{i+1}|}\otimes \frac{f_{i+1}}{|f_{i+1}|}\right)\\
	&=\sum_{s:I=(s,\upsilon)}\theta_I^2\chi_I^2d_{q,i+1}^2\left(\sgm^*\Id-(2\sgm_2+8\sgm_{12})\frac{f_{i+1}}{|f_{i+1}|}\otimes \frac{f_{i+1}}{|f_{i+1}|}\right)\\
	&=\chi_\upsilon^2(\mu_{q,i}^{-1}x)d_{q,i+1}^2\left(\sgm^*\Id-(2\sgm_2+8\sgm_{12})\frac{f_{i+1}}{|f_{i+1}|}\otimes \frac{f_{i+1}}{|f_{i+1}|}\right),
\end{align*}
and
\begin{align*}
	(\tS_{q,i+1,p}^{(2),m,(\upsilon)})^{high}+(\tG_{q,i+1,m}\tS_{q,i+1,m1}^{(1),(\upsilon)})^{high}
	&=-\sum_{s:I=(s,\upsilon)}\sum_{\substack{I'=(s',\tup)\in\mathscr I\\\nrm{I'-I}\leqslant1}}\sgm^*|f_{i+1}|^4(\gamma_{q,i+1}^I\gamma_{q,i+1}^{I'})\left(\Id-\frac{f_{i+1}}{|f_{i+1}|}\otimes \frac{f_{i+1}}{|f_{i+1}|}\right)\mPG\Pi_{I,I'}.
\end{align*}
where we have used  \eqref{eq:structural-assumptions} and \eqref{def of GSG high}.
Similarly, we could use \eqref{eq:structural-assumptions}, \eqref{def of tS_mm}, and \eqref{def of GSG low} to obtain
\begin{align*}
(\tS_{q,i+1,p}^{(2),m,(\upsilon)})^{low}+(\tG_{q,i+1,m}\tS_{q,i+1,m1}^{(1),(\upsilon)})^{low}=&\sgm^*\chi_\upsilon^2(\mu_{q,i}^{-1}x)d_{q,i+1}^2\left(\Id-\frac{f_{i+1}}{|f_{i+1}|}\otimes \frac{f_{i+1}}{|f_{i+1}|}\right),
\end{align*}
and then,
\begin{align*}
	\sgm^*\chi_{\upsilon}^2(\mu_{q,i}^{-1}x)d_{q,i+1}^2\left(\Id-\frac{f_{i+1}}{|f_{i+1}|}\otimes \frac{f_{i+1}}{|f_{i+1}|}\right)-\tG_{q,i+1,m}\tS_{q,i+1,m1}^{(1),(\upsilon)}-\tS_{q,i+1,p}^{(2),m,(\upsilon)}=-(\tG_{q,i+1,m}\tS_{q,i+1,m1}^{(1),(\upsilon)})^{high}-(\tS_{q,i+1,p}^{(2),m,(\upsilon)})^{high}.
\end{align*}
Next, we calculate
\begin{align*}
	&(\Id+G_{q,i+1})((\tG_{q,i+1,m}\tS_{q,i+1,m1}^{(1),(\upsilon)})^{high}+(\tS_{q,i+1,p}^{(2),m,(\upsilon)})^{high})\\
	=&(\Id+\Gl)((\tG_{q,i+1,m}\tS_{q,i+1,m1}^{(1),(\upsilon)})^{high}+(\tS_{q,i+1,p}^{(2),m,(\upsilon)})^{high})+\tG_{q,i+1}((\tG_{q,i+1,m}\tS_{q,i+1,m1}^{(1),(\upsilon)})^{high}+(\tS_{q,i+1,p}^{(2),m,(\upsilon)})^{high}).
\end{align*}

For the first term, after simple calculation, we have
$$
|f_{i+1}|^2([I]\pm [I'])f_{i+1}=(f_{i+1}\cdot([I]\pm [I'])f_{i+1})f_{i+1},\quad \text{if}\ \nrm{I-I'}\leqslant1,
$$ 
which means
\begin{align*}
\Div\left(\left(\Id-\frac{f_{i+1}}{|f_{i+1}|}\otimes \frac{f_{i+1}}{|f_{i+1}|}\right)\mPG\Pi_{I,I'}\right)=0.
\end{align*}
Then, we have
\begin{align*}
	&\Div((\Id+\Gl)((\tG_{q,i+1,m}\tS_{q,i+1,m1}^{(1),(\upsilon)})^{high}+(\tS_{q,i+1,p}^{(2),m,(\upsilon)})^{high}))\\
    =&-\sum_{s:I=(s,\upsilon)}\sum_{\substack{I'=(s',\tup)\in\mathscr I\\\nrm{I'-I}\leqslant1}}\Div\left(\sgm^*|f_{i+1}|^4(\gamma_{q,i+1}^I\gamma_{q,i+1}^{I'})(\Id+\Gl)\left(\Id-\frac{f_{i+1}}{|f_{i+1}|}\otimes \frac{f_{i+1}}{|f_{i+1}|}\right)\right)\mPG\Pi_{I,I'},
\end{align*}
and 
\begin{align}
&\nrm{\pt^ra_m[\Div((\Id+\Gl)((\tG_{q,i+1,m}\tS_{q,i+1,m1}^{(1),(\upsilon)})^{high}+(\tS_{q,i+1,p}^{(2),m,(\upsilon)})^{high}))]}_N\nonumber\\
\lesssim&\nrm{\pt^{r}((\gamma_{q,i+1}^I\gamma_{q,i+1}^{I'})(\Id+\Gl))}_{N+1}\lesssim\mu_{q,i}^{-N-r-1}\delta_{q+1}.\label{est on R_O1 1}
\end{align}
For the second term, we could obtain
\begin{align*}
	&\tG_{q,i+1}((\tG_{q,i+1,m}\tS_{q,i+1,m1}^{(1),(\upsilon)})^{high}+(\tS_{q,i+1,p}^{(2),m,(\upsilon)})^{high})\\
	=&(\tG_{q,i+1,m}+\tG_{q,i+1,s1}+\tG_{q,i+1,s2}+\tG_{q,i+1,ac})((\tG_{q,i+1,m}\tS_{q,i+1,m1}^{(1),(\upsilon)})^{high}+(\tS_{q,i+1,p}^{(2),m,(\upsilon)})^{high})\\
	=&(\tG_{q,i+1,s2}+\tG_{q,i+1,ac})((\tG_{q,i+1,m}\tS_{q,i+1,m1}^{(1),(\upsilon)})^{high}+(\tS_{q,i+1,p}^{(2),m,(\upsilon)})^{high})\\
	=&-\sum_{s:I=(s,\upsilon)}\sum_{\substack{I'=(s',\tup)\in\mathscr I\\\nrm{I'-I}\leqslant1}}\sum_{\substack{I''=(s'',\ttup)\in\mathscr I\\\nrm{I''-I}\leqslant1}}\frac{1}{\lambda_{q,i+1}[I'']}\sgm^*|f_{i+1}|^4(\gamma_{q,i+1}^I\gamma_{q,i+1}^{I'})\tilde{f}_{A_{I''}^{\ttup}}\otimes\left(\left(\Id-\frac{f_{i+1}}{|f_{i+1}|}\otimes \frac{f_{i+1}}{|f_{i+1}|}\right)\nabla\gamma_{q,i+1,c}^{I''}\right)\Pi_{I''}\mPG\Pi_{I,I'}\\
	&-\sum_{s:I=(s,\upsilon)}\sum_{\substack{I'=(s',\tup)\in\mathscr I\\\nrm{I'-I}\leqslant1}}\sum_{\substack{I''=(s'',\ttup)\in\mathscr I\\\nrm{I''-I}\leqslant1}}\frac{1}{\lambda_{q,i+1}[I'']}\sgm^*|f_{i+1}|^4(\gamma_{q,i+1}^I\gamma_{q,i+1}^{I'})\tilde{f}_{A_{I''}^{\ttup}}\otimes\left(\left(\Id-\frac{f_{i+1}}{|f_{i+1}|}\otimes \frac{f_{i+1}}{|f_{i+1}|}\right)\gamma_{q,i+1,s}^{I''}\right)\tPi_{I''}\mPG\Pi_{I,I'},
\end{align*}
We denote
\begin{align*}
	\mathcal H_{O1}^{(\upsilon)}
	:=&(\tG_{q,i+1,m}\tS_{q,i+1,m1}^{(1),(\upsilon)})^{high}
	+(\tS_{q,i+1,p}^{(2),m,(\upsilon)})^{high}.
\end{align*}
The products in the two sums above may contain spatial zero-frequency components. We decompose
\begin{align*}
	\Pi_{I''}\mPG\Pi_{I,I'}
	=&\mPG\left(\Pi_{I''}\mPG\Pi_{I,I'}\right)
	+\mPO\left(\Pi_{I''}\mPG\Pi_{I,I'}\right),\\
	\tPi_{I''}\mPG\Pi_{I,I'}
	=&\mPG\left(\tPi_{I''}\mPG\Pi_{I,I'}\right)
	+\mPO\left(\tPi_{I''}\mPG\Pi_{I,I'}\right).
\end{align*}
Accordingly, we write
\begin{align*}
	\tG_{q,i+1}\mathcal H_{O1}^{(\upsilon)}
	=&(\tG_{q,i+1}\mathcal H_{O1}^{(\upsilon)})^{high}
	+(\tG_{q,i+1}\mathcal H_{O1}^{(\upsilon)})^{low},
\end{align*}
where the superscripts $high$ and $low$ denote the contributions obtained by replacing the two oscillatory products above by their $\mPG$ and $\mPO$ parts, respectively. The high-frequency part satisfies
\begin{align}
	&\nrm{\pt^ra_m\left[\Div\left((\tG_{q,i+1}\mathcal H_{O1}^{(\upsilon)})^{high}\right)\right]}_N\nonumber\\
	\lesssim&
	\sum_{N_1+N_2=N}\sum_{r_1+r_2=r}
	\nrm{\pt^{r_1}(\gamma_{q,i+1}^I\gamma_{q,i+1}^{I'})}_{N_1}
	\left(
	\nrm{\pt^{r_2}\nb\gamma_{q,i+1,c}^{I''}}_{N_2}
	+\nrm{\pt^{r_2}\gamma_{q,i+1,s}^{I''}}_{N_2}
	\right)
	\lesssim\mu_{q,i}^{-N-r-1}\delta_{q+1}^{\frac32}.
	\label{est on R_O1 2}
\end{align}
For the low-frequency part, the spatial derivatives fall only on the coefficients, while the time derivatives may also fall on the spatial zero-frequency oscillatory factors. Thus, for $0\leqslant N+r\leqslant4$,
\begin{align}
	\nrm{\pt^r\Div\left((\tG_{q,i+1}\mathcal H_{O1}^{(\upsilon)})^{low}\right)}_N
	\lesssim\lambda_{q,i+1}^{r}\mu_{q,i}^{-N}
	\lambda_{q,i+1}^{-1}\mu_{q,i}^{-2}
	\delta_{q+1}^{\frac32}.
	\label{est on R_O1 2 low}
\end{align}
We choose
\begin{align*}
	U_{O1}^{high}
	:=-\Div\left((\Id+G_{\ell,i})\mathcal H_{O1}^{(\upsilon)}
	+(\tG_{q,i+1}\mathcal H_{O1}^{(\upsilon)})^{high}\right),\quad
	U_{O1}^{low}
	:=-\Div\left((\tG_{q,i+1}\mathcal H_{O1}^{(\upsilon)})^{low}\right).
\end{align*}
By using Proposition \ref{lm: asym div eq of ae}, \eqref{est on R_O1 1},
\eqref{est on R_O1 2}, and \eqref{est on R_O1 2 low}, the oscillation error
$R_{O1}$ satisfies
\begin{align}
	\nrm{\pt^rR_{O1}}_N
	\lesssim&\lambda_{q,i+1}^{N+r}
	\left(
	\lambda_{q,i+1}^{-1}\nrm{U_{O1}^{high}}_0
	+\mu_{q,i}\nrm{U_{O1}^{low}}_0
	\right)\nonumber\\
	\lesssim&\lambda_{q,i+1}^{N+r}
	\left(
	(\lambda_{q,i+1}\mu_{q,i})^{-1}\delta_{q+1}
	+(\lambda_{q,i+1}\mu_{q,i})^{-1}\delta_{q+1}^{\frac32}
	\right)
	\lesssim\lambda_{q,i+1}^{N+r}
	(\lambda_{q,i+1}\mu_{q,i})^{-1}\delta_{q+1}.
	\label{est on R_O1}
\end{align}
As for $R_{O2}$, we could directly get
\begin{align*}
		\nrm{\pt^r\tS_{q,i+1,s}^{(2),m}}_N\lesssim \sum_{N_1+N_2=N}\sum_{r_1+r_2=r}\nrm{\pt^{r_1}\tG_{q,i+1}}_{N_1}\left(\nrm{\pt^{r_2}\tG_{q,i+1,s}}_{N_2}+\nrm{\pt^{r_2}\tG_{q,i+1,ac}}_{N_2}\right)\lesssim\lambda_{q,i+1}^{N+r}\varepsilon\delta_{q+1}.
\end{align*}
Moreover, due to the definition of $\tS_{q,i+1}^{(2),c}$ and $\tS_{q,i+1}^{(\geqslant 3)}$ in \eqref{def of tS3}, \eqref{def of tS_2},  and  \eqref{def of tS_2 m}, we have
\begin{align*}
	\nrm{\pt^r\tS_{q,i+1}^{(2),c}}_N\lesssim&\sum_{k=1}^6\sum_{N_1+\cdots+N_{k+2}=N}\sum_{r_1+\cdots+r_{k+2}=r}\prod_{l=1}^k\nrm{\pt^{r_l}\Gl}_{N_l} \nrm{\pt^{r_{k+1}}\tG_{q,i+1}}_{N_{k+1}}\nrm{\pt^{r_{k+2}}\tG_{q,i+1}}_{N_{k+2}}\lesssim\lambda_{q,i+1}^{N+r}\varepsilon\delta_{q+1},\\	\nrm{\pt^r\tS_{q,i+1}^{(\geqslant 3)}}_N\lesssim&\sum_{k=0}^6\sum_{N_1+\cdots+N_{k+3}=N}\sum_{r_1+\cdots+r_{k+3}=r}\prod_{l=1}^k\nrm{\pt^{r_l}\Gl}_{N_l} \nrm{\pt^{r_{k+1}}\tG_{q,i+1}}_{N_{k+1}}\nrm{\pt^{r_{k+2}}\tG_{q,i+1}}_{N_{k+2}}\nrm{\pt^{r_{k+3}}\tG_{q,i+1}}_{N_{k+3}}\lesssim\lambda_{q,i+1}^{N+r}\delta_{q+1}^{\frac{3}{2}}.
\end{align*}
Therefore, the oscillation error $R_{O2}$ satisfies
\begin{align}
	\nrm{\pt^rR_{O2}}_N
	\lesssim
	\nrm{\pt^r\tS_{q,i+1,s}^{(2),m}}_N
	+\nrm{\pt^r\tS_{q,i+1}^{(2),c}}_N
	+\nrm{\pt^r\tS_{q,i+1}^{(\geqslant3)}}_N
	\lesssim
	\lambda_{q,i+1}^{N+r}\varepsilon\delta_{q+1}.
	\label{est on R_O2}
\end{align}
To sum up, we have
\begin{align}
	\nrm{\pt^r\delta R_{q,i+1}}_N\lesssim\nrm{\pt^rR_{P}}_N+\nrm{\pt^rR_{M}}_N+\nrm{\pt^rR_{O1}}_N+\nrm{\pt^rR_{O2}}_N\lesssim\lambda_{q,i+1}^{N+r}\left(\varepsilon\delta_{q+1}+(\lambda_{q,i+1}\mu_{q,i})^{-1}\delta_{q+1}^{\frac{1}{2}}\right).
\end{align} 
\eqref{pp of supp dR} follows immediately from \eqref{pp of supp tu_q,i+1 p}, \eqref{pp of supp tu_q,i+1 ac}, and \eqref{pp of time supp uRlj}.
\subsection{Estimate on energy}
Here, we will give a proposition about the estimates on the total energy function.
\begin{pp}
	The total energy function
	$
	\int_{\T^2}\left(\frac{1}{2}|\pt u_{q,i+1}|^2+\sgm_{G_{q,i+1}}\right)\rd x
	$
	satisfies the following estimate for $t\in\supp_t\te_{q,i}$:
	\begin{align}\label{est on energy}
		\left|\int_{\T^2}\left[\left(\frac{1}{2}|\pt u_{q,i+1}|^2+\sgm_{G_{q,i+1}}\right)-\left(\frac{1}{2}|\pt u_{q,i}|^2+\sgm_{G_{q,i}}\right)-4\sgm_{11}d_{q,i+1}^2\right]\rd x\right|\lesssim_{\sgm}\varepsilon\delta_{q+1}.
	\end{align}
\end{pp}

			\begin{proof}
			For convenience, set
			$
				\pttu_{q,i+1,\mathrm{rem}}:=\pttu_{q,i+1,s}+\pt\tu_{q,i+1,ac},\ \tG_{q,i+1,\mathrm{rem}}:=\tG_{q,i+1,s}+\tG_{q,i+1,ac}.
			$
			By the definitions, we have 
			\begin{align*}
				\int_{\T^2}	\frac{1}{2}|\pt u_{q,i+1}|^2\rd x=&\int_{\T^2}\frac{1}{2}|\pt \ul|^2\rd x
				+\int_{\T^2}\pt \ul\cdot\pt\tu_{q,i+1}\rd x+\int_{\T^2}\frac{1}{2}|\pt \tu_{q,i+1}|^2\rd x,\\
				\int_{\T^2}\frac{1}{2}|\pt\tu_{q,i+1}|^2\rd x=&\int_{\T^2}\frac{1}{2}|\pttu_{q,i+1,m}|^2\rd x+\int_{\T^2}\pttu_{q,i+1,\mathrm{rem}}\cdot\pttu_{q,i+1,m}\rd x+\int_{\T^2}\frac{1}{2}|\pttu_{q,i+1,\mathrm{rem}}|^2\rd x,\\
				\int_{\T^2}\sgm_{G_{q,i+1}}\rd x=&\int_{\T^2}\sgm({j_1|_{G=\Gl},j_2|_{G=\Gl}})\rd x+\int_{\T^2}\tsgm_{q,i+1}^{(1)}\rd x+\int_{\T^2}\tsgm_{q,i+1}^{(2),m}\rd x+\int_{\T^2}\tsgm_{q,i+1}^{(2),c}\rd x+\int_{\T^2}\tsgm_{q,i+1}^{(\geqslant3)}\rd x.
			\end{align*}
			Among these terms, we calculate
			\begin{align*}	
				\int_{\T^2}\frac{1}{2}|\pt\tu_{q,i+1,m}|^2\rd x=&-\sum_{I}\sum_{\substack{I'\in\mathscr I\\\nrm{I'-I}\leqslant1}}\int_{\T^2}2\sgm_{11}|f_{i+1}|^4\gamma_{q,i+1}^I\gamma_{q,i+1}^{I'}\Pi_{I,I'}\rd x\\
				=&-\sum_{I}\sum_{\substack{I'\in\mathscr I\\\nrm{I'-I}\leqslant1}}\int_{\T^2}2\sgm_{11}|f_{i+1}|^4\gamma_{q,i+1}^I\gamma_{q,i+1}^{I'}\mPG\Pi_{I,I'}\rd x+\int_{\T^2}2\sgm_{11}d_{q,i+1}^2\rd x,\\
				\int_{\T^2}\tsgm_{q,i+1}^{(2),m}\rd x=&\int_{\T^2}\left(2(\sgm_{2}+\sgm_{11})(\tr(\tG_{q,i+1}))^2-\sgm_2\tr(\tG_{q,i+1}\tG_{q,i+1}^{\top}+\tG_{q,i+1}^2)\right)\rd x,\\
				=&\int_{\T^2}\left(4(\sgm_{2}+\sgm_{11})\tr(\tG_{q,i+1,m})\tr(\tG_{q,i+1,\mathrm{rem}})-2\sgm_2\tr(\tG_{q,i+1,m}\tG_{q,i+1,\mathrm{rem}}^{\top}+\tG_{q,i+1,m}\tG_{q,i+1,\mathrm{rem}})\right)\rd x\\
				&+\int_{\T^2}\left(2(\sgm_{2}+\sgm_{11})(\tr(\tG_{q,i+1,\mathrm{rem}}))^2-\sgm_2\tr(\tG_{q,i+1,\mathrm{rem}}\tG_{q,i+1,\mathrm{rem}}^{\top}+\tG_{q,i+1,\mathrm{rem}}^2)\right)\rd x\\
				&-\sum_{I}\sum_{\substack{I'\in\mathscr I\\\nrm{I'-I}\leqslant1}}\int_{\T^2}2\sgm_{11}|f_{i+1}|^4\gamma_{q,i+1}^I\gamma_{q,i+1}^{I'}\mPG\Pi_{I,I'}\rd x+\int_{\T^2}2\sgm_{11}d_{q,i+1}^2\rd x,
			\end{align*}
			\begin{align*}
				\int_{\T^2}\pt \ul\cdot\pt\tu_{q,i+1}\rd x
				=&\int_{\T^2}\left(i\sum_{I}\tilde{f}_{A_I^{\upsilon}}\cdot\pt \ul\gamma_{q,i+1}^{I}\Pi_{I}+\sum_{I}\frac{1}{\lambda_{q,i+1}[I]}\left(\pt(\gamma_{q,i+1,c}^I\tilde{f}_{A_I^{\upsilon}}\Pi_{I})+\pt(\gamma_{q,i+1}^{I}\tilde{f}_{A_I^{\upsilon}})\tPi_{I}\right)\cdot\pt \ul\right)\rd x\\
				&+\int_{\T^2}\pt \ul\cdot\pt\tu_{q,i+1,ac}\rd x,\\
				\int_{\T^2}\tsgm_{q,i+1}^{(1)}\rd x=&\int_{\T^2}\left(\sgm_2\tj_2^{(1)}+\sgm_{11}\tj_1^{(0)}\tj_1^{(1)}+\sgm_{22}\tj_2^{(0)}\tj_2^{(1)}+\sgm_{12}(\tj_1^{(0)}\tj_2^{(1)}+\tj_1^{(1)}\tj_2^{(0)})+\frac{\sgm_{111}}{2}(\tj_1^{(0)})^2\tj_1^{(1)}\right)\rd x.
			\end{align*}
			Using the finite overlap of the supports and Lemma \ref{est on int operator}, we obtain
			\begin{align*}
				&\left|\sum_{I}\sum_{\substack{I'\in\mathscr I\\\nrm{I'-I}\leqslant1}}\int_{\T^2}2\sgm_{11}|f_{i+1}|^4\gamma_{q,i+1}^I\gamma_{q,i+1}^{I'}\mPG\Pi_{I,I'}\rd x\right|\lesssim\frac{\nrm{\gamma_{q,i+1}^I\gamma_{q,i+1}^{I'}}_{10}}{\lambda_{q,i+1}^{10}}\lesssim(\lambda_{q,i+1}\mu_{q,i})^{-10}\delta_{q+1},
			\end{align*}
			and 
			\begin{align*}
				\left|\int_{\T^2}\pt \ul\cdot\pt\tu_{q,i+1}\rd x\right|\lesssim&\frac{\nrm{\gamma_{q,i+1}^I\pt\ul}_{10}}{\lambda_{q,i+1}^{10}}+\frac{\lambda_{q,i+1}\nrm{\gamma_{q,i+1,c}^I\tilde{f}_{A_I^{\upsilon}}\pt\ul}_{10}}{\lambda_{q,i+1}^{11}}\\
				&+\frac{\nrm{\pt(\gamma_{q,i+1,c}^I\tilde{f}_{A_I^{\upsilon}})\pt\ul}_{10}+\nrm{\pt(\gamma_{q,i+1}^I\tilde{f}_{A_I^{\upsilon}})\pt\ul}_{10}}{\lambda_{q,i+1}^{11}}
				+\nrm{\pt\ul}_0\nrm{\pt\tu_{q,i+1,ac}}_0\\
				\lesssim&(\lambda_{q,i+1}\mu_{q,i})^{-10}\varepsilon\delta_{q+1}^{\frac{1}{2}}+\varepsilon^2\delta_{q+1}\lesssim\varepsilon\delta_{q+1},\\
				\left|\int_{\T^2}\tsgm_{q,i+1}^{(1)}\rd x\right|\lesssim&\lambda_{q,i+1}^{-10}\sum_{k=1}^7\sum_{N_1+\cdots+N_{k}+N_{k+1}=10}\prod_{l=1}^k\nrm{\Gl}_{N_l}\nrm{\gamma_{q,i+1}^I}_{N_{k+1}}+\sum_{k=0}^7\prod_{l=1}^k\nrm{\Gl}_{0}\nrm{\tG_{q,i+1,ac}}_0\\
				\lesssim&(\lambda_{q,i+1}\mu_{q,i})^{-10}\varepsilon\delta_{q+1}^{\frac{1}{2}}+\varepsilon\delta_{q+1}\lesssim\varepsilon\delta_{q+1}.
			\end{align*}
			Moreover, by \eqref{est on pt tu_p}, \eqref{est on tG_p}, and \eqref{est on tVuG ac}, we directly obtain
			\begin{align*}
				&\left|\int_{\T^2}\pttu_{q,i+1,\mathrm{rem}}\cdot\pttu_{q,i+1,m}\rd x\right|+\left|\int_{\T^2}\frac{1}{2}|\pttu_{q,i+1,\mathrm{rem}}|^2\rd x\right|\lesssim\nrm{\pttu_{q,i+1,\mathrm{rem}}}_0\nrm{\pttu_{q,i+1,m}}_0+\nrm{\pttu_{q,i+1,\mathrm{rem}}}_0^2
				\lesssim\varepsilon\delta_{q+1},\\
				&\left|\int_{\T^2}\left(4(\sgm_{2}+\sgm_{11})\tr(\tG_{q,i+1,m})\tr(\tG_{q,i+1,\mathrm{rem}})-2\sgm_2\tr(\tG_{q,i+1,m}\tG_{q,i+1,\mathrm{rem}}^{\top}+\tG_{q,i+1,m}\tG_{q,i+1,\mathrm{rem}})\right)\rd x\right|
				\lesssim\varepsilon\delta_{q+1},\\
				&\left|\int_{\T^2}\left(2(\sgm_{2}+\sgm_{11})(\tr(\tG_{q,i+1,\mathrm{rem}}))^2-\sgm_2\tr(\tG_{q,i+1,\mathrm{rem}}\tG_{q,i+1,\mathrm{rem}}^{\top}+\tG_{q,i+1,\mathrm{rem}}^2)\right)\rd x\right|
				\lesssim\nrm{\tG_{q,i+1,\mathrm{rem}}}_0^2
				\lesssim\varepsilon^2\delta_{q+1},\\
				&\left|\int_{\T^2}\tsgm_{q,i+1}^{(2),c}\rd x\right|\lesssim\sum_{k=1}^6\prod_{l=1}^k\nrm{\Gl}_{0}\nrm{\tG_{q,i+1}}_{0}^2\lesssim\varepsilon\delta_{q+1},\\
				&\left|\int_{\T^2}\tsgm_{q,i+1}^{(\geqslant3)}\rd x\right|\lesssim\sum_{k=0}^6\prod_{l=1}^k\nrm{\Gl}_{0}\nrm{\tG_{q,i+1}}_{0}^3	\lesssim\delta_{q+1}^{\frac{3}{2}}.
			\end{align*}
			Combining the above estimates with \eqref{est on u_qi^2-u_li^2} and \eqref{est on sgm_qi-sgm_li}, we obtain
			\begin{align*}
				&\left|\int_{\T^2}\left(\frac{1}{2}|\pt u_{q,i+1}|^2+\sgm_{G_{q,i+1}}\right)\rd x-\int_{\T^2}\left(\frac{1}{2}|\pt u_{q,i}|^2+\sgm_{G_{q,i}}\right)\rd x-\int_{\T^2}4\sgm_{11}d_{q,i+1}^2\rd x\right|\\
				\leqslant&\left|\int_{\T^2}\frac{1}{2}(|\pt \ul|^2-|\pt u_{q,i}|^2)\rd x\right|
				+\left|\int_{\T^2}\pt \ul\cdot\pt\tu_{q,i+1}\rd x\right|+\left|\int_{\T^2}\pttu_{q,i+1,\mathrm{rem}}\cdot\pttu_{q,i+1,m}\rd x\right|+\left|\int_{\T^2}\frac{1}{2}|\pttu_{q,i+1,\mathrm{rem}}|^2\rd x\right|\\
				&+\left|\int_{\T^2}\left(\sgm({j_1|_{G=\Gl},j_2|_{G=\Gl}})-\sgm_{G_{q,i}}\right)\rd x\right|+\left|\int_{\T^2}\tsgm_{q,i+1}^{(1)}\rd x\right|+\left|\int_{\T^2}\tsgm_{q,i+1}^{(2),c}\rd x\right|\\
				&+\left|\int_{\T^2}\left(\frac{1}{2}|\pttu_{q,i+1,m}|^2-2\sgm_{11}d_{q,i+1}^2\right)\rd x\right|+\left|\int_{\T^2}\left(\tsgm_{q,i+1}^{(2),m}-2\sgm_{11}d_{q,i+1}^2\right)\rd x\right|+\left|
				\int_{\T^2}
				\sum_{k\geqslant3}
				\tsgm_{q,i+1}^{(k)}
				\rd x
				\right|\\
				\lesssim_{\sgm}&\varepsilon\delta_{q+1},
			\end{align*}
			which completes the proof.
		\end{proof}

\section{Proof of the main theorem}\label{Proof of the main theorem}
\subsection{Proof of Proposition \ref{Proposition 1}}
Let $\varepsilon_0^*$ be as in Proposition \ref{pp of Reynolds error} for $b>5$. Given an approximate solution $(u_q,c_q,R_q)$ on $\mcI^{q,-1}\times\T^2$, we construct $\tu_{q,i+1}$ as in Section \ref{Definition of the perturbation}, set
$
u_{q,i+1}=u_{\ell,i}+\tu_{q,i+1},\ i=0,1,2,
$
and define $u_{q+1}=u_{q,3}$. This produces a new Reynolds stress $R_{q+1}$ satisfying the estimates in Proposition \ref{pp of Reynolds error}. It remains to prove \eqref{Induction estimate}--\eqref{est on R_qi} and to verify that $(u_{q+1},c_{q+1},R_{q+1})$ satisfies \eqref{pp of supp q}--\eqref{est on R_q} at the $(q+1)$-st stage.

We denote the implicit constants in Proposition \ref{est on perturbation} for $0\leqslant N+r\leqslant3$, which do not depend on $\oM$, by $M_0$ and set $\oM = 60M_0$. Then, for sufficiently small  $\varepsilon<\varepsilon_0^*$, if we set $\nrm{\cdot}_N = \nrm{\cdot}_{C^0(\mcI^{q+1,-1}; C^N(\T^2))}$,  by \eqref{est on u_q low}, \eqref{est on u_q high}, \eqref{est on u_qi-u_li} and \eqref{est on nb uq i+1}, we have for $2\leqslant N+r\leqslant3$,
\begin{align*}
	\nrm{u_{q+1}}_0&\leqslant\nrm{u_{q}}_0+\sum_{i=0}^2\nrm{u_{q,i}-\ul}_0+\sum_{i=1}^3\nrm{\tu_{q,i}}_0\leqslant 2\varepsilon-\dlt_{q}^{\frac{1}{2}}+\sum_{i=0}^2\ell_{q,i}^{3}\oM\lambda_{q,i}^2\delta_{q,i}^{\frac{1}{2}}+2M_0\sum_{i=1}^3\lambda_{q}^{-1}\dlt_{q+1}^{\frac{1}{2}}\leqslant 2\varepsilon-\dlt_{q+1}^{\frac{1}{2}}, \\
	\nrm{\nb u_{q+1}}_0&\leqslant\nrm{\nb u_{q}}_0+\sum_{i=0}^2\nrm{\nabla(u_{q,i}-\ul)}_0+\sum_{i=1}^3\nrm{\nb  \tu_{q,i}}_0\leqslant 2\varepsilon-\dlt_{q}^{\frac{1}{2}}+\sum_{i=0}^2\ell_{q,i}^{2}\oM\lambda_{q,i}^2\delta_{q,i}^{\frac{1}{2}}+2M_0\sum_{i=1}^3\dlt_{q+1}^{\frac{1}{2}}\leqslant 2\varepsilon-\dlt_{q+1}^{\frac{1}{2}}, \\
	\nrm{\pa_t u_{q+1}}_0&\leqslant\nrm{\pa_t u_{q}}_0+\sum_{i=0}^2\nrm{\pt(u_{q,i}-\ul)}_0+\sum_{i=1}^3\nrm{\pa_t \tu_{q,i}}_0\leqslant 2\varepsilon-\dlt_{q}^{\frac{1}{2}}+\sum_{i=0}^2\ell_{q,i}^{2}\oM\lambda_{q,i}^2\delta_{q,i}^{\frac{1}{2}}+2M_0\sum_{i=1}^3\dlt_{q+1}^{\frac{1}{2}}\leqslant 2\varepsilon-\dlt_{q+1}^{\frac{1}{2}}, \\
	\nrm{\pa_{t}^ru_{q+1}}_N&\leqslant\nrm{\pa_{t}^ru_{q}}_N+\sum_{i=0}^2\nrm{\pt^r(u_{q,i}-\ul)}_N+\sum_{i=1}^3\nrm{\pa_{t}^r\tu_{q,i}}_N\leqslant \oM\lambda_{q}^{N+r-1}\dlt_{q}^{\frac{1}{2}}+\sum_{i=0}^2\ell_{q,i}^{3-N-r}\oM\lambda_{q,i}^2\delta_{q,i}^{\frac{1}{2}}+\sum_{i=1}^3\frac{1}{60}\oM\lambda_{q,i}^{N+r-1}\dlt_{q+1}^{\frac{1}{2}}\\
	&\hspace{195pt}\leqslant \oM\lambda_{q+1}^{N+r-1}\dlt_{q+1}^{\frac{1}{2}},
\end{align*}
and
\begin{equation}
	\begin{aligned}
		&\lambda_{q}\nrm{u_{q+1}-u_q}_0+\sum_{1\leqslant N+r\leqslant3}\lambda_{q+1}^{1-N-r}\nrm{\pa_{t}^r(u_{q+1}-u_q)}_N\\
		\leqslant&\sum_{i=1}^3\lambda_{q}(\nrm{\tu_{q,i}}_0+\nrm{u_{\ell,i-1}-u_{q,i-1}}_0)+\sum_{1\leqslant N+r\leqslant3}\sum_{i=1}^3\lambda_{q,i}^{1-N-r}(\nrm{\pa_{t}^r\tu_{q,i}}_N+\nrm{\pa_{t}^r(u_{\ell,i-1}-u_{q,i-1})}_N)\\
		\leqslant&30M_0\dlt_{q+1}^{\frac{1}{2}}\leqslant \oM\dlt_{q+1}^{\frac{1}{2}}.
	\end{aligned}
\end{equation}

Similarly, it is easy to check  \eqref{est on u_qi low}--\eqref{est on u_qi high} at the $i$-th substep.
The estimates on $R_{q+1}$ can be obtained from Proposition \ref{pp of Reynolds error}. We first need some calculation. From \eqref{Geometric lemma 1} and \eqref{def of R_q,i+1}, we can get
\begin{align*}
	R_{q,1}=&\sgm^*\te_{q,0}\Id^{<1>}+R_{\ell,0}-\sgm^*\te_{q,0}\Gamma_{f_{1}}^2(\Id+(\sgm^*\te_{q,0})^{-1}R_{\ell,0})\left(\Id-\frac{f_{1}}{|f_{1}|}\otimes \frac{f_{1}}{|f_{1}|}\right)+\delta R_{q,1}\\
	=&\sgm^*\te_{q,0}\Id^{<1>}+R_{\ell,0}-(\sgm^*\te_{q,0}\Id+R_{\ell,0})^{<1>}+\delta R_{q,1}\\
	=&\sum_{k=2}^3R_{\ell,0}^{<k>}+\delta R_{q,1},\\
	R_{\ell,1}=&\PLN{1}\ULN{1}\left(\sum_{k=2}^3R_{\ell,0}^{<k>}+\delta R_{q,1}\right)=\sum_{k=2}^3(\PLN{1}\ULN{1}R_{\ell,0})^{<k>}+\PLN{1}\ULN{1}\delta R_{q,1},
\end{align*}
and then
\begin{align*}
	R_{q,2}=&\sgm^*\te_{q,1}\Id^{<2>}+R_{\ell,1}-\sgm^*\te_{q,1}\Gamma_{f_{2}}^2(\Id+(\sgm^*\te_{q,1})^{-1}R_{\ell,1})\left(\Id-\frac{f_{2}}{|f_{2}|}\otimes \frac{f_{2}}{|f_{2}|}\right)+\delta R_{q,2}\\
	=&\sgm^*\te_{q,1}\Id^{<2>}+\sum_{k=2}^3(\PLN{1}\ULN{1}R_{\ell,0})^{<k>}+\PLN{1}\ULN{1}\delta R_{q,1}\\
	&-(\sgm^*\te_{q,1}\Id+\sum_{k=2}^3(\PLN{1}\ULN{1}R_{\ell,0})^{<k>}+\PLN{1}\ULN{1}\delta R_{q,1})^{<2>}+\delta R_{q,2}\\
	=&(\PLN{1}\ULN{1}R_{\ell,0})^{<3>}+(\PLN{1}\ULN{1}\delta R_{q,1})^{<1>}+(\PLN{1}\ULN{1}\delta R_{q,1})^{<3>}+\delta R_{q,2},\\
	R_{q,3}=&\sgm^*\te_{q,2}\Id^{<3>}+R_{\ell,2}-\sgm^*\te_{q,2}\Gamma_{f_{3}}^2(\Id+(\sgm^*\te_{q,2})^{-1}R_{\ell,2})\left(\Id-\frac{f_{3}}{|f_{3}|}\otimes \frac{f_{3}}{|f_{3}|}\right)+\delta R_{q,3}\\
	=&\sgm^*\te_{q,2}\Id^{<3>}+\PLN{2}\ULN{2}\left((\PLN{1}\ULN{1}R_{\ell,0})^{<3>}+(\PLN{1}\ULN{1}\delta R_{q,1})^{<1>}+(\PLN{1}\ULN{1}\delta R_{q,1})^{<3>}+\delta R_{q,2}\right)\\
	&-\left(\sgm^*\te_{q,2}\Id+\PLN{2}\ULN{2}\left((\PLN{1}\ULN{1}R_{\ell,0})^{<3>}+(\PLN{1}\ULN{1}\delta R_{q,1})^{<1>}+(\PLN{1}\ULN{1}\delta R_{q,1})^{<3>}+\delta R_{q,2}\right)\right)^{<3>}\\
	&+\delta R_{q,3}\\
	=&(\PLN{2}\ULN{2}\PLN{1}\ULN{1}\delta R_{q,1})^{<1>}+(\PLN{2}\ULN{2}\delta R_{q,2})^{<1>}+(\PLN{2}\ULN{2}\delta R_{q,2})^{<2>}+\delta R_{q,3},
\end{align*}
where we have used that for a symmetric matrix $K$, it holds $\PL\UL K^{<k>}=(\PL\UL K)^{<k>}$, and
\begin{equation}
	(K^{<i>})^{<k>}=\left\lbrace
	\begin{aligned}
		&K^{<i>},&& k=i,\\
		&0,&& k\neq i.
	\end{aligned}\right.
\end{equation}
\eqref{est on dR_qi} follows from \eqref{est on delta R_q i+1} by choosing $\varepsilon<\varepsilon_0^*$. The proof is similar for \eqref{est on R_q} and \eqref{est on R_qi}. Here, we only give the following estimates for $R_{q+1}=R_{q,3}$:
\begin{equation}\label{est on R_q+1}
	\begin{aligned}
		\nrm{\pt^rR_{q+1}}_N
		\leqslant{}&
		\nrm{\pt^r(\PLN{2}\ULN{2}\PLN{1}\ULN{1}\delta R_{q,1})^{<1>}}_N
		+\nrm{\pt^r(\PLN{2}\ULN{2}\delta R_{q,2})^{<1>}}_N\\
		&+\nrm{\pt^r(\PLN{2}\ULN{2}\delta R_{q,2})^{<2>}}_N
		+\nrm{\pt^r\delta R_{q,3}}_N\\
		\leqslant{}&
		3\nrm{\pt^r\PLN{2}\ULN{2}\PLN{1}\ULN{1}\delta R_{q,1}}_N
		+6\nrm{\pt^r\PLN{2}\ULN{2}\delta R_{q,2}}_N
		+3\nrm{\pt^r\delta R_{q,3}}_N\\
		\leqslant{}&
		12\cdot\frac{1}{24}\lambda_{q+1}^{N+r}\tilde{c}_0^4\delta_{q+2}
		\leqslant\lambda_{q+1}^{N+r}\tilde{c}_0^4\delta_{q+2}.
	\end{aligned}
\end{equation}
Moreover, from \eqref{pp of supp l,i}, \eqref{pp of supp tu_q,i+1 p}, \eqref{pp of supp tu_q,i+1 ac}, and \eqref{pp of supp dR}, we could obtain \eqref{pp of supp q} and \eqref{pp of supp q,i}.

Finally, we only need to prove \eqref{Induction energy estimate}.
Noting that 
\begin{align}\label{main part of energy difference}
\int_{\T^2}4\sgm_{11}d_{q,i+1}^2\rd x=\int_{\T^2}4\sgm_{11}\tr\left(d_{q,i+1}^2\left(\Id-\frac{f_{i+1}}{|f_{i+1}|}\otimes \frac{f_{i+1}}{|f_{i+1}|}\right)\right)\rd x
=\int_{\T^2}4\sgm_{11}\tr((\te_{q,i}\Id+(\sgm^{*})^{-1}R_{\ell,i})^{<i+1>})\rd x,
\end{align}
by using \eqref{Geometric lemma 1}  and \eqref{est on R_li low}, we have for $t\in \set{t|\ \te_{q,i}(t)=\delta_{q+1}}$,
\begin{align*}
	\int_{\T^2}4\sgm_{11}d_{q,i+1}^2\rd x&\leqslant16\pi^2\sgm_{11}\left(\delta_{q+1}+4|\sgm^*|^{-1}\nrm{\Rl}_0\right)\leqslant 32\pi^2\sgm_{11}\delta_{q+1},\\
	\int_{\T^2}4\sgm_{11}d_{q,i+1}^2\rd x&\geqslant16\pi^2\sgm_{11}\left(\frac{1}{2}\delta_{q+1}-4|\sgm^*|^{-1}\nrm{\Rl}_0\right)\geqslant 4\pi^2\sgm_{11}\delta_{q+1}.
\end{align*}
By \eqref{est on energy}, for sufficiently small $\varepsilon$, we obtain, for each $i=0,1,2$ and every $t$ such that $\te_{q,i}(t)=\delta_{q+1}$,
\begin{align*}
	\int_{\T^2}\left(\frac{1}{2}|\pt u_{q,i+1}|^2+\sgm_{G_{q,i+1}}\right)\rd x-\int_{\T^2}\left(\frac{1}{2}|\pt u_{q,i}|^2+\sgm_{G_{q,i}}\right)\rd x
	&\geqslant\int_{\T^2}4\sgm_{11}d_{q,i+1}^2\rd x-C_{\sgm}\varepsilon\delta_{q+1}
	\geqslant\pi^2\sgm_{11}\delta_{q+1},\\
	\int_{\T^2}\left(\frac{1}{2}|\pt u_{q,i+1}|^2+\sgm_{G_{q,i+1}}\right)\rd x-\int_{\T^2}\left(\frac{1}{2}|\pt u_{q,i}|^2+\sgm_{G_{q,i}}\right)\rd x
	&\leqslant\int_{\T^2}4\sgm_{11}d_{q,i+1}^2\rd x+C_{\sgm}\varepsilon\delta_{q+1}
	\leqslant40\pi^2\sgm_{11}\delta_{q+1}.
\end{align*}
Moreover, using the identity
\begin{align*}
	\int_{\T^2}\left(\frac{1}{2}|\pt u_{q+1}|^2+\sgm_{G_{q+1}}\right)\rd x-\int_{\T^2}\left(\frac{1}{2}|\pt u_q|^2+\sgm_{G_q}\right)\rd x=\sum_{i=0}^2\left[\int_{\T^2}\left(\frac{1}{2}|\pt u_{q,i+1}|^2+\sgm_{G_{q,i+1}}\right)\rd x-\int_{\T^2}\left(\frac{1}{2}|\pt u_{q,i}|^2+\sgm_{G_{q,i}}\right)\rd x\right],
\end{align*}
and noting that $\te_{q,i}(t)=\delta_{q+1}$ for every $i=0,1,2$ and every $t\in[T/2-T_{q,0},T+\tau_{q+1,-1}]$, we obtain
\begin{align*}
	3\pi^2\sgm_{11}\delta_{q+1}
	\leqslant\int_{\T^2}\left(\frac{1}{2}|\pt u_{q+1}|^2+\sgm_{G_{q+1}}\right)\rd x-\int_{\T^2}\left(\frac{1}{2}|\pt u_q|^2+\sgm_{G_q}\right)\rd x
	\leqslant120\pi^2\sgm_{11}\delta_{q+1}
\end{align*}
for every $t\in[T/2-T_{q,0},T+\tau_{q+1,-1}]$.

\subsection{Proof of Theorem \ref{Thm 1}}\label{Proof of the theorems}
Fix $T>0$. Since $b>5$, it follows from the definitions of
$\lambda_{q,k}$, $\delta_q$, $\tau_{q,k}$, and $\ell_{q,k}$ that
$$
\sum_{q=0}^{\infty}\sum_{k=0}^{2}
\left(
3\tau_{q,k}
+\ell_{q,k}
+\lambda_q^{-1}
\right)
\longrightarrow0
\qquad
\text{as }\varepsilon\to0.
$$
Therefore, by choosing $\varepsilon>0$ sufficiently small at the
beginning of the construction, depending also on $T$, we may assume
that
\begin{equation}\label{eq:total-time-buffer}
	\sum_{q=0}^{\infty}\sum_{k=0}^{2}
	\left(
	3\tau_{q,k}
	+\ell_{q,k}
	+\lambda_q^{-1}
	\right)
	\leqslant
	\frac{T}{4}.
\end{equation}
We choose $b$, $\oM$, $\tilde{c}_0$, and $\varepsilon_1$ based on Proposition \ref{Proposition 1}. We choose $(u_0,c_0,R_0)\equiv(0,c_0(t),0)$ on $\mcI^{0,-1}\times \T^2 $ which solves \eqref{approximate  elstro dynamic}  and satisfies \eqref{pp of supp q}--\eqref{est on R_q}.

We can apply Proposition \ref{Proposition 1} iteratively to produce a sequence of approximate solutions $(u_q, c_{q},R_q)$, which solve \eqref{approximate  elstro dynamic} and satisfy \eqref{pp of supp q}--\eqref{Induction energy estimate}.

First, we prove that $(u_q)$ is a Cauchy sequence in $C^0([0,T];C^1(\T^2))$. For any $q\leqslant q'$, we have
\begin{align*}
	\nrm{u_{q'} - u_q}_{C^0([0, T]; C^{1}(\T^2))}
	\leqslant \sum_{l=1}^{q'-q} \nrm{u_{q+l} - u_{q+l-1}}_{C^0([0, T]; C^{1}(\T^2))} \leqslant\oM\sum_{l=1}^{q'-q}\delta_{q+l}^{\frac12}.
\end{align*}
Since
$
\sum_{j=q+1}^{\infty}\delta_j^{\frac12}\longrightarrow0
$
as $q\to\infty$, the sequence $(u_q)$ is Cauchy in $C^0([0,T];C^1(\T^2))$. Hence, it converges to some
$
u\in C^0([0,T];C^1(\T^2)).
$
Likewise, \eqref{Induction estimate} implies that $(\pa_tu_q)$ is Cauchy in $C^0([0,T];C^0(\T^2))$, and hence
$
u\in C^1([0,T];C^0(\T^2)).
$
Therefore,
$
u\in C^1([0,T]\times\T^2).
$
Moreover, the preceding convergences imply that $u_q\to u$ in $C^1([0,T]\times\T^2)$. Consequently, we have $G_q\to G$ and $(\Id+G_q)\Sgm_{G_q}\to(\Id+G)\Sgm_G$ in $C^0([0,T]\times\T^2)$. On the other hand, by \eqref{est on R_q} and the definition of $c_q$, we have
$$
(\Id+G_q)(c_q+R_q)\longrightarrow0
\quad\text{in }C^0([0,T]\times\T^2).
$$
Moreover, by \eqref{pp of supp q} and
\eqref{eq:total-time-buffer}, we have
$$
\supp_tu_q\cap[0,T]
\subseteq
\left[
\frac{T}{2}
-
\sum_{j=0}^{\infty}\sum_{k=0}^{2}
\left(
3\tau_{j,k}
+\ell_{j,k}
+\lambda_j^{-1}
\right),
T
\right]
\subseteq
\left[\frac{T}{4},T\right].
$$
Thus, each $u_q$ vanishes in a neighborhood of $t=0$. In particular,
$
u_q(0,\cdot)=0,\
\pa_tu_q(0,\cdot)=0.
$
Passing to the limit, we also obtain
$
u(0,\cdot)=0,\
\pa_tu(0,\cdot)=0.
$

For any $\eta\in C_c^\infty([0,T)\times\T^2;\R^2)$, testing \eqref{approximate  elstro dynamic} against $\eta$ and using $\pa_tu_q(0,\cdot)=0$ gives
\begin{align*}
	\int_0^T\int_{\T^2}\left(\pa_tu_q\cdot\pa_t\eta-(\Id+G_q)\Sgm_{G_q}:\nb\eta\right)\rd x\rd t
	=\int_0^T\int_{\T^2}(\Id+G_q)(c_q+R_q):\nb\eta\rd x\rd t.
\end{align*}
Letting $q\to\infty$, we obtain
\begin{align*}
	\int_0^T\int_{\T^2}\left(\pa_tu\cdot\pa_t\eta-(\Id+G)\Sgm_G:\nb\eta\right)\rd x\rd t
	=0.
\end{align*}
Thus, $u$ satisfies \eqref{system 1} in the sense of distributions. Since $u\in C^1([0,T]\times\T^2)$, the tensor $(\Id+G)\Sgm_G$ belongs to $L^2(0,T;L^2(\T^2))$, and hence
$$
\pa_{tt}u=\Div((\Id+G)\Sgm_G)\in L^2(0,T;H^{-1}(\T^2)).
$$
Moreover, $u\in C([0,T];H^1(\T^2))\cap C^1([0,T];L^2(\T^2))$. Together with the zero initial conditions established above, this shows that $u$ is a weak solution of \eqref{system 1} in the sense of Definition \ref{Weak Solution}.

It remains to prove that $u$ is nontrivial. For $t\in[0,T]$, set
$$
E_q(t):=\int_{\T^2}\left(\frac{1}{2}|\pt u_q(t,x)|^2+\sgm_{G_q(t,x)}\right)\rd x,
\qquad
E(t):=\int_{\T^2}\left(\frac{1}{2}|\pt u(t,x)|^2+\sgm_{G(t,x)}\right)\rd x.
$$
Since $u_q\to u$ in $C^1([0,T]\times\T^2)$, we have $\pt u_q\to\pt u$ and $G_q\to G$ uniformly on $[0,T]\times\T^2$. By the continuity of the stored-energy function, it follows that $E_q(t)\to E(t)$ uniformly for $t\in[0,T]$. Moreover, since $u_0\equiv0$ and $\sgm(0,0)=0$, we have $E_0(t)=0$. Therefore, for every $Q\in\N$ and $t\in[T/2,T]$, the telescoping identity gives
\begin{align*}
	E_{Q+1}(t)=E_{Q+1}(t)-E_0(t)=\sum_{q=0}^{Q}\left(E_{q+1}(t)-E_q(t)\right).
\end{align*}
Using \eqref{Induction energy estimate} and then letting $Q\to\infty$, we obtain
\begin{align}
	3\pi^2\sgm_{11}\sum_{q=0}^{\infty}\delta_{q+1}
	\leqslant\int_{\T^2}\left(\frac{1}{2}|\pt u|^2+\sgm_G\right)\rd x
	\leqslant120\pi^2\sgm_{11}\sum_{q=0}^{\infty}\delta_{q+1}
\end{align}
for every $t\in[T/2,T]$. Since $\sgm_{11}>0$ and $\sum_{q=0}^{\infty}\delta_{q+1}>0$, the lower bound shows that $u$ is nontrivial. Thus, we have constructed a nontrivial $C^1$ weak solution to \eqref{system 1} emanating from zero initial data. This completes the proof.

\appendix
\section{H\"{o}lder spaces}
In this section, we introduce the notations we would use for H{\"o}lder spaces. For some time interval $\mcI\subset\R,$ we denote the supremum norm as  $\|f\|_0=\left\|f\right\|_{C^0(\mcI;C^0(\T^2))}:=\underset{(t,x)\in\mcI\times\T^2}{\sup}|f(t,x)|$. For $N\in\N,$ a multi-index $k=(k_1,k_2)\in\N^2$ and $\alpha\in(0,1],$ we denote the H\"{o}lder seminorms as
$$
\begin{aligned}
	&[f]_{N}=\underset{|k|=N}{\max}\left\|D^k f\right\|_{0},&&
	[f]_{N+\alpha}=\underset{|k|=N}{\max}\underset{t}{\sup}\underset{x\neq y}{\sup}\frac{|D^k f(t,x)-D^k f(t,y)|}{|x-y|^\alpha},
\end{aligned}
$$
where $D^k$ are spatial derivatives. Then we can denote the H\"{o}lder norms as
$$
\left\|f\right\|_{N}=\sum_{j=0}^{N}[f]_{j}, \qquad\left\|f\right\|_{N+\alpha}=\left\|f\right\|_{N}+[f]_{N+\alpha}.
$$
	\section{Some technical lemmas}
In this section, we introduce some lemmas given in \cite{BDSV19,DK22,GK22}. The proof for the following lemma can be found in \cite[Appendix]{BDSV19}.
\begin{lm}\cite[Proposition A.1]{BDSV19}\label{lm: ffhs}
	Suppose $F:\Omega\rightarrow\R$ and $\Psi:\R^n\rightarrow\Omega$ are smooth functions for some $\Omega\subset\R^m$. Then, for each $N\in\Z_+$, we have 
	\begin{equation}
		\begin{aligned}
			&\nrm{\nb^N(F\circ\Psi)}_0\lesssim\nrm{\nb F}_0\nrm{\nb\Psi}_{N-1}+\nrm{\nb F}_{N-1}\nrm{\Psi}_0^{N-1}\nrm{\Psi}_N,\\
			&\nrm{\nb^N(F\circ\Psi)}_0\lesssim\nrm{\nb F}_0\nrm{\nb\Psi}_{N-1}+\nrm{\nb F}_{N-1}\nrm{\nb\Psi}_0^{N},
		\end{aligned}\label{est on Holider norm}
	\end{equation}
	where the implicit constants in the inequalities depend only on $n, m$, and $N$.\label{Holder norm of composition}
\end{lm}
\begin{lm}\cite[Proposition C.2]{BDSV19}.\label{est on int operator}
Let $N\geqslant1$. Suppose that $a\in C^\infty(\T^2)$ and
$\xi\in C^\infty(\T^2;\R^2)$ satisfies
\begin{equation}
	\frac{1}{C}\leqslant|\nabla\xi|\leqslant C
\end{equation}
for some constant $C>1$. Then
\begin{equation}\label{est on Rae^ikxi}
	\Bigg|\int_{\T^2}a(x)e^{ik\cdot\xi(x)}\rd x\Bigg|
	\lesssim_{C,N}
	\frac{\|a\|_N+\|a\|_0\|\nabla\xi\|_N}{|k|^N}.
\end{equation}
Moreover, if there exist $x_0\in\T^2$ and a sufficiently small
parameter $\mu>0$ such that
$
\supp a\subseteq B(x_0,\mu),
$
then
\begin{equation}\label{est on Rae^ikxi 1}
	\Bigg|\int_{\T^2}a(x)e^{ik\cdot x}\rd x\Bigg|
	\lesssim_N
	\mu^2\frac{\|a\|_N}{|k|^N}.
\end{equation}
\end{lm}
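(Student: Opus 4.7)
The plan is to establish both estimates by repeated integration by parts, with the non-degeneracy $|\nabla \xi|\geqslant 1/C$ producing the gain for the first bound and the compact support of $a$ producing the volume factor $\mu^2$ in the second.

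For the first estimate I would introduce the first-order differential operator
$$
L f := -i\,\frac{(\nabla\xi)\,k}{|(\nabla\xi)^{\top} k|^{2}}\cdot\nabla f,
$$
whose coefficients are well defined and bounded purely in terms of $C$ by virtue of the lower bound $|(\nabla\xi)^{\top} k|\geqslant |k|/C$ coming from $|\nabla\xi|\geqslant 1/C$. A direct computation gives $L(e^{ik\cdot\xi})=e^{ik\cdot\xi}$, hence $L^{N}(e^{ik\cdot\xi})=e^{ik\cdot\xi}$. Integrating by parts $N$ times then yields
$$
\int_{\T^2} a\, e^{ik\cdot\xi}\,\rd x \;=\; \int_{\T^2}(L^{*})^{N}(a)\, e^{ik\cdot\xi}\,\rd x,
$$
and the estimate reduces to bounding $(L^{*})^{N}a$ in $L^{\infty}$.

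The main bookkeeping step is to expand $(L^{*})^{N}a$ by Leibniz: each application of $L^{*}$ either (i) differentiates $a$, contributing a factor $|k|^{-1}$ and one derivative of $a$, or (ii) differentiates the coefficient $(\nabla\xi)k/|(\nabla\xi)^{\top} k|^{2}$, which is homogeneous of degree $-1$ in $k$ and so again contributes $|k|^{-1}$, together with further derivatives of $\nabla\xi$. All intermediate factors of $\nabla\xi$ and the lower bound on $|(\nabla\xi)^{\top}k|$ yield only $C$-dependent constants. Counting derivatives, the worst two extremes are the term in which all $N$ derivatives land on $a$, giving $|k|^{-N}\|a\|_{N}$, and the term in which all $N$ derivatives hit $\nabla\xi$, giving $|k|^{-N}\|a\|_{0}\|\nabla\xi\|_{N}$; every mixed term is dominated by their sum via the standard interpolation/product inequalities for H\"older norms.

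For the second estimate, the phase is simply $k\cdot x$, so $\int_{\T^2} a\, e^{ik\cdot x}\,\rd x$ is a Fourier coefficient and the preceding operator reduces to the constant-coefficient $M:=-i\,k/|k|^{2}\cdot\nabla$. Integrating by parts $N$ times produces
$$
\Big|\int_{\T^2} a\, e^{ik\cdot x}\,\rd x\Big|\;\leqslant\; \frac{\|\nabla^{N}a\|_{L^{1}(\T^{2})}}{|k|^{N}}.
$$
Since $\supp a\subseteq B(x_{0},\mu)$, the support of $\nabla^{N}a$ is contained in the same ball, so $\|\nabla^{N}a\|_{L^{1}}\leqslant |B(x_{0},\mu)|\,\|a\|_{N}\lesssim\mu^{2}\|a\|_{N}$, which yields the claimed bound. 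I expect the only delicate point in the whole proof to be the combinatorial tracking of the coefficients of $(L^{*})^{N}$ in part one, but this is essentially the argument of Proposition C.2 in \cite{BDSV19}, so no new idea is required.
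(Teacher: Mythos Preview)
Your argument is correct. For the first inequality, the paper simply cites \cite[Proposition~C.2]{BDSV19} without reproving it; your operator-$L$ integration-by-parts sketch is exactly the standard proof of that proposition, so there is nothing to compare.

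For the second inequality the paper takes a slightly different route: it performs the change of variables $z=(x-x_0)/\mu$, picks up the Jacobian factor $\mu^2$, and then applies the first estimate to $a(x_0+\mu\,\cdot)$ with phase $e^{i\mu k\cdot z}$ (so $\nabla\xi=\Id$), using that $\|a(x_0+\mu\,\cdot)\|_N$ scales favorably in $\mu$. Your approach instead integrates by parts directly with the constant-coefficient operator $-ik/|k|^2\cdot\nabla$ and then exploits the small support to pass from $L^\infty$ to $L^1$, gaining the volume factor $\mu^2$. Your argument is a bit more self-contained (it does not invoke the first estimate at all), while the paper's argument is shorter once the first inequality is taken as given; both yield the same bound.
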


\begin{proof}
Estimate \eqref{est on Rae^ikxi} is the two-dimensional analogue of
\cite[Proposition C.2]{BDSV19} and follows from the stationary-phase
argument in \cite[Lemma 2.2]{DS17}. Since
	$$
	e^{ik\cdot x}
	=
	\frac{(k\cdot\nabla)^Ne^{ik\cdot x}}
	{i^N|k|^{2N}},
	$$
	integration by parts $N$ times yields
	$$
	\int_{\T^2}a(x)e^{ik\cdot x}\rd x
	=
	\frac{(-1)^N}{i^N|k|^{2N}}
	\int_{\T^2}(k\cdot\nabla)^Na(x)e^{ik\cdot x}\rd x.
	$$
	Therefore, using $\supp a\subseteq B(x_0,\mu)$, we obtain
	$$
	\begin{aligned}
		\Bigg|
		\int_{\T^2}a(x)e^{ik\cdot x}\rd x
		\Bigg|
		\lesssim_N
		\frac{1}{|k|^N}
		\int_{\supp a}|\nabla^Na(x)|\rd x\lesssim_N
		\mu^2\frac{\|a\|_N}{|k|^N}.
	\end{aligned}
	$$
	This proves \eqref{est on Rae^ikxi 1}.
\end{proof}

To obtain the commutator estimate, we present the following lemma in  \cite{DK22,GK22,MQ24}. In the following lemmas, we use the notation $\mcI$ and $\mcI_\ell$ to represent the interval $\mcI=[c,d],\ \mcI_\ell=[c-\ell,d+\ell]$ and denote $\nrm{\cdot}_N=\nrm{\cdot}_{C^0(\mcI;C^N(\T^2))}$.
\begin{lm}\label{lm of commutator}
	Let $f$ and $g$ be in $C^\infty(\mcI_\ell\times\T^2)$. Then, for each $N,r\geqslant0$, the following holds,
	\begin{align}
		\|\PL f \PL g-\PL(fg)\|_N&\lesssim_N\ell^{2-N}\|f\|_1\|g\|_1,\label{est on commutator 0}\\
		\|\partial_{t}^r(\UL f \UL g-\UL(fg))\|_0&\lesssim_r\ell^{2-r}\|\partial_{t}f\|_{C^0(\mcI_\ell;C^0(\T^2))}\|\partial_tg\|_{C^0(\mcI_\ell;C^0(\T^2))}.\label{est on time commutator 0}
	\end{align} 
	If we set $f_\ell=\UL\PL f$, $g_\ell=\UL\PL g$, and $(fg)_\ell=\UL\PL (fg)$, we have for each $N,r\geqslant 0$,
	\begin{equation}\label{est on mollification commutator 0}
		\begin{aligned}
			\|\partial_{t}^r(f_\ell g_\ell-(fg)_\ell)\|_{N}&\lesssim_{N,r} \ell^{2-N-r}\|\partial_{t}f\|_{C^0(\mcI_\ell;C^0(\T^2))}\|\partial_{t}g\|_{C^0(\mcI_\ell;C^0(\T^2))}+\ell^{2-N-r}\|f\|_{C^0(\mcI_\ell;C^1(\T^2))}\|g\|_{C^0(\mcI_\ell;C^1(\T^2))}.
		\end{aligned}
	\end{equation} 
	For a more general case involving the product of multiple functions, the commutator estimate between the mollification operator and the product is given by:\par 
	Let $N_0\in\N$ and let
	$\{f_n\}_{n=1}^{N_0}\subset
	C^\infty(\mcI_\ell\times\T^2)$.
	Then, for each $N,r\geqslant0$, the following holds,
		\begin{align}
		&\Nrm{\prod_{n=1}^{N_0}(\PL f_n)  -\PL\left(\prod_{n=1}^{N_0}f_n\right)}_{N}\lesssim_N\ell^{2-N}\sum_{\substack{s_1+\cdots+s_{N_0}=2\\s_n<2,n=1,\cdots,N_0} }\prod_{n=1}^{N_0}\nrm{f_n}_{s_n},\label{est on N commutator 0}\\
		&\Nrm{\pa_{t}^r\left(\prod_{n=1}^{N_0}( \UL f_n ) -\UL\left(\prod_{n=1}^{N_0}f_n\right)\right)}_{0}\lesssim_r\ell^{2-r}\sum_{\substack{s_1+\cdots+s_{N_0}=2\\s_n<2,n=1,\cdots,N_0}}\prod_{n=1}^{N_0}\nrm{\pa_{t}^{s_n}f_n}_{C^0(\mcI_\ell;C^{0}(\T^2))},\label{est on N time commutator 0}\\
		&\Nrm{\pa_{t}^r\left(\prod_{n=1}^{N_0}( \PL\UL f_n ) -\PL\UL\left(\prod_{n=1}^{N_0}f_n\right)\right)}_{N}\nonumber\\
		&\hspace{135pt}\lesssim_{N,r}\ell^{2-N-r}\sum_{\substack{s_1+\cdots+s_{N_0}=2\\s_n<2,n=1,\cdots,N_0}}\left(\prod_{n=1}^{N_0}\nrm{f_n}_{C^0(\mcI_\ell;C^{s_n}(\T^2))}+\prod_{n=1}^{N_0}\nrm{\pt^{s_n}f_n}_{C^0(\mcI_\ell;C^{0}(\T^2))}\right).\label{est on N space and time commutator 0}
	\end{align} 
\end{lm}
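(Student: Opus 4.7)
\medskip

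\noindent\textbf{Proposal.} The plan is to prove the four commutator estimates by reducing each to the classical Constantin--E--Titi symmetric-integral identity together with the vanishing-moment property of the mollifier, and to handle the multi-factor versions by induction on $N_0$.

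For the pure-space estimate \eqref{est on commutator 0}, the starting point is the symmetric identity
\begin{align*}
\PL(fg)(x)-\PL f(x)\PL g(x)=\tfrac12\int\!\!\int \eta_\ell(x-y)\eta_\ell(x-z)\bigl(f(y)-f(z)\bigr)\bigl(g(y)-g(z)\bigr)\rd y\,\rd z,
\end{align*}
which one checks by expanding the product inside the integral and using $\int\eta_\ell=1$. Because $y$ and $z$ both lie in a ball of radius $\ell$ around $x$, the mean-value theorem gives $|f(y)-f(z)|\lesssim \ell\|f\|_1$ and similarly for $g$, which already yields the $N=0$ case with the characteristic gain $\ell^{2}$. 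For $N\geq 1$ the $x$-derivatives land only on the two mollifiers in the integral representation; distributing via Leibniz, using $\|\nabla^{N_j}\eta_\ell\|_{L^1(\R^2)}\lesssim \ell^{-N_j}$, and keeping the two factors $\ell$ coming from $(f(y)-f(z))(g(y)-g(z))$ produces exactly $\ell^{2-N}\|f\|_1\|g\|_1$. The temporal estimate \eqref{est on time commutator 0} is identical in structure with $\eta_\ell^t$ in place of $\eta_\ell$ and with $\partial_t$ playing the role of $\nabla$; here one differentiates $r$ times in $t$ under the integral and uses $|f(y)-f(z)|\lesssim |t-s|\|\partial_t f\|_0$ on the support.

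The mixed space-time estimate \eqref{est on mollification commutator 0} follows by composing the two mollifications. Writing
\begin{align*}
f_\ell g_\ell-(fg)_\ell = \UL\bigl(\PL f\,\PL g-\PL(fg)\bigr) + \bigl(\UL(\PL f\,\PL g)-\UL \PL f\,\UL\PL g\bigr) + \UL\PL f\,\UL\PL g - \UL\PL(fg) + \text{vanishing}
\end{align*}
and reorganizing, one isolates a pure-space commutator (controlled by \eqref{est on commutator 0} applied to $f,g$), a pure-time commutator applied to $\PL f,\PL g$ (controlled by \eqref{est on time commutator 0} and then bounded by $\|\partial_t f\|_0\|\partial_t g\|_0$), and a genuinely mixed error that is handled by the same symmetric representation but now with the product kernel $\eta_\ell(x-y)\eta_\ell^t(t-s)$; the two gains in $\ell$ arise from the independent differences $f(t,y)-f(s,z)$ and $g(t,y)-g(s,z)$, each of which is estimated by $\ell\|\partial_t f\|_0+\ell\|f\|_{C^0(\mcI_\ell;C^1)}$ (and analogously for $g$) via the triangle inequality along the path $(t,y)\to(s,y)\to(s,z)$. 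Distributing $\partial_t^r$ and $\nabla^N$ onto the kernels then gives the claimed $\ell^{2-N-r}$ factor.

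The multi-factor bounds \eqref{est on N commutator 0}--\eqref{est on N space and time commutator 0} I would prove by induction on $N_0$: write
\begin{align*}
\prod_{n=1}^{N_0}\PL f_n - \PL\Bigl(\prod_{n=1}^{N_0} f_n\Bigr)
=\PL f_{N_0}\Bigl(\prod_{n=1}^{N_0-1}\PL f_n - \PL\Bigl(\prod_{n=1}^{N_0-1} f_n\Bigr)\Bigr)
+\Bigl(\PL f_{N_0}\,\PL\bigl(\textstyle\prod_{n<N_0}f_n\bigr)-\PL\bigl(\prod_n f_n\bigr)\Bigr),
\end{align*}
apply the inductive hypothesis to the first bracket and the two-factor estimate \eqref{est on commutator 0} (with $f_{N_0}$ and $\prod_{n<N_0}f_n$) to the second, and collect the admissible index distributions $(s_1,\dots,s_{N_0})$ with $\sum s_n=2$ and $s_n<2$. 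The time-only and space-time versions follow by the same induction, invoking \eqref{est on time commutator 0} and \eqref{est on mollification commutator 0} in place of \eqref{est on commutator 0}.

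I expect the main obstacle to be the bookkeeping in the mixed space-time estimate \eqref{est on mollification commutator 0} and its multi-factor counterpart \eqref{est on N space and time commutator 0}: one must verify that the \emph{two} independent $\ell$-gains (one from each difference) survive in the worst term where both spatial and temporal mollifications act simultaneously, and that no term requires a $C^2$ control that would violate the constraint $s_n<2$ in \eqref{est on N commutator 0}--\eqref{est on N space and time commutator 0}. Everything else is an essentially standard application of the Leibniz rule combined with Young's inequality for convolutions.
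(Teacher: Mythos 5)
Your overall strategy is essentially the paper's: a symmetric commutator identity in which all derivatives fall on the mollification kernels, the elementary kernel bounds of Lemma \ref{lm: Mollification}, and an induction on the number of factors for \eqref{est on N commutator 0}--\eqref{est on N space and time commutator 0}. The differences are minor and harmless: you use the double-convolution identity $\PL(fg)-\PL f\,\PL g=\tfrac12\iint\eta_\ell(x-y)\eta_\ell(x-z)(f(y)-f(z))(g(y)-g(z))\,\rd y\,\rd z$ where the paper uses the Constantin--E--Titi form $(f-\PL f)(g-\PL g)-\int\eta_\ell(z)(f(x)-f(x-z))(g(x)-g(x-z))\,\rd z$ (your form is, if anything, cleaner for $N,r\geqslant 1$ since derivatives hit only kernels); your induction peels off one factor while the paper merges the last two; and you get \eqref{est on N space and time commutator 0} by re-running the induction with the two-factor mixed bound, while the paper instead splits it into the multi-factor spatial and temporal commutators via $\ell^{-r}$ and $\ell^{-N}$ losses. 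All of these variants close, and your check that only norms with $s_n<2$ appear is correct.

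The one concrete defect is in your treatment of \eqref{est on mollification commutator 0}. The displayed ``decomposition'' is not an identity: the three bracketed terms sum to $2\,\UL(\PL f\,\PL g)-2\,\UL\PL(fg)$, not to $f_\ell g_\ell-(fg)_\ell$, so the ``vanishing'' remainder is not zero. More importantly, the ``genuinely mixed'' piece you then introduce, estimated through differences $f(t,y)-f(s,z)\lesssim \ell(\nrm{\pt f}_{C^0(\mcI_\ell;C^0(\T^2))}+\nrm{f}_{C^0(\mcI_\ell;C^1(\T^2))})$, produces cross terms of the type $\nrm{\pt f}_{C^0(\mcI_\ell;C^0)}\nrm{g}_{C^0(\mcI_\ell;C^1)}$, which are absent from the right-hand sides of \eqref{est on mollification commutator 0} and \eqref{est on N space and time commutator 0}; as written you would only prove a weaker, cross-term version. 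The repair is exactly the paper's (and the clean version of your own) split: $f_\ell g_\ell-(fg)_\ell=\bigl[\UL\PL f\,\UL\PL g-\UL(\PL f\,\PL g)\bigr]+\UL\bigl[\PL f\,\PL g-\PL(fg)\bigr]$, where for the first bracket you distribute $\nb^N$ onto $\PL f,\PL g$ by Leibniz (each spatial derivative of $\PL$ costing $\ell^{-1}$) and then apply the $C^0$ time-commutator bound \eqref{est on time commutator 0}, and for the second you pay $\ell^{-r}$ for $\pt^r\UL$ and invoke \eqref{est on commutator 0}; no mixed piece and no cross terms arise. With that correction the rest of your argument, including the multi-factor inductions, goes through.
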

\begin{proof}
Recall the definition of $\PL$ and $\UL$, we could calculate for $(t,x)\in \mcI\times\T^2$,
	\begin{align*}
		(\PL f\PL g-\PL(fg))(t,x)&=(f-\PL f)(g-\PL g)-\int_{\R^2}(f(t,x)-f(t,x-z))(g(t,x)-g(t,x-z))\eta_\ell(z)\rd z,
	\end{align*}
	Then, \eqref{est on commutator 0} follows  from  Lemma \ref{lm: Mollification}, $\supp\eta_\ell\subseteq B(0,\ell)$, and
	$$
	\begin{aligned}
		&|f(t,x)-f(t,x-z)|\leqslant|z|\nrm{f}_{C^0(\mcI_\ell;C^1(\T^2))}, \quad|g(t,x)-g(t,x-z)|\leqslant|z|\nrm{g}_{C^0(\mcI_\ell;C^1(\T^2))}.
	\end{aligned}
	$$
	Similarly, we could calculate
	\begin{align*}
		(\UL f\UL g-\UL(fg))(t,x)&=(f-\UL f)(g-\UL g)-\int_{\R}(f(t,x)-f(t-\tau,x))(g(t,x)-g(t-\tau,x)){\eta}^t_\ell(\tau)\rd\tau,
	\end{align*}
	and \eqref{est on time commutator 0} follows from Lemma \ref{lm: Mollification}, $\supp\eta_\ell^t\subseteq (-\ell,\ell)$, and
	$$
	\begin{aligned}
		&|f(t,x)-f(t-\tau,x)|\leqslant|\tau|\nrm{\pa_{t}f}_{C^0(\mcI_\ell;C^0(\T^2))}, \quad|g(t,x)-g(t-\tau,x)|\leqslant|\tau|\nrm{\pa_{t}g}_{C^0(\mcI_\ell;C^0(\T^2))}.
	\end{aligned}
	$$
	Next, we use \eqref{est on commutator 0} and \eqref{est on time commutator 0} to obtain
	\begin{align*}
		&\quad\nrm{\pa_{t}^r(f_\ell g_\ell-(fg)_\ell)}_{N}\\
		&\leqslant\nrm{\pa_{t}^r(f_\ell g_\ell-\UL(\PL f\PL g))}_{N}+\nrm{\pa_{t}^r(\UL(\PL f\PL g)-\UL\PL(fg))}_{N}\\
		&\lesssim_{N,r}\sum_{|\alpha_0|+|\alpha_1|=N}\nrm{\pa_{t}^r( (\UL\nb^{\alpha_0}(\PL f)) (\UL\nb^{\alpha_1}(\PL g)) -\UL(\nb^{\alpha_0}(\PL f) \nb^{\alpha_1}(\PL g) ))}_{0}\\
		&\quad+\ell^{-r}\nrm{\PL f\PL g-\PL(fg)}_{C^0(\mcI_\ell;C^N(\T^2))}\\
		&\lesssim_{N,r} \ell^{2-N-r}\nrm{\pa_{t}f}_{C^0(\mcI_\ell;C^0(\T^2))}\nrm{\pa_{t}g}_{C^0(\mcI_\ell;C^0(\T^2))}+\ell^{2-N-r}\nrm{f}_{C^0(\mcI_\ell;C^1(\T^2))}\nrm{g}_{C^0(\mcI_\ell;C^1(\T^2))}.
	\end{align*}
	For the case involving the product of multiple functions, we can use mathematical induction. The case of $N_0=2$ has been proved. Assume that \eqref{est on N commutator 0} holds for $N_0=k$,  we compute 
	\begin{align*}
	&\Nrm{\prod_{n=1}^{k+1}(\PL f_n)  -\PL\left(\prod_{n=1}^{k+1}f_n\right)}_{N}\\
	\leqslant&\Nrm{\prod_{n=1}^{k+1}(\PL f_n) - \prod_{n=1}^{k-1}(\PL f_n)\PL (f_{k}f_{k+1})}_{N} + \Nrm{\prod_{n=1}^{k-1}(\PL f_n)\PL (f_{k}f_{k+1})  -\PL\left(\prod_{n=1}^{k+1}f_n\right)}_{N}\\
	\lesssim&_N  \sum_{N_1+N_2=N}\Nrm{\prod_{n=1}^{k-1}(\PL f_n)}_{N_1}\Nrm{(\PL f_{k})(\PL f_{k+1}) - \PL (f_{k}f_{k+1})}_{N_2}+\ell^{2-N}\sum_{ \substack{s_1+\cdots+s_{k}=2\\s_n<2,n=1,\cdots,k}}\prod_{n=1}^{k-1}\Nrm{f_n}_{s_n}\Nrm{f_{k}f_{k+1}}_{s_{k}}\\
	\lesssim&_N\ell^{2-N}\sum_{ \substack{s_1+\cdots+s_{k+1}=2\\s_n<2,n=1,\cdots,k+1}}\prod_{n=1}^{k+1}\Nrm{f_n}_{s_n}.
	\end{align*}
	Similarly, we could obtain \eqref{est on N time commutator 0}. By using  \eqref{est on N commutator 0} and \eqref{est on N time commutator 0}, we could get \eqref{est on N space and time commutator 0}.
	\begin{align*}
		&\quad\Nrm{\pt^{r}\left(\prod_{n=1}^{k+1}(\PL\UL f_n)  -\PL\UL\left(\prod_{n=1}^{k+1}f_n\right)\right)}_{N}\\
		&\leqslant\Nrm{\pt^{r}\left(\prod_{n=1}^{k+1}(\PL\UL f_n)  -\PL\left(\prod_{n=1}^{k+1}\UL f_n\right)\right)}_{N}+\Nrm{\pt^{r}\left(\PL\left(\prod_{n=1}^{k+1}\UL f_n\right) -\PL\UL\left(\prod_{n=1}^{k+1}f_n\right)\right)}_{N}\\
		&\lesssim_{N,r}\ell^{-r}\Nrm{\prod_{n=1}^{k+1}(\PL\UL f_n)  -\PL\left(\prod_{n=1}^{k+1}\UL f_n\right)}_{N}+\ell^{-N}\Nrm{\pt^{r}\left(\prod_{n=1}^{k+1}\UL f_n -\UL\left(\prod_{n=1}^{k+1}f_n\right)\right)}_{0}\\
		&\lesssim_{N,r}\ell^{2-N-r}\sum_{\substack{s_1+\cdots+s_{k+1}=2\\s_n<2,n=1,\ldots,k+1}}\left(\prod_{n=1}^{k+1}\nrm{f_n}_{C^0(\mcI_\ell;C^{s_n}(\T^2))}+\prod_{n=1}^{k+1}\nrm{\pt^{s_n}f_n}_{C^0(\mcI_\ell;C^0(\T^2))}\right).\qedhere
	\end{align*}
\end{proof}
\section{Estimate for nonautonomous linear differential systems}\label{Estimate for nonautonomous linear differential systems}
Here, we give an estimate for nonautonomous linear differential systems. Let $m\geqslant 1$, $T>0$, $A,B\in C^0([0,T];\R^{m\times m})$, and $R\in C^0([0,T];\R^m)$. If we consider the second-order ordinary differential system
\begin{equation}\label{second order ode}
	\left\lbrace 
	\begin{aligned}
		&\frac{\rd^2y}{\rd t^2}+A(t)\frac{\rd y}{\rd t}+B(t)y=R(t),\\ 
		&y(0)=0,\quad \frac{\rd y}{\rd t}(0)=0,
	\end{aligned}
	\right.
\end{equation}
where $y:[0,T]\to\R^m$, then there exists a unique solution $y\in C^2([0,T];\R^m)$. Moreover, there exists a constant $C=C(T,K)>0$ such that
\begin{align*}
	\|y\|_{C_t^0}
	+\left\|\frac{\rd y}{\rd t}\right\|_{C_t^0}
	\leqslant
	C(T,K)\|R\|_{C_t^0},
\end{align*}
where
\begin{align*}
	K:=\sup_{t\in[0,T]}\max\{\|A(t)\|,\|B(t)\|\},
\end{align*}
and the matrix norms are the operator norms induced by the Euclidean norm on $\R^m$. Consequently,
\begin{equation}\label{ode ypp}
	\left\|\frac{\rd^2y}{\rd t^2}\right\|_{C_t^0}
	\leqslant
	C(T,K)\|R\|_{C_t^0}.
\end{equation}

Indeed, setting
\begin{align*}
	Y(t):=
	\begin{pmatrix}
		y(t)\\
		\frac{\rd y}{\rd t}(t)
	\end{pmatrix},
	\qquad
	\mathcal A(t):=
	\begin{pmatrix}
		0&\Id\\
		-B(t)&-A(t)
	\end{pmatrix},
	\qquad
	\mathcal R(t):=
	\begin{pmatrix}
		0\\
		R(t)
	\end{pmatrix},
\end{align*}
we rewrite \eqref{second order ode} as
\begin{align*}
	\frac{\rd Y}{\rd t}=\mathcal A(t)Y+\mathcal R(t),
	\qquad
	Y(0)=0.
\end{align*}
Since $\|\mathcal A\|_{C_t^0}\lesssim 1+K$, Gronwall's inequality yields
\begin{align*}
	\|Y\|_{C_t^0}\leqslant C(T,K)\|\mathcal R\|_{C_t^0}
	=C(T,K)\|R\|_{C_t^0},
\end{align*}
which gives the first estimate. Finally, \eqref{ode ypp} follows directly from \eqref{second order ode}.\\

\noindent{\bf Acknowledgment.}
P.~Qu acknowledges support from the NSFC  Grants No.~62588101 and No.~12431007. S.~Mao acknowledges support from the Fundamental Research Funds for the Central Universities  (No. 2232026D46).\\

\noindent{\bf Conflict of interest.} The authors declare that they have no conflict of interest.\ \\

\noindent{\bf Data availability statement.} Data sharing is not applicable to this article as no datasets were generated or analysed during the current study.

\end{document}